\numberwithin{equation}{section}
\newcommand{\D}{\displaystyle}
\newcommand{\lbl}[1]{\label{#1}}
\newcommand\ee{\end{equation}}
\newcommand\bes{\begin{eqnarray}}
\newcommand\ees{\end{eqnarray}}
\newcommand\bess{\begin{eqnarray*}}
\newcommand\eess{\end{eqnarray*}}
\newcommand\nm{\nonumber}
\title{ Pointwise estimates for bipolar compressible Navier-Stokes-Poisson system in dimension three}
\author{ZHIGANG WU\footnotemark[2]
\ \ WEIKE WANG\footnotemark[1]\thanks{Corresponding author.}
        }
\begin{document}

\maketitle
\renewcommand{\thefootnote}{\fnsymbol{footnote}}
 \footnotetext[2]{Department of Applied Mathematics, Donghua University,
Shanghai, P.R. China,  }
\footnotetext[1]{Department of Mathematics, Shanghai Jiao Tong University,
Shanghai, P.R. China, ({\tt wkwang@sjtu.edu.cn(W.K. WANG)}). }
\renewcommand{\thefootnote}{\arabic{footnote}}

\date{}
\maketitle

{{\bf  Abstract:}} The Cauchy problem of the bipolar
Navier-Stokes-Poisson system (\ref{1.1}) in dimension three is
considered. We obtain the pointwise estimates of the time-asymptotic shape of the solution, which exhibit generalized Huygens' principle as the  Navier-Stokes system. This phenomenon is the the most important difference from the unipolar Navier-Stokes-Poisson system.  Due to non-conservative structure of the system (\ref{1.1}) and interplay of two carriers which counteracts the influence of electric field (a nonlocal term), some new observations are essential for the proof. We make full use of the conservative structure of the system for the total density and total momentum, and the mechanism of the linearized unipolar Navier-Stokes-Poisson system together with the special form of the nonlinear terms in the system for the difference of densities and the difference of momentums. Lastly, as a byproduct, we extend the usual $L^2(\mathbb{R}^3)$-decay rate to $L^p(\mathbb{R}^3)$-decay rate with $p>1$ and also improve former decay rates in part.

\textbf{{\bf Key Words}:}
 Green's function; Bipolar Navier-Stokes-Poisson system; Huygens' principle.

\textbf{{\bf MSC2010}:} 35A09; 35B40; 35J08; 35Q35.

\section{Introduction}
\ \

\quad The bipolar
Navier-Stokes-Poisson system has been used to simulate the transport of charged particles under the influence of
electrostatic force governed by the self-consistent Poisson equation, cf. \cite{Degond}. In this paper, we are mainly concerned with the Cauchy problem of the bipolar  Navier-Stokes-Poisson system (BNSP) in $n$ dimensions:
\bes
\left\{\begin{array}{l}
\partial_t\rho_1+{\rm div} J_1=0, \\
\partial_tJ_1+{\rm div}(\frac{J_1\otimes J_1}{\rho_1})+\nabla P_1(\rho_1)
=\mu_1 \Delta(\frac{J_1}{\rho_1}) +\mu_2\nabla{\rm div}(\frac{J_1}{\rho_1})+\rho_1\nabla\phi, \\
\partial_t\rho_2+{\rm div} J_2=0, \\
\partial_tJ_2+{\rm div}(\frac{J_2\otimes J_2}{\rho_2})+\nabla P_2(\rho_2)
=\mu_1 \Delta(\frac{J_2}{\rho_2}) +\mu_2\nabla{\rm div}(\frac{J_2}{\rho_2})-\rho_2\nabla\phi, \\
\Delta\phi=\rho_1-\rho_2, \lim\limits_{|x|\rightarrow\infty}\phi(x,t)\rightarrow 0,
\end{array}
        \right.
        \lbl{1.1}
\ees
with initial data
\bes
(\rho_1,\rho_2,J_1,J_2,\nabla\phi)(x,0)=(\rho_{1,0},\rho_{2,0},J_{1,0},J_{2,0},\nabla\phi_0)(x), x\in\mathbb{R}^n.
\lbl{1.2}
\ees
Here $\rho_i(x,t),J_i(x,t)=\rho_iu_i,\phi(x,t)$ and $P_i\!=\! P_i(\rho_i)$
represent the fluid density, momentum, self-consistent electric potential and
pressure. The viscosity coefficients satisfy the usual physical conditions $\mu_1>0,\
\mu_1+\frac{2}{n}\mu_2>0$. $\bar{\rho}>0$ denotes the prescribed density of
positive charged background ions, and in this paper is taken as a
positive constant. In the present paper, we mainly consider the three dimensions case.

Now we mainly review some previous works on the Cauchy problem for some related models. For the compressible Navier-Stokes system, a lot of works have been done on the existence, stability and $L^p$-decay rates with $p\geq2$ for either isentropic or non-isentropic
(heat-conductive) cases, cf. \cite{D1,D2,HJ, HM, K, KM,KN,KS,KW,L,LX,MN,MN1,OK} in various settings by using (weighted) energy method together with spectrum analysis.  On the other hand, Liu and Zeng \cite{LZ2} first studied the pointwise estimates of solution to general hyperbolic-parabolic systems in one space dimension by using the method of Green's function. Hoff and Zumbrun \cite{HZ0} investigated the $L^p(\mathbb{R}^n)\ (p\geq1)$ estimates for the isentropic
Navier-Stokes system in multi-dimensions.
Then they in \cite{HZ} deduced a detailed, pointwise
description of the Green's function for the related linearized
Navier-Stokes system with artificial viscosity. Later, Liu and Wang
\cite{LW} considered the pointwise estimates of the solution in odd
dimensions and showed the generalized Huygens' principle by using real analysis method, which was reconsidered in \cite{LS} by using the complex analysis method for the Green's function recently. In particular, they in \cite{LS,LW} showed the solution behaves as
\bes
(1+t)^{-\frac{n}{2}}\bigg(1+\frac{|x|^2}{1+t}\bigg)^{-\frac{n}{2}}+(1+t)^{-\frac{3n-1}{4}}\bigg(1+\frac{(|x|-ct)^2}{1+t}\bigg)^{-\frac{n}{2}},\lbl{1.2(0)}
\ees
where the first profile is called as the diffusion wave(D-wave) and the second profile is called as the generalized Huygens' wave(H-wave). Wang $et\ al.$ \cite{Wang2,Wu3}  also discussed the pointwise estimates of the solution for the damped Euler system when initial data is a small
perturbation of the constant state and showed the solution behaves as the diffusion wave since the long wave of the Green's function does not contain the wave operator. All of the results above showed the different asymptotic profiles
in the pointwise sense for different systems. Indeed, the pointwise estimates of the solution play an important role in the description of the partial differential equations, since it can give explicit expressions of the time-asymptotic behavior of the solution. Moreover, one can get the global existence and optimal $L^p$-estimates of the solution directly from the pointwise estimates of
the solution. Here, we give some comments on the diffusion wave(D-wave) and the generalized Huygens' wave(H-wave) in (\ref{1.2(0)}) for the convenience of readers. As we know, for the $L^p(\mathbb{R}^n)$-estimate of these two waves, when $p<2$ the H-wave is the dominated part and when $p>2$ the D-wave is the dominated part, and $p=2$ is the critical case and the $L^2$-decay rate of these two waves is the same as the heat kernel. In other words, the usual $L^2$-estimates for some hyperbolic-parabolic coupled systems indeed conceal the hyperbolic characteristic of the solution. Besides, there are also other results in \cite{LY1,LY2,LY3,LY4,LY5,LZ1,Y1,Z1} on the related models for the $L^1$-estimate, wave propagation pattern around shock wave by using the method of Green's function.

Nowadays, for unipolar Navier-Stokes-Poisson system(NSP), there are also some results on the decay rate of the Cauchy problem when the initial data $(\rho_0,u_0)$ is a small perturbation of the constant state $(\bar{\rho},0)$. Li $et\ al.$ \cite{Li1} obtained the global existence and large time behavior of classical solution by spectrum method and energy method. They mainly  found that the density of NSP system
 converges to its equilibrium state at the same $L^2$-decay rate $(1+t)^{-\frac{3}{4}}$ as the  Navier-Stokes system, but the momentum of NSP system decays at $L^2$-decay rate $(1+t)^{-\frac{1}{4}}$, and then they extended the similar result to the non-isentropic case \cite{Zhang}. On the other hand, in \cite{Wang,Wu1}, we investigated the pointwise estimates of the solution and showed the pointwise profile of the solution contains the D-wave but does not contain the H-wave, which is different from (\ref{1.2(0)}) for the  Navier-Stokes system in \cite{LS,LW}.  In fact,  the NSP system is a hyperbolic-parabolic system with a non-local term arising from the electric field $\nabla\phi$. The symbol of this nonlocal term is singular in the long wave of the Green's function. It destroys the acoustic wave propagation, at the same time, this non-local term also destroys the time-decay rate and the regularity (integrability) on the space variable $x$ for the momentum (or the velocity $u$). Recently, Wang \cite{WangYJ} improved the $L^2$-decay rate and Wu $et\ al.$ \cite{Wu2} improved the pointwise estimate under an additional assumption that the initial data of the electric potential term $\nabla\phi$ is in $L^1(\mathbb{R}^3)$, respectively. Li $et\ al.$ \cite{Li4} considered the similar problem in the negative Besov space to improve the $L^2$-decay rate.

Compared with NSP system, there are few results for the Cauchy problem of BNSP system (\ref{1.1})-(\ref{1.2}) due to the interplay of two carriers which counteracts the influence of electric field. Li $et\ al.$ \cite{Li3} first gave the global existence and the $L^2$-decay rate of the classical solution as in \cite{Li1} when the initial data is small in $H^4\cap L^1(\mathbb{R}^3)$ by changing the system (\ref{1.1}) to (\ref{2.4}). Later, Zou \cite{Zou} improved the $L^2$-decay rate under the assumption on the initial data in the negative Besov space. Recently, Wang $et\ al.$ \cite{WX} directly considered the system (\ref{1.1}) and also obtained the $L^2$-decay rate of the solution as the Navier-Stokes system by using long wave and short wave decomposition method together with energy method when the initial data is small in $H^4\cap L^1(\mathbb{R}^3)$ together with an additional assumption of the antiderivative of two densities. Both the assumptions on the initial data in the negative Besov space and the antiderivative of two densities in \cite{WX, Zou} are used to lift the $L^2$-decay rate of two momentums to $(1+t)^{-\frac{3}{4}}$ as that of the Navier-Stokes system.

The motivation of this paper is as follows. Firstly, from the analysis on the Green's function of BNSP system (\ref{1.1}), we find that there exists the wave operator in the long wave of the Green's function, therefore we hope that the pointwise estimates of the solution to the Cauchy problem (\ref{1.1})-(\ref{1.2}) exhibit the generalized Huygens' principle as in \cite{LS,LW}, which is an absolutely different phenomenon from the unipolar case in \cite{Wang, Wu2}. However, comparing with the Navier-Stokes system in \cite{LS,LW} and NSP system in \cite{Wang,Wu1}, the non-conservative structure of the system (\ref{1.1}) and the presence of the H-wave and the nonlocal electric field $\nabla\phi$ are all coming together, which coupled with the nonlinear interplay of two carriers through the electric field ultimately bring us more new difficulties for nonlinear estimates. The key observations include that the optimum matching relation between the H-wave and conservative structure for the total density and total momentum, besides the delicate matching relation between the absence of the H-wave in linearized NSP system and the special form of the corresponding nonlinear terms. Lastly, with the pointwise estimates in hand, we can immediately obtain the $L^p(p>1)$-estimate of the solution, which is a generalization for the usual $L^p(p\geq2)$-decay rate of the solution given in \cite{Li1,WX,Zou}.

We make a brief interpretation for the main idea of the proof. Due to the slower $L^p$-decay rate of the H-wave when $p<2$, the authors in \cite{LS,LW} rely heavily on the conservative structure of the Navier-Stokes system on the density and momentum, which allows to ``borrow" the first order derivative from the nonlinear terms to deduce a H-wave above when estimating the convolution between the Green's function and nonlinear terms. However, the system (\ref{1.1}) has not the conservative structure because of the presence of the terms $\rho_1\nabla\phi$ and $\rho_2\nabla\phi$ in the two momentum equations. Fortunately, by rewriting (\ref{1.1}) into a new system (\ref{2.4}) with the new variables $(\rho_1+\rho_2,J_1+J_2,\rho_1-\rho_2,J_1-J_2)$, and from the Poisson equation $\Delta\phi=\rho_1-\rho_2=:n_2$, we find that the nonlinear term $n_2\nabla\phi=\Delta\phi\nabla\phi={\rm div}(\nabla\phi\otimes\nabla\phi-\frac{1}{2}|\nabla\phi|^2I_{3\times3})$. Then the system $(\ref{2.4})_{1,2}$ is conservative, which is called as a Navier-Stokes system on the total density $n_1=\rho_1+\rho_2$ and the total momentum $w_1=J_1+J_2$. Hence, it is possible to help us to obtain the H-wave as in \cite{LS,LW}. Nevertheless, to utilize this conservative structure in (\ref{2.4})$_{1,2}$, we need the pointwise estimate of the electronic potential $\nabla\phi$, since there exists the term $n_2\nabla\phi$ in the equation $(\ref{2.4})_{2}$.

By the relation $\nabla\phi=\frac{\nabla}{-\Delta}n_2$, we know that the pointwise estimate of $\nabla\phi$ can only be derived from the pointwise estimate of $n_2$ in the system $(\ref{2.4})_{3,4,5}$ with the variables $n_2=\rho_1-\rho_2$ and $w_2=J_1-J_2$ . Thus, in the first step we should consider the NSP system $(\ref{2.4})_{3,4,5}$, which has still not the conservative structure because of the term $(\rho_1+\rho_2)\nabla\phi$ in $(\ref{2.4})_4$. As showed in \cite{Wang, Wu2}, because of the presence of the electric field $\nabla\phi$, the long wave of the Green's function of NSP only contains the D-wave and does not contain the H-wave, then one can deduce the pointwise estimates as the D-wave without the help from the conservative structure of the system. Based on the mechanism of linearized NSP system, we hope the solution $(n_2,w_2)$ of the system $(\ref{2.4})_{3,4,5}$ also behaves like the D-wave,  though there exists both the D-wave and the H-wave arising from the highly coupled unknown variables of the nonlinear terms $F_2(n_1,w_1,n_2,w_2)$ in (\ref{2.4})$_4$. Otherwise, once the pointwise estimate of $n_2=\rho_1-\rho_2$ contains a H-wave, it will generate a new H-wave with ``worse" decay rate for the pointwise estimate of $\nabla\phi$ from the relation between $n_2$ and $\nabla\phi$. Thus, it is crucial for us to prevent the H-wave from happening in the pointwise estimate of $n_2$. On the other hand, when using the system $(\ref{2.4})_{3,4,5}$ to get the pointwise estimate for $n_2$ and $\nabla\phi$, one will encounter the nonlinear term $n_1\nabla\phi$ in the momentum equation (\ref{2.4})$_4$. Then it need the pointwise estimate of $\nabla\phi$ in turn when deducing the pointwise estimate of $n_2$. What's worse, there is a factor $\varepsilon$ in Proposition \ref{l 53} for the pointwise estimate of $\nabla\phi$, that is,  the nonlocal operator $\frac{\nabla}{-\Delta}$ acting on a D-wave for $n_2$ will lead to  a D-wave with ``bad" decay rate both on the time and the space for $\nabla\phi$. Due to this mutual mechanism between $n_2$ and $\nabla\phi$,  and the presence of the factor $\varepsilon$, one will go into an endless loop when making the nonlinear estimates and ultimately cannot close the ansatz (\ref{5.20}). Hence, it is likewise crucial for us to avoid the presence of the term $\nabla\phi$ in the nonlinear terms of the system on the variable $n_2$ as far as possible.

To overcome the difficulties from the system $(\ref{2.4})_{3,4,5}$, we mainly rely on the good form of nonlinear term $F_2(n_1,w_1,n_2,w_2)$. In fact, after a careful computation we find that each nonlinear term in $F_2(n_1,w_1,n_2,w_2)$ contains a variable $n_2$ or $w_2$, that is, there is not the interaction between the H-wave and the H-wave in $F_2(n_1,w_1,n_2,w_2)$. Then, after a careful computation, we can prove the pointwise asymptotic shape of $(n_2,w_2)$ only contains the D-wave. On the other hand, to avoid dealing with the nonlinear term $n_1\nabla\phi$ in the momentum equation (\ref{2.4})$_4$, we consider a new system on the variables $\rho_1-\rho_2=n_2$ and $u_1-u_2=:v_2$ in (\ref{5.18}), where there is actually not the variable $\nabla\phi$ in the nonlinear term $F_4(n_1,v_1,n_2,v_2)$. That is, when deducing the pointwise estimate of $n_2$, we need not the pointwise estimate of $\nabla\phi$, which is the key difference from using the variable $(n_2,w_2)$. On the other side, we have to overcome the difficulty from the nonlocal operators in the Green's function of the system $(\ref{2.4})_{3,4,5}$. Noticing the following proposition in Fourier space of long wave part for the Green's function for linearized NSP system $(2.4)_{3,4,5}$:
\bess
\hat{\mathbb{G}}(\xi,t)=\left(
\begin{array}{cc}
\hat{\mathbb{G}}_{11} & \hat{\mathbb{G}}_{12}  \\
\hat{\mathbb{G}}_{21} & \hat{\mathbb{G}}_{22} \\
\end{array}
\right)\sim\left(
\begin{array}{cc}
1 & \xi^\tau  \\
\frac{\xi}{|\xi|^2} & \frac{\xi\xi^\tau}{|\xi|^2} \\
\end{array}
\right)e^{-C_1|\xi|^2t},
\eess
i.e., there are nonlocal operators with symbol
$\frac{\xi}{|\xi|^2}$ and $\frac{\xi\xi^\tau}{|\xi|^2}$ in Fourier space. Under the assumption on  $\nabla\phi_0$ in (\ref{1.5(0)}), we can regard the nonlocal operator with the symbol $\frac{\xi}{|\xi|^2}$ in the Green's function of
$(\ref{2.4})_{3,4,5}$ as the nonlocal operator with the symbol $\frac{\xi\xi^\tau}{|\xi|^2}$ because of the Poisson equation (\ref{2.4})$_5$. In addition, for the nonlinear estimates, the operator $\frac{\nabla}{-\Delta}$ in the long wave of the Green's function just corresponds to the nonlinear term $F_3(n_1,v_1,n_2,v_2)=-{\rm div}(\frac{n_1v_2+n_2v_1}{2})$  in (\ref{5.18}), which is conservative due to the mass conservation on the variable $\rho_1-\rho_2$. Then, one can still ``borrow" the first order derivative from $F_3(n_1,v_1,n_2,v_2)$ to the Green's function $\mathbb{G}_{21}$. From these facts above, we can obtain a refined
pointwise estimate of the solution to unipolar Navier-Stokes-Poisson system as in (\ref{1.7(4)}). Then, all of the pointwise estimates for the other variables are based on (\ref{1.7(4)}).

Lastly, the presence of the factor $\varepsilon$ in the pointwise estimate for $\nabla\phi$ in Proposition \ref{l 53} will bring us new difficulty when dealing with the nonlinear problem, since it decays slower than all of the variables in the Navier-Stokes system \cite{LS,LW}. In fact, owing to the presence of several different wave profiles: Riesz wave, Huygens' wave, diffusion wave in long wave of the Green's function of the Navier-Stokes system,  to close the ansatz (\ref{5.20}) on the variables $(n_1,w_1,n_2,w_2)$, all of which have the different (unsymmetrical) asymptotic profiles in the pointwise estimates, we have to make some subtle estimations for the convolutions between each term in the Green's functions and the coupled nonlinear terms. Specifically, sometimes we will exchange time decay rate with the space decay rate to avoid producing the factor $\varepsilon$ in the time decay rate, and sometimes we will exchange the D-wave with the H-wave though it will waste the time decay rate to close the ansatz on the different pointwise profiles between $(\rho_1-\rho_2,J_1-J_2)$ and $D_x^\alpha(\rho_1-\rho_2,J_1-J_2)$ with $1\leq |\alpha|\leq 2$.

Throughout this paper, $C$, $C_i$ and $c_0$ denote general positive constants which may vary in different estimates. We use
$H^s(\mathbb{R}^n)=W^{s,2}(\mathbb{R}^n)$, where
$W^{s,p}(\mathbb{R}^n)$ is the usual Sobolev space with its norm
\bess
\|f\|_{W^{s,p}(\mathbb{R}^n)}=\sum_{k=0}^s\|\partial_x^kf\|_{L^p(\mathbb{R}^n)}.
\eess
The following existence result is from \cite{Li3, WX, Zou}, the unique difference is the regularity of the initial data in $H^6(\mathbb{R}^3)$ here.

\begin{theorem}\lbl{l a}{\rm (Existence)} Let $P_1'(\rho_1)>0$ and $P_2'(\rho_2)>0$ for $\rho_1>0$ and $\rho_2>0$ respectively, and $\bar{\rho}>0$ be a constant. Assume that $(\rho_i-\bar{\rho},J_{i0},\nabla\phi_0)\in H^6(\mathbb{R}^3)$
for $i=1,2$, with
$\varepsilon_0=:\|(\rho_{i0}-\bar{\rho},J_{i0},\nabla\phi_0)\|_{H^6(\mathbb{R}^3)}$
small. Then there is a unique global classical solution
$(\rho_i,J_i,\nabla\phi)$ of the Cauchy problem (\ref{1.1})-(\ref{1.2})
satisfying
\bes
\|(\rho_i-\bar{\rho},J_i,\nabla\phi)\|_{H^6(\mathbb{R}^3)}^2\leq
C\varepsilon_0.\lbl{1.4}
\ees
\end{theorem}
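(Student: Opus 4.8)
The plan is to follow the classical energy-method scheme for quasilinear hyperbolic--parabolic systems coupled to a Poisson equation, carried out at the regularity level $s=6$. First I would establish local-in-time existence and uniqueness. Rewrite (\ref{1.1}) in the reformulated form (\ref{2.4}) on the variables $(\rho_1+\rho_2,J_1+J_2,\rho_1-\rho_2,J_1-J_2)$, express the electric field through $\nabla\phi=\nabla(-\Delta)^{-1}(\rho_1-\rho_2)$, and set up a linearized iteration scheme: at each step solve the linear parabolic systems for the momenta and the transport equations for the densities, with the Poisson term frozen as a given forcing. Standard linear estimates give uniform bounds for the iterates in $C([0,T];H^6)$, with an extra $L^2([0,T];H^7)$ gain on the parabolic (momentum) components; contraction in a lower-order norm then yields a unique local classical solution on some interval $[0,T_0]$, with $T_0$ depending only on $\varepsilon_0$ through the positivity of $P_1',P_2'$ and the viscosity conditions $\mu_1>0$, $\mu_1+\tfrac23\mu_2>0$.

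The heart of the matter is the uniform a priori estimate. Set $E(t)=\|(\rho_i-\bar\rho,J_i,\nabla\phi)(t)\|_{H^6(\mathbb{R}^3)}^2$ and let $D(t)$ collect the parabolic dissipation $\|\nabla J_i\|_{H^6}^2$ together with the density-dissipation terms recovered below. Applying $\partial_x^\alpha$ with $|\alpha|\le 6$, pairing with the corresponding solution component, integrating by parts, and symmetrizing the acoustic part with the weight $P_i'(\rho_i)>0$, one obtains $\tfrac{d}{dt}\mathcal E(t)+c\,D(t)\le C\sqrt{E(t)}\,D(t)$, where $\mathcal E(t)$ is a functional equivalent to $E(t)$; to capture the missing density dissipation one adds a small multiple of an interaction functional of the type $\int\nabla\rho_i\cdot J_i\,dx$ (and uses the Poisson relation $\Delta\phi=\rho_1-\rho_2$ to control the difference-of-densities part). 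The only genuinely new contribution compared with the pure Navier--Stokes estimate is the nonconservative term $\rho_i\nabla\phi$: at top order its worst piece $\int\partial^\alpha(\rho_i\nabla\phi)\cdot\partial^\alpha J_i\,dx$ is handled by substituting $\Delta\phi=\rho_1-\rho_2$, integrating by parts once more, and using $H^2(\mathbb{R}^3)\hookrightarrow L^\infty(\mathbb{R}^3)$ together with Moser-type product and commutator estimates, which bounds it by $C\sqrt{E(t)}\,D(t)$. Since $\varepsilon_0$ is small, a continuity (bootstrap) argument keeps $\sqrt{E(t)}$ small on the existence interval, so $\tfrac{d}{dt}\mathcal E(t)\le 0$ and hence $E(t)\le C\varepsilon_0$ uniformly in $t$.

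Finally I would combine the local existence with this uniform bound through the standard continuation argument: the local solution extends as long as $E(t)$ remains bounded, and the a priori estimate forbids blow-up, so the solution is global and satisfies (\ref{1.4}). The main obstacle is isolated entirely in the Poisson coupling — making sure the nonlocal factor $\nabla\phi$ and the nonconservative terms $\rho_i\nabla\phi$ do not obstruct closure of the energy estimate. Because $\Delta\phi=\rho_1-\rho_2$ converts the nonlocal factor into a genuine density and the offending contributions live in the already-dissipative difference equations, this is routine; and the step-up from the lower-regularity results of \cite{Li3,WX,Zou} to $H^6(\mathbb{R}^3)$ requires only carrying the same commutator and product estimates up to $|\alpha|=6$, with no structural change.
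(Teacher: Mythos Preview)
Your outline is correct and follows exactly the standard energy-method scheme that the paper relies on. In fact the paper gives no proof of this theorem at all: it simply cites the existence result from \cite{Li3,WX,Zou} and remarks that the only change is the regularity level $H^6(\mathbb{R}^3)$, so your sketch already supplies more detail than the paper itself.
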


Main results of this paper are stated as the following theorem.

\begin{theorem}\lbl{1 b}{\rm (Pointwise\  estimates)} Under the assumptions of
Theorem 1.1. If further, for $|\alpha|\leq 2$
\bes
&&|D_x^\alpha(\rho_{1,0}+\rho_{2,0}-2\bar{\rho},J_{1,0}+J_{2,0})|\leq C\varepsilon_0(1+|x|^2)^{-r_1},\ r_1\geq\frac{21}{10},\\
&&|D_x^\alpha(\rho_{1,0}-\rho_{2,0},J_{1,0}-J_{2,0}),\nabla\phi_0|\leq C\varepsilon_0\big(1+|x|^2\big)^{-r},\ r>\frac{3}{2},\lbl{1.5(0)}
\ees
then for the base sound speed $c:=\sqrt{P_i'(\bar{\rho})}$ and a constant $\varepsilon$ satisfying $0<\varepsilon\ll1$, it holds that
\bes
&&|\rho_1+\rho_2-2\bar{\rho}|\leq C\varepsilon_0(1+t)^{-2}\left\{(1+\frac{(|x|-ct)^2}{1+t})^{-(\frac{3}{2}-\varepsilon)}
+\bigg(1+\frac{|x|^2}{1+t}\bigg)^{-(\frac{3}{2}-\varepsilon)}\right\}, \\ \lbl{1.7(1)}
&&|J_1+J_2|\leq C\varepsilon_0(1+t)^{-2}\bigg(1+\frac{(|x|-ct)^2}{1+t}\bigg)^{-(\frac{3}{2}-\varepsilon)}\!\!
+C\varepsilon_0(1+t)^{-\frac{3}{2}}\bigg(1+\frac{|x|^2}{1+t}\bigg)^{-(\frac{3}{2}-\varepsilon)}\!\!\!,\\ \lbl{1.7(2)}
&&|\rho_1-\rho_2|\leq  C\varepsilon_0(1+t)^{-2}\bigg(1+\frac{|x|^2}{1+t}\bigg)^{-\frac{3}{2}},\\ \lbl{1.7(3)}
&&|J_1-J_2|\leq C\varepsilon_0(1+t)^{-\frac{3}{2}}\bigg(1+\frac{|x|^2}{1+t}\bigg)^{-\frac{3}{2}}, \lbl{1.7(4)}
\ees
which imply that
\bes
&&|(\rho_1-\bar{\rho},\rho_2-\bar{\rho})|\leq C\varepsilon_0(1+t)^{-2}\Bigg\{\bigg(1+\frac{(|x|-ct)^2}{1+t}\bigg)^{-(\frac{3}{2}-\varepsilon)}+\bigg(1+\frac{|x|^2}{1+t}\bigg)^{-(\frac{3}{2}-\varepsilon)}\Bigg\},\\
&&|(J_1,J_2)|\leq C\varepsilon_0(1+t)^{-2}\bigg(1+\frac{(|x|-ct)^2}{1+t}\bigg)^{-(\frac{3}{2}-\varepsilon)}\!\!+C\varepsilon_0(1+t)^{-\frac{3}{2}}\bigg(1+\frac{|x|^2}{1+t}\bigg)^{-(\frac{3}{2}-\varepsilon)}.
\lbl{1.10}
\ees
\end{theorem}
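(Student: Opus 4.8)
The plan is to prove (\ref{1.7(1)})--(\ref{1.7(4)}) by the Green's-function and Duhamel method combined with a pointwise bootstrap on the ansatz (\ref{5.20}), working throughout with the reformulated system (\ref{2.4}) in the variables $n_1=\rho_1+\rho_2$, $w_1=J_1+J_2$, $n_2=\rho_1-\rho_2$, $w_2=J_1-J_2$. Theorem \ref{l a} already supplies the global solution with a uniform $H^6(\mathbb{R}^3)$ bound, so the whole task is to upgrade it to the stated pointwise profiles. At the linear level the system decouples into a conservative Navier-Stokes block $(\ref{2.4})_{1,2}$ for $(n_1,w_1)$ and a Navier-Stokes-Poisson block $(\ref{2.4})_{3,4,5}$ for $(n_2,w_2)$, coupled only through the nonlinear terms; the argument is arranged so that the difference block is closed first --- yielding (\ref{1.7(3)})--(\ref{1.7(4)}) and, via $\nabla\phi=\nabla(-\Delta)^{-1}n_2$, the bound for $\nabla\phi$ of Proposition \ref{l 53} --- and only then the total block, yielding (\ref{1.7(1)})--(\ref{1.7(2)}).

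First I would establish sharp pointwise bounds for the two linearized Green's functions. For $(n_1,w_1)$ this is the compressible Navier-Stokes Green's function, whose low-frequency part splits into a diffusion wave, a generalized Huygens' wave, and a Riesz wave as in \cite{LS,LW}, together with an exponentially-decaying short-wave remainder; these I would import. For $(n_2,w_2)$ the relevant object is the linearized NSP Green's function $\mathbb{G}$, and the structural input is the low-frequency behaviour displayed just before the theorem: $\hat{\mathbb{G}}$ carries no acoustic factor and is dominated by $e^{-C_1|\xi|^2 t}$ together with the nonlocal symbols $\xi/|\xi|^2$ and $\xi\xi^\tau/|\xi|^2$. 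Hence in physical space $\mathbb{G}$ contains only D-waves together with Riesz-type nonlocal pieces; using hypothesis (\ref{1.5(0)}) on $\nabla\phi_0$ and the Poisson equation $(\ref{2.4})_5$, the $\xi/|\xi|^2$-entry can be rewritten through the $\xi\xi^\tau/|\xi|^2$-entry, which is what produces the refined estimate (\ref{1.7(4)}) for the NSP solution. The price is that $\nabla(-\Delta)^{-1}$ applied to a D-wave degrades the decay by the small factor $\varepsilon$, which is exactly the content of Proposition \ref{l 53}.

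For the nonlinear closure of the difference block I would exploit the good form of the nonlinearities by switching to the variables $(n_2,v_2)=(\rho_1-\rho_2,u_1-u_2)$ as in (\ref{5.18}), in which the nonlinear terms $F_3,F_4$ contain no $\nabla\phi$ at all, and moreover $F_3=-{\rm div}\big(\tfrac12(n_1v_2+n_2v_1)\big)$ is in divergence form thanks to mass conservation for $\rho_1-\rho_2$, so one full $x$-derivative can be transferred onto the Green's-function entry $\mathbb{G}_{21}$ --- precisely the entry carrying the dangerous symbol $\xi/|\xi|^2$ --- turning that convolution into a harmless D-wave. Equally important, each term of $F_2,F_3,F_4$ carries a factor $n_2$ or $w_2$, so there is never an H$\,\ast\,$H self-interaction; this prevents an H-wave from appearing in the pointwise shape of $n_2$, and hence --- through $\nabla\phi=\nabla(-\Delta)^{-1}n_2$ --- keeps $\nabla\phi$ itself free of H-waves. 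Substituting (\ref{5.20}) into the Duhamel integrals $\int_0^t\!\!\int \mathbb{G}(x-y,t-s)(\text{nonlinear})\,dy\,ds$ and carrying out the convolution estimates term by term, I would recover (\ref{1.7(3)})--(\ref{1.7(4)}) with strictly smaller constants.

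With $(n_2,w_2)$ and $\nabla\phi$ in hand, I would then feed them into the Duhamel formula for $(n_1,w_1)$ against the Navier-Stokes Green's function. The crucial structural observation is that the troublesome term in $F_2$ is $n_2\nabla\phi=\Delta\phi\,\nabla\phi={\rm div}\big(\nabla\phi\otimes\nabla\phi-\tfrac12|\nabla\phi|^2 I_{3\times3}\big)$, again in divergence form, so one derivative can be borrowed onto the Navier-Stokes Green's function to generate the H-wave in (\ref{1.7(1)})--(\ref{1.7(2)}) in the manner of \cite{LS,LW}. The hard part --- and the bulk of the work --- is the bookkeeping of the mismatched wave profiles (diffusion, Huygens, Riesz, and the $\varepsilon$-degraded D-wave of $\nabla\phi$) when the convolutions are evaluated product by product: for each term one must decide whether to trade time decay for space decay (so as to keep the factor $\varepsilon$ out of the time rate) or to bound an H-wave by a D-wave (wasting time decay) so that the output again fits (\ref{5.20}), including the separate, asymmetric profiles demanded for $(\rho_1-\rho_2,J_1-J_2)$ versus $D_x^\alpha(\rho_1-\rho_2,J_1-J_2)$ with $1\le|\alpha|\le2$, and so that the mutual $n_2$--$\nabla\phi$ loop through the $\varepsilon$-loss does not accumulate. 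Once (\ref{5.20}) is recovered with a strictly smaller constant, a continuity argument closes the bootstrap and gives (\ref{1.7(1)})--(\ref{1.7(4)}); finally (\ref{1.10}) is immediate since $\rho_i-\bar\rho=\tfrac12(n_1-2\bar\rho)\pm\tfrac12 n_2$ and $J_i=\tfrac12 w_1\pm\tfrac12 w_2$.
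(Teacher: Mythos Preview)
Your proposal is correct and follows essentially the same route as the paper: reformulate into the system (\ref{2.4}), use the Navier--Stokes and Navier--Stokes--Poisson Green's functions of Sections~3--4, close the difference block first in the velocity variables of (\ref{5.18}) (exploiting that $F_3,F_4$ contain no $\nabla\phi$ and that every term carries a factor $n_2$ or $v_2$), extract $\nabla\phi$ via Proposition~\ref{l 53}, and only then close the conservative block for $(n_1,w_1)$ using the divergence structure $n_2\nabla\phi={\rm div}(\nabla\phi\otimes\nabla\phi-\tfrac12|\nabla\phi|^2 I)$ together with the Huygens/Riesz convolution lemmas (Lemmas~\ref{l 54}, \ref{l 55}, \ref{A4}--\ref{A6}); the bootstrap then closes through $M(T)\le C(\varepsilon_0+M^2(T))$. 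Two small slips: the term $n_2\nabla\phi$ sits in $F_1$, not $F_2$ (cf.\ (\ref{2.5})), and the closure is via the quadratic inequality $M(T)\le C(\varepsilon_0+M^2(T))$ rather than literally ``strictly smaller constants''; neither affects the substance.
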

\!\!\noindent{\bf Remark 1.1.} We find that the pointwise estimate of the densities is better than those of the momentums, which is resulted from two facts. On the one hand, the first row of the Green's function of the Navier-Stokes system corresponding to the variable $\rho_1+\rho_2$ only contains the Huygens' wave and the rest rows of the Green's function corresponding to the variable $J_1+J_2$ contain both the Riesz wave and the Huygens' wave. On the other hand, the assumption (\ref{1.5(0)}) on $\nabla\phi$ can enhance the decay rate of the variable $\rho_1-\rho_2$ in the Navier-Stokes-Poisson system in $(\ref{2.4})_{3,4,5}$.\\
\noindent{\bf Remark 1.2.} Comparing with the pointwise estimates for the Navier-Stokes system, the solution is also in $L^p(\mathbb{R}^3)$ with $p>1$ in spite of an extra factor $\epsilon$ in Theorem \ref{1 b}. That is, the factor  $\epsilon$ does not matter the range of $p$
and the pointwise estimates with optimal decay rate above for $(\rho_1-\bar{\rho},\rho_2-\bar{\rho},J_1,J_2)$ are almost the same as in \cite{LS,LW}. Here the factor $\epsilon$ is caused by the pointwise estimate for $\nabla\phi$, which does not exist in the Navier-Stokes system \cite{LS,LW}.\\
\noindent{\bf Remark 1.3.} The result in Theorem \ref{1 b} also holds for other odd dimensions $n\geq5$ without any new difficulty. On the other hand, it is more difficult to deduce the pointwise estimates for the problem (\ref{1.1})-(\ref{1.2}) in even dimensions, especially in two dimensions,  which will be considered in future.

From Theorem \ref{1 b} and Remark 1.3, we have the following $L^p$-decay rates in odd dimensions $n\geq 3$.

\begin{corollary}\lbl{1 c}
  Under the assumptions in Theorem \ref{1 b},
we have the following optimal $L^p(\mathbb{R}^n)$ estimates of the solution in odd dimensions $n\geq3$
\bess
&&\|(\rho_1-\bar{\rho}, \rho_2-\bar{\rho},\rho_1+\rho_2-2\bar{\rho})(\cdot,t)\|_{L^p(\mathbb{R}^n)}\leq
C\varepsilon_0(1+t)^{-\left(\frac{n}{2}(1-\frac{1}{p})+\frac{n-1}{4}\big(1-\frac{2}{p}\big)\right)},\ \ \ 1<p\leq \infty,\ \ \ \ \ \ \\[1.5mm]
&&\|(\rho_1-\rho_2)(\cdot,t)\|_{L^p(\mathbb{R}^n)}\leq C\varepsilon_0(1+t)^{-\frac{n}{2}(1-\frac{1}{p})-\frac{1}{2}},\ \ \ \ \ 1<p\leq \infty,\\[1.5mm]
&&\|(J_1,J_2,J_1+J_2)(\cdot,t)\|_{L^p(\mathbb{R}^n)}\leq
\left\{\begin{array}{ll}
  C\varepsilon_0(1+t)^{-\left(\frac{n}{2}(1-\frac{1}{p})+\frac{n-1}{4}\big(1-\frac{2}{p}\big)\right)}, & \ \ \ 1<p\leq2, \\[1.5mm]
 C\varepsilon_0(1+t)^{-\frac{n}{2}(1-\frac{1}{p})}, & \ \ \ 2\leq p\leq\infty,
\end{array}\right.\\
&&\|(J_1-J_2)(\cdot,t)\|_{L^p(\mathbb{R}^n)}\leq C\varepsilon_0(1+t)^{-\frac{n}{2}(1-\frac{1}{p})},\ \ \ \ \ \ \ \ 1<p\leq \infty.
\eess
\end{corollary}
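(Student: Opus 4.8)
The plan is to derive Corollary \ref{1 c} directly from the pointwise bounds of Theorem \ref{1 b} by computing the $L^p(\mathbb{R}^n)$ norms of the two ``model'' profiles that make up the right-hand sides of (\ref{1.7(1)})--(\ref{1.7(4)}) and (\ref{1.10}). For $n=3$ the pointwise input is Theorem \ref{1 b} itself; for odd $n\ge5$ it is the analogous estimate guaranteed by Remark 1.3, and the computation below is verbatim the same with $3$ replaced by $n$ (so that, for instance, the number $\tfrac{5}{2}=\tfrac{2\cdot3-1}{2}$ becomes $\tfrac{2n-1}{2}$). First, for the diffusion wave the parabolic scaling $y=x/\sqrt{1+t}$ gives, for $1\le p<\infty$,
\[
\Big\|(1+t)^{-a}\big(1+\tfrac{|x|^2}{1+t}\big)^{-b}\Big\|_{L^p(\mathbb{R}^n)}=C\,(1+t)^{-a+\frac{n}{2p}},
\]
with a finite constant precisely when $2bp>n$, and the left-hand side is $(1+t)^{-a}$ for $p=\infty$. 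Second, for the Huygens' wave I split $\mathbb{R}^n$ into the light-cone shell $\{ct/2\le|x|\le2ct\}$, the inner ball $\{|x|\le ct/2\}$ and the exterior $\{|x|\ge2ct\}$: on the shell I pass to polar coordinates, bound $|x|^{n-1}\le C(1+t)^{n-1}$ and substitute $s=(|x|-ct)/\sqrt{1+t}$; on the inner ball the profile is $\le C(1+t)^{-b}$ over a set of volume $\le C(1+t)^{n}$; on the exterior $1+\tfrac{(|x|-ct)^2}{1+t}\ge\tfrac14\big(1+\tfrac{|x|^2}{1+t}\big)$. Summing the three contributions yields, for $1\le p<\infty$,
\[
\Big\|(1+t)^{-a}\big(1+\tfrac{(|x|-ct)^2}{1+t}\big)^{-b}\Big\|_{L^p(\mathbb{R}^n)}\le C\,(1+t)^{-a+\frac{2n-1}{2p}},
\]
again valid provided $2bp>n$ (this is what controls the exterior), the shell giving the dominant term and the inner and outer pieces being lower order as soon as $b\ge\tfrac{1}{2p}$; the left-hand side is $(1+t)^{-a}$ for $p=\infty$.

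The rest is bookkeeping. Substituting these two displays into (\ref{1.7(1)})--(\ref{1.7(4)}) and (\ref{1.10}): for $\rho_1-\bar\rho$, $\rho_2-\bar\rho$ and $\rho_1+\rho_2-2\bar\rho$ a D-wave and an H-wave occur with common prefactor $(1+t)^{-2}$ (that is, $(1+t)^{-\frac{3n-1}{4}}$ when $n=3$), and since $\tfrac{2n-1}{2p}>\tfrac{n}{2p}$ the H-wave dominates, giving the $L^p$ bound $C\varepsilon_0(1+t)^{-\frac{3n-1}{4}+\frac{2n-1}{2p}}=C\varepsilon_0(1+t)^{-(\frac n2(1-\frac1p)+\frac{n-1}{4}(1-\frac2p))}$. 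For $\rho_1-\rho_2$ only a D-wave, with prefactor $(1+t)^{-\frac{n+1}{2}}$ and exponent $\tfrac n2$, is present, giving $C\varepsilon_0(1+t)^{-\frac{n+1}{2}+\frac{n}{2p}}=C\varepsilon_0(1+t)^{-\frac n2(1-\frac1p)-\frac12}$. For $J_1$, $J_2$, $J_1+J_2$ the H-wave piece contributes $C\varepsilon_0(1+t)^{-(\frac n2(1-\frac1p)+\frac{n-1}{4}(1-\frac2p))}$ and the separate D-wave piece (prefactor $(1+t)^{-\frac n2}$) contributes $C\varepsilon_0(1+t)^{-\frac n2(1-\frac1p)}$; their maximum is the first expression when $1<p\le2$ and the second when $2\le p\le\infty$, which is exactly the stated dichotomy. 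Finally $J_1-J_2$ carries a single D-wave, prefactor $(1+t)^{-\frac n2}$ and exponent $\tfrac n2$, giving $C\varepsilon_0(1+t)^{-\frac n2(1-\frac1p)}$. This establishes all four displayed estimates of the corollary.

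The main point demanding care --- more a bookkeeping subtlety than a genuine obstacle --- is the integrability threshold in the two model estimates. The profiles coming from Theorem \ref{1 b} have exponents $\tfrac n2-\varepsilon$ (for $\rho_1+\rho_2$, $J_1+J_2$) or exactly $\tfrac n2$ (for $\rho_1-\rho_2$, $J_1-J_2$); the latter are $L^1(\mathbb{R}^n)$-critical and the former are not in $L^1(\mathbb{R}^n)$ at all, which is why $p=1$ is excluded, and why for each fixed $p>1$ one must first apply Theorem \ref{1 b} with $\varepsilon=\varepsilon(p)$ small enough that $(n-2\varepsilon)p>n$ --- legitimate since $\varepsilon$ is a free small parameter there. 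The same condition $2bp>n$ is precisely what keeps the Huygens profile integrable at spatial infinity, so that the exterior region $\{|x|\ge2ct\}$ is genuinely lower order. The only assertion not contained in these upper bounds is the optimality of the time-exponents; as in \cite{LS,LW} this is the matching lower bound for the linearized evolution of the initial data, whose $L^p$ norm decays at exactly these rates, and I would record it by reference to that linear analysis rather than reproduce it.
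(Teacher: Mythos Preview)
Your proposal is correct and follows exactly the route the paper intends: the corollary is stated in the paper as a direct consequence of Theorem \ref{1 b} (and Remark 1.3 for $n\ge5$) with no separate proof, so the intended argument is precisely the one you carry out --- integrate the two pointwise profiles and read off the exponents. Your scaling computation for the D-wave and your three-region decomposition for the H-wave reproduce (and in fact sharpen the derivation of) the paper's own integral bound (\ref{5.0}); your remark that for each fixed $p>1$ one first fixes $\varepsilon=\varepsilon(p)$ small enough to ensure $(n-2\varepsilon)p>n$ is exactly the content of Remark 1.2.
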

\!\!\!\noindent{\bf Remark 1.4.} The usual $L^2$-estimates conceal the hyperbolic feature for the hyperbolic-parabolic coupled system. Our $L^p$-estimates above imply that the dominant part of $(\rho_1,\rho_2)$ is always the H-wave for all of $p>1$, the dominant part of $(\rho_1-\rho_2,J_1-J_2)$ is always the D-wave for all of $p>1$, and the dominant part of $(J_1,J_2)$ is the H-wave when $p<2$ and the D-wave when $p\geq2$. In fact, the $L^p$-decay rate of $(\rho_1,\rho_2)$ is completely new, which can hardly be deduced by  using  $L^2$-estimates together with Sobolev inequalities.

The rest of the paper is arranged as follows. In Section 2, we reformulate the system (\ref{1.1}) into a new system that we will considered in the next sections. In Section 3 and Section 4, we give the pointwise estimates for the Green's functions of the Navier-Stokes system and the Navier-Stokes-Poisson system. The pointwise estimates of the solution to the nonlinear Cauchy problem (\ref{1.1})-(\ref{1.2}) will be deduced in Section 5. Lastly, some useful lemmas on the nonlinear estimates are given in Appendix.

\section{Reformulation of original problem}\ \

\quad Assume $\bar{\rho}=1$ and $\sqrt{P_i'(\bar{\rho})}=c$ without loss
of generality. Then the Cauchy problem (1.1)-(1.2) is reformulated
as
\bes
\left\{\begin{array}{l}
\partial_t \rho_1+{\rm div} J_1=0, \\[0.5mm]
\partial_t J_1\!-\!\mu_1\Delta J_1\!-\!\mu_2\nabla{\rm div} J_1+c^2\nabla \rho_1-\nabla\phi=-{\rm div}(\frac{J_1\otimes J_1}{\rho_1})\!-\!\nabla(P_1(\rho_1)\!-\!c^2\rho_1)\!+\!(\rho_1\!-\!1)\nabla\phi, \\[0.5mm]
\partial_t \rho_2+{\rm div} J_2=0, \\[0.5mm]
\partial_t J_2\!-\!\mu_1\Delta J_2\!-\!\mu_2\nabla{\rm div} J_2+c^2\nabla \rho_2+\nabla\phi=-{\rm div}(\frac{J_2\otimes J_2}{\rho_2})\!-\!\nabla(P_2(\rho_2)\!-\!c^2\rho_2)-(\rho_2\!-\!1)\nabla\phi, \\[0.5mm]
\Delta\phi=\rho_1-\rho_2,\\[0.5mm]
(\rho_1,J_1,\rho_2,J_2,\nabla\phi)(x,0)=(\rho_{1,0},J_{1,0},\rho_{2,0},J_{2,0},\nabla\phi_0)(x).
\end{array}
        \right.
        \lbl{2.1}
\ees
Next, set
\bes
n_1=\rho_1+\rho_2-2,\ n_2=\rho_1-\rho_2,\ w_1=J_1+J_2,\
w_2=J_1-J_2,\lbl{2.2}
\ees
which give equivalently
\bes
\rho_1=\frac{n_1+n_2}{2}+1,\ \rho_2=\frac{n_1-n_2}{2}+1,\
J_1=\frac{w_1+w_2}{2},\ J_2=\frac{w_1-w_2}{2}.\lbl{2.3}
\ees

From (\ref{2.2}) and (\ref{2.3}), it follows that the Cauchy problem (\ref{2.1}) can
be reformulated into the following Cauchy problem for the unknown
$(n_1,w_1,n_2,w_2,\nabla\phi)$
\bes
\left\{\begin{array}{l}
\partial_t n_1+{\rm div} w_1=0, \\
\partial_t w_1-\mu_1\Delta w_1-\mu_2\nabla{\rm div} w_1+c^2\nabla n_1=F_1(n_1,w_1,n_2,w_2), \\
\partial_t n_2+{\rm div} w_2=0, \\
\partial_t w_2-\mu_2\Delta w_2-\mu_2\nabla{\rm div} w_2+c^2\nabla n_2-2\nabla\phi=F_2(n_1,w_1,n_2,w_2), \\
\Delta\phi=n_2,\\
(n_1,w_1,n_2,w_2,\nabla\phi)(x,0)=(n_{1,0},w_{1,0},n_{2,0},w_{2,0},\nabla\phi_0)(x),
\end{array}
        \right.
        \lbl{2.4}
\ees
where
$(n_{1,0},w_{1,0},n_{2,0},w_{2,0}):=(\rho_{1,0}+\rho_{2,0}-2,J_{1,0}+J_{2,0},\rho_{1,0}-\rho_{2,0},J_{1,0}-J_{2,0})$,
and
\bes  \begin{array}[b]{rl}
 F_1=&-{\rm div}\Big(\frac{(w_1+w_2)\otimes(w_1+w_2)}{2(n_1+n_2)+4}+\frac{(w_1-w_2)\otimes(w_1-w_2)}{2(n_1-n_2)+4}\Big)\\
 &-\nabla\Big(P_1(\frac{n_1+n_2}{2}+1)-c^2\frac{n_1+n_2}{2}+P_2(\frac{n_1-n_2}{2}+1)-c^2\frac{n_1-n_2}{2}\Big)+n_2\nabla\phi\\
 &-\mu_1\Delta\bigg(\frac{(n_1-n_2)(w_1-w_2)}{2(n_1-n_2)+4}+\frac{(n_1+n_2)(w_1+w_2)}{2(n_1+n_2)+4}\bigg)\\
 &-\mu_2\nabla{\rm div}\bigg(\frac{(n_1-n_2)(w_1-w_2)}{2(n_1-n_2)+4}+\frac{(n_1+n_2)(w_1+w_2)}{2(n_1+n_2)+4}\bigg),\\
 F_2=&-{\rm div}\Big(\frac{(w_1+w_2)\otimes(w_1+w_2)}{2(n_1+n_2)+4}-\frac{(w_1-w_2)\otimes(w_1-w_2)}{2(n_1-n_2)+4}\Big),\\
 &-\nabla\Big(P_1(\frac{n_1+n_2}{2}+1)-c^2\frac{n_1+n_2}{2}-P_2(\frac{n_1-n_2}{2}+1)+c^2\frac{n_1-n_2}{2}\Big)+n_1\nabla\phi\\
 &-\mu_1\Delta\bigg(\frac{(n_1+n_2)(w_1+w_2)}{2(n_1+n_2)+4}-\frac{(n_1-n_2)(w_1-w_2)}{2(n_1-n_2)+4}\bigg)\\
 &-\mu_2\nabla{\rm div}\bigg(\frac{(n_1+n_2)(w_1+w_2)}{2(n_1+n_2)+4}-\frac{(n_1-n_2)(w_1-w_2)}{2(n_1-n_2)+4}\bigg).
\end{array}
\lbl{2.5}
\ees

\section{Green's function of linearized Navier-Stokes system}\ \

In this section, we shall give an analysis on the Green's function for the Navier-Stokes system by using complex method,
which have been studied in \cite{LW} by using real method. In fact, recently Liu $et\ al.$ \cite{LS} reconsidered it by using complex method. For completeness, we would like to restate it in another form with some slight differences.

First, the domain is divided into two parts: the finite Mach number region$\{|x|\le4ct\}$ and the outside finite Mach number region$\{|x|>3ct\}$. In the outside finite Mach number region, we will use the weighted energy estimate method to obtain the pointwise estimate. Inside the finite Mach number region, we will use the long-wave short-wave decomposition method. Such weighted method was introduced in \cite{U} to study the boundary layer problem, and was used by other authors \cite{LY1,Z}.

Let
$$
\chi_1(\xi)=\left\{\begin{array}{ll}
1, &|\xi|<\varepsilon_1, \\
0, &|\xi|>2\varepsilon_1,
\end{array}\right.
$$
and
$$\chi_3(\xi)=\left\{\begin{array}{ll}
1, &|\xi|>K+1, \\
0, &|\xi|<K,
\end{array}\right.
$$
be the smooth cut-off functions with $2\varepsilon_1<K$, and
$\chi_2=1-\chi_1-\chi_3$. Then we can divide the Green's function into three parts: the long wave when $|\xi|$ small, the short wave when $|\xi|$ large and  the middle part when $|\xi|$ is in the middle:
 \bes
D_x^{\alpha}G(x,t)&=&\frac{1}{(2\pi)^n}\left(\int_{|\xi|\le\varepsilon_1}+\int_{\varepsilon_1\le|\xi|\le K}+\int_{|\xi|\ge K}\right)(i\xi)^{\alpha}\hat{G}(\xi,t)e^{ix\cdot\xi}d\xi\nm\\
&:=&D_x^{\alpha}(\chi_1(D))G(x,t)+D_x^{\alpha}(\chi_2(D))G(x,t)+D_x^{\alpha}(\chi_3(D))G(x,t).
\lbl{3.1}
\ees

Recall the linearized system on $(n_1,w_1)$ in (\ref{2.4})
\bes
\left\{\begin{array}{l}
\partial_t n_1+{\rm div} w_1=0, \\
\partial_t w_1-\mu_1\Delta w_1-\mu_2\nabla{\rm div} w_1+c^2\nabla n_1=0. \\
\end{array}\right.
\lbl{3.2}
\ees
The symbol of the operator in (\ref{3.2}) with $\mu=\mu_1+\mu_2$ is
\bes
\tau^2+\mu|\xi|^2\tau+c^2|\xi|^2=0.\lbl{3.3}
\ees
The eigenvalues of (\ref{3.3}) for $\lambda_\pm$ are
\bes
\lambda_{\pm}(\xi)=\frac{-\mu|\xi|^2\pm\sqrt{\mu^2|\xi|^4-4c^2|\xi|^2}}{2}.
 \lbl{2.22}
\ees
Then we consider the Green's function for (\ref{3.2}), i.e.,
\bes
\left\{\begin{array}{l} (\frac{\partial}{\partial t}+A(D_x))G(x,t)=0, \\
     G(x,0)=\delta(x),
       \end{array}
        \right.
         \lbl{3.4}
\ees
where $\delta(x)$ is the Dirac function, the symbols of operator
$A(D_x)$ are
$$A(\xi)=\left(
           \begin{array}{cc}
             0 & \sqrt{-1}\xi^\tau \\
             \sqrt{-1}\xi & \mu_1|\xi|^2I+\mu_2\xi\xi^\tau \\
           \end{array}
         \right).
$$

Thus by direct calculation, we get
\bes
\hat{G}(\xi,t)=\left(
                 \begin{array}{cc}
                   \hat{G}_{11} & \hat{G}_{12}  \\
                   \hat{G}_{21} & \hat{G}_{22} \\
                 \end{array}
               \right), \lbl{3.5}
\ees
where
$$
\hat{G}_{11}=\frac{\lambda_+e^{\lambda_-t}-\lambda_-e^{\lambda_+t}}{\lambda_+-\lambda_-},\ \
\hat{G}_{12}=-\sqrt{-1}c^2\frac{e^{\lambda_+t}-e^{\lambda_-t}}{\lambda_+-\lambda_-}\xi^\tau,
$$
$$
\hat{G}_{21}=-\sqrt{-1}c^2\frac{e^{\lambda_+t}-e^{\lambda_-t}}{\lambda_+-\lambda_-}\xi,\ \
\hat{G}_{22}=\frac{\xi\xi^\tau}{|\xi|^2}\frac{\lambda_+e^{\lambda_+t}-\lambda_-e^{\lambda_-t}}{\lambda_+-\lambda_-}
-e^{-\mu_1|\xi|^2t}\bigg(I-\frac{\xi\xi^\tau}{|\xi|^2}\bigg).
$$

Sometime, we also rewrite the Fourier transform of the Green's function as
follows
\bess
&&\hat{G}^+=e^{\lambda_+t}L_1=\left(
            \begin{array}{cc}
              -\eta_- & -\sqrt{-1}\eta_0\xi^\tau \\
              -\sqrt{-1}c^2\eta_0\xi & \eta_+\frac{\xi\xi^\tau}{|\xi|^2}\\
            \end{array}
          \right)e^{\lambda_+t},\\[0.5mm]
&&\hat{G}^-=e^{\lambda_-t}L_2=\left(
            \begin{array}{cc}
              \eta_+ & \sqrt{-1}\eta_0\xi^\tau  \\
              \sqrt{-1}c^2\eta_0\xi  & -\eta_-\frac{\xi\xi^\tau}{|\xi|^2}  \\
            \end{array}
          \right)e^{\lambda_-t},\\[0.5mm]
&&\hat{G}^0=e^{-\mu_1|\xi|^2t}L_3=\left(
            \begin{array}{cc}
              0 & 0 \\
              0 & I-\frac{\xi\xi^\tau}{|\xi|^2}  \\
            \end{array}
          \right)e^{-\mu_1|\xi|^2t},
\eess
where $\eta_0(\xi)=(\lambda_+(\xi)-\lambda_-(\xi))^{-1},
\eta_\pm(\xi)=\lambda_{\pm}(\xi)\eta_0(\xi)$.

\begin{lemma}\lbl{l 31} \cite{LW}
For a sufficiently small number $\varepsilon_1>0$ and a sufficiently large number $K>0$, we have the following:

(i) when $|\xi|<\varepsilon_1$, $\lambda_\pm$ are complex conjugates and have the following expansion:
\bess
\lambda_+&=&-\frac{\mu}{2}|\xi|^2+\sqrt{-1}c|\xi|\bigg(1+\sum_{j=1}^{\infty}d_j|\xi|^{2j}\bigg),\\
\lambda_-&=&-\frac{\mu}{2}|\xi|^2-\sqrt{-1}c|\xi|\bigg(1+\sum_{j=1}^{\infty}d_j|\xi|^{2j}\bigg);
\eess

 (ii) when $\varepsilon_1 \le |\xi|\le K$, $\lambda_\pm$ have the following spectrum gap property:
 \bess
 {\rm Re}(\lambda_\pm)\le-C,\ \  \mbox{for some constant} \ \ C>0;
 \eess

(iii) when $|\xi|>K\gg1$,
 $\lambda_\pm$ are real and has the following expansion:
 \bess
\lambda_+&=&-\frac{c^2}{\mu}+\frac{\mu}{2}\sum_{j=1}^{\infty}e_j|\xi|^{-2j},\\
\lambda_-&=&-\mu|\xi|^2+\frac{c^2}{\mu}-\sum_{j=1}^{\infty}e_{j}|\xi|^{-2j}.
\eess
Here $c=\sqrt{P_i'(\bar{\rho})}$ is defined as the equilibrium sound speed, and all $d_j,e_j$ are real constants.
 \end{lemma}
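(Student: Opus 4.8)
The plan is to take the explicit formula (\ref{2.22}) for the two roots $\lambda_\pm$ of $\tau^2+\mu|\xi|^2\tau+c^2|\xi|^2=0$ (recall $\mu:=\mu_1+\mu_2>0$) as the starting point and to read off the three regimes from the sign of the discriminant $\Delta(\xi):=\mu^2|\xi|^4-4c^2|\xi|^2=|\xi|^2(\mu^2|\xi|^2-4c^2)$: one has $\Delta<0$ for $0<|\xi|<2c/\mu$, $\Delta>0$ for $|\xi|>2c/\mu$, and $\Delta=0$ (a genuine double root) exactly on $|\xi|=2c/\mu$, which is why the three regions in the lemma have to be separated by a gap. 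Accordingly I would fix, once and for all, a small $\varepsilon_1>0$ with $2\varepsilon_1<2c/\mu$ and a large $K$ with $K>\max\{2\varepsilon_1,2c/\mu\}$; these are consistent with the only constraints the cut-offs $\chi_1,\chi_2,\chi_3$ require.

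Part (i). On $|\xi|<2\varepsilon_1$ we have $\Delta<0$, so $\sqrt{\Delta}$ is purely imaginary and (\ref{2.22}) reads $\lambda_\pm=-\tfrac{\mu}{2}|\xi|^2\pm\sqrt{-1}\,c|\xi|\sqrt{1-\tfrac{\mu^2}{4c^2}|\xi|^2}$; these are complex conjugates, with common real part $-\tfrac{\mu}{2}|\xi|^2$ and opposite imaginary parts. Expanding $\sqrt{1-z}$ for $z=\tfrac{\mu^2}{4c^2}|\xi|^2$ by the binomial series --- convergent for $|z|<1$, i.e.\ for $|\xi|<2c/\mu$, hence on the whole support of $\chi_1$ --- gives $\sqrt{1-z}=1+\sum_{j\ge1}d_j|\xi|^{2j}$ with $d_j=(-1)^j\binom{1/2}{j}(\mu/2c)^{2j}\in\mathbb{R}$, which is exactly the stated form.

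Part (ii). Here I would avoid the explicit formula and use a sign argument valid for all $\xi\neq0$: since the coefficients $\mu|\xi|^2$ and $c^2|\xi|^2$ are strictly positive, if $\lambda_\pm$ are complex conjugates their common real part is $-\tfrac{\mu}{2}|\xi|^2<0$, and if they are real then $\lambda_+\lambda_-=c^2|\xi|^2>0$ together with $\lambda_++\lambda_-=-\mu|\xi|^2<0$ forces both to be negative; thus $\mathrm{Re}\,\lambda_\pm<0$ for every $\xi\neq0$. Since $\lambda_\pm$ depend continuously on $\xi$ (roots of a polynomial with continuous coefficients), $\max\{\mathrm{Re}\,\lambda_+,\mathrm{Re}\,\lambda_-\}$ attains a strictly negative maximum $-C$ on the compact annulus $\varepsilon_1\le|\xi|\le K$, which is the claimed spectral gap; an explicit value follows, with $s=|\xi|^2$, from $\mu s-\sqrt{\mu^2s^2-4c^2s}=\tfrac{4c^2s}{\mu s+\sqrt{\mu^2s^2-4c^2s}}\ge\tfrac{2c^2}{\mu}$ on the real-root range, giving $\mathrm{Re}\,\lambda_+\le-\min\{\tfrac{\mu}{2}\varepsilon_1^2,\tfrac{c^2}{\mu}\}$.

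Part (iii). On $|\xi|>2c/\mu$ one has $\Delta>0$, so the roots are real and $\sqrt{\Delta}=\mu|\xi|^2\sqrt{1-\tfrac{4c^2}{\mu^2|\xi|^2}}$. Expanding $\sqrt{1-w}$ for $w=\tfrac{4c^2}{\mu^2|\xi|^2}$ by the binomial series --- convergent for $|w|<1$, i.e.\ for $|\xi|>2c/\mu$, hence on $|\xi|\ge K$ --- and inserting it into (\ref{2.22}): in $\lambda_+$ the term $+\tfrac{\mu}{2}|\xi|^2$ coming from $\tfrac12\sqrt{\Delta}$ cancels $-\tfrac{\mu}{2}|\xi|^2$, leaving a power series in $|\xi|^{-2j}$ around the finite value $-c^2/\mu$; in $\lambda_-$ the same two terms add, producing $-\mu|\xi|^2+c^2/\mu$ plus a power series in $|\xi|^{-2j}$. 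Collecting coefficients yields the two asserted expansions with real $e_j$. I do not foresee a real obstacle: the content is elementary root asymptotics for a quadratic, and the only points needing care are (a) choosing $\varepsilon_1$ and $K$ compatibly both with the cut-offs and with the convergence radii at the thresholds $|\xi|=2c/\mu$, and (b) keeping track of the cancellation in (iii) that cleanly separates the bounded ``acoustic'' branch $\lambda_+\to-c^2/\mu$ from the fast-relaxing ``viscous'' branch $\lambda_-\sim-\mu|\xi|^2$ --- the structural fact that drives the rest of the Green's-function analysis.
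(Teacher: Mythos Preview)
Your argument is correct: the three regimes follow directly from the sign of the discriminant $\Delta=|\xi|^2(\mu^2|\xi|^2-4c^2)$, and the binomial expansions you invoke converge precisely where needed once $\varepsilon_1<2c/\mu<K$. The paper itself does not prove this lemma but simply cites it from \cite{LW}, so your elementary derivation supplies what the paper omits; the only minor caveat is that the form of the coefficients in (iii) as stated in the paper (the factor $\tfrac{\mu}{2}$ in front of $\sum e_j|\xi|^{-2j}$ for $\lambda_+$ versus the bare $\sum e_j|\xi|^{-2j}$ for $\lambda_-$) is not quite internally consistent with Vieta's relation $\lambda_++\lambda_-=-\mu|\xi|^2$, but this is a typographical artifact of the statement, not a flaw in your reasoning.
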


  \subsection{ Pointwise estimate in the finite Mach number region}

 \subsubsection{Long wave estimate}
  We first give the Kirchhoff method through the Fourier transform method, see also in \cite{LS, LW, LY3, Z}:

 \begin{lemma}\lbl{l 32}
 Let $w(x,t)$ be a function given by its Fourier transformation in $\mathbb{R}^3$:
 \bess
 \hat{w}=\frac{\sin(c|\xi |t)}{c|\xi|},\ \ \ \ \
 \hat{w}_t=\cos(c|\xi|t).
 \eess

 Then, for any function $g(x)$ one has that
 \bess
 w\ast g(x)&=&\frac{t}{4\pi}\int_{|y|=1}g(x+cty)dS_y,\\
  w_t\ast g(x)&=&\frac{1}{4\pi} \int_{|y|=1}g(x+cty)dS_y+\frac{ct}{4\pi}\int_{|y|=1}\nabla g(x+cty)\cdot ydS_y.
 \eess
 \end{lemma}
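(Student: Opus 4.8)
The plan is to identify $w(\cdot,t)$ with a suitably normalized uniform surface measure on the sphere of radius $ct$ in $\mathbb{R}^3$, and then to read off the two displayed formulas by a convolution and a differentiation in $t$.

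First I would compute the Fourier transform of the uniform surface measure $\sigma_R$ carried by $\{|x|=R\}\subset\mathbb{R}^3$: choosing coordinates with $\xi$ along the polar axis and integrating in spherical coordinates gives $\widehat{\sigma_R}(\xi)=2\pi R^2\int_{-1}^{1}e^{-iR|\xi|s}\,ds=4\pi R\,\dfrac{\sin(R|\xi|)}{|\xi|}$. Taking $R=ct$, this says $\widehat{\sigma_{ct}}(\xi)=4\pi c^2 t\,\dfrac{\sin(c|\xi|t)}{c|\xi|}=4\pi c^2 t\,\hat{w}(\xi,t)$, hence $w(\cdot,t)=\dfrac{1}{4\pi c^2 t}\,\sigma_{ct}$ as a tempered distribution. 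Since $\hat{w}$ is only bounded and decays like $|\xi|^{-1}$, the inversion is read in $\mathcal{S}'$; for $g$ smooth with enough decay one has $\hat{w}\hat{g}\in L^1$ and the manipulations below become classical.

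Next, convolving with $g$ and substituting $z=cty$ with $|y|=1$, $dS_z=(ct)^2\,dS_y$, gives $w\ast g(x)=\dfrac{1}{4\pi c^2 t}\int_{|z|=ct}g(x-z)\,dS_z=\dfrac{t}{4\pi}\int_{|y|=1}g(x-cty)\,dS_y$, and the reflection symmetry $y\mapsto -y$ of the unit sphere turns this into $\dfrac{t}{4\pi}\int_{|y|=1}g(x+cty)\,dS_y$, which is the first identity. For the second, I would note that $\partial_t\hat{w}(\xi,t)=\cos(c|\xi|t)=\hat{w}_t(\xi,t)$, so $w_t=\partial_t w$ and therefore $w_t\ast g=\partial_t(w\ast g)$; differentiating the first identity in $t$ under the integral sign (legitimate for $g\in C^1$ with mild decay) produces exactly $\dfrac{1}{4\pi}\int_{|y|=1}g(x+cty)\,dS_y+\dfrac{ct}{4\pi}\int_{|y|=1}\nabla g(x+cty)\cdot y\,dS_y$.

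The computation is short and essentially classical; the only point needing a little care — the main obstacle such as it is — is the distributional reading of the inverse Fourier transform of $\sin(c|\xi|t)/(c|\xi|)$, which is not integrable at infinity in three dimensions. This is precisely why I would route the argument through the surface-measure identity above (or, equivalently, through the regularization $e^{-\varepsilon|\xi|^2}$ and a limit $\varepsilon\to0$) rather than attempting a direct oscillatory-integral evaluation. Alternatively one could simply check that the two right-hand sides solve the wave equation $w_{tt}-c^2\Delta w=0$ with data $w|_{t=0}=0$, $w_t|_{t=0}=\delta$ and invoke uniqueness, but the Fourier route is more in keeping with the statement.
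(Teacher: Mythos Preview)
Your proof is correct and is essentially the standard derivation of Kirchhoff's formula via the Fourier transform of the surface measure on the sphere. The paper itself does not supply a proof of this lemma: it states the result as the ``Kirchhoff method'' and refers the reader to \cite{LS,LW,LY3,Z}. So there is nothing to compare against beyond noting that your argument is exactly the classical one those references contain.
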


  To study the dissipation of different waves in the Navier-Stokes system, we need the following lemma with a slight modification.
    \begin{lemma}\lbl{l 33} \cite{LY5, Z}
  Let $w(x,t)$ be the inverse Fourier transformation of $\sin(c|\xi|t)$ given in Lemma \ref{l 32}. Then one has that
  \bess
 \left|w\ast\frac{e^{-\frac{|x|^2}{4D(1+t)}}}{(4D\pi (1+t))^{3/2}}\right|\le\mathcal{O}(1)W_3(x,1+t,2D),\\
 \left|w_t\ast\frac{e^{-\frac{|x|^2}{4D(1+t)}}}{(4D\pi (1+t))^{3/2}}\right|\le\frac{\mathcal{O}(1)}{1+t}W_3(x,1+t,2D),
  \eess
  where
    \bess
W_3(x,1+t,D)= \left\{\begin{array}{lll}\medskip
\frac{1}{(1+t)^{3/2}\sqrt{D}}, &&||x|-ct|\le\sqrt{D (1+t)},\\
\frac{e^{-\frac{(|x|-ct)^2}{4(1+t)D}}}{(1+t)^{3/2}\sqrt{D }}, &&||x|-ct|\ge \sqrt{D (1+t)}.
 \end{array}\right.
\eess
 \end{lemma}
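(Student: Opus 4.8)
I would reduce both convolutions to explicit one–dimensional integrals through the Kirchhoff representation of Lemma \ref{l 32}, evaluate those integrals in closed form, and then estimate the resulting expressions region by region. Write $g(x)=\dfrac{e^{-|x|^{2}/(4D(1+t))}}{(4\pi D(1+t))^{3/2}}$ and $\kappa=4D(1+t)$. By Lemma \ref{l 32}, $(w\ast g)(x)=\frac{t}{4\pi}\int_{|y|=1}g(x+cty)\,dS_{y}$, which depends on $x$ only through $|x|$; taking $x=|x|e_{1}$ and parametrising the unit sphere by $s=\cos\theta$ one has $|x+cty|^{2}=|x|^{2}+c^{2}t^{2}+2|x|cts$, so after integrating out the azimuthal angle the surface integral collapses to a single elementary Gaussian integral in $s$, which can be computed in closed form; the outcome is
\begin{equation*}
(w\ast g)(x)=\frac{1}{8\pi^{3/2}c\,|x|\,\sqrt{D(1+t)}}\Big(e^{-(|x|-ct)^{2}/\kappa}-e^{-(|x|+ct)^{2}/\kappa}\Big).
\end{equation*}
For $w_{t}$ I would use the second identity of Lemma \ref{l 32}; since $\nabla g(z)=-\frac{2z}{\kappa}g(z)$, the extra term reduces under the same substitution to $\int_{-1}^{1}(|x|s+ct)\,e^{-(|x|^{2}+c^{2}t^{2}+2|x|cts)/\kappa}\,ds$, again elementary, and combining the two contributions gives
\begin{equation*}
(w_{t}\ast g)(x)=\frac{1}{2|x|\,(4\pi D(1+t))^{3/2}}\Big((|x|-ct)\,e^{-(|x|-ct)^{2}/\kappa}+(|x|+ct)\,e^{-(|x|+ct)^{2}/\kappa}\Big).
\end{equation*}

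With these closed forms the two stated estimates become bookkeeping. In each, the term carrying $e^{-(|x|+ct)^{2}/\kappa}$ is harmless, since $|x|+ct\ge ||x|-ct|$ and $|x|+ct\ge ct$, so it is dominated by the $(|x|-ct)$–term. For the main term I would split into three regimes. On the cone layer $|x|\ge ct/2$ with $t\ge1$: drop the negative exponential, bound $1/|x|$ by $2/(ct)\le C(1+t)^{-1}$ (which produces the factor $(1+t)^{-3/2}$ of $W_{3}$), and absorb the Gaussian into $W_{3}(x,1+t,2D)$ via $e^{-(|x|-ct)^{2}/\kappa}\le e^{-(|x|-ct)^{2}/(2\kappa)}$ and the trivial lower bound $W_{3}(x,1+t,2D)\ge(1+t)^{-3/2}(2D)^{-1/2}e^{-(|x|-ct)^{2}/(2\kappa)}$. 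In the deep interior $|x|<ct/2$, where $(ct-|x|)^{2}\ge c^{2}t^{2}/4$: apply the mean value theorem to $u\mapsto e^{-u/\kappa}$ on $[(|x|-ct)^{2},(|x|+ct)^{2}]$ to extract a factor $|x|\,ct$ — the $|x|$ cancels the $1/|x|$ of $w\ast g$, while $w_{t}\ast g$ already carries $|x|-ct$ — and then split $e^{-(ct-|x|)^{2}/\kappa}$ into two halves, using $(ct-|x|)^{2}\ge c^{2}t^{2}/4$ in one of them to produce a factor $e^{-c^{2}t^{2}/(8\kappa)}$ that annihilates the leftover powers of $t$, while the other half matches $W_{3}(x,1+t,2D)$. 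For $t\le1$: both $w\ast g$ and $w_{t}\ast g$ are bounded continuous functions with Gaussian tails in $|x|$ of width $\mathcal{O}(\sqrt{D})$, while $W_{3}(x,1+t,2D)$ is bounded below on bounded sets and also has a Gaussian tail, and the doubling of the width parameter in $W_{3}$ furnishes exactly the room needed to absorb the polynomial prefactors, so the inequalities hold with an absolute constant.

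The step I expect to be genuinely delicate is the behaviour of $w_{t}\ast g$ near the cone. Because of the prefactor $|x|-ct$ this function vanishes on $\{|x|=ct\}$ and peaks at distance $\sim\sqrt{1+t}$ from it, and what has to be checked there is the conversion $|\rho|\,e^{-\rho^{2}/\kappa}\le C\sqrt{\kappa}\,e^{-\rho^{2}/(2\kappa)}$ with $\rho=|x|-ct$, which trades the factor $|x|-ct$ for a $\sqrt{1+t}$: it is precisely the spare half of the exponent — the slack obtained by writing $2D$ rather than $D$ in $W_{3}$ — that pays for this trade and brings the estimate inside $\frac{\mathcal{O}(1)}{1+t}W_{3}(x,1+t,2D)$. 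The $w\ast g$ estimate is easier, the only care being that the mean value theorem bound may be invoked solely in the interior $|x|<ct/2$, where the surplus factor $(ct)^{2}/(1+t)$ is controlled by the Gaussian, whereas on the cone layer one must instead drop the second exponential and estimate $1/|x|$ by $1/(ct)$. In all cases the argument reduces to the elementary inequalities $\sup_{\rho}|\rho|^{k}e^{-\rho^{2}/(b(1+t))}\le C_{k,b}(1+t)^{k/2}$ and $\sup_{t\ge0}t^{m}e^{-\beta t^{2}/(1+t)}<\infty$, so beyond Lemma \ref{l 32} and the two explicit Gaussian integrals displayed above, no further real analysis enters.
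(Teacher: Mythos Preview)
The paper does not give its own proof of this lemma; it is quoted from \cite{LY5,Z} with the citation in the lemma heading and no argument in the text. Your approach---Kirchhoff formula from Lemma~\ref{l 32}, collapse to a one-variable Gaussian integral, closed form, then region-by-region bookkeeping---is exactly the standard argument in those sources, and your two closed-form expressions for $w\ast g$ and $w_t\ast g$ are correct.

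There is, however, a genuine quantitative gap in your treatment of the $w_t$ bound. After the trade $|\rho|e^{-\rho^2/\kappa}\le C\sqrt{\kappa}\,e^{-\rho^2/(2\kappa)}$ and the bound $1/|x|\le C/(ct)$ on the cone layer $|x|\ge ct/2$, the main term of your closed form gives
\[
\frac{||x|-ct|}{|x|(D(1+t))^{3/2}}e^{-(|x|-ct)^2/\kappa}\ \le\ \frac{C}{D(1+t)^2}\,e^{-(|x|-ct)^2/(8D(1+t))},
\]
which is $\frac{C_D}{\sqrt{1+t}}W_3(x,1+t,2D)$, not $\frac{C}{1+t}W_3$. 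This is not a slip in your algebra: your own formula, evaluated at $|x|=ct+\sqrt{D(1+t)}$, gives $|w_t\ast g|\sim c_0/(D(1+t)^2)$, while $\tfrac{1}{1+t}W_3(x,1+t,2D)\sim c_1/(\sqrt{D}(1+t)^{5/2})$, so the ratio grows like $\sqrt{(1+t)/D}$. In other words, the second inequality in the lemma, as printed, is too strong by a factor $\sqrt{1+t}$; the correct statement is $|w_t\ast g|\le \frac{\mathcal{O}(1)}{\sqrt{1+t}}W_3(x,1+t,2D)$, and that is precisely what your machinery proves. This weaker bound is all that is actually needed downstream: in the proof of Lemma~\ref{l 37} the Huygens estimate is $(1+t)^{-2}e^{-(|x|-ct)^2/(C(1+t))}$, i.e.\ $\frac{C}{\sqrt{1+t}}W_3$, and indeed the second term in the paper's own display there is of this order. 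So keep your argument, but do not claim the $\frac{1}{1+t}$ prefactor for $w_t\ast g$; state and prove $\frac{\mathcal{O}(1)}{\sqrt{1+t}}W_3$ instead.
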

 The  following lemma reveals the process of the coupling of isotropic and shear wave  within a cone.  We modify the relevant results from \cite{LY3,LY4}, and we omit its proof for simplicity.
  \begin{lemma}\lbl{l 34}
  There exists a constant $C$, such that

 When $0<t\le1$,
\bess
&&\left|\int_0^ts\int_{|y|=1}\frac{e^{-\frac{|x+csy|^2}{C t}}}{t^{5/2}}dS_yds\right|=C\frac{e^{-\frac{|x|^2}{C t}}}{(1+t)^{3/2}},\ \ \mbox{for} \ \  |x|\le ct^{1/2},\\
&&\left|\int_0^ts\int_{|y|=1}\frac{e^{-\frac{|x+csy|^2}{C t}}}{t^{5/2}}dS_yds\right|=C\frac{e^{-\frac{(|x|-ct)^2}{2C t}}}{(1+t)^2}, \ \mbox{for} \  \  |x|\ge ct^{1/2};
\eess

 When $t\ge1$,
\bess
&&\left|\int_0^ts\int_{|y|=1}\frac{e^{-\frac{|x+csy|^2}{C t}}}{t^{5/2}}dS_yds\right|=C\frac{e^{-\frac{|x|^2}{C t}}}{t^{3/2}},\ \ \mbox{for} \ \  |x|\le ct^{1/2},\\
&&\left|\int_0^ts\int_{|y|=1}\frac{e^{-\frac{|x+csy|^2}{C t}}}{t^{5/2}}dS_yds\right|=C\left(t^{-\frac{3}{2}}\left(1+\frac{|x|^2}{t}\right)^{-\frac{3}{2}}+\frac{e^{-\frac{(|x|-ct)^2}{2C t}}}{t^2}\right), \ \mbox{for} \  \  ct^{1/2}\le|x|\le ct,\\
&&\left|\int_0^ts\int_{|y|=1}\frac{e^{-\frac{|x+csy|^2}{C t}}}{t^{5/2}}dS_yds\right|=C\frac{e^{-\frac{(|x|-ct)^2}{2C t}}}{t^2}, \ \mbox{for} \  \ |x|\ge ct.
\eess
\end{lemma}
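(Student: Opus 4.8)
The plan is to carry out the angular integration in closed form, thereby reducing the two--fold integral to a one--dimensional Gaussian integral, and then to read off the three asserted wave profiles region by region. For $|y|=1$ we have $|x+csy|^{2}=|x|^{2}+2cs\,x\cdot y+c^{2}s^{2}$, so applying the elementary identity $\int_{|y|=1}e^{a\cdot y}\,dS_{y}=\frac{4\pi\sinh|a|}{|a|}$ on the unit sphere of $\mathbb R^{3}$ with $a=-\frac{2cs}{Ct}x$ gives
\[
\int_{|y|=1}e^{-|x+csy|^{2}/(Ct)}\,dS_{y}
=\frac{\pi Ct}{cs|x|}\Big(e^{-(|x|-cs)^{2}/(Ct)}-e^{-(|x|+cs)^{2}/(Ct)}\Big)
=\frac{2\pi Ct}{cs|x|}\,e^{-(|x|^{2}+c^{2}s^{2})/(Ct)}\sinh\!\Big(\tfrac{2cs|x|}{Ct}\Big).
\]
Inserting this into the left--hand side of the lemma, the weight $s$ cancels the factor $1/s$, and the substitution $r=cs$ shows that the quantity to be estimated equals
\[
\mathcal I(x,t):=\frac{\pi C}{c^{2}|x|\,t^{3/2}}\int_{0}^{ct}\Big(e^{-(|x|-r)^{2}/(Ct)}-e^{-(|x|+r)^{2}/(Ct)}\Big)\,dr
=\frac{2\pi C}{c^{2}|x|\,t^{3/2}}\int_{0}^{ct}e^{-(|x|^{2}+r^{2})/(Ct)}\sinh\!\Big(\tfrac{2|x|r}{Ct}\Big)\,dr .
\]
The second representation makes the integrand manifestly nonnegative and shows that the apparent $1/|x|$ singularity is harmless: by the mean value theorem $e^{-(|x|-r)^{2}/(Ct)}-e^{-(|x|+r)^{2}/(Ct)}\le\frac{4|x|r}{Ct}e^{-(|x|-r)^{2}/(Ct)}$, which recovers a factor $|x|$, while the bound $\sinh z\le\frac12 e^{z}$ also yields $\mathcal I(x,t)\le\frac{\pi C}{c^{2}|x|t^{3/2}}\int_{|x|-ct}^{|x|}e^{-u^{2}/(Ct)}\,du$. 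From here everything reduces to elementary estimates of the incomplete Gaussian integral $\int_{a}^{b}e^{-u^{2}/(Ct)}\,du$, in particular the tail bound $\int_{a}^{\infty}e^{-u^{2}/(Ct)}\,du\le C\sqrt t\,e^{-a^{2}/(2Ct)}$ for $a\ge0$, obtained from $u^{2}\ge\frac12u^{2}+\frac12a^{2}$.

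With this reduction in hand one proceeds by the region decomposition already indicated in the statement. In the diffusive region $|x|\le c\sqrt t$ the mean value form of $\mathcal I$, together with $\int_{0}^{ct}r\,e^{-(|x|-r)^{2}/(Ct)}\,dr\le C(t+|x|\sqrt t)$ and the fact that $e^{-|x|^{2}/(Ct)}$ is bounded below there, produces the Gaussian profile $C(1+t)^{-3/2}e^{-|x|^{2}/(Ct)}$ (for $0<t\le1$ one only uses that $t+|x|\sqrt t$ and $e^{-|x|^{2}/(Ct)}$ are of order one). Outside the light cone, $|x|\ge ct$, all arguments of the Gaussian lie in $[|x|-ct,|x|]\subset[0,\infty)$, so dropping the negative exponential, applying the tail bound with $a=|x|-ct$, and using $1/|x|\le1/(ct)$, one obtains the Huygens profile $C(1+t)^{-2}e^{-(|x|-ct)^{2}/(2Ct)}$.

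The only genuinely delicate case, where essentially all the work lies, is the intermediate region $c\sqrt t\le|x|\le ct$: here the estimate must simultaneously reproduce the algebraically decaying Riesz profile $t^{-3/2}(1+|x|^{2}/t)^{-3/2}$ and the Huygens profile $t^{-2}e^{-(|x|-ct)^{2}/(2Ct)}$, while the prefactor $1/|x|$ has to be balanced against the growth of $\int_{|x|-ct}^{|x|}e^{-u^{2}/(Ct)}\,du$. The plan is to split $[0,ct]$ into the window $\{|r-|x||\le\sqrt{Ct}\}$ around the Gaussian peak, the endpoint part $r\in[|x|+\sqrt{Ct},ct]$, and, when present, the part $r\in[0,|x|-\sqrt{Ct}]$, and to estimate each piece separately: the central window (of length $O(\sqrt t)$) is the source of the Riesz contribution, the endpoint part — where one shifts $r$ by $ct$ so as to exploit the cancellation between the two exponentials — produces the Huygens contribution, and the remaining piece is controlled crudely; the constants are then matched with the two previous regions at the interfaces $|x|=c\sqrt t$ and $|x|=ct$. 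Finally, the statements for $0<t\le1$ follow from the same computations upon replacing $t$ by $1+t$ throughout, which does not change orders of magnitude on $[0,1]$. As in the corresponding lemmas of \cite{LY3,LY4}, it is precisely this intermediate region — the bookkeeping of the windows and of the $1/|x|$ prefactor — that is the main obstacle; the rest is routine.
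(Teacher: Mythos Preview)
The paper does not actually prove this lemma: immediately before the statement it says ``we modify the relevant results from \cite{LY3,LY4}, and we omit its proof for simplicity.'' Your reduction via the spherical identity
\[
\int_{|y|=1}e^{-|x+csy|^{2}/(Ct)}\,dS_y=\frac{\pi Ct}{cs|x|}\Big(e^{-(|x|-cs)^{2}/(Ct)}-e^{-(|x|+cs)^{2}/(Ct)}\Big)
\]
to the one--dimensional form
\[
\mathcal I(x,t)=\frac{\pi C}{c^{2}|x|\,t^{3/2}}\int_{0}^{ct}\Big(e^{-(|x|-r)^{2}/(Ct)}-e^{-(|x|+r)^{2}/(Ct)}\Big)\,dr
\]
is exactly the device used in those references, and your treatment of the regions $|x|\le c\sqrt t$ and $|x|\ge ct$ is correct.

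There is, however, a genuine gap in the intermediate zone, and it is not one that a more careful window decomposition can close. Take $|x|=ct/2$ with $t$ large: in your own formula the second exponential is negligible, the first contributes $\int_0^{ct}e^{-(|x|-r)^2/(Ct)}dr\sim\sqrt{\pi Ct}$, and hence $\mathcal I\sim C/(|x|t)\sim t^{-2}$. But the bound asserted in the lemma gives $t^{-3/2}(1+c^2t/4)^{-3/2}+t^{-2}e^{-c^2t/(8C)}\sim t^{-3}$ there. So the inequality \emph{as literally stated} is false, and no splitting of $[0,ct]$ will produce the exponent $-3/2$ on the Riesz factor.

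What is going on is visible in the only place the lemma is invoked (Lemma~\ref{l 38}): the quantity actually needed is $\partial_{x_j}\partial_{x_k}$ of the integral with $t^{-3/2}$, and the paper's last displayed equality there silently replaces $\partial^{2}\big(t^{-3/2}e^{-|x+csy|^2/(Ct)}\big)$ by $t^{-5/2}e^{-|x+csy|^2/(Ct)}$. The two are \emph{not} interchangeable in the intermediate zone: $\partial_j\partial_k$ of the heat kernel has zero mean, and it is precisely this cancellation (one oscillation of a Hermite factor across the window $|r-|x||\lesssim\sqrt t$) that buys the extra factor $(1+|x|^2/t)^{-1}$. If you rerun your computation with $\partial^2 H_t$ in place of $t^{-1}H_t$, the angular integration still closes in one dimension, but the resulting integrand changes sign near $r=|x|$ and the central window contributes $O(t^{-1/2})$ rather than $O(t^{1/2})$; your plan then goes through and yields the Riesz profile. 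In short: your strategy is the right one and matches \cite{LY3,LY4}, but it should be applied to the differentiated kernel, not to the expression displayed in the lemma.
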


Now we invert each part of Green's function separately.
When $|\xi|\ll1$,  the Taylor expansion gives the following explicit expression of Green's function
\bes
&&\chi_1(\xi)\hat{G}(\xi,t)=e^{\lambda_+t}L_1+e^{\lambda_-t}L_2+e^{-\mu_1|\xi|^2t}L_3\nm\\
&=&e^{-\frac{1}{2}\mu|\xi|^2t+\mathcal{O}(1)|\xi|^3t}(e^{\sqrt{-1}|\xi|(c+\beta(|\xi|^2))t} L_1+e^{-\frac{1}{2}\mu|\xi|^2t+\mathcal{O}(1)|\xi|^3t}(e^{-\sqrt{-1}|\xi|(c+\beta(|\xi|^2))t} L_2+e^{-\mu_1|\xi|^2t}L_3\nm\\
&=&e^{-\mu_1|\xi|^2t}L_3+e^{-\frac{1}{2}\mu|\xi|^2t+\mathcal{O}(1)|\xi|^3t}(\cos((c|\xi|+|\xi|\beta(|\xi|^2))t)(L_1+L_2)+\sqrt{-1}\sin((c|\xi|+|\xi|\beta(|\xi|^2))t)(L_1-L_2))\nm\\
&=&e^{-\mu_1|\xi|^2t}L_3+e^{-\frac{1}{2}\mu|\xi|^2t+\mathcal{O}(1)|\xi|^3t}\cdot\left\{\left(\cos(c|\xi|t)\cos(|\xi|\beta(|\xi|^2)t)\!-\!\frac{\sin(c|\xi| t)}{c|\xi|}c|\xi|\sin(|\xi|\beta(|\xi|^2)t)\right)(L_1\!+\!L_2)\right.\nm\\
&&  \ \ \ \ \ \ \ \ \ \  \ \ \ \ \ \ \ \  \ \ \ \ \ \ \ \ \ \ \left.\ \ \ +\sqrt{-1}\left(\frac{\sin(c|\xi| t)}{c|\xi| }c|\xi|\cos(|\xi|\beta(|\xi|^2)t)+\cos(c|\xi| t)\sin(|\xi|\beta(|\xi|^2)t)\right)(L_1-L_2)\right\}\nm\\
&=&e^{-\mu_1|\xi|^2t}L_3+\cos(c|\xi| t )e^{-\frac{1}{2}\mu|\xi|^2t+\mathcal{O}(1)|\xi|^3t}
\Big(\cos(|\xi|\beta(|\xi|^2)t)(L_1+L_2)+\sqrt{-1}\sin(|\xi|\beta(|\xi|^2)t)(L_1\!-\!L_2)\Big)\nm\\
&&\!+\frac{\sin(c|\xi| t)}{c|\xi|}e^{\!-\frac{1}{2}\mu|\xi|^2t+\mathcal{O}(1)|\xi|^3t}\Big(\!-c|\xi|\sin(|\xi|\beta(|\xi|^2)t)(L_1\!+\!L_2)\!+\sqrt{-1} c|\xi|\cos(|\xi|\beta(|\xi|^2)t)(L_1\!-\!L_2)\Big).\lbl{2.25}
\ees

\begin{lemma}\lbl{l 35}
For a sufficiently small $\xi$, we have the following estimates for $L_1$ and $L_2$:
\bess
L_1+L_2=\left( \begin{array}{ccc}\medskip 1 \ \  & 0 \\
0\  \  & \frac{\xi\xi^\tau}{|\xi|^2}  \end{array}\right),\ \ \
L_1-L_2=\left( \begin{array}{ccc}\medskip i\mathcal{O}(1) \  \ & \mathcal{O}(|\xi|)\\
\mathcal{O}(|\xi|)\  \ & i\mathcal{O}(1)\frac{\xi\xi^\tau}{|\xi|^2}
 \end{array}\right) .
\eess
Here $\mathcal{O}(\xi)$ is an analytic function of $\xi$.
\end{lemma}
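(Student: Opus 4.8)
The plan is to derive both identities by substituting the small--frequency expansion of $\lambda_\pm$ from Lemma~\ref{l 31}(i) into the explicit entries of $L_1$ and $L_2$ displayed above (namely those in $\hat{G}^{+}=e^{\lambda_+ t}L_1$ and $\hat{G}^{-}=e^{\lambda_- t}L_2$) and then simply adding and subtracting. The only algebra needed beforehand are the two relations, valid wherever $\lambda_+\neq\lambda_-$: from $\eta_0=(\lambda_+-\lambda_-)^{-1}$ and $\eta_\pm=\lambda_\pm\eta_0$,
\begin{equation*}
\eta_+-\eta_-=(\lambda_+-\lambda_-)\eta_0=1,\qquad \eta_++\eta_-=(\lambda_++\lambda_-)\eta_0 .
\end{equation*}
By Vieta applied to (\ref{3.3}), or directly from Lemma~\ref{l 31}(i), $\lambda_++\lambda_-=-\mu|\xi|^2$ and $\lambda_+-\lambda_-=2\sqrt{-1}\,c|\xi|\big(1+\sum_{j\ge1}d_j|\xi|^{2j}\big)$, so that for $|\xi|<\varepsilon_1$
\begin{equation*}
\eta_0=\frac{-\sqrt{-1}}{2c|\xi|}\big(1+\mathcal{O}(|\xi|^2)\big),\qquad \eta_++\eta_-=-\mu|\xi|^2\eta_0=\frac{\sqrt{-1}\,\mu|\xi|}{2c}\big(1+\mathcal{O}(|\xi|^2)\big),
\end{equation*}
that is, $\eta_++\eta_-$ is $\sqrt{-1}$ times a real, bounded analytic function vanishing at $\xi=0$.

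Adding entrywise, the off--diagonal blocks of $L_1$ and $L_2$ have opposite signs and cancel exactly, the $(1,1)$ entry is $-\eta_-+\eta_+=1$, and the $(2,2)$ block is $(\eta_+-\eta_-)\frac{\xi\xi^\tau}{|\xi|^2}=\frac{\xi\xi^\tau}{|\xi|^2}$; hence $L_1+L_2$ equals the claimed matrix \emph{identically}, with no error term. Subtracting instead, the $(1,1)$ entry is $-(\eta_++\eta_-)$, the $(2,2)$ block is $(\eta_++\eta_-)\frac{\xi\xi^\tau}{|\xi|^2}$, and the $(1,2)$, $(2,1)$ blocks are proportional to $\eta_0\xi^\tau$ and $\eta_0\xi$; inserting the expansions above puts the diagonal entries into the form $\sqrt{-1}\,\mathcal{O}(1)$ and $\sqrt{-1}\,\mathcal{O}(1)\frac{\xi\xi^\tau}{|\xi|^2}$ and the off--diagonal blocks into the stated order. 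The analyticity statement follows by noting that $(\lambda_+-\lambda_-)^2=\mu^2|\xi|^4-4c^2|\xi|^2$ is a polynomial in $\xi$ which, for $|\xi|<\varepsilon_1$, equals $-|\xi|^2$ times the strictly positive analytic function $4c^2-\mu^2|\xi|^2$; thus the scalar combinations occurring above are ratios of analytic functions whose only possible singularity, at $\xi=0$, is removable --- trivially in $L_1+L_2$, against the factor $|\xi|^2$ in $\eta_++\eta_-=-\mu|\xi|^2\eta_0$ on the diagonal of $L_1-L_2$, and against the factor $\xi$ that multiplies $\eta_0$ in its off--diagonal blocks.

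There is no deep difficulty here once the relations $\eta_+\mp\eta_-$ are in hand; the only point that requires care is precisely this bookkeeping of the $|\xi|^{-1}$ behaviour of $\eta_0$ against the $\xi$ and $\xi\xi^\tau/|\xi|^2$ factors it is paired with, together with keeping the real/imaginary split straight so that $L_1-L_2$ comes out with a purely imaginary diagonal --- the form in which these matrices are then substituted back into the Kirchhoff--type decomposition (\ref{2.25}).
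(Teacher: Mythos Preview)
The paper states this lemma without proof, so your direct computation from the definitions of $L_1,L_2$ and the small--$\xi$ expansion of $\eta_0,\eta_\pm$ is exactly the intended route. Your derivation of $L_1+L_2$ is clean and exact, and your treatment of the diagonal of $L_1-L_2$ via $\eta_++\eta_-=-\mu|\xi|^2\eta_0=\sqrt{-1}\,\mathcal O(|\xi|)$ is correct.

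There is, however, a genuine gap in the off--diagonal step. You write that the $(1,2)$ and $(2,1)$ blocks ``are proportional to $\eta_0\xi^\tau$ and $\eta_0\xi$'' and that inserting your expansion ``puts \dots\ the off--diagonal blocks into the stated order.'' But your own formula $\eta_0=\dfrac{-\sqrt{-1}}{2c|\xi|}\bigl(1+\mathcal O(|\xi|^2)\bigr)$ gives
\[
-2\sqrt{-1}\,\eta_0\,\xi^\tau=-\frac{\xi^\tau}{c|\xi|}\bigl(1+\mathcal O(|\xi|^2)\bigr),
\qquad
-2\sqrt{-1}c^2\eta_0\,\xi=-\frac{c\,\xi}{|\xi|}\bigl(1+\mathcal O(|\xi|^2)\bigr),
\]
which are bounded (i.e.\ $\mathcal O(1)$) but do \emph{not} vanish as $\xi\to0$; they are not $\mathcal O(|\xi|)$. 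For the same reason your analyticity argument for the off--diagonal blocks does not go through: the factor $\xi/|\xi|$ is not analytic (nor even continuous) at the origin, so ``the factor $\xi$ that multiplies $\eta_0$'' does not remove the singularity in the sense you claim. In short, your computation actually contradicts the asserted $\mathcal O(|\xi|)$ order; the honest conclusion from your setup is that the off--diagonal entries are of the form $\dfrac{\xi}{|\xi|}\,\mathcal O(1)$ (real, bounded). This is almost certainly a typographical slip in the lemma as printed, and it is harmless for the subsequent Huygens--wave analysis (where these blocks are multiplied by $c|\xi|$ in (\ref{2.27}) before entering the kernel estimates), but you should flag the discrepancy rather than assert that your expansion reproduces the stated order.
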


Then, we define
 \bess
L_a&=&\left( \begin{array}{ccc}\medskip 0 \ \ &0_{1\times 3}\\
0_{3\times 1}\ \  &\frac{\xi\xi^T}{|\xi|^2}\\
\end{array}\right),
L_b=\left( \begin{array}{ccc}\medskip 0 \ \ &0_{1\times 3}\\
0_{3\times 1}\ \  &\frac{\xi\xi^T}{|\xi|}
\end{array}\right) ,
L_c=\left( \begin{array}{ccc}\medskip 0 \ \ &0_{1\times 3}\\
0_{3\times 1}\ \  &I_{3\times3}\\
\end{array}\right),
\eess
\bess
L_{r_1} = \left( \begin{array}{ccc}\medskip 0 \ \ \ \  \  \ \ \ \ \  &0_{1\times 3}\\
0_{3\times 1}\ \ \ \  & (\cos(c|\xi| t)-1)\frac{\xi\xi^T}{|\xi|^2}\\
 \end{array}\right),\ \
L_{r_2} =\left( \begin{array}{ccc}\medskip 0 \ \   &0_{1\times 3}\\
0_{3\times 1}\ \ \ \ &\cos(c|\xi| t)\frac{\xi\xi^T}{|\xi|^2}
 \end{array}\right).
 \lbl{2.26}
  \eess

First, we divide the third term in the right hand side of (\ref{2.25}) into three parts
   \bess
  e^{-\mu_1|\xi|^2t}L_3= e^{-\mu_1|\xi|^2t}L_c+ e^{-\mu_1|\xi|^2t}L_{r_1}- e^{-\mu_1|\xi|^2t}L_{r_2},
  \eess
where the first part is identified as the rotational pairing:
  $\hat{\mathcal{E}}(\xi,t)=e^{-\mu_1|\xi|^2t}L_c$,
 and the second part is identified as the  Riesz pairing-I:
 $\hat{\mathcal{R}}_1(\xi,t)=e^{-\mu_1|\xi|^2t}L_{r_1}$.
We also define the  Riesz pairing-II as
$$
\hat{\mathcal{R}}_2(\xi,t)=(e^{\!-\!\frac{1}{2}\mu|\xi|^2t+\mathcal{O}(1)|\xi|^3t}-e^{-\mu_1|\xi|^2t})L_{r_2}.
$$
The Huygens' pair is defined as
 \bes
&&\hat{\mathcal{H}}(\xi,t)
=\cos(c|\xi| t )e^{\!-\!\frac{1}{2}\mu|\xi|^2t+\mathcal{O}(1)|\xi|^3t}
\left\{\cos(|\xi|\beta(|\xi|^2)t)(L_1\!+\!L_2\!-\!L_a)+\sqrt{-1}\sin(|\xi|\beta(|\xi|^2)t)(L_1\!-\!L_2\!-\!L_b)\right\}\nm\\ \ \ \
&&\ +\frac{\sin(c|\xi| t)}{c|\xi|}e^{\!-\!\frac{1}{2}\mu|\xi|^2t+\mathcal{O}(1)|\xi|^3t}\!\left\{\!-\! c|\xi|\sin(|\xi|\beta(|\xi|^2)t)(L_1\!+\!L_2\!-\!L_a)\!+\!\sqrt{-1} c|\xi|\cos(|\xi|\beta(|\xi|^2)t)(L_1\!-\!L_2\!-\!L_b)\right\}.\ \ \ \ \ \ \ \
\lbl{2.27}
\ees

Obviously,
\bes
\chi_1(\xi)\hat{G}(\xi,t)=\hat{\mathcal{E}}+\hat{\mathcal{H}}+\hat{\mathcal{R}}_1+\hat{\mathcal{R}}_2+\widehat{\mathcal{RE}},
\ees
where $\widehat{\mathcal{RE}}$ is the rest term with higher order of $|\xi|$.
Next, we will give the estimate of each term above.

 \begin{lemma}\lbl{l 36}
 The rotational wave (or called entropy wave) have the following estimate:
\bess
  \left |D_x^\alpha\mathcal{E}(x,t)\right|&\le&C(1+t)^{-\frac{|\alpha|}{2}}\frac{e^{-\frac{|x|^2}{C(1+t)}}}{(1+t)^{3/2}}+Ce^{-(|x|+t)/C}.
\eess
\end{lemma}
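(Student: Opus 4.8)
The plan is to regard $\mathcal E$ as a low‑frequency‑truncated heat kernel: it involves only the rotational factor $e^{-\mu_1|\xi|^2t}$ and carries no wave operator, so one expects precisely a diffusion profile plus an exponentially localized remainder, with no Huygens piece (unlike the terms $\mathcal H,\mathcal R_1,\mathcal R_2$ treated afterwards). Since $\hat{\mathcal E}(\xi,t)=\chi_1(\xi)e^{-\mu_1|\xi|^2t}L_c$ and $L_c=\mathrm{diag}(0,I_{3\times3})$ is a constant matrix, it suffices to estimate the scalar kernel $h(x,t)=\mathcal F^{-1}[\chi_1(\xi)e^{-\mu_1|\xi|^2t}](x)$ and its derivatives $D_x^\alpha h$. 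Following the dichotomy announced at the start of this section, I would treat the finite Mach number region $\{|x|\le4ct\}$ and its exterior $\{|x|>3ct\}$ separately.

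In the interior, and for $t\ge1$, I would write $\chi_1(\xi)e^{-\mu_1|\xi|^2t}=e^{-\mu_1|\xi|^2t}-(1-\chi_1(\xi))e^{-\mu_1|\xi|^2t}$, so $h=H-r$ with $H$ the full heat kernel and $r$ having Fourier support in $\{|\xi|\ge\varepsilon_1\}$. The classical Gaussian bounds on derivatives of the heat kernel give $|D_x^\alpha H(x,t)|\le Ct^{-(3+|\alpha|)/2}e^{-|x|^2/Ct}\le C(1+t)^{-|\alpha|/2}(1+t)^{-3/2}e^{-|x|^2/C(1+t)}$, the first term of the asserted bound. On $\mathrm{supp}(1-\chi_1)$ one has $e^{-\mu_1|\xi|^2t}\le e^{-\mu_1\varepsilon_1^2t/2}e^{-\mu_1|\xi|^2t/2}$, hence $|D_x^\alpha r(x,t)|\le Ce^{-c_0t}$ uniformly in $x$ (an integration by parts in $\xi$ may be inserted to gain extra powers of $|x|^{-1}$, at the cost of powers of $1+t$ that are harmlessly absorbed into $e^{-c_0t}$); and since $|x|\le4ct$ forces $e^{-c_0t}\le e^{-c_0t/2}e^{-|x|/C}$, this becomes $Ce^{-(|x|+t)/C}$, the second term. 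For $0<t\le1$ I would not split at all: the truncated symbol is supported in $|\xi|\le2\varepsilon_1$, so $|D_x^\alpha h(x,t)|\le C$ directly, while $|x|\le4c$ there and the right‑hand side of the lemma is bounded below by a positive constant, so the inequality is trivial.

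In the exterior $\{|x|>3ct\}$ a frequency cut‑off $\chi_1\in C_0^\infty$ produces only rapid polynomial, not exponential, decay in $|x|$, so this region is not handled through the $\chi_1$‑decomposition; instead the Green's function is estimated there by the weighted energy method announced for this region (as in \cite{U,Z}), applied directly to the fundamental solution $G$ of (\ref{3.2}): using the finite propagation speed of the acoustic part together with the parabolic dissipation, an energy estimate with an exponential spatial weight adapted to $\{|x|>3ct\}$ yields $|D_x^\alpha G(x,t)|\le Ce^{-(|x|+t)/C}$. Thus the $\chi_1$‑pieces, $\mathcal E$ among them, are only needed for $|x|\le4ct$, where the interior estimate above supplies the stated bound; combining it with the exterior estimate of $G$ on the overlap $\{3ct<|x|\le4ct\}$ produces the global description, with the heat‑type term dominant near the origin and the exponential term dominant far out.

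The genuinely delicate point is this far‑field control and the matching of the two schemes: the exponential tail $e^{-|x|/C}$ cannot come from the truncated Fourier integral itself — the cut‑off remainder $r$ has only polynomial spatial decay — and must be produced by the weighted energy argument on $G$, with the weights and constants on the two sides chosen compatibly so that a single $C$ serves globally; one must also remember to use the direct (un‑split) estimate for $0<t\le1$ to avoid the spurious $t^{-3/2}$ blow‑up of the full heat kernel. The remaining ingredients — the heat‑kernel Gaussian bounds, the gain of $e^{-c_0t}$ from the frequency support $|\xi|\ge\varepsilon_1$, the integration by parts, and rewriting a Gaussian as an exponential once $|x|$ exceeds a multiple of $t$ — are routine.
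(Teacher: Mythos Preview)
Your argument is correct for the finite Mach number region, and you correctly identify that the exponential tail $e^{-|x|/C}$ cannot come from the $\chi_1$-truncated integral itself (since $\chi_1$ is only $C_0^\infty$, not analytic), so that the lemma is really an interior statement, with the exterior handled by the weighted energy method on the full $G$. However, the paper takes a genuinely different route to the interior bound.

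The paper does \emph{not} split off the full heat kernel. Instead it applies directly the complex contour-shift method of \cite{LS} to the truncated integral $\int_{|\xi|\le\varepsilon_1}e^{i\xi\cdot x}e^{-\mu_1|\xi|^2t}L_c\,d\xi$: after an orthogonal rotation sending $x$ to $(|x|,0,0)^\tau$, the integration over a cube inscribed in $\{|\eta|\le\varepsilon_1\}$ is isolated, and the $\eta_1$-contour is shifted into the complex plane by $\sigma=\frac{\varepsilon_1}{\sqrt3}\frac{|x|}{t}$ (bounded, since $|x|\le4ct$). The horizontal segment $\Gamma_3$ produces the Gaussian $(1+t)^{-3/2}e^{-|x|^2/C(1+t)}$, the vertical segments $\Gamma_2,\Gamma_4$ give $e^{-t/C}$ (converted to $e^{-(|x|+t)/C}$ in the finite Mach region), and the portion of the ball outside the cube contributes $e^{-\varepsilon_1^2t/C}$. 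No decomposition into ``full heat kernel plus high-frequency remainder'' appears; the exponentially small error comes from the contour geometry, not from a frequency-support argument.

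Your approach is more elementary for this particular piece because $\mathcal{E}$ carries no wave operator and its symbol is literally a cut-off heat symbol, so the explicit Gaussian formula is available. The paper's contour shift is less economical for $\mathcal{E}$ alone but is the \emph{same} mechanism used immediately afterwards for the kernels $K_1,K_2$ in the Huygens piece $\mathcal{H}$ (Lemma~\ref{l 37}), where factors such as $\cos(|\xi|\beta(|\xi|^2)t)$ are present and your heat-kernel splitting would not apply; treating $\mathcal{E}$ by the same device keeps the argument uniform across all long-wave components. Your separate handling of $0<t\le1$ is a clean touch that the paper does not isolate, relying instead on the boundedness of $\sigma$ in the finite Mach region for all $t>0$.
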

\begin{proof} The estimate of the rotational wave relies on the complex analysis. First, we have
 \bes
  D_x^\alpha \mathcal{E}(x,t)&:=&\frac{1}{(2\pi)^3}\int_{|\xi|\le\varepsilon_1}(\sqrt{-1}\xi)^\alpha e^{\sqrt{-1} \xi\cdot x}e^{-\mu_1|\xi|^2t+\mathcal{O}(1)|\xi|^3t}L_cd\xi.\lbl{2.28}
  \ees
We only consider the case $|\alpha|=0$  since an additional factor $\xi$   will produce extra decay rate of $(1+t)^{-1/2}$.

  For each $x\in \mathbb{R}^3$, there is an orthogonal matrix $\mathcal{Q}$ depending on $x$ such that
  $\mathcal{Q}x=(|x|,0,0)^T$.
  Let $\eta=(\eta_1,\eta_2,\eta_3)^T=\mathcal{Q}\xi$, then
   \bess
 \left|\mathcal{E}(x,t)\right|&\le&C\frac{1}{(2\pi)^3}\int_{|\eta|\le\varepsilon_1}\left|e^{\sqrt{-1} |x|\eta_1}e^{-\mu_1|\eta|^2(1+t)+\mathcal{O}(1)|\eta|^3(1+t)}L_c\right|d\eta,\lbl{2.28(0)}
  \eess
  since $\eta$ is bounded and the orthogonal matrix preserves inner products and norms.

 Let $\mathbb{B}:=[-\frac{\varepsilon_1}{\sqrt{3}},\frac{\varepsilon_1}{\sqrt{3}}]\times[-\frac{\varepsilon_1}{\sqrt{3}},
 \frac{\varepsilon_1}{\sqrt{3}}]\times[-\frac{\varepsilon_1}{\sqrt{3}},\frac{\varepsilon_1}{\sqrt{3}}]$, $\mathbb{B}\subset\{|\eta|\le\varepsilon_1\}$.
  Then the integration is divided into two regions:
  \bess
  \left|\mathcal{E}(x,t)\right|&\le&\frac{1}{(2\pi)^3}\int_{\mathbb{B}}\left|e^{\sqrt{-1} |x|\eta_1}e^{-\mu_1|\eta|^2(1+t)+\mathcal{O}(1)|\eta|^3(1+t)}L_c\right|d\eta\\
  &&+\frac{1}{(2\pi)^3}\int_{\{|\eta|\le \varepsilon_1\}\cap\mathbb{B}^c}\left|e^{\sqrt{-1} |x|\eta_1}e^{-\mu_1\mu|\eta|^2(1+t)+\mathcal{O}(1)|\eta|^3(1+t)}L_c\right|d\eta.
  \eess
  The integrand in the second term is $|e^{-\mu_1|\eta|^2(1+t)+\mathcal{O}(1)|\eta|^3(1+t)}L_c|= \mathcal{O}(1)e^{-\varepsilon_1^2t/C}$ since
  $|\eta|\ge\frac{\varepsilon_1}{\sqrt{3}}$. Thus the second term is $ \mathcal{O}(1)e^{-\varepsilon_1^2t/C}$.
  The first term can be estimated as follows
   \bess
  \left|\mathcal{E}(x,t)\right|&\le&\frac{1}{(2\pi)^3}\int_{\mathbb{B}}\left|e^{\sqrt{-1} |x|\eta_1}e^{-\mu_1|\eta|^2(1+t)+\mathcal{O}(1)|\eta|^3(1+t)}L_c\right|d\eta\\
  &=&\mathcal{O}(1)\int_{-\frac{\varepsilon_1}{\sqrt{3}}}^{\frac{\varepsilon_1}{\sqrt{3}}}
  \int_{-\frac{\varepsilon_1}{\sqrt{3}}}^{\frac{\varepsilon_1}{\sqrt{3}}}
  \int_{-\frac{\varepsilon_1}{\sqrt{3}}}^{\frac{\varepsilon_1}{\sqrt{3}}}\left|e^{\sqrt{-1} |x|\eta_1} e^{-\mu_1|\eta|^2(1+t)+\mathcal{O}(1)|\eta|^3(1+t)}L_c\right|d\eta_1d\eta_2d\eta_3.
  \eess
   Since $L_c$ is analytic in $\eta_1$ near the origin, we choose the contour  as
   $\Gamma=\Gamma_1\cup \Gamma_2\cup \Gamma_3\cup \Gamma_4$:
  \bess
 \Gamma_1&=&\{z: Im z=0,\ \ |Re z|\le \frac{1}{\sqrt{3}}\varepsilon_1\},
 \ \ \ \Gamma_2=\{z:Re z=-\frac{1}{\sqrt{3}}\varepsilon_1, \ \ 0\le Im z\le \sigma\},\\
\Gamma_3&=&\{z:  Im z=\sigma, \ \ |Re z|\le\frac{1}{\sqrt{3}}\varepsilon_1\},
\ \ \ \Gamma_4=\{z:Re z=\frac{1}{\sqrt{3}}\varepsilon_1,  \ \ 0\le Im z\le \sigma  \},\ {\rm with}\ \sigma=\frac{\varepsilon_1}{\sqrt{3}}\frac{x_1}{t}.
   \eess
   One the path  $\Gamma_3$,
   \bess
&&\int_{-\frac{\varepsilon_1}{\sqrt{3}}}^{\frac{\varepsilon_1}{\sqrt{3}}}
\int_{-\frac{\varepsilon_1}{\sqrt{3}}}^{\frac{\varepsilon_1}{\sqrt{3}}}\int_{\Gamma_3}\left| e^{\sqrt{-1} |x|\eta_1} e^{-\mu_1(\eta_1^2+\eta_2^2+\eta_3^2)(1+t)+\mathcal{O}(1)(\eta_1^3+\eta_2^3+\eta_3^3)(1+t)}L_c\right|d\eta_1d\eta_2d\eta_3\\
   &\le&\mathcal{O}(1)\int_{-\frac{\varepsilon_1}{\sqrt{3}}}^{\frac{\varepsilon_1}{\sqrt{3}}}
   \int_{-\frac{\varepsilon_1}{\sqrt{3}}}^{\frac{\varepsilon_1}{\sqrt{3}}}e^{-\mu_1(\eta_2^2+\eta_3^2)(1+t)
   +\mathcal{O}(1)(\eta_2^3+\eta_3^3)(1+t)}\int_{\Gamma_3}e^{-\mu_1\eta_1^2(1+t)+\mathcal{O}(1)\eta_1^3(1+t)}d\eta_1d\eta_2d\eta_3\\
   &\le&C\frac{e^{-\frac{|x|^2}{C(1+t)}}}{(1+t)^{3/2}}.
   \eess

Along the path $\Gamma_2$ and $\Gamma_4$, we can get the exponential decay rate
 \bess
&&\int_{-\frac{\varepsilon_1}{\sqrt{3}}}^{\frac{\varepsilon_1}{\sqrt{3}}}
\int_{-\frac{\varepsilon_1}{\sqrt{3}}}^{\frac{\varepsilon_1}{\sqrt{3}}}\int_{\Gamma_2,\Gamma_4}\left|e^{\sqrt{-1} |x|\eta_1}  e^{-\mu_1(\eta_1^2+\eta_2^2+\eta_3^2)(1+t)+\mathcal{O}(1)(\eta_1^3+\eta_2^3+\eta_3^3)(1+t)}L_c\right|d\eta_1d\eta_2d\eta_3\\
   &\le&\mathcal{O}(1)\int_{-\frac{\varepsilon_1}{\sqrt{3}}}^{\frac{\varepsilon_1}{\sqrt{3}}}
   \int_{-\frac{\varepsilon_1}{\sqrt{3}}}^{\frac{\varepsilon_1}{\sqrt{3}}}e^{-\mu_1(\eta_2^2+\eta_3^2)(1+t)
   +\mathcal{O}(1)(\eta_2^3+\eta_3^3)(1+t)}\int_{\Gamma_2, \Gamma_4}e^{-\mu_1\eta_1^2(1+t)+\mathcal{O}(1)\eta_1^3(1+t)}d\eta_1d\eta_2d\eta_3\\
   &\le&Ce^{-t/C}\le Ce^{-(|x|+t)/C}.
   \eess
    This gives the estimate of the rotational wave.
\end{proof}

 \begin{lemma}\lbl{l 37}
The Huygens' wave $\mathcal{H}(x,t)$ has the following estimate:
\bess
|D_x^\alpha\mathcal{H}(x,t)|\le C(1+t)^{-\frac{|\alpha|}{2}}\frac{e^{-\frac{(|x|-ct)^2}{C(1+t)}}}{(1+t)^{2}}+Ce^{-(|x|+t)/C}.
\eess
\end{lemma}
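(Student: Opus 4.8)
The plan is to invert $\hat{\mathcal H}$ entry by entry, separating the hyperbolic (wave) factor from the parabolic (heat) factor exactly as in the proof of Lemma~\ref{l 36}, and then feed the result into the Kirchhoff formula (Lemma~\ref{l 32}) together with the dissipation estimate (Lemma~\ref{l 33}). First I would rewrite the definition (\ref{2.27}) of $\hat{\mathcal H}$ in the schematic form $\hat{\mathcal H}(\xi,t)=\cos(c|\xi|t)\,\hat A(\xi,t)+\frac{\sin(c|\xi|t)}{c|\xi|}\,\hat B(\xi,t)$, where, reading off the two lines of (\ref{2.27}), $\hat A$ and $\hat B$ are matrix symbols of the form $e^{-\frac12\mu|\xi|^2t+\mathcal O(1)|\xi|^3t}$ times a bounded analytic matrix symbol times the bounded oscillatory factors $\cos(|\xi|\beta(|\xi|^2)t)$ and $\sin(|\xi|\beta(|\xi|^2)t)$. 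Two observations drive everything. (i) Since $\cos$ is even and $z\mapsto z^{-1}\sin z$ is even, and since $\beta(|\xi|^2)=c\sum_{j\ge1}d_j|\xi|^{2j}$ is analytic, the factors $\cos(|\xi|\beta(|\xi|^2)t)$ and $|\xi|^{-1}\sin(|\xi|\beta(|\xi|^2)t)$ are analytic in $\xi$ near $0$; moreover on $|\xi|\le\varepsilon_1$ one has $e^{-\frac12\mu|\xi|^2t+\mathcal O(1)|\xi|^3t}\le e^{-\frac14\mu|\xi|^2t}$, which dominates the polynomial-in-$t$ growth of $\beta(|\xi|^2)t$, etc. (ii) The $\cos(c|\xi|t)$ branch equals $\hat w_t\,\hat A$ (with $w$ the Kirchhoff kernel of Lemma~\ref{l 32}), so by the $w_t$-part of Lemma~\ref{l 33} it automatically gains one power of $(1+t)$ over the generic heat rate and never saturates the bound. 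The bottleneck is the $\frac{\sin(c|\xi|t)}{c|\xi|}=\hat w$ branch $\hat w\,\hat B$; here, thanks to the explicit $c|\xi|$ in the second line of (\ref{2.27}) multiplying the bounded-but-singular entries of $L_1-L_2-L_b$ — and using $\eta_+-\eta_-=1$, $\eta_++\eta_-=\mathcal O(|\xi|)$ from Lemma~\ref{l 31} and $L_1+L_2=\mathrm{diag}(1,\xi\xi^\tau/|\xi|^2)$ from Lemma~\ref{l 35}, together with the definitions of $L_a,L_b$ (chosen precisely to subtract off the non-smooth and the non-Huygens parts) — the slowest surviving component of $\hat B$ is still an analytic symbol carrying one explicit factor $\xi$. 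That single extra $|\xi|$ is exactly what separates the asserted $(1+t)^{-2}$ from a false $(1+t)^{-3/2}$.

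Next I would establish Gaussian pointwise bounds for $A:=\mathcal F^{-1}\hat A$ and $B:=\mathcal F^{-1}\hat B$. This is the contour-deformation argument used to prove Lemma~\ref{l 36}, verbatim: after rotating $x$ to $(|x|,0,0)$, shift the $\xi_1$-contour by $i\sigma$ with $\sigma\sim\varepsilon_1|x|/t$, bound $e^{-\frac12\mu|\xi|^2t+\mathcal O(1)|\xi|^3t}$ by $e^{-\frac14\mu|\xi|^2t}$, and control the analytic and oscillatory factors by their analytic continuations (harmless by (i)). This yields $|D_x^\alpha A(x,t)|\le C(1+t)^{-3/2-|\alpha|/2}e^{-|x|^2/(C(1+t))}+Ce^{-(|x|+t)/C}$, and, because the slowly decaying component of $\hat B$ carries one extra factor $\xi$ — which in the contour estimate costs exactly $(1+t)^{-1/2}$ — $|D_x^\alpha B(x,t)|\le C(1+t)^{-2-|\alpha|/2}e^{-|x|^2/(C(1+t))}+Ce^{-(|x|+t)/C}$. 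Each $\partial_{x_j}$ differentiated under the integral sign produces one more factor $\xi$ and hence the $(1+t)^{-|\alpha|/2}$ above.

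Finally, since $\cos(c|\xi|t)=\hat w_t$ and $\frac{\sin(c|\xi|t)}{c|\xi|}=\hat w$, we have $\mathcal H=w_t\ast A+w\ast B$, and $D_x^\alpha$ commutes through the convolutions. For $w_t\ast A$, Lemma~\ref{l 33} gives $|D_x^\alpha(w_t\ast A)|\le C(1+t)^{-1-|\alpha|/2}W_3(x,1+t,\cdot)\le C(1+t)^{-5/2-|\alpha|/2}e^{-(|x|-ct)^2/(C(1+t))}$, stronger than needed. For $w\ast B$ — the saturating piece — writing the Gaussian bound on $D_x^\alpha B$ as $C(1+t)^{-1/2-|\alpha|/2}$ times a normalized heat kernel and invoking Lemma~\ref{l 33} gives $|D_x^\alpha(w\ast B)|\le C(1+t)^{-1/2-|\alpha|/2}W_3(x,1+t,\cdot)\le C(1+t)^{-2-|\alpha|/2}e^{-(|x|-ct)^2/(C(1+t))}$; equivalently one argues directly, as in Lemma~\ref{l 34}, noting that the integration sphere $\{x+cty:|y|=1\}$ meets the origin precisely when $|x|=ct$ and that the surface average of the Gaussian over the near-antipodal cap is $O((1+t)/t^2)$ for $t\ge1$, with the trivial estimate for $0<t\le1$. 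The exponential tails $Ce^{-(|x|+t)/C}$ convolve with $w,w_t$ into tails of the same type, and summing the two pieces gives the claim. The main obstacle is precisely observation (ii): one must verify, component by component of the $2\times2$ block symbols in (\ref{2.27}) and for both branches, that after subtracting $L_a,L_b$ every surviving amplitude is analytic and — in the $\hat w$ branch — carries at least one factor $\xi$, so that the Huygens' wave genuinely decays at rate $(1+t)^{-2}$ rather than $(1+t)^{-3/2}$; a secondary nuisance is tracking the three spatial regimes $|x|\le ct^{1/2}$, $ct^{1/2}\le|x|\le ct$, $|x|\ge ct$ of Lemma~\ref{l 34} so that the sharp Gaussian factor $e^{-(|x|-ct)^2/(C(1+t))}$, not merely the algebraic factor $(1+(|x|-ct)^2/(1+t))^{-3/2}$, survives the spherical average.
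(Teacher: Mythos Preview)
Your proposal is correct and takes essentially the same route as the paper: write $\mathcal{H}=w_t\ast K_1+w\ast K_2$ (your $A,B$), establish Gaussian bounds on $K_1,\nabla K_1,K_2$ via the contour-shift argument of Lemma~\ref{l 36}, and then apply the Kirchhoff/dissipation machinery of Lemmas~\ref{l 32}--\ref{l 33}. The one detail the paper makes explicit that you elide is that $w_t\ast(\cdot)$ is not a positive operator, so Lemma~\ref{l 33} cannot be invoked for $w_t\ast A$ from a pointwise bound on $A$ alone; the paper instead uses the Kirchhoff formula directly together with the oddness estimate $\big|\int_{|y|=1}e^{-|x+cty|^2/C(1+t)}\,y^\beta\,dS_y\big|\le C(1+t)^{-1}e^{-(|x|-ct)^2/C(1+t)}$ for $|\beta|>0$ (from \cite{LW}) to control the gradient term --- after which, as you note, the saturating piece is $w\ast K_2$ anyway.
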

\begin{proof} First, for convenience's sake we rewrite (\ref{2.27}) as
\bess
\mathcal{H}(x,t)=w_t(\cdot,t)\ast K_1(\cdot,t)+w(\cdot,t)\ast K_2(\cdot,t),
\eess
where $w(x,t)$ and $w_t(x,t)$ is defined in Lemma \ref{l 32}. The estimates of  $K_1$ and $K_2$  are similar to that of $ \mathcal{E}(x,t)$. Note that on the contour $\Gamma$, one also has the growth rate
  \bess
 |\cos(|\xi|\beta(|\xi|^2)t)|, |\sin(|\xi|\beta(|\xi|^2)t)|\le Ce^{\mathcal{O}(1)|\xi|^3t}.
 \eess
 Thus, by the proof of Lemma \ref{l 36}, we have
 \bess
| K_1(x,t)|\le C\frac{e^{-\frac{|x|^2}{C(1+t)}}}{(1+t)^{3/2}}+Ce^{-(|x|+t)/C},\\
| \nabla K_1(x,t)|+| K_2(x,t)|\le C\frac{e^{-\frac{|x|^2}{C(1+t)}}}{(1+t)^{2}}+Ce^{-(|x|+t)/C},
 \eess
where an additional decay rate of $(1+t)^{-1/2}$ is gained due to the extra factor $|\xi|$ in  $\nabla K_1$ and $K_2$.

Using Lemma \ref{l 33} and  \ref{l 34}, we have
\bes
\left|w\ast K_2(x,t)\right|&=&\left|\frac{t}{4\pi}\int_{|y|=1}K_5(x+cty)dS_y\right|
\le\frac{t}{4\pi}\int_{|y|=1}\left|K_2(x+cty)\right|dS_y\nm\\
&\le&\frac{t}{4\pi}\int_{|y|=1}\left|\frac{e^{-\frac{|x+cty|^2}{C(1+t)}}}{(1+t)^{2}}\right|dS_y
\le C\frac{1}{\sqrt{1+t}}W_3(x,1+t,2D);\nm\\
\left|w_t\ast  K_1(x,t)\right|&\le&\frac{1}{4\pi} \int_{|y|=1}\left|K_1(x+cty)\right|dS_y+\frac{ct}{4\pi}\int_{|y|=1}\left|\nabla K_1(x+cty)\cdot y\right|dS_y\nm\\
&\le& C\frac{W_3(x,1+t,2D)}{1+t}+\frac{ct}{4\pi}C(1+t)^{-3}e^{-\frac{(|x|-ct)^2}{C(1+t)}}\le C\frac{1}{1+t}W_3(x,1+t,2D).
\lbl{2.29}
\ees
 Here we  have used the important estimate of
 \bess
\left|\int_{|y|=1}e^{-\frac{|x+cty|^2}{C(1+t)}}\cdot y^\beta dS_y\right|
 \le C(1+t)^{-1}e^{-\frac{(|x|-ct)^2}{C(1+t)}},  \ \ \mbox{for}  \ \ \beta>0,
 \eess
  which was proved by Lemma 2.3 in \cite{LW}.

Hence $$\left|D_x^\alpha\mathcal{H}(x,t)\right|=\left|D_x^\alpha w_t\ast K_1\right|+\left|D_x^\alpha w\ast K_2\right|\le C(1+t)^{-\frac{|\alpha|}{2}}\frac{e^{-\frac{(|x|-ct)^2}{C(1+t)}}}{(1+t)^{2}}+Ce^{-(|x|+t)/C} ,$$
 and we prove this lemma.
  \end{proof}

 \begin{lemma}\lbl{l 38}
The Riesz wave-I has the following estimate:
\bess
\left|D_x^\alpha \mathcal{R}_1(x,t)\right| \le C(1+t)^{-|\alpha|/2}\Bigg\{\chi_{|x|\le ct}(1+t)^{-\frac{3}{2}}\left(1+\frac{x^2}{1+t}\right)^{-\frac{3+|\alpha|}{2}}+\frac{e^{-\frac{(|x|-ct)^2}{2C (1+t)}}}{(1+t)^2}+\frac{e^{-\frac{x^2}{C(1+t)}}}{(1+t)^{3/2}}\Bigg\}.
\eess
\end{lemma}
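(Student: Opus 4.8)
The plan is to cancel the spurious factor $|\xi|^{-2}$ carried by $\hat{\mathcal R}_1$, rewrite the resulting symbol as a heat kernel convolved with the Kirchhoff kernel of Lemma~\ref{l 32}, and then treat separately the interior of the light cone and a neighbourhood of (and the exterior of) it. The only nontrivial block of the long--wave part of $\hat{\mathcal R}_1(\xi,t)=e^{-\mu_1|\xi|^2t}L_{r_1}$ is $e^{-\mu_1|\xi|^2t}\frac{\xi\xi^\tau}{|\xi|^2}(\cos(c|\xi|t)-1)$; since $\cos(c|\xi|t)-1=\mathcal O(|\xi|^2)$, writing $\frac{\cos(c|\xi|t)-1}{|\xi|^2}=-c^2\int_0^t\frac{\sin(c|\xi|s)}{c|\xi|}\,ds=-c^2\hat v(\xi,t)$ with $v(\cdot,t):=\int_0^tw(\cdot,s)\,ds$ and $\hat w(\xi,s)=\frac{\sin(c|\xi|s)}{c|\xi|}$ as in Lemma~\ref{l 32} removes it, and converting spherical means to a solid integral gives $v(\cdot,t)\ast g(x)=\frac1{4\pi c^2}\int_{|z|\le ct}\frac{g(x+z)}{|z|}\,dz$. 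Since $\xi_j\xi_k$ corresponds to $-\partial_{x_j}\partial_{x_k}$ and $e^{-\mu_1|\xi|^2t}$ (on $|\xi|\le\varepsilon_1$) inverts, up to an exponentially small contour--shift remainder handled as in Lemma~\ref{l 36}, to a heat kernel $K_{\mu_1}$ with $|D_x^\beta K_{\mu_1}(y,t)|\le C(1+t)^{-(3+|\beta|)/2}e^{-|y|^2/(C(1+t))}+Ce^{-(|y|+t)/C}$, each entry of the block takes the working form
\[
D_x^\alpha\mathcal R_1(x,t)=\frac1{4\pi}\int_{|z|\le ct}\frac{D_x^\alpha\partial_{x_j}\partial_{x_k}K_{\mu_1}(x+z,t)}{|z|}\,dz
=c^2\int_0^t w(\cdot,s)\ast\big(D_x^\alpha\partial_{x_j}\partial_{x_k}K_{\mu_1}(\cdot,t)\big)(x)\,ds
\]
modulo exponentially small terms.

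Fix a large constant $K_0$. On the deep interior $t\ge1$, $|x|\le ct-K_0\sqrt{1+t}$, the integrand $D_x^\alpha\partial_{x_j}\partial_{x_k}K_{\mu_1}(x+\cdot,t)$ is concentrated in a ball of radius $\mathcal O(\sqrt{1+t})$ about $-x$, hence in $\{|z|<ct\}$, so the cut--off $|z|\le ct$ may be removed up to an exponentially small error; then, since $(-\Delta)^{-1}\partial_{x_j}\partial_{x_k}=-R_jR_k$ with $R_l$ the Riesz transforms, $D_x^\alpha\mathcal R_1(x,t)=-D_x^\alpha R_jR_k K_{\mu_1}(x,t)$ up to exponentially small terms. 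The sharp pointwise bound for the Riesz transform of the (long--wave) heat kernel is classical --- it comes from $(-\Delta)^{-1}K_{\mu_1}(x,t)=\tfrac1{4\pi|x|}\mathrm{erf}\big(|x|/2\sqrt{\mu_1t}\big)$ plus an exponentially small term, by differentiation, and coincides with the Riesz--wave estimate of \cite{LW,LS} --- and yields $|D_x^\alpha R_jR_k K_{\mu_1}(x,t)|\le C(1+t)^{-(3+|\alpha|)/2}\big(1+\tfrac{|x|^2}{1+t}\big)^{-(3+|\alpha|)/2}$, which is the first term of the claim ($\chi_{|x|\le ct}$ being automatic here).

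On the complementary region --- $t\ge1$ with $|x|\ge ct-K_0\sqrt{1+t}$, and the elementary range $0<t\le1$ --- I would instead feed $|D_x^\alpha\partial_{x_j}\partial_{x_k}K_{\mu_1}(y,t)|\le C(1+t)^{-(|\alpha|+5)/2}e^{-|y|^2/(C(1+t))}$ (plus exponentially small terms) into the Kirchhoff form and apply Lemma~\ref{l 34}: since $\int_0^tw(\cdot,s)\ast e^{-|\cdot|^2/(C(1+t))}(x)=\tfrac1{4\pi}\int_0^ts\int_{|y|=1}e^{-|x+csy|^2/(C(1+t))}\,dS_y\,ds$, Lemma~\ref{l 34} gives $|D_x^\alpha\mathcal R_1(x,t)|\le C(1+t)^{-(|\alpha|+5)/2}(1+t)^{5/2}$ times its right--hand side. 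For $|x|\ge ct$ this equals $C(1+t)^{-(|\alpha|+4)/2}e^{-(|x|-ct)^2/(2C(1+t))}$ (second term of the claim); for $0<t\le1$ it produces (in the finite Mach number region) the second and third terms; and in the thin zone $ct-K_0\sqrt{1+t}\le|x|\le ct$, where $\tfrac{x^2}{1+t}\approx c^2t$ and $\tfrac{(|x|-ct)^2}{1+t}\le K_0^2$, it is $\le C(1+t)^{-(|\alpha|+6)/2}+C(1+t)^{-(|\alpha|+4)/2}$, which there is absorbed into the second term because $e^{-(|x|-ct)^2/(2C(1+t))}\ge e^{-K_0^2/(2C)}$ is bounded below. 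Adding the two regions, and noting that in the finite Mach number region the $e^{-(|y|+t)/C}$--type remainders are dominated by the stated profile, proves the lemma.

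The step I expect to be the main obstacle is securing the $|\alpha|$--dependent exponent $-(3+|\alpha|)/2$ in the spatial weight inside the cone. Lemma~\ref{l 34} alone, being a Gaussian--kernel estimate, delivers only the exponent $-\tfrac32$: that is enough near and beyond the light cone, but deep inside it loses the factor $\big(1+\tfrac{|x|^2}{1+t}\big)^{-|\alpha|/2}$. This is exactly what forces the detour through the double Riesz operator $R_jR_k$ together with the classical pointwise estimate for the Riesz transform of the heat kernel --- in which each extra $x$--derivative genuinely upgrades the algebraic decay by one power --- and what makes the accompanying bookkeeping (that the cut--off remainder and the discarded part of the constraint $|z|\le ct$ are exponentially small precisely on the region where the Riesz bound is invoked) unavoidable, though routine.
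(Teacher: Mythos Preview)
Your reduction is the same as the paper's: write $(\cos(c|\xi|t)-1)/|\xi|^2=-c^2\int_0^t\hat w(\xi,s)\,ds$, convert $\xi_j\xi_k$ into $\partial_{x_j}\partial_{x_k}$ on the (long--wave) heat kernel, and land on a Kirchhoff integral. After that the paper simply invokes Lemma~\ref{l 34} and declares the lemma proved; you instead split the cone into a deep interior --- where you drop the restriction $|z|\le ct$ and recognise the Riesz operator $R_jR_k$ acting on the heat kernel, then quote the Hoff--Zumbrun pointwise bound (the paper's own Lemma~\ref{l 45}) --- and a boundary layer, where you fall back on Lemma~\ref{l 34}. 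Your concern in the last paragraph is well placed: Lemma~\ref{l 34} as stated yields only the spatial exponent $-3/2$, so to extract the sharper weight $\big(1+\tfrac{|x|^2}{1+t}\big)^{-(3+|\alpha|)/2}$ one genuinely needs either an $|\alpha|$--dependent refinement of Lemma~\ref{l 34} (which the paper implicitly takes for granted, in the spirit of \cite{LW,LS}) or your Riesz--transform detour. In that sense your argument is more explicit than the paper's on the point that matters for $|\alpha|\ge1$.

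One wording to tighten: the error incurred when you remove the constraint $|z|\le ct$ in the deep interior is \emph{not} exponentially small in $t$ uniformly --- it is of order $t^{-(|\alpha|+4)/2}e^{-(ct-|x|)^2/(C(1+t))}$, which near the edge $|x|\approx ct-K_0\sqrt{1+t}$ is merely $t^{-(|\alpha|+4)/2}$ times a fixed small constant. This is not a gap, because that profile is exactly the Huygens term $(1+t)^{-|\alpha|/2}\,(1+t)^{-2}e^{-(|x|-ct)^2/(2C(1+t))}$ in the statement; but you should say that the tail is absorbed by the second term of the bound rather than call it ``exponentially small.'' With that adjustment your proof is complete and, modulo the common starting identity, provides a cleaner justification of the $|\alpha|$--dependence than the paper's one--line appeal to Lemma~\ref{l 34}.
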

\begin{proof} Recall that
 \bess
 \mathcal{R}_1(x,t)&=&\frac{1}{(2\pi)^3}\int_{\mathbb{R}^3}e^{\sqrt{-1} \xi\cdot x}e^{-\mu_1|\xi|^2t}L_{r_1}d\xi.
\eess
Since
 \bess
 &&\left|\frac{1}{(2\pi)^3}\int_{\mathbb{R}^3}e^{\sqrt{-1} \xi\cdot x} (\cos(c|\xi| t)-1)\frac{\xi\xi^T}{|\xi|^2}e^{-\mu_1|\xi|^2t}d\xi\right|\nm\\
 &=&\left|\frac{\partial^2}{\partial x_j\partial x_k}\frac{1}{(2\pi)^3}\int_{\mathbb{R}^3}e^{\sqrt{-1} \xi\cdot x} (\cos(c|\xi| t)-1)\frac{1}{|\xi|^2}e^{-\mu_1|\xi|^2t}d\xi\right|\nm\\
&=&\left|\frac{\partial^2}{\partial x_j\partial x_k}\frac{1}{(2\pi)^3}\int_{\mathbb{R}^3}e^{\sqrt{-1} \xi\cdot x} \int_0^t\frac{-c\sin(c|\xi| s)}{|\xi|} ds e^{-\mu_1|\xi|^2t}d\xi\right|\nm\\
&=&\left|-c^2\frac{\partial^2}{\partial x_j\partial x_k}\int_0^t\frac{1}{(2\pi)^3}\int_{\mathbb{R}^3}e^{\sqrt{-1} \xi\cdot x}\frac{\sin(c|\xi| s)}{c|\xi|} e^{-\mu_1|\xi|^2t}d\xi ds\right|\nm\\
&=&\left|c^2\frac{\partial^2}{\partial x_j\partial x_k}\int_0^t \frac{s}{4\pi}\frac{1}{(4\varepsilon\pi t)^{3/2} }
\int_{|y|=1}e^{-\frac{|x+csy|^2}{4\varepsilon t}}dS_yds\right|\\
&=&C\left|\int_0^t s\int_{|y|=1}\frac{e^{-\frac{|x+csy|^2}{4\varepsilon t}}}{t^{5/2}}dS_yds\right|,
 \eess
then by using Lemma \ref{l 34}, we prove this lemma.
\end{proof}

\begin{lemma}\lbl{l 39}
The  Riesz wave-II has the following  estimate:
\bess
|D_x^\alpha \mathcal{R}_2(x,t)|\le C(1+t)^{-\frac{|\alpha|}{2}}\frac{ e^{-\frac{(|x|-ct)^2}{C(1+t)}}}{(1+t)^{2}}.
\eess
\end{lemma}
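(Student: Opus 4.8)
The plan is to separate the wave factor $\cos(c|\xi|t)=\hat{w}_t(\xi,t)$ of Lemma~\ref{l 32} from $\hat{\mathcal{R}}_2$ and to treat what remains as a heat kernel. Since the only nonzero block of $L_{r_2}$ is the lower-right one, set
\[
\hat{N}(\xi,t):=\chi_1(\xi)\Big(e^{-\frac12\mu|\xi|^2t+\mathcal{O}(1)|\xi|^3t}-e^{-\mu_1|\xi|^2t}\Big)\frac{\xi\xi^\tau}{|\xi|^2},
\]
so that $D_x^\alpha\mathcal{R}_2(x,t)=w_t(\cdot,t)\ast D_x^\alpha N(\cdot,t)$. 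The key observation is that the two real heat-type scalars in the bracket both equal $1$ at $\xi=0$ and differ only at order $|\xi|^2t$; hence, choosing $\varepsilon_1$ small enough to absorb the cubic term,
\[
\Big|e^{-\frac12\mu|\xi|^2t+\mathcal{O}(1)|\xi|^3t}-e^{-\mu_1|\xi|^2t}\Big|\le C\,|\xi|^2t\,e^{-c_0|\xi|^2t},\qquad |\xi|\le 2\varepsilon_1 .
\]
This $|\xi|^2$-vanishing exactly cancels the $|\xi|^{-2}$ singularity of the Riesz symbol $\xi\xi^\tau/|\xi|^2$, so that $\hat{N}$ is a bona fide heat-type symbol, $|\hat{N}(\xi,t)|\le C|\xi|^2t\,e^{-c_0|\xi|^2t}$, with analogous bounds on its $\xi$-derivatives; in particular $\frac{1}{|\xi|^2}(e^{-\frac12\mu|\xi|^2t+\mathcal{O}(1)|\xi|^3t}-e^{-\mu_1|\xi|^2t})$ is, on the real axis, $t$ times an entire function of $|\xi|^2t$ and is therefore analytic in each $\xi_j$ near the origin.

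Next I would estimate $N$ and its $x$-derivatives by the complex-analytic contour-shift argument already used for $\mathcal{E}$ in the proof of Lemma~\ref{l 36}, disposing of the factor $\xi\xi^\tau/|\xi|^2$ exactly as in the proof of Lemma~\ref{l 38}: pull $\partial_{x_j}\partial_{x_k}$ outside the integral, leaving the symbol $\chi_1(\xi)|\xi|^{-2}\big(e^{-\frac12\mu|\xi|^2t+\mathcal{O}(1)|\xi|^3t}-e^{-\mu_1|\xi|^2t}\big)$, which is bounded by $Ct\,e^{-c_0|\xi|^2t}$ and analytic near the origin as noted above, the residual phase contributing only the growth factor $e^{\mathcal{O}(1)|\xi|^3t}$ on the contour $\Gamma=\Gamma_1\cup\Gamma_2\cup\Gamma_3\cup\Gamma_4$ exactly as in Lemmas~\ref{l 36}--\ref{l 37}. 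Shifting along $\Gamma_3$, checking that $\Gamma_2,\Gamma_4$ give exponentially small contributions, and accounting for the two outer derivatives and the explicit factor $t$, one obtains
\[
|D_x^\alpha N(x,t)|\le C(1+t)^{-\frac{|\alpha|}{2}}\Big((1+t)^{-\frac32}e^{-\frac{|x|^2}{C(1+t)}}+e^{-(|x|+t)/C}\Big),
\]
where each additional $x$-derivative produces the gain $(1+t)^{-1/2}$ just as in Lemma~\ref{l 36}.

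Finally I would apply the Kirchhoff formula of Lemma~\ref{l 32} to $D_x^\alpha\mathcal{R}_2=w_t\ast D_x^\alpha N$, which writes it through the spherical means of $D_x^\alpha N$ and of $\nabla D_x^\alpha N$ over $\{|y|=1\}$, the second carrying the weight $ct$. The spherical-mean estimate $\big|\int_{|y|=1}e^{-|x+cty|^2/(C(1+t))}y^\beta\,dS_y\big|\le C(1+t)^{-1}e^{-(|x|-ct)^2/(C(1+t))}$ for $\beta\ge 0$ (Lemma~2.3 of \cite{LW}, already used in the proof of Lemma~\ref{l 37}) turns the Gaussian in $x$ into the Huygens profile. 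The unweighted term contributes $C(1+t)^{-\frac{|\alpha|}{2}-\frac52}e^{-(|x|-ct)^2/(C(1+t))}$, while the $ct\,\nabla D_x^\alpha N$ term contributes $Ct(1+t)^{-\frac{|\alpha|}{2}-3}e^{-(|x|-ct)^2/(C(1+t))}\le C(1+t)^{-\frac{|\alpha|}{2}-2}e^{-(|x|-ct)^2/(C(1+t))}$, which is the dominant one and is precisely the asserted bound; thus $\mathcal{R}_2$ inherits the same estimate as the Huygens' wave of Lemma~\ref{l 37}, and the exponentially small remainder $e^{-(|x|+t)/C}$ is absorbed in the same way as the corresponding term there.

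The step I expect to be the main obstacle is making the cancellation $|\xi|^2t\cdot|\xi|^{-2}$ completely rigorous at the level of all the $\xi$-derivative bounds demanded by the contour shift: the symbol $\chi_1(\xi)|\xi|^{-2}\big(e^{-\frac12\mu|\xi|^2t+\mathcal{O}(1)|\xi|^3t}-e^{-\mu_1|\xi|^2t}\big)$ carries $t$-dependent derivative estimates, and one must check that differentiating it and pushing the contour into complex $\xi$ never loses the extra power $t^{-1}$ responsible for the final rate $(1+t)^{-2}$, while keeping the residual-phase growth $e^{\mathcal{O}(1)|\xi|^3t}$ under control along $\Gamma_2,\Gamma_3,\Gamma_4$. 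Apart from this bookkeeping the argument is a direct transcription of the proofs of Lemmas~\ref{l 36}--\ref{l 38}.
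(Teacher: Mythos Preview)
Your proposal is correct and follows essentially the same route as the paper: write $\mathcal{R}_2=w_t\ast N$, show $N$ is a heat-type kernel of size $(1+t)^{-3/2}e^{-|x|^2/(C(1+t))}$ via a contour shift, and then apply the Kirchhoff formula together with the spherical-mean estimate to obtain the Huygens' profile.

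The one noteworthy difference is how the Riesz singularity $\xi\xi^\tau/|\xi|^2$ is neutralized. You argue by Taylor expansion that the bracket vanishes to order $|\xi|^2t$, so the quotient $\frac{1}{|\xi|^2}\big(e^{-\frac{1}{2}\mu|\xi|^2t+\mathcal{O}(1)|\xi|^3t}-e^{-\mu_1|\xi|^2t}\big)$ is analytic near $\xi=0$, and then you run the contour argument on it directly. The paper instead uses the explicit identity
\[
e^{-\frac{1}{2}\mu|\xi|^2t+\mathcal{O}(1)|\xi|^3t}-e^{-\mu_1|\xi|^2t}=\int_{\mu_1 t}^{\frac{1}{2}\mu t-\mathcal{O}(1)|\xi|t}\partial_s e^{-|\xi|^2 s}\,ds=-|\xi|^2\int_{\mu_1 t}^{\frac{1}{2}\mu t-\mathcal{O}(1)|\xi|t}e^{-|\xi|^2 s}\,ds,
\]
so that the factor $|\xi|^2$ cancels the $|\xi|^{-2}$ in the Riesz symbol by hand, and one is left with the clean contour estimate $\bigl|\int_{|\xi|\le\varepsilon_1}e^{-|\xi|^2 s}\xi\xi^\tau e^{i\xi\cdot x}\,d\xi\bigr|\le C(1+s)^{-5/2}e^{-|x|^2/(C(1+s))}$, integrated over $s$ of length $\mathcal{O}(t)$. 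This integral-in-$s$ trick is exactly what sidesteps the analyticity-and-derivative bookkeeping you identify as your main obstacle: it makes the cancellation completely explicit and reduces everything to a pure Gaussian symbol, so there is no need to track $t$-dependent derivative bounds on the quotient. Your approach and the paper's yield the same bound, but the paper's device is the cleaner way to make the step rigorous.
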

\begin{proof}After a direct computation, we have
 \bess
&&\frac{1}{(2\pi)^3}\int_{\mathbb{R}^3}\chi_{|\xi|\le \varepsilon_1}e^{\sqrt{-1} \xi\cdot x}(e^{\!-\!\frac{1}{2}\mu|\xi|^2t+\mathcal{O}(1)|\xi|^3t}-e^{-\mu_1|\xi|^2t})L_{r_2}d\xi\nm\\
&=&\frac{1}{(2\pi)^{3}}\int_{\mathbb{R}^3}\chi_{|\xi|\le \varepsilon_1}(e^{\!-\frac{1}{2}\mu|\xi|^2t+\mathcal{O}(1)|\xi|^3t}-e^{-\mu_1|\xi|^2t})\cos(c|\xi| t)\frac{\xi\xi^T}{|\xi|^2}e^{\sqrt{-1}\xi\cdot x}d\xi\nm\\
&=&w_t\ast \frac{1}{(2\pi)^{3}}\int_{\mathbb{R}^3}\chi_{|\xi|\le \varepsilon_1} (e^{\!-\!\frac{1}{2}\mu|\xi|^2t+\mathcal{O}(1)|\xi|^3t}-e^{-\mu_1|\xi|^2t})\frac{\xi\xi^T}{|\xi|^2}e^{\sqrt{-1}\xi\cdot x}d\xi\nm\\
&=&w_t\ast \frac{1}{(2\pi)^{3}}\int_{|\xi|\le\varepsilon_1}\int_{\mu_1 t}^{\frac{1}{2}\mu t-\mathcal{O}(1)|\xi| t} \frac{\partial}{\partial_s}e^{-|\xi|^2s}ds \frac{\xi\xi^T}{|\xi|^2}e^{\sqrt{-1}\xi\cdot x}d\xi\nm\\
&=&w_t\ast\left(\int_{\mu_1 t}^{\frac{1}{2}\mu t-\mathcal{O}(1)|\xi| t}\frac{1}{(2\pi)^{3}}\int_{|\xi|\le\varepsilon_1} e^{-|\xi|^2s}\xi\xi^Te^{\sqrt{-1}\xi\cdot x}d\xi ds\right).
\eess
The contour integral gives
\bess
\left| \int_{|\xi|\le\varepsilon_1} e^{-|\xi|^2s}e^{\sqrt{-1}\xi\cdot x}\xi\xi^Td\xi \right|\le C\frac{ e^{-\frac{x^2}{4(s+1)}}}{(4\pi (s+1))^{5/2}},
 \eess
which imply that
\bess
 |\mathcal{R}_2(x,t)|&=&\left|\frac{1}{(2\pi)^3}\int_{|\xi|\le\varepsilon_1}e^{\sqrt{-1} \xi\cdot x}(e^{\!-\!\frac{1}{2}\mu|\xi|^2t+\mathcal{O}(1)|\xi|^3t}-e^{-\mu_1|\xi|^2t})L_{r_2}d\xi\right|\nm\\
 &=&\left|\frac{1}{(2\pi)^3}\int_{\mathbb{R}^3}\chi_{|\xi|\le \varepsilon_1}e^{\sqrt{-1} \xi\cdot x}(e^{\!-\!\frac{1}{2}\mu|\xi|^2t+\mathcal{O}(1)|\xi|^3t}-e^{-\mu_1|\xi|^2t})L_{r_2}d\xi\right|
\le C\frac{ e^{-\frac{x^2}{C(1+t)}}}{(1+t)^{5/2} }.
\eess
We complete the proof of this lemma.
 \end{proof}

 The rest is
\bess
\left|\widehat{\mathcal{RE}}(\xi,t)\right|&=&\left|\cos(c|\xi| t )e^{-\frac{1}{2}\mu|\xi|^2t+\mathcal{O}(1)|\xi|^3t}
\sqrt{-1}\sin(|\xi|\beta(|\xi|^2)t)L_b\right.\nm\\
&&\left.+\frac{\sin(c|\xi| t)}{c|\xi|}e^{-\frac{1}{2}\mu|\xi|^2t+\mathcal{O}(1)|\xi|^3t}\left(-c|\xi|\sin(|\xi|\beta(|\xi|^2)t)L_a+\sqrt{-1} c|\xi|\cos(|\xi|\beta(|\xi|^2)t)L_b\right)\right|\nm\\
&=&\left|\cos(c|\xi| t )e^{-\frac{1}{2}\mu|\xi|^2t+\mathcal{O}(1)|\xi|^3t}
\sqrt{-1}\sin(|\xi|\beta(|\xi|^2)t)\frac{\xi\xi^T}{|\xi|^2}\right.\nm\\
&&\left.+\frac{\sin(c|\xi| t)}{c|\xi|}e^{-\frac{1}{2}\mu|\xi|^2t+\mathcal{O}(1)|\xi|^3t}\left(-c|\xi|\sin(|\xi|\beta(|\xi|^2)t)\frac{\xi\xi^T}{|\xi|^2}+\sqrt{-1} c|\xi|\cos(|\xi|\beta(|\xi|^2)t)\frac{\xi\xi^T}{|\xi|}\right)\right|.
\eess
  For any analytic function $\beta(|\xi|^2)$, we have $\sin(|\xi|\beta(|\xi|^2)t)=|\xi|\beta(|\xi|^2)t-\frac{1}{6}(|\xi|\beta(|\xi|^2)t)^3+\cdot\cdot\cdot$,
  then $\mathcal{RE}(x,t)$ has the following estimate:
  \bess
  |\mathcal{RE}(x,t)|
 \le C w_t\ast \frac{ e^{-\frac{|x|^2}{C(1+t)}}}{(1+t)^2 }+Cw\ast\frac{ e^{-\frac{|x|^2}{C(1+t)}}}{(1+t)^{5/2}}
 \le C\frac{e^{-\frac{(|x|-ct)^2}{C(1+t)}}}{(1+t)^{5/2}}.
  \eess

Now we conclude the estimate  of the long wave:
\begin{proposition}\lbl{l 310}
The long wave component contains several waves
$$
\chi_1(D)G(x,t)=\mathcal{E}(x,t)+\mathcal{H}(x,t)+\mathcal{R}_1(x,t)+\mathcal{R}_2(x,t)+\mathcal{RE}(x,t),
$$
 and has the following estimate
\bess
    &&\left|D_x^{\alpha}(\chi_1(D)(G_{11},G_{12},G_{21}))(x,t)\right|\le \!C(1+t)^{-\frac{4+|\alpha|}{2}}\frac{e^{-\frac{(|x|-ct)^2}{C(1+t)}}}{(1+t)^{2}}+Ce^{-(|x|+t)/C},\\
    &&\left|D_x^{\alpha}(\chi_1(D)G_{22})(x,t)\right|\! \le \!C(1\!+\!t)^{-\frac{|\alpha|}{2}}\!\!\left(\!\chi_{\small{|x|\le ct}}(1\!+\!t)^{-\frac{3}{2}}\!\!\left(\!1\!+\!\frac{|x|^2}{1\!+\!t}\!\right)^{-\!\frac{3+|\alpha|}{2}}\!\!\!\!\!
    \!\!+\!\frac{e^{-\!\frac{|x|^2}{C(1+t)}}}{(1\!+\!t)^{3/2}}
    \!+\!\frac{e^{-\!\frac{(|x|\!-\!ct)^2}{C(1+t)}}}{(1\!+\!t)^{2}}\!\right) \!+\!Ce^{-(|x|+t)/C}.
 \eess
\end{proposition}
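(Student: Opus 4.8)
The plan is to treat this proposition as a bookkeeping/assembly step: all the genuine analysis is already contained in Lemmas \ref{l 32}--\ref{l 39}, so it remains only to collect the five long-wave components on the Fourier side and to record which of them contributes to which entry of the $4\times 4$ Green's matrix. First I would take the identity
\[
\chi_1(\xi)\hat G(\xi,t)=\hat{\mathcal E}(\xi,t)+\hat{\mathcal H}(\xi,t)+\hat{\mathcal R}_1(\xi,t)+\hat{\mathcal R}_2(\xi,t)+\widehat{\mathcal{RE}}(\xi,t),
\]
which was obtained from the Taylor expansion (\ref{2.25}) and the subsequent splitting of $e^{-\mu_1|\xi|^2t}L_3$ and of the oscillatory terms performed right after (\ref{2.27}); inverting the Fourier transform gives the stated decomposition $\chi_1(D)G=\mathcal E+\mathcal H+\mathcal R_1+\mathcal R_2+\mathcal{RE}$, and the $D_x^\alpha$-estimates of the five summands are precisely Lemmas \ref{l 36} (rotational wave), \ref{l 37} (Huygens' wave), \ref{l 38} (Riesz wave-I), \ref{l 39} (Riesz wave-II), and the $\mathcal{RE}$-bound proved just above, the derivative loss $(1+t)^{-|\alpha|/2}$ being already built into each of them.

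Next I would do the entrywise bookkeeping. By the definitions of $L_c$, $L_{r_1}$, $L_{r_2}$, the matrices defining $\mathcal E$, $\mathcal R_1$, $\mathcal R_2$ are supported only in the lower-right $3\times3$ block, so these three waves vanish in the entries $G_{11}$, $G_{12}$, $G_{21}$. For those entries only $\mathcal H$ and $\mathcal{RE}$ survive; both carry the Huygens Gaussian $e^{-(|x|-ct)^2/C(1+t)}$ localized near the sound cone (with $\mathcal{RE}$ decaying a half-power faster in $t$), and both carry the exponentially small outside-cone remainder $Ce^{-(|x|+t)/C}$ coming from the $\Gamma_2,\Gamma_4$ contour pieces in Lemmas \ref{l 36}--\ref{l 37} and from the spectral gap of Lemma \ref{l 31}(ii) for $|\xi|\ge\varepsilon_1/\sqrt3$; adding the two gives the first inequality. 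For the block $G_{22}$ one must also include the rotational wave $\mathcal E$ (Lemma \ref{l 36}), which contributes the diffusion term $(1+t)^{-3/2}e^{-|x|^2/C(1+t)}$, and the Riesz wave-I $\mathcal R_1$ (Lemma \ref{l 38}), which contributes the cone-localized algebraic term $\chi_{|x|\le ct}(1+t)^{-3/2}(1+|x|^2/(1+t))^{-(3+|\alpha|)/2}$; the remaining waves $\mathcal R_2$, $\mathcal H$, $\mathcal{RE}$ are all dominated by the Huygens term $(1+t)^{-2}e^{-(|x|-ct)^2/C(1+t)}$. Taking the maximum of the three profiles, with the factor $(1+t)^{-|\alpha|/2}$, then yields the second inequality.

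Inside the present argument there is only one point demanding care: one must check that $L_a$ and $L_b$ are chosen so that, after the cancellations recorded in Lemma \ref{l 35} (namely $L_1+L_2-L_a=\mathrm{diag}(1,0)$, while $L_1-L_2-L_b$ carries only higher powers of $|\xi|$), the pair $\mathcal H$ really does carry only the Huygens profile and the leftover $\mathcal{RE}$ is of strictly higher order in $|\xi|$, which is exactly what gives $\mathcal{RE}$ the extra $(1+t)^{-1/2}$ used above. Apart from that, the proposition carries no new estimate of its own; the real obstacles are all upstream and have already been resolved — on one side the contour-deformation (complex-analysis) arguments of Lemmas \ref{l 36}--\ref{l 37}, which turn the symbols $e^{-\frac12\mu|\xi|^2t+\mathcal O(1)|\xi|^3 t}$, $\cos(|\xi|\beta t)$, $\sin(|\xi|\beta t)$ into Gaussian-in-$x$ bounds while producing the outside-cone exponential factor, and on the other side the Kirchhoff/stationary-phase computations of Lemmas \ref{l 32}--\ref{l 34}, which generate the cone kernel $W_3(x,1+t,\cdot)$ and the inside-cone algebraic decay $(1+|x|^2/(1+t))^{-\cdots}$ for the Riesz wave.
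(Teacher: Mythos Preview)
Your proposal is correct and matches the paper's approach exactly: the proposition is stated in the paper without a separate proof, as the direct summary of Lemmas \ref{l 36}--\ref{l 39} and the $\mathcal{RE}$ bound, and you have identified precisely this assembly role. One small slip worth noting: by the definitions of $L_a,L_b$ (both supported in the lower-right $3\times3$ block), the remainder $\mathcal{RE}$ also lives only in the $G_{22}$ entry, so for $(G_{11},G_{12},G_{21})$ the \emph{only} contributor is $\mathcal{H}$ --- your inclusion of $\mathcal{RE}$ there is harmless overcounting since it carries the same Huygens profile with faster decay, but the bookkeeping is cleaner without it.
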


 \subsubsection{Short wave estimate}\ \

\begin{lemma}\lbl{l 311}
When $|\xi|\ge K$, $L_i$ has the following estimates:
\bess
&&L_1=\left( \begin{array}{ccc}\medskip 1+\mathcal{O}(|\xi|^{-2}) \ \  &\sqrt{-1}\xi^T \mathcal{O}(|\xi|^{-2})\\
\sqrt{-1}\xi\mathcal{O}(|\xi|^{-2})\ \  &\xi\xi^T\mathcal{O}(|\xi|^{-4})\\
\end{array}\right),\\
&&L_2=\left( \begin{array}{ccc}\medskip \mathcal{O}(|\xi|^{-2}) \  & \sqrt{-1}\xi^T\mathcal{O}(|\xi|^{-2})\\
\sqrt{-1}\xi\mathcal{O}(|\xi|^{-2})\ \  & \mathcal{O}(|\xi|^{-2})+\mathcal{O}(|\xi|^{-4})\xi\xi^T\\
\end{array}\right),\\
&&L_3=\left( \begin{array}{ccc}\medskip \mathcal{O}(|\xi|^{-2}) \  & \sqrt{-1}\xi^T\mathcal{O}(|\xi|^{-2})\\
\sqrt{-1}\xi\mathcal{O}(|\xi|^{-2})\ \  &\xi\xi^T\mathcal{O}(|\xi|^{-4})
\end{array}\right) .
\eess
\end{lemma}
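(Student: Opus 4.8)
The plan is to prove Lemma \ref{l 311} by inserting the large-frequency expansions of Lemma \ref{l 31}(iii) directly into the closed-form symbols of $L_1,L_2,L_3$. These matrices are assembled from the scalars $\eta_0=(\lambda_+-\lambda_-)^{-1}$ and $\eta_\pm=\lambda_\pm\eta_0$ together with the three tensor structures $1$, $\xi$ and $\xi\xi^\tau/|\xi|^2$, so the statement reduces to controlling $\eta_0,\eta_\pm$ uniformly on $\{|\xi|\ge K\}$.

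First I would fix $K$ large enough that the two power series in $|\xi|^{-2}$ of Lemma \ref{l 31}(iii) converge absolutely and uniformly there; then $\lambda_+=-c^2/\mu+\mathcal{O}(|\xi|^{-2})$ and $\lambda_-=-\mu|\xi|^2+c^2/\mu+\mathcal{O}(|\xi|^{-2})$, both real and analytic in $|\xi|^{-2}$. Subtracting gives $\lambda_+-\lambda_-=\mu|\xi|^2\bigl(1+\mathcal{O}(|\xi|^{-2})\bigr)$, which after enlarging $K$ stays $\ge\tfrac{\mu}{2}|\xi|^2$; hence $\eta_0=(\lambda_+-\lambda_-)^{-1}=\tfrac{1}{\mu|\xi|^2}\bigl(1+\mathcal{O}(|\xi|^{-2})\bigr)=\mathcal{O}(|\xi|^{-2})$. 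Multiplying by $\lambda_+$ and $\lambda_-$ then yields $\eta_+=\lambda_+\eta_0=\mathcal{O}(|\xi|^{-2})$ and $\eta_-=\lambda_-\eta_0=-1+\mathcal{O}(|\xi|^{-2})$. Among $\eta_0,\eta_+,\eta_-$ only $\eta_-$ has an $\mathcal{O}(1)$ leading term, which is exactly what produces the $1+\mathcal{O}(|\xi|^{-2})$ diagonal entries in the statement; every remaining scalar is already $\mathcal{O}(|\xi|^{-2})$.

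Next I would substitute into $L_1=\begin{pmatrix}-\eta_- & -\sqrt{-1}\eta_0\xi^\tau\\ -\sqrt{-1}c^2\eta_0\xi & \eta_+\frac{\xi\xi^\tau}{|\xi|^2}\end{pmatrix}$ and $L_2=\begin{pmatrix}\eta_+ & \sqrt{-1}\eta_0\xi^\tau\\ \sqrt{-1}c^2\eta_0\xi & -\eta_-\frac{\xi\xi^\tau}{|\xi|^2}\end{pmatrix}$ and read off the four blocks: the $(1,1)$-entry is the pure scalar $-\eta_-$, resp.\ $\eta_+$; the $(1,2)$-entry is $\xi^\tau$ times $\pm\sqrt{-1}\eta_0=\sqrt{-1}\,\mathcal{O}(|\xi|^{-2})$; the $(2,1)$-entry is $\xi$ times $\pm\sqrt{-1}c^2\eta_0=\sqrt{-1}\,\mathcal{O}(|\xi|^{-2})$ (the constant $c^2$ absorbed into $\mathcal{O}$); and the $(2,2)$-block is $\xi\xi^\tau/|\xi|^2$ times a scalar, so e.g.\ $\eta_+\frac{\xi\xi^\tau}{|\xi|^2}=\xi\xi^\tau\,\mathcal{O}(|\xi|^{-4})$. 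For $L_3=\begin{pmatrix}0 & 0\\ 0 & I-\frac{\xi\xi^\tau}{|\xi|^2}\end{pmatrix}$ no expansion is needed: it is already an explicit, $|\xi|$-independent bounded symbol whose only nonzero block is the projection $I-\xi\xi^\tau/|\xi|^2$.

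There is no genuine obstacle here; the two points demanding care are (i) making the $\mathcal{O}(\cdot)$ bounds on $\eta_0,\eta_\pm$ uniform in $|\xi|\ge K$ — which is why one first fixes $K$ so large that $\lambda_+-\lambda_-$ is comparable to $\mu|\xi|^2$ and the series tails in Lemma \ref{l 31}(iii) are negligible — and (ii) bookkeeping which scalar multiplies which of the structures $1$, $\xi$, $\xi\xi^\tau/|\xi|^2$, since it is this structure, not merely the scalar order, that is recorded in the statement and that controls how many derivatives each entry can absorb when these symbols, together with the factors $e^{\lambda_\pm t}$ and $e^{-\mu_1|\xi|^2t}$, are fed into the short-wave inverse-Fourier estimates that follow.
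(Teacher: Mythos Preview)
Your approach is correct and is precisely the intended computation: the paper states Lemma \ref{l 311} without proof, and the only sensible derivation is exactly the one you give --- insert the large-$|\xi|$ expansions of $\lambda_\pm$ from Lemma \ref{l 31}(iii) into $\eta_0=(\lambda_+-\lambda_-)^{-1}$ and $\eta_\pm=\lambda_\pm\eta_0$, then read off each block of $L_1,L_2,L_3$. Your identification $\eta_0=\mathcal{O}(|\xi|^{-2})$, $\eta_+=\mathcal{O}(|\xi|^{-2})$, $\eta_-=-1+\mathcal{O}(|\xi|^{-2})$ is the heart of the matter, and your remarks on uniformity in $K$ and on tracking the tensor structure are appropriate.
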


\begin{proposition}\lbl{l 312}
There exists a constant $C$ such that the short wave part has the following estimates:
\bess
\left| D_{x}^{\alpha}(\chi_3(D)G(x,t)-G_{S_2}(x,t))\right|\le C\frac{e^{-\frac{t}{2C}}e^{-\frac{|x|^2}{2Ct}}}{t^{(3+|\alpha|)/2}}+ Ce^{-(|x|+t)/C}.
\eess
\end{proposition}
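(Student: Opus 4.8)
The plan is to invert $\chi_3(D)\hat G$ block by block, after first isolating the part of the short wave that carries no pointwise kernel bound. Writing $\hat G=e^{\lambda_+t}L_1+e^{\lambda_-t}L_2+e^{-\mu_1|\xi|^2t}L_3$ as above, I would take $G_{S_2}$ to be the inverse Fourier transform of the non-smoothing short-wave symbol $\chi_3(\xi)e^{\lambda_+t}L_1(\xi)$ (together, if convenient, with the homogeneous degree-zero Riesz block $\chi_3(\xi)\frac{\xi\xi^\tau}{|\xi|^2}e^{-\mu_1|\xi|^2t}$ coming from $\hat G^0$). Indeed, by Lemma~\ref{l 31}(iii) one has $\lambda_+(\xi)\to-c^2/\mu$ and $(L_1)_{11}\to1$, so $e^{\lambda_+t}$ produces no decay in $\xi$ and this piece is a genuine distribution --- a Dirac-type singularity plus Calder\'on--Zygmund kernels, with amplitude $\mathcal O(e^{-c^2t/2\mu})$ since ${\rm Re}\,\lambda_+\le-c^2/(2\mu)$ on $|\xi|\ge K$ for $K$ large; it is precisely this piece that has no pointwise bound and must be subtracted. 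The content of the proposition is then the estimate of the remainder
\[
R:=\chi_3(D)G-G_{S_2},\qquad D_x^\alpha R(x,t)=\frac{1}{(2\pi)^3}\int_{|\xi|\ge K}(i\xi)^\alpha\chi_3(\xi)\,m(\xi,t)\,e^{ix\cdot\xi}\,d\xi,
\]
where, by Lemma~\ref{l 311}, $m(\xi,t)$ is a finite sum of terms of the form $e^{\lambda_-t}L_2(\xi)$ and $e^{-\mu_1|\xi|^2t}\widetilde L_3(\xi)$ whose matrix symbols are, on $|\xi|\ge K$, uniformly bounded (the surviving entries even decay like $|\xi|^{-1}$ or $|\xi|^{-2}$), real-analytic (the branch locus $\mu^2|\xi|^2=4c^2$ is avoided once $K$ is large), and multiplied by a genuinely parabolic factor $e^{-c_0|\xi|^2t}$.

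For $R$ I would argue as in the rotational-wave estimate of Lemma~\ref{l 36}. First, by Lemma~\ref{l 31}(iii), for $K$ large and $|\xi|\ge K$ one has both ${\rm Re}\,\lambda_-(\xi)\le-\tfrac12\mu|\xi|^2-\tfrac1{2C}$ and $-\mu_1|\xi|^2\le-\tfrac12\mu_1|\xi|^2-\tfrac1{2C}$, so each term of $m$ carries a fixed factor $e^{-t/2C}$ in addition to $e^{-c_0|\xi|^2t}$. To turn the remaining integral $\int_{|\xi|\ge K}(i\xi)^\alpha\chi_3(\xi)\widetilde m(\xi)e^{-c_0|\xi|^2t}e^{ix\cdot\xi}\,d\xi$ ($\widetilde m$ bounded and analytic) into $t^{-(3+|\alpha|)/2}e^{-|x|^2/2Ct}$, I split $\chi_3=1-(\chi_1+\chi_2)$. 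On the ``$1$'' piece the integral runs over all of $\mathbb R^3$ and one shifts the contour $\xi\mapsto\xi-i\lambda\frac{x}{|x|}$ with $\lambda=\min\{c_1|x|/t,\delta_0\}$, $\delta_0$ small and fixed, so that the shifted contour stays in the strip of analyticity of $\lambda_\pm$ and the $L_i$: for $|x|\le At$ this produces the Gaussian $e^{-|x|^2/Ct}$ and, after $\int|\xi|^{|\alpha|}e^{-c_0|\xi|^2t}\,d\xi\le Ct^{-(3+|\alpha|)/2}$, the first term of the claim; for $|x|\ge At$ the bounded shift gives $e^{-\delta_0|x|/C}$, which together with the already-present $e^{-t/2C}$ is absorbed into $Ce^{-(|x|+t)/C}$. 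On the compactly supported correction $-(\chi_1+\chi_2)$ the symbol is supported in $\{|\xi|\le2K\}$, where $e^{-c_0|\xi|^2t}\le e^{-t/C}$, and repeated integration by parts in $\xi$ yields $(1+|x|)^{-N}$ decay for every $N$, hence a contribution $\le Ce^{-(|x|+t)/C}$. Summing over the finitely many entries of $L_2,L_3$ gives the stated bound.

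The main obstacle is this contour-shift step. Producing $e^{-|x|^2/2Ct}$ formally requires a shift of magnitude $\sim|x|/t$, which is unbounded, so one is forced into casework: a near field $|x|\le At$ where the (still bounded, for $K$ large) optimal shift yields the Gaussian, and a far field $|x|\ge At$ where only a bounded shift is admissible and the output, exponential in $|x|$, is swept into $Ce^{-(|x|+t)/C}$; throughout one must check that the deformed contour never meets the branch locus $\mu^2|\xi|^2=4c^2$ of $\lambda_\pm$ nor leaves the region $|\xi|\ge K/2$ in which the large-$\xi$ expansions of Lemma~\ref{l 311} are valid, and one must handle the non-analyticity of $\chi_3$ via the $1-(\chi_1+\chi_2)$ decomposition and the attendant integration-by-parts estimates. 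The conceptual point is, by contrast, cheap but essential: $\chi_3(D)(e^{\lambda_+t}L_1)$ simply admits no pointwise bound, so correctly identifying what has to be removed as $G_{S_2}$ before the parabolic estimate can be run is really the heart of the statement.
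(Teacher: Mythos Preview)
Your overall strategy is correct and matches what the paper (which defers this proposition to \cite{HZ,LS} without a self-contained proof) has in mind: isolate the non-smoothing $e^{\lambda_+t}L_1$ piece as $G_{S_2}$, then treat the remaining parabolic symbol via a contour shift to recover the heat-kernel Gaussian. Your discussion of what must be subtracted and why is the heart of the matter and is right.

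There is, however, a genuine gap in the cutoff step. After extracting $e^{-t/2C}$ (valid on $\mathrm{supp}\,\chi_3$ since $|\xi|\ge K$ there) you split $\chi_3=1-(\chi_1+\chi_2)$ and treat the pieces separately. Two things go wrong. First, on the ``$1$'' piece the contour shift needs $\widetilde m(\xi)e^{-c_0|\xi|^2t}$ to be bounded and analytic on all of $\mathbb R^3$ (indeed a complex strip), but your $\widetilde m=m\,e^{t/2C}e^{c_0|\xi|^2t}$ is only controlled for $|\xi|\ge K$; for small $|\xi|$ one has ${\rm Re}\,\lambda_-=O(|\xi|^2)$, so the extension blows up like $e^{t/2C}$ in $t$. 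Second, on the correction $-(\chi_1+\chi_2)$ your claim $e^{-c_0|\xi|^2t}\le e^{-t/C}$ is false, since the support includes $\xi=0$; integration by parts gives only $(1+|x|)^{-N}$, not $e^{-|x|/C}$. Inside the finite Mach region $|x|\le 4ct$ (where Section~3.1 operates) one can still absorb $e^{-t/2C}(1+|x|)^{-N}$ into $Ce^{-(|x|+t)/C'}$ for $C'$ large, because $|x|+t\le(4c+1)t$; but that is not the argument you gave, and it does not rescue the ``$1$'' piece.

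A cleaner fix, closer to \cite{HZ}, avoids the split: write the parabolic remainder directly as a convolution $K_t*H_{\theta t}$, where $H_{\theta t}$ is a true heat kernel (carrying $t^{-3/2}e^{-|x|^2/Ct}$) and $K_t=\mathcal F^{-1}\big[\chi_3(\xi)L_2(\xi)e^{(\lambda_-(\xi)+\theta|\xi|^2)t}\big]$. By Lemma~\ref{l 311} the entries of $L_2$ are $O(|\xi|^{-1})$ or better, and for $\theta<\mu$ one has $\lambda_-+\theta|\xi|^2\le-\mu K^2/4$ on $|\xi|\ge K$; hence the symbol of $K_t$ and all its $\xi$-derivatives are $O(e^{-t/C})$ and integrable, so $\|K_t\|_{L^1}=O(e^{-t/C})$. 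Convolving with the Gaussian $H_{\theta t}$ then gives the stated bound with no analyticity issue for $\chi_3$.
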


Here $G_{S_2}$ is defined in the following Lemma \ref{l 313}, which contains a Dirac function and a Dirac-like function with exponential decay rate on the time $t$. And it is from the terms in short wave of Green's function corresponding to the eigenvalue $\lambda_+\sim -\frac{c^2}{\mu}$ for the short wave. For convenience, we denote the term $\frac{e^{-\frac{t}{2C}}e^{-\frac{|x|^2}{2Ct}}}{t^{(3+|\alpha|)/2}}$ as $G_{S_1}(x,t)$, which is from the terms in short wave of Green's function corresponding to the eigenvalue $\lambda_-\sim -\mu|\xi|^2$.

\begin{lemma} \lbl{l 313} \cite{HZ,LS}
The singular term $G_{S_2}$ in short wave satisfies
\bes
&&G_{S_2}(x,t)=e^{-\frac{t}{2C}}\left(
                                      \begin{array}{cc}
                                        1 & 0 \\
                                        0 & 0 \\
                                      \end{array}
                                    \right)\delta(x)+f(x),\\
&&|f(x)|\leq e^{-bt}\tilde{f}(x),\ \tilde{f}(x)\in L^1,\ \tilde{f}(x)=\left\{\begin{array}{lcr}
C|x|^{-2},\ |x|\leq R,\\
C|x|^{-N},\ |x|>R,
\end{array}\right.
\ees
for some positive constant $b$ and any given positive integer $N$.
\end{lemma}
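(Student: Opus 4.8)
The plan is to realize $G_{S_2}$ as the inverse Fourier transform of the $\chi_3$-localized $\lambda_+$-branch of $\hat G$, namely $\mathcal F^{-1}\bigl[\chi_3(\xi)\,e^{\lambda_+(\xi)t}L_1(\xi)\bigr]$, and to peel off from it the single non-decaying symbol. Let $E_{11}$ denote the $(1+3)$-block matrix with a single $1$ in the $n_1$-slot and zeros elsewhere (so $E_{11}=\bigl(\begin{smallmatrix}1&0\\0&0\end{smallmatrix}\bigr)$). By Lemma \ref{l 31}(iii), on $|\xi|\ge K$ the eigenvalue $\lambda_+$ is real with $\lambda_+(\xi)=-\tfrac{c^2}{\mu}+\tfrac{\mu}{2}\sum_{j\ge1}e_j|\xi|^{-2j}$; enlarging $K$ if necessary we may assume $\mathrm{Re}\,\lambda_+\le-\tfrac{c^2}{2\mu}=:-\tfrac1{2C}$ on ${\rm supp}\,\chi_3$, so that $|e^{\lambda_+(\xi)t}|\le e^{-t/2C}$ there, and moreover $|\partial_\xi^\gamma(\lambda_+(\xi)+\tfrac{c^2}{\mu})|\le C_\gamma|\xi|^{-2-|\gamma|}$ for $|\xi|\ge K$. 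Likewise, by Lemma \ref{l 311}, for $|\xi|\ge K$ we may write $L_1(\xi)=E_{11}+R(\xi)$, where every entry of $R$ is $\mathcal O(|\xi|^{-1})$ (the off-diagonal blocks $\sqrt{-1}\,\xi\,\mathcal O(|\xi|^{-2})$ being the slowest-decaying) with matching decay of all $\xi$-derivatives.

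Next I would split $e^{\lambda_+t}=e^{-c^2t/\mu}+\bigl(e^{\lambda_+t}-e^{-c^2t/\mu}\bigr)$, the bracket being $e^{-c^2t/\mu}\bigl(\exp(\tfrac{\mu}{2}t\sum_{j\ge1}e_j|\xi|^{-2j})-1\bigr)=\mathcal O\bigl(t\,e^{-t/2C}|\xi|^{-2}\bigr)$ with the analogous derivative bounds. This yields the decomposition $\chi_3(\xi)e^{\lambda_+t}L_1(\xi)=e^{-c^2t/\mu}\chi_3(\xi)E_{11}+m(\xi,t)$, where $m(\cdot,t)$ equals $\chi_3(\xi)$ times a matrix-valued symbol of order $\le-1$, uniformly in $t$ up to a prefactor bounded by $(1+t)e^{-t/2C}$. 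Since $\chi_3=1-(\chi_1+\chi_2)$ with $\chi_1+\chi_2\in C_c^\infty(\mathbb R^3)$, one has $\mathcal F^{-1}[\chi_3]=\delta(x)-\psi(x)$ with $\psi\in\mathcal S(\mathbb R^3)$; the first summand therefore contributes exactly the stated Dirac part $e^{-c^2t/\mu}E_{11}\delta(x)$ (which is the $e^{-t/2C}$ term of the statement, up to a harmless relabelling of the exponential rate), while $-e^{-c^2t/\mu}\psi(x)E_{11}$ is absorbed into $f$.

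It remains to estimate $f(x):=\mathcal F^{-1}[m(\cdot,t)](x)-e^{-c^2t/\mu}\psi(x)E_{11}$. For the local behaviour, the worst singularity comes from the $\mathcal O(|\xi|^{-1})$ off-diagonal entries, and in $\mathbb R^3$ one has $\mathcal F^{-1}[\chi_3(\xi)|\xi|^{-1}](x)=O(|x|^{-2})$ near the origin (all remaining symbols being of order $\le-2$, hence producing only $O(|x|^{-1})$ or milder local singularities), which gives the bound $C|x|^{-2}$ for $|x|\le R$ and is locally integrable on $\mathbb R^3$. For the decay at infinity, I would write $x^\beta f(x)=\mathcal F^{-1}\bigl[(i\partial_\xi)^\beta m(\cdot,t)\bigr](x)-(\text{a Schwartz term})$; for $|\beta|\ge3$ the symbol $(i\partial_\xi)^\beta m$ is of order $\le-1-|\beta|<-3$, hence lies in $L^1(\mathbb R^3)$ with norm $\le C_\beta(1+t)^{|\beta|+1}e^{-t/2C}$, so $|x^\beta f(x)|\le C_\beta(1+t)^{|\beta|+1}e^{-t/2C}\le C_N e^{-bt}$ for any $0<b<\tfrac1{2C}$. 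Since $|\beta|$ can be chosen as large as we please, this gives $|f(x)|\le C_N e^{-bt}|x|^{-N}$ for $|x|\ge R$; collecting the two regimes yields $|f(x)|\le e^{-bt}\tilde f(x)$ with $\tilde f\in L^1(\mathbb R^3)$ of the asserted shape.

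The main obstacle is purely the bookkeeping of the competition between the polynomial-in-$t$ growth generated by differentiating $e^{\lambda_+(\xi)t}$ in $\xi$ (each derivative of $\lambda_+$ gains a power $|\xi|^{-1}$ but costs a factor $t$) and the exponential gain $e^{-t/2C}$ extracted from $\mathrm{Re}\,\lambda_+$, together with verifying that the transition annulus $K\le|\xi|\le K+1$ of the cutoff $\chi_3$ contributes only smooth, compactly supported, hence rapidly decaying, terms; none of this is deep but it must be carried out with care. Alternatively, since this short-wave decomposition of the Green's function of the linearized Navier--Stokes system is already carried out in detail in \cite{HZ,LS}, one may simply invoke those references.
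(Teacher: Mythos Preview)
The paper does not give its own proof of this lemma: it is stated with the citation \cite{HZ,LS} and used as a black box. Your proposal is a correct and well-organized sketch of precisely the argument carried out in those references --- identifying $G_{S_2}$ with the $\chi_3$-localized $\lambda_+$-branch, peeling off the constant symbol $e^{-c^2t/\mu}E_{11}$ to produce the Dirac part via $\mathcal F^{-1}[\chi_3]=\delta-\psi$, and then using the symbol order of the remainder to obtain the local $|x|^{-2}$ singularity and the $|x|^{-N}$ decay by repeated $\xi$-differentiation --- so there is nothing to correct.
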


 \subsubsection{ Middle part estimate}\ \

 The following results is standard for the middle part of the Green's function as in \cite{Wang2,LS}, we omit the details for simplicity.

   \begin{lemma}\lbl{l 314}
When $\{\varepsilon_1\le|\xi|\le K\}$,
$L_1+L_2+L_3$ is bounded and analytic. The only possible pole for $L_1+L_2+L_3$ is $\xi=0$ and has been excluded here.
\end{lemma}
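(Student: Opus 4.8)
The plan is to prove Lemma \ref{l 314} by an elementary block computation of $L_1+L_2+L_3$, using only the defining relations $\eta_0=(\lambda_+-\lambda_-)^{-1}$ and $\eta_\pm=\lambda_\pm\eta_0$ recorded just after (\ref{3.5}). First I would add $L_1$ and $L_2$ entry by entry: their $(1,2)$ blocks ($-\sqrt{-1}\eta_0\xi^\tau$ and $\sqrt{-1}\eta_0\xi^\tau$) cancel, as do their $(2,1)$ blocks; the $(1,1)$ entries give $-\eta_-+\eta_+=(\lambda_+-\lambda_-)\eta_0=1$; and the $(2,2)$ blocks give $(\eta_+-\eta_-)\frac{\xi\xi^\tau}{|\xi|^2}=\frac{\xi\xi^\tau}{|\xi|^2}$. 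Hence $L_1+L_2={\rm diag}\!\left(1,\ \frac{\xi\xi^\tau}{|\xi|^2}\right)$, and adding $L_3={\rm diag}\!\left(0,\ I-\frac{\xi\xi^\tau}{|\xi|^2}\right)$ gives $L_1+L_2+L_3=I$, the identity matrix, which is consistent with $\hat{G}(\xi,0)$ being the identity. In particular $L_1+L_2+L_3$ is constant, hence bounded and entire.

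The substance of the lemma is therefore only to check that the apparent singularities of the building blocks are harmless. There are two candidate loci: (i) $\xi=0$, where $\frac{\xi\xi^\tau}{|\xi|^2}$ is singular; and (ii) the eigenvalue-collision sphere on which the discriminant $\sqrt{\mu^2|\xi|^4-4c^2|\xi|^2}$ in $\lambda_+-\lambda_-$ vanishes, i.e. $|\xi|=2c/\mu$, where $\eta_0$ and hence $\eta_\pm$ individually develop a branch point; by Lemma \ref{l 31} this sphere lies inside the middle range $\varepsilon_1\le|\xi|\le K$. The point is that (ii) drops out of every entry of the sum, since $\eta_0$ enters only through the combination $\eta_+-\eta_-\equiv1$ (the block $\frac{\xi\xi^\tau}{|\xi|^2}$ does not involve $\eta_0$ at all); equivalently, one may observe that $\hat{G}(\xi,t)=e^{-A(\xi)t}$ is entire in $\xi$ because $A(\xi)$ has polynomial entries, so the branch cuts of the spectral decomposition must cancel in the full sum. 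Thus the only genuine candidate pole of $L_1+L_2+L_3$ is $\xi=0$, which the region $\{\varepsilon_1\le|\xi|\le K\}$ excludes; on a complex neighbourhood of this compact annulus $\frac{\xi\xi^\tau}{|\xi|^2}=\frac{\xi\xi^\tau}{\xi_1^2+\cdots+\xi_n^2}$ extends holomorphically, which gives the asserted analyticity.

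I do not expect any real obstacle here: the whole argument is a short verification. The only things to be careful about are the block bookkeeping---keeping track of which entries are scalars, row/column vectors or $n\times n$ matrices---and the discipline of invoking the cancelled quantity $\eta_+-\eta_-=1$ rather than the individually ill-behaved $\eta_\pm$ near $|\xi|=2c/\mu$. Once Lemma \ref{l 314} is in hand, the middle-wave contribution $D_x^{\alpha}(\chi_2(D)G)(x,t)$ is treated in the standard way of \cite{Wang2,LS}: since $L_1+L_2+L_3$ is analytic on a neighbourhood of the annulus and ${\rm Re}\,\lambda_\pm\le-C$ there by part (ii) of Lemma \ref{l 31}, a routine contour shift on the compactly supported integral $\frac{1}{(2\pi)^n}\int_{\varepsilon_1\le|\xi|\le K}(\sqrt{-1}\xi)^{\alpha}\hat{G}(\xi,t)e^{\sqrt{-1}x\cdot\xi}\,d\xi$ produces a remainder bounded by $Ce^{-(|x|+t)/C}$.
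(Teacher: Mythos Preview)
Your proposal is correct, and the paper itself does not give a proof of this lemma---it is asserted as standard, with a reference to \cite{Wang2,LS}. Your explicit block computation showing $L_1+L_2+L_3=I$ is exactly the content of the paper's own Lemma~\ref{l 35} (the identity $L_1+L_2={\rm diag}(1,\xi\xi^\tau/|\xi|^2)$ there is in fact exact, not merely an expansion for small $\xi$), so you are taking essentially the same route the paper has already laid out, just making the conclusion explicit. Your additional remark that the eigenvalue-collision sphere $|\xi|=2c/\mu$ drops out because only $\eta_+-\eta_-\equiv1$ survives is a useful clarification the paper leaves implicit.
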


 \begin{proposition}\lbl{l 315}
There exists a constant $C$ such that the middle part of the Green's function has the following estimate:
\bess
\left|D_x^{\alpha}\chi_2(D)G(x,t)\right|\le Ce^{-(|x|+t)/C}.
\eess
\end{proposition}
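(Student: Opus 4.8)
The plan is to exploit the two facts established just above: the spectral gap of Lemma~\ref{l 31}(ii) on the band $\varepsilon_1\le|\xi|\le K$, and the boundedness and analyticity of $L_1+L_2+L_3$ there (the only possible pole $\xi=0$ being excluded), Lemma~\ref{l 314}. Writing
\[
D_x^{\alpha}\chi_2(D)G(x,t)=\frac{1}{(2\pi)^n}\int_{\varepsilon_1\le|\xi|\le K+1}(i\xi)^{\alpha}\chi_2(\xi)\hat G(\xi,t)e^{ix\cdot\xi}\,d\xi,
\]
and recalling $\hat G=e^{\lambda_+t}L_1+e^{\lambda_-t}L_2+e^{-\mu_1|\xi|^2t}L_3$, the gap ${\rm Re}\,\lambda_\pm(\xi)\le-C_0<0$ together with $e^{-\mu_1|\xi|^2t}\le e^{-\mu_1\varepsilon_1^2t}$ yields the uniform pointwise bound $|(i\xi)^{\alpha}\chi_2(\xi)\hat G(\xi,t)|\le Ce^{-bt}$ on the bounded support of $\chi_2$, with $b=\min\{C_0,\mu_1\varepsilon_1^2\}>0$. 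Integrating over the compact annulus gives at once
\[
|D_x^{\alpha}\chi_2(D)G(x,t)|\le Ce^{-bt},
\]
which already proves the proposition on $|x|\le t$, since there $e^{-bt}\le e^{-b(|x|+t)/2}$.

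It then remains to produce genuine exponential decay in $|x|$ when $|x|>t$, and here I would use analyticity to deform the $\xi$-contour. The relevant structural fact is that $\lambda_++\lambda_-=-\mu|\xi|^2$ and $\lambda_+\lambda_-=c^2|\xi|^2$ are polynomials in $\xi$, and every entry of $\hat G(\xi,t)$ is an entire symmetric function of $(\lambda_+,\lambda_-)$ times a factor whose only denominator is $|\xi|^2$; hence each entry extends analytically to a complex neighbourhood $\{\xi\in\mathbb{C}^n:\ \tfrac{\varepsilon_1}{2}<|{\rm Re}\,\xi|<K+2,\ |{\rm Im}\,\xi|<\delta_0\}$ of the real band, on which (after shrinking $\delta_0$) the gap persists, ${\rm Re}\,\lambda_\pm<-C_0/2$ and ${\rm Re}(\mu_1|\xi|^2)>0$, so that $|\hat G(\xi,t)|\le Ce^{-bt/2}$ there. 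Writing $x=|x|\omega$ with $\omega\in S^{n-1}$ and shifting the contour in the direction $-i\omega$ by $\delta\le\delta_0$ produces the factor $e^{-\delta|x|}$ from $e^{ix\cdot\xi}$, while the polynomial-in-$t$ factors created when $\xi$-derivatives hit $e^{\lambda_\pm t}$ are absorbed by $e^{-bt/2}$; combining with $|x|>t$ upgrades the bound to $Ce^{-(|x|+t)/C}$. (Equivalently, one may follow the short-wave computation of Proposition~\ref{l 312} to obtain a bound of the form $Ce^{-bt}e^{-|x|^2/(Ct)}t^{-(n+|\alpha|)/2}$, which is again $\le Ce^{-(|x|+t)/C'}$.)

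The one genuinely technical point is that $\chi_2$ is only $C^\infty$, not analytic, so the contour cannot be deformed directly in the transition annuli $\{\varepsilon_1\le|\xi|\le 2\varepsilon_1\}$ and $\{K\le|\xi|\le K+1\}$ where $\chi_2$ is not constant; on $\{2\varepsilon_1\le|\xi|\le K\}$, where $\chi_2\equiv1$, the deformation above applies verbatim, and the contributions of the two transition annuli — where the integrand is smooth, compactly supported, and still obeys the uniform bound $Ce^{-bt}$ — are handled by the standard argument of \cite{LS,Wang2}. I expect this last, bookkeeping step to be the only real obstacle, and since it is routine and carried out in detail in \cite{LS,Wang2} (see also \cite{HZ}), the details may be omitted.
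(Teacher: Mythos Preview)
Your proposal is correct and follows the standard route: the paper itself omits the proof entirely, citing \cite{Wang2,LS}. Your first step --- the spectral gap of Lemma~\ref{l 31}(ii) together with the boundedness of $L_1+L_2+L_3$ on the annulus (Lemma~\ref{l 314}) gives $|D_x^{\alpha}\chi_2(D)G(x,t)|\le Ce^{-bt}$ --- is exactly the core of the argument.

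One simplification worth noting: your contour-deformation step for $|x|>t$ is more than the paper actually needs. Proposition~\ref{l 315} sits inside Section~3.1, \emph{Pointwise estimate in the finite Mach number region} $\{|x|\le 4ct\}$, so the crude bound $e^{-bt}\le e^{-b(|x|+t)/(4c+1)}$ already delivers the stated $Ce^{-(|x|+t)/C}$ throughout the region where this proposition is invoked; outside the cone the paper switches to the weighted energy method of Section~3.2 (Proposition~\ref{l 316}) rather than relying on Green's function pointwise bounds. This is also why the analogous statement for the NSP Green's function, Proposition~\ref{l 48}, is content with the weaker polynomial-in-$|x|$ decay $Ce^{-c_0 t}\big(1+\tfrac{|x|^2}{1+t}\big)^{-N}$ obtained purely by integration by parts in $\xi$. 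Your identification of the non-analyticity of $\chi_2$ as the only genuine obstacle to a global contour shift is accurate, but in the paper's architecture that obstacle simply never has to be confronted.
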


\subsection{Energy estimate outside the finite Mach number region}\ \

 Consider the initial value problem for the linear part of the system (\ref{2.4})$_{1,2}$ outside the finite Mach number region$\{|x|>3ct\}$:
   \bes
 \left\{\begin{array}{lll}\medskip
\partial_tn_1+{\rm div} w_1=0,\\
\partial_t w_1+c^2\nabla n_1=\mu_1\Delta w_1+\mu_2\nabla {\rm div} \  w_1,\\
n_1(x,0)=\rho_{1,0}(x), w_1(x,0)=w_{1,0}(x).
 \end{array}\right.
 \lbl{2.30}
  \ees

   Multiplying each side of the two equations in (\ref{2.30}) by $ce^{a(|x|-M ct)}n_1$ and $e^{a(|x|-Mc t)}w_1$ respectively and integrating with respective to $x$ on $\mathbb{R}^3$,
    we have the $L^2$ estimate of $n_1$, $w_1$. Similarly, one can derive the $L^2$ estimate of $\nabla_x^\alpha n_1$ and $\nabla_x^\alpha w_1$ when $|\alpha|\geq 1$.  Hence,  with the help of Sobolev embedding theorem, we can prove the following Proposition.

  \begin{proposition}\lbl{l 316} For $|x|>4ct$, there exists a positive constant $C$ independent of $|x|$ and $t$ such that
  \bess
  |D_x^\alpha G(x,t)\ast n_{1,0}(x)|+|D_x^\alpha G(x,t)\ast w_{1,0}(x)|\leq Ce^{-(|x|+t)/C}.
\eess
\end{proposition}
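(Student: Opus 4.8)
\noindent{\bf Proof proposal.} I would prove this by a weighted energy estimate for the linear Cauchy problem (\ref{2.30}), using an exponential weight $e^{a(|x|-Mct)}$ tuned to the finite propagation speed $c$ of the hyperbolic part, and then pass to the pointwise bound through the Sobolev embedding $H^2(\mathbb{R}^3)\hookrightarrow L^\infty(\mathbb{R}^3)$. The guiding observation is that $\partial_t(|x|-Mct)=-Mc<0$, so differentiating the weighted energy in time produces a strictly negative term $-aMc\,e^{a(|x|-Mct)}$; every remaining term in the energy identity comes from a derivative hitting the slowly varying weight and therefore carries an extra factor of the small parameter $a$, so it is dominated once $a$ is small and the Mach number lies in the window $1<M<4$.

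Concretely, fix such $M$ and a small $a>0$, and for $k=0,1,2,\dots$ set
\[
\mathcal{E}_k(t)=\sum_{|\alpha|\le k}\int_{\mathbb{R}^3}e^{a(|x|-Mct)}\big(c^2|D_x^\alpha n_1|^2+|D_x^\alpha w_1|^2\big)\,dx .
\]
First I would treat $k=0$: multiply the first equation of (\ref{2.30}) by $c^2e^{a(|x|-Mct)}n_1$, the second by $e^{a(|x|-Mct)}w_1$, add, and integrate over $\mathbb{R}^3$. With $\partial_te^{a(|x|-Mct)}=-aMc\,e^{a(|x|-Mct)}$ and $\nabla e^{a(|x|-Mct)}=a\tfrac{x}{|x|}e^{a(|x|-Mct)}$, the continuity/pressure terms reorganize into $c^2\,{\rm div}(n_1w_1)$ tested against the weight, which after one integration by parts is bounded by $c^2a\int e^{a(|x|-Mct)}|n_1||w_1|\,dx\le\tfrac{ca}{2}\mathcal{E}_0(t)$, while the viscous terms, integrated by parts and using the physical conditions on $\mu_1,\mu_2$, contribute a non-positive principal part in $\nabla w_1$ (modulo weight commutators of size $O(a)$) plus cross-terms absorbed into it up to $Ca^2\mathcal{E}_0(t)$. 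Hence $\tfrac12\tfrac{d}{dt}\mathcal{E}_0(t)\le(-\tfrac{aMc}{2}+\tfrac{ca}{2}+Ca^2)\mathcal{E}_0(t)\le-\tfrac{aMc}{4}\mathcal{E}_0(t)$ once $a$ is chosen small (depending only on $c,\mu_1,\mu_2,M$), so $\mathcal{E}_0(t)\le e^{-aMct/2}\mathcal{E}_0(0)$.

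The step $k\ge1$ is identical: since (\ref{2.30}) has constant coefficients, $D_x^\alpha(n_1,w_1)$ solves the same system with data $D_x^\alpha(n_{1,0},w_{1,0})$ and the same weight, so the level-$\alpha$ viscous dissipation absorbs the level-$\alpha$ cross-terms and $\mathcal{E}_k(t)\le e^{-aMct/2}\mathcal{E}_k(0)$ for $k\le4$, the requisite regularity being available from Theorem \ref{l a}. To finish, restrict to $\{|x|>4ct\}$: there $e^{a(|x|-Mct)}\ge e^{a(4-M)ct}\ge1$ since $M<4$, and for $|\alpha|\le2$ the function $e^{a(|x|-Mct)/2}D_x^\alpha(n_1,w_1)(\cdot,t)$ lies in $H^2(\mathbb{R}^3)$ with squared norm $\le C(\mathcal{E}_4(t)+a^2\mathcal{E}_2(t))\le Ce^{-aMct/2}$, so $H^2(\mathbb{R}^3)\hookrightarrow L^\infty(\mathbb{R}^3)$ gives $|D_x^\alpha(n_1,w_1)(x,t)|^2\le Ce^{aMct/2}e^{-a|x|}$; using $|x|>4ct$ to write $e^{-a|x|}\le e^{-a|x|/2}e^{-2act}$ and $M<4$ so that $e^{aMct/2}e^{-2act}=e^{-a(4-M)ct/2}$ decays, one obtains $|D_x^\alpha(n_1,w_1)(x,t)|\le Ce^{-(|x|+t)/C}$, which is the claim since $(n_1,w_1)=G\ast(n_{1,0},w_{1,0})$. (The weighted initial energy $\mathcal{E}_k(0)$ is of course required finite; running the same computation with the fundamental solution yields the analogous kernel bound, the singular short-wave part of Lemma \ref{l 313} being supported near the origin and hence zero in $\{|x|>4ct\}$ for $t>0$.)

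\textbf{Main obstacle.} The delicate point is the balancing in the energy inequality, uniformly for $k\le4$: one must show that the favorable term $-aMc\,\mathcal{E}_k$ generated by $\partial_t$ of the weight strictly beats both the flux error $\tfrac{ca}{2}\mathcal{E}_k$ (from $\nabla e^{a(|x|-Mct)}$ meeting the divergence form $c^2{\rm div}(n_1w_1)$) and the viscous cross-terms $Ca^2\mathcal{E}_k$ (from the weight gradient meeting the second-order viscous operator). This dictates the joint choice $1<M<4$ — the lower bound so that $Mc$ exceeds the order-one constant in the flux error, the upper bound so the weight stays $\ge1$ on the target region $\{|x|>4ct\}$ — together with $a$ small. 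A secondary bookkeeping point is that the viscous terms consume one extra derivative at each level, which is why the high Sobolev regularity of Theorem \ref{l a} is needed; and if one preferred to localize the estimate to the exterior domain $\{|x|>3ct\}$ rather than integrate over all of $\mathbb{R}^3$, the moving inner-boundary contributions would be controlled by the finite-Mach-region estimates of Propositions \ref{l 310}, \ref{l 312} and \ref{l 315}.
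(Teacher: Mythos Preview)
Your proposal is correct and follows essentially the same approach as the paper: the paper's own ``proof'' is just the one-sentence sketch preceding the proposition, namely multiply the two equations of (\ref{2.30}) by $ce^{a(|x|-Mct)}n_1$ and $e^{a(|x|-Mct)}w_1$, integrate over $\mathbb{R}^3$ to get a weighted $L^2$ estimate, repeat for derivatives, and invoke Sobolev embedding. Your write-up supplies exactly the details this sketch omits --- the balance $-\tfrac{aMc}{2}+\tfrac{ca}{2}+Ca^2<0$ forcing $M>1$, the upper bound $M<4$ so the weight is $\ge1$ on $\{|x|>4ct\}$, and the bookkeeping that $D_x^\alpha$ commutes with the constant-coefficient system --- and your multiplier $c^2e^{a(|x|-Mct)}n_1$ (rather than the paper's $c$) is in fact the correct normalization to make the cross terms combine into $c^2\,{\rm div}(n_1w_1)$.
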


Up to now, from Proposition \ref{l 310}-\ref{l 316}, we get the following pointwise estimates for each term in the Green's function $G(x,t)$, which contains the H-wave and D-wave mentioned in \S1.

\begin{proposition}\lbl{l 317} For any $|\alpha|\geq0$, we have for some constant $C>0$
\bess
|D_x^\alpha(G_{11}\!-\!G_{S_2})|\!+\!|D_x^\alpha(G_{12}\!-\!G_{S_2})|\!+\!|D_x^\alpha(G_{21}\!-\!G_{S_2})|
\leq  C(1+t)^{-\frac{4+|\alpha|}{2}}e^{-\frac{(|x|-ct)^2}{1+t}}+\frac{e^{-\frac{t}{2C}}e^{-\frac{|x|^2}{2Ct}}}{t^{(3+|\alpha|)/2}},\\
|D_x^\alpha (G_{22}\!-\!G_{S_2})|
\leq C(1+t)^{-\frac{3+|\alpha|}{2}}\left\{(1+t)^{-\frac{1}{2}}e^{-\frac{(|x|-ct)^2}{1+t}}+\bigg(1+\frac{|x|^2}{1+t}\bigg)^{-\frac{3+|\alpha|}{2}}\right\}+\frac{e^{-\frac{t}{2C}}e^{-\frac{|x|^2}{2Ct}}}{t^{(3+|\alpha|)/2}},\ \
\eess
where $G_{S_2}$ is defined in Lemma \ref{l 313}.
\end{proposition}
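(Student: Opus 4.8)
The plan is to assemble the component bounds of Propositions~\ref{l 310}, \ref{l 312}, \ref{l 315} and \ref{l 316} in accordance with the splitting into the finite Mach number region $\{|x|\leq 4ct\}$ and the exterior region $\{|x|>3ct\}$ (whose union is $\mathbb{R}^3$), and, inside the former, the long/middle/short frequency decomposition; no new analysis is needed, only careful matching of the several wave profiles. Fix a multi-index $\alpha$. On $\{|x|\leq 4ct\}$ write $D_x^\alpha G=D_x^\alpha\chi_1(D)G+D_x^\alpha\chi_2(D)G+D_x^\alpha\chi_3(D)G$. Proposition~\ref{l 315} gives $|D_x^\alpha\chi_2(D)G|\leq Ce^{-(|x|+t)/C}$; Proposition~\ref{l 312} gives $D_x^\alpha(\chi_3(D)G-G_{S_2})=R_{\rm sh}$ with $|R_{\rm sh}|\leq C t^{-(3+|\alpha|)/2}e^{-t/(2C)}e^{-|x|^2/(2Ct)}+Ce^{-(|x|+t)/C}$, so that the only contribution which is neither integrable nor time-decaying is exactly $G_{S_2}$; and Proposition~\ref{l 310} shows that the long-wave part, after majorizing, collapses for the $G_{11},G_{12},G_{21}$ blocks to the single Huygens term $C(1+t)^{-(4+|\alpha|)/2}e^{-(|x|-ct)^2/(C(1+t))}$, while for the $G_{22}$ block the entropy wave $\mathcal{E}$ and the Riesz wave~I $\mathcal{R}_1$ survive as well and contribute in addition $C(1+t)^{-(3+|\alpha|)/2}\big(1+|x|^2/(1+t)\big)^{-(3+|\alpha|)/2}$ --- here one uses $e^{-|x|^2/(C(1+t))}\leq C_N\big(1+|x|^2/(1+t)\big)^{-N}$ to absorb the genuine Gaussian of $\mathcal{E}$, and the cutoff $\chi_{|x|\leq ct}$ attached to $\mathcal{R}_1$ is dropped after this majorization.

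Next I would handle the exterior region $\{|x|>3ct\}$: there the weighted-energy method of Proposition~\ref{l 316}, applied to the regular part $G-G_{S_2}$ (which belongs to $L^2$ for $t>0$, since by Lemma~\ref{l 313} $G_{S_2}$ gathers precisely the singular short-wave pieces: the atom $e^{-t/(2C)}\delta(x)$, supported in $\{|x|=0\}\subset\{|x|\leq 3ct\}$ for $t>0$, and the integrable tail $f$ with $|f(x)|\leq e^{-bt}\tilde f(x)$), yields $|D_x^\alpha(G-G_{S_2})|\leq Ce^{-(|x|+t)/C}$ on that region. Combining with the first step, and absorbing the residual terms of the form $Ce^{-(|x|+t)/C}$ into the displayed right-hand sides (enlarging the generic constants as needed: in the exterior the Huygens Gaussian itself already reduces to such a remainder, while in the finite Mach region those remainders are dominated by the short-wave Gaussian $t^{-(3+|\alpha|)/2}e^{-t/(2C)}e^{-|x|^2/(2Ct)}$), one arrives at the two claimed estimates, uniformly in $|\alpha|\geq 0$ because every cited bound is uniform in $\alpha$.

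The proof carries no essential difficulty; the one thing that needs care is the precise matching of the five long-wave profiles --- the rotational (entropy) wave $\mathcal{E}$, the Huygens wave $\mathcal{H}$, the Riesz waves $\mathcal{R}_1,\mathcal{R}_2$, and the higher-order remainder $\mathcal{RE}$ --- together with keeping straight which reductions are exact cancellations (only the subtraction of $G_{S_2}$ is) and which are mere domination. The key structural point is that $\mathcal{E}$ and $\mathcal{R}_1$ occur only in the $G_{22}$ block, so that $G_{11},G_{12},G_{21}$ collapse to the single Huygens rate $(1+t)^{-(4+|\alpha|)/2}$ whereas $G_{22}$ retains the slower diffusive/Riesz term $(1+t)^{-(3+|\alpha|)/2}\big(1+|x|^2/(1+t)\big)^{-(3+|\alpha|)/2}$; this asymmetry is exactly what will later produce the different pointwise profiles of the total density and the total momentum.
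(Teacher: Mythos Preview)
Your proposal is correct and follows essentially the same route as the paper: the paper itself offers no separate argument for this proposition, presenting it simply as the summary obtained ``from Proposition~\ref{l 310}--\ref{l 316}'', and you have spelled out precisely how those pieces combine (long/middle/short wave in the finite Mach region, weighted energy outside), together with the key structural observation that $\mathcal{E}$ and $\mathcal{R}_1$ sit only in the $G_{22}$ block. One minor remark: your invocation of Proposition~\ref{l 316} for $G-G_{S_2}$ rather than for $G\ast(n_{1,0},w_{1,0})$ is a slight extension of what is literally stated there (the proposition is formulated for smooth data), but this is standard and exactly what the paper intends.
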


\section{Green's function of linearized Navier-Stokes-Poisson system}\ \

In this section, we shall review the pointwise estimates for the Green's function for
the linearized Navier-Stokes-Poisson equations, which has been considered in Wang and Wu \cite{Wang} by using real analysis method. Here we do not reconsider this term by using the complex method as in \cite{LS} for the Navier-Stokes system for simplicity.

Recall the linearized system on $(n_2,w_2)$ in (\ref{2.4}) with $c=P_1'(\bar{\rho_1})=P_2'(\bar{\rho_2})$
\bes
\left\{\begin{array}{l}
\partial_tn_2+{\rm div}w_2=0,\\
\partial_tw_2+c^2\nabla n_2-\mu_1\Delta w_2-\mu_2\nabla{\rm div}w_2+2\nabla\phi=0,\\
\Delta\phi=n_2.
\end{array}\right.
\lbl{4.1}
\ees

The symbol of the operator in (\ref{4.1}) is
\bes
\lambda^2+(\mu_1+\mu_2)|\xi|^2\lambda+(c^2|\xi|^2+2)=0.\lbl{4.2}
\ees
Here, $\lambda$ and $\xi^\tau=(\xi_1,\xi_2,\cdots,\xi_n)$ correspond to $\frac{\partial}{\partial_t}$ and $(D_{x_1},\cdots,D_{x_n})$ respectively, where $D_{x_j}=\frac{1}{\sqrt{-1}\partial/\partial_{x_j}}$ with $j=1,2,\cdots,n$. It is easy to find that the eigenvalues of (\ref{4.2}) for $\lambda$ are
\bes
\lambda=\lambda_{\pm}(\xi)=\frac{-\mu|\xi|^2\pm\sqrt{\mu^2|\xi|^4-4(c^2|\xi|^2+2)}}{2},\lbl{4.3}
\ees
where $\mu=\mu_1+\mu_2$.

Now we consider the Green's function for (\ref{4.1}) defined as
\bes
\left\{\begin{array}{l}
\left(\frac{\partial}{\partial t}+A(D_x)\right)\mathbb{G}(x,t)=0,\\
\mathbb{G}(x,0)=\delta(x),
\end{array}\right.
\lbl{4.4}
\ees
where $\delta(x)$ is the Dirac function, the symbols of operator $A(D_x)$ are
\bess
A(\xi)=\left(
           \begin{array}{cc}
             0 & \sqrt{-1}\xi^\tau \\
             \sqrt{-1}\xi(c^2+\frac{1}{|\xi|^2}) & \mu_1|\xi|^2+\mu_2\xi\xi^\tau \\
           \end{array}
         \right).
\eess
Applying the Fourier transform with respect to the variable $x$ to (\ref{4.4}), we get by a direct calculation
\bes
\begin{array}[b]{rl}
\hat{\mathbb{G}}(\xi,t)\!=&\!\left(\!
                 \begin{array}{cc}
                   \hat{\mathbb{G}}_{11} & \hat{\mathbb{G}}_{12}  \\
                   \hat{\mathbb{G}}_{21} & \hat{\mathbb{G}}_{22} \\
                 \end{array}
               \!\right)
               \!=\!\left(\!
                 \begin{array}{cc}
                   \frac{\lambda_+e^{\lambda_-t}-\lambda_-e^{\lambda_+t}}{\lambda_+-\lambda_-} & -\sqrt{-1}\frac{e^{\lambda_+t}-e^{\lambda_-t}}{\lambda_+-\lambda_-}\xi^\tau  \\
                   -\sqrt{-1} \frac{e^{\lambda_+t}-e^{\lambda_-t}}{\lambda_+-\lambda_-}\xi & e^{-\mu_1|\xi|^2t}I\!+\!\left(\frac{\lambda_+e^{\lambda_+t}-\lambda_-e^{\lambda_-t}}{\lambda_+-\lambda_-}
\!-\!e^{-\mu|\xi|^2t}\right)\frac{\xi\xi^\tau}{|\xi|^2}
                 \end{array}
               \!\right)\\[1mm]
               =&\!\left(\!
                 \begin{array}{cc}
                   \eta_+e^{\lambda_-t}-\eta_-e^{\lambda_+t} & -\sqrt{-1}\eta_0\xi^\tau(-e^{\lambda_+t}+e^{\lambda_-t})  \\
                   -\sqrt{-1} \eta_0\xi(-e^{\lambda_+t}+e^{\lambda_-t}) & e^{-\mu_1|\xi|^2t}(I-\frac{\xi\xi^\tau}{|\xi|^2})\!+\!\frac{\xi\xi^\tau}{|\xi|^2}(\eta_+e^{\lambda_+t}-\eta_-e^{\lambda_-t})
                 \end{array}
               \!\right),
               \end{array} \lbl{4.5}
\ees
where $\eta_0(\xi)=(\lambda_+(\xi)-\lambda_-(\xi))^{-1},
\eta_\pm(\xi)=\lambda_{\pm}(\xi)\eta_0(\xi)$.

Sometime, we also rewrite the Fourier transform of the Green's function as
follows
$$
\hat{\mathbb{G}}^+=\left(
            \begin{array}{cc}
              -\eta_- & -\sqrt{-1}\eta_0\xi^\tau \\
              -\sqrt{-1}(c^2-|\xi|^{-2})\eta_0\xi & \eta_+\frac{\xi\xi^\tau}{|\xi|^2}  \\
            \end{array}
          \right)e^{\lambda_+t},
$$
$$
\hat{\mathbb{G}}^-=\left(
            \begin{array}{cc}
              -\eta_+e^{\lambda_-t} & \sqrt{-1}\eta_0\xi^\tau e^{\lambda_-t} \\
              \sqrt{-1}(c^2-|\xi|^{-2})\eta_0\xi e^{\lambda_-t} & \eta_-e^{\lambda_-t}\frac{\xi\xi^\tau}{|\xi|^2}+(I-\frac{\xi\xi^\tau}{|\xi|^2})e^{-\mu_1|\xi|^2t}  \\
            \end{array}
          \right),
$$
$$
\hat{\mathbb{G}}^0=\left(
            \begin{array}{cc}
              0 & 0 \\
              0 & I-\frac{\xi\xi^\tau}{|\xi|^2}  \\
            \end{array}
          \right)e^{-\mu_1|\xi|^2t},
$$

\begin{lemma}\lbl{l 41} \cite{Wang}
For a sufficiently small number $\varepsilon_1>0$ and a sufficiently large number $K>0$, we have the following:

(i) when $|\xi|<\varepsilon_1$, $\lambda_\pm$ has the following expansion:
\bess
\lambda_+&=&-\frac{\mu}{2}|\xi |^2+\sum_{j=2}^{\infty}a_j|\xi|^{2j}+\sqrt{-1}(\sqrt{2}+\sum_{j=1}^{\infty}b_j|\xi|^{2j}),\\
\lambda_-&=&-\frac{\mu}{2}|\xi |^2+\sum_{j=2}^{\infty}a_j|\xi|^{2j}-\sqrt{-1}(\sqrt{2}+\sum_{j=1}^{\infty}b_j|\xi|^{2j});
\eess

 (ii) when $\varepsilon_1 \le |\xi|\le K$, $\lambda_\pm$ has the following spectrum gap property:
 \bess
 Re(\lambda_\pm)\le-C,\ \  \mbox{for some constant} \ \ C>0;
 \eess

(iii) when $|\xi|>K$,
 $\lambda_\pm$ has the following expansion:
 \bess
\lambda_+&=&-\frac{c^2}{\mu}+\sum_{j=1}^{\infty}c_j|\xi|^{-2j},\\
\lambda_-&=&-\mu|\xi|^2+\frac{c^2}{\mu}-\sum_{j=1}^{\infty}c_{j}|\xi|^{-2j}.
\eess
Here $c=\sqrt{P_i'(\bar{\rho})}$ is defined as the equilibrium sound speed, and all $a_j,b_j,c_j$ are real constants.
 \end{lemma}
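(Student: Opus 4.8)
The plan is to view $\lambda_\pm(\xi)$ purely as the two roots of the quadratic $\lambda^2+\mu|\xi|^2\lambda+(c^2|\xi|^2+2)=0$ in $\lambda$, and to follow the sign of its discriminant $\Delta(s):=\mu^2s^2-4c^2s-8$ as a function of $s:=|\xi|^2\ge0$. Since $\Delta$ is a quadratic in $s$ with $\Delta(0)=-8<0$ and positive leading coefficient $\mu^2$, it has exactly one positive root $s_\ast$; on $[0,s_\ast)$ the two eigenvalues are complex conjugates (and then $\mathrm{Re}\,\lambda_\pm=-\tfrac{\mu}{2}s$ by the Vieta relation $\lambda_++\lambda_-=-\mu s$), while on $(s_\ast,\infty)$ they are real and, as noted below, both negative. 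I would fix $\varepsilon_1$ small enough that $\varepsilon_1^2<s_\ast$ and $K$ large enough that $K^2>s_\ast$, which is precisely the latitude granted by the statement; then the three regimes $|\xi|<\varepsilon_1$, $\varepsilon_1\le|\xi|\le K$, $|\xi|>K$ fall respectively into the ``complex conjugate'', ``mixed'', and ``real'' ranges.

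For part (i), on $\{|\xi|<\varepsilon_1\}$ one has $\Delta(s)<0$, so
\[
\lambda_\pm=-\frac{\mu}{2}s\pm\frac{\sqrt{-1}}{2}\sqrt{8+4c^2s-\mu^2s^2}=-\frac{\mu}{2}s\pm\sqrt{-1}\,\sqrt{2}\,\sqrt{1+\tfrac{c^2}{2}s-\tfrac{\mu^2}{8}s^2}.
\]
The radicand under the last square root is a polynomial in $s$ equal to $1$ at $s=0$, so its square root is analytic near $s=0$ with value $1$ there; expanding it in powers of $s=|\xi|^2$ yields the imaginary part $\sqrt{2}\big(1+\sum_{j\ge1}b_j|\xi|^{2j}\big)$ with real coefficients $b_j$, while the real part is exactly $-\tfrac{\mu}{2}|\xi|^2$ (so the coefficients $a_j$, $j\ge2$, may simply be taken to vanish). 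Equivalently one may apply the analytic implicit function theorem to $\lambda^2+\mu s\lambda+c^2s+2=0$ at $(s,\lambda)=(0,\sqrt{-1}\sqrt{2})$, where $\partial_\lambda=2\sqrt{-1}\sqrt{2}\ne0$, to obtain an analytic branch $\lambda_+(s)$, and then use $\lambda_-(s)=\overline{\lambda_+(s)}$ for real $s$ to read off the real and imaginary parts.

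For part (iii), on $\{|\xi|>K\}$ I would invoke the analytic implicit function theorem in the scaled variable $u:=|\xi|^{-2}$: multiplying the characteristic equation by $u$ gives $u\lambda^2+\mu\lambda+c^2+2u=0$, which at $u=0$ reduces to $\mu\lambda+c^2=0$, i.e. $\lambda=-c^2/\mu$, and there $\partial_\lambda(u\lambda^2+\mu\lambda+c^2+2u)=2u\lambda+\mu=\mu\ne0$. Hence there is an analytic branch $\lambda_+(u)=-\tfrac{c^2}{\mu}+\sum_{j\ge1}c_ju^j$, that is $\lambda_+=-\tfrac{c^2}{\mu}+\sum_{j\ge1}c_j|\xi|^{-2j}$ (this is the bounded root; the other root, $\sim-\mu|\xi|^2$, is singular at $u=0$). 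The remaining eigenvalue then comes for free from $\lambda_++\lambda_-=-\mu|\xi|^2$, namely $\lambda_-=-\mu|\xi|^2+\tfrac{c^2}{\mu}-\sum_{j\ge1}c_j|\xi|^{-2j}$, with the same coefficients $c_j$. (Alternatively, factor $\mu s$ out of $\sqrt{\Delta(s)}$ and expand the remaining square root in powers of $1/s$.) For part (ii), on the compact annulus $\varepsilon_1\le|\xi|\le K$ I would argue softly rather than by expansion: for every $s>0$ the quadratic $\lambda^2+\mu s\lambda+(c^2s+2)$ has both lower-order coefficients positive, so by the Routh--Hurwitz criterion both roots lie in $\{\mathrm{Re}<0\}$ (directly: their product $c^2s+2>0$ and their sum $-\mu s<0$, so if real they are both negative, and if conjugate their real part is $-\tfrac{\mu}{2}s<0$), while $0$ is never a root since $c^2s+2\ne0$; since the roots depend continuously on $s$, the map $s\mapsto\mathrm{Re}\,\lambda_\pm(s)$ is continuous and strictly negative on the compact interval $[\varepsilon_1^2,K^2]$, hence attains there a maximum $-C<0$.

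The computation is essentially routine; the only point requiring care is the location of the transition value $s_\ast$ of the discriminant and the verification that the three regimes in the statement genuinely correspond to complex-conjugate, mixed, and real roots, so that in (i) and (iii) the expansions are taken about points at which the pertinent square root (or implicit-function branch) is truly analytic. The middle regime (ii) is the only one in which $\Delta$ may change sign, which is exactly why no single series expansion is available there and why I would instead close it with the compactness/Routh--Hurwitz argument above.
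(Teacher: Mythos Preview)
Your proof is correct. The paper does not actually prove this lemma; it is stated with a citation to \cite{Wang} and used as input, much as the analogous Lemma~\ref{l 31} for the Navier--Stokes Green's function is quoted from \cite{LW}. Your argument---tracking the sign of the discriminant $\Delta(s)=\mu^2s^2-4c^2s-8$ to separate the complex-conjugate and real regimes, expanding the explicit square root in (i), using the implicit function theorem in $u=|\xi|^{-2}$ (or equivalently the large-$s$ expansion of $\sqrt{\Delta}$) together with Vieta's relation $\lambda_++\lambda_-=-\mu|\xi|^2$ for (iii), and closing (ii) by Routh--Hurwitz plus compactness---is exactly the standard route and supplies a self-contained justification that the paper omits. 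Your observation that the real part in (i) is exactly $-\tfrac{\mu}{2}|\xi|^2$, so that the $a_j$ for $j\ge2$ may be taken to vanish, is also correct and worth noting.
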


From Lemma \ref{l 41}, we have
$$
\eta_+(\xi)=1/2+\sqrt{-1}O(|\xi|^2),\ \ \eta_-(\xi)=-1/2+\sqrt{-1}O(|\xi|^2),
\ \ \eta_0(\xi)=-\sqrt{-1}\frac{\sqrt{2}}{4}+\sqrt{-1}O(|\xi|^2).
$$
Thus, we get
\bes
\chi_1(\xi)\widehat{\mathbb{G}}^{+}=\left(
            \begin{array}{cc}
              \frac{1}{2}+\sqrt{-1}O(|\xi|^2) & -\frac{1}{2}\xi^\tau+O(|\xi|^3) \\
              -\frac{1}{2}(c^2+\frac{2}{|\xi|^2})\xi+O(|\xi|^3) & \frac{1}{2}\xi\xi^\tau/|\xi|^2+O(|\xi|^3) \\
            \end{array}
          \right)e^{-\frac{\mu}{2}|\xi|^2t+O(|\xi|^3)t}, \lbl{4.6}
\ees
\bes
\chi_1(\xi)\widehat{\mathbb{G}}^{-}=\left(
            \begin{array}{cc}
              -\frac{1}{2}+\sqrt{-1}O(|\xi|^2) & -\frac{1}{2}\xi^\tau+O(|\xi|^3) \\
             -\frac{1}{2}(c^2+\frac{2}{|\xi|^2})\xi+O(|\xi|^3) & -\frac{3}{2}\xi\xi^\tau/|\xi|^2+I+O(|\xi|^3)\\
            \end{array}
          \right)e^{-\frac{\mu}{2}|\xi|^2t+O(|\xi|^3)t}, \lbl{4.7}
\ees
and
\bes
\chi_1(\xi)\widehat{\mathbb{G}}^{0}=\left(
            \begin{array}{cc}
              0 & 0 \\
             0 & I-\frac{\xi\xi^\tau}{|\xi|^2}\\
            \end{array}
          \right)e^{-\mu_1|\xi|^2t}. \lbl{4.8}
\ees

By differentiating both sides of $\chi_1(\xi)\widehat{\mathbb{G}}_{11}^+$ and $\chi_1(\xi)\widehat{\mathbb{G}}_{12}^-$
with respect to $\xi$, because of the smoothness of
$\chi_1(\xi)\widehat{\mathbb{G}}_{11}^+$ and $\chi_1(\xi)\widehat{\mathbb{G}}_{11}^-$ near $|\xi|=0$, we can
immediately obtain the following lemma.

\begin{lemma}\lbl{l 42}  If $|\xi|$ is sufficiently small, then there exists a constant
$b>0$, such that
\bes
&&|D_\xi^\beta(\xi^\alpha\chi_1(\xi)\hat{\mathbb{G}}_{11}^+(\xi,t))|+|D_\xi^\beta(\xi^\alpha\chi_1(\xi)\hat{\mathbb{G}}_{11}^-(\xi,t))|\nm\\
&\leq&
C(|\xi|^{(|\alpha|-|\beta|)_+}+|\xi|^{|\alpha|}t^{|\beta|/2})(1+t|\xi|^2)^{|\beta|+1}e^{-b|\xi|^2t},\nm\\
&&|D_\xi^\beta(\xi^\alpha\chi_1(\xi)\hat{\mathbb{G}}_{12}^+(\xi,t))|+|D_\xi^\beta(\xi^\alpha\chi_1(\xi)\hat{\mathbb{G}}_{12}^-(\xi,t))|\nm\\
&\leq&
C(|\xi|^{(|\alpha|-|\beta|+1)_+}+|\xi|^{|\alpha|+1}t^{|\beta|/2})(1+t|\xi|^2)^{|\beta|+1}e^{-b|\xi|^2t}.
\ees
\end{lemma}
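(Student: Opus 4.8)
The plan is to derive the bounds in Lemma \ref{l 42} by direct differentiation of the explicit formulas \eqref{4.6}--\eqref{4.8}, using the analyticity of the symbols near $\xi=0$ together with the eigenvalue expansion in Lemma \ref{l 41}. First I would record the key structural fact: by Lemma \ref{l 41}(i), for $|\xi|<\varepsilon_1$ we have $\lambda_\pm(\xi)=-\tfrac{\mu}{2}|\xi|^2+O(|\xi|^4)\pm\sqrt{-1}\big(\sqrt2+O(|\xi|^2)\big)$, so $e^{\lambda_\pm t}=e^{-\frac{\mu}{2}|\xi|^2t}e^{\pm\sqrt{-1}\sqrt2\,t}\cdot e^{O(|\xi|^4)t\,+\,\sqrt{-1}O(|\xi|^2)t}$. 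The oscillatory prefactors $e^{\pm\sqrt{-1}\sqrt2 t}$ and the perturbation factor $e^{O(|\xi|^4)t+\sqrt{-1}O(|\xi|^2)t}$ are smooth in $\xi$; each $\xi$-derivative of the latter costs at most a factor $|\xi|t$ (from differentiating the $O(|\xi|^2)t$ term, which dominates $O(|\xi|^4)t$ near the origin), and this is the source of the $(1+t|\xi|^2)^{|\beta|+1}$ factor. Likewise, from $\eta_+=\tfrac12+\sqrt{-1}O(|\xi|^2)$ and $\eta_0=-\sqrt{-1}\tfrac{\sqrt2}{4}+\sqrt{-1}O(|\xi|^2)$ one reads off that $\chi_1(\xi)\hat{\mathbb{G}}^{\pm}_{11}=\big(\pm\tfrac12+\sqrt{-1}O(|\xi|^2)\big)e^{\lambda_\pm t}$ and $\chi_1(\xi)\hat{\mathbb{G}}^{\pm}_{12}=\big(\mp\tfrac12\xi^\tau+O(|\xi|^3)\big)e^{\lambda_\pm t}$; crucially these particular entries contain no negative powers of $|\xi|$, so they are genuinely analytic (not merely meromorphic) near $\xi=0$, which is exactly why the lemma singles out $\hat{\mathbb{G}}_{11}$ and $\hat{\mathbb{G}}_{12}$ rather than $\hat{\mathbb{G}}_{21}$ or $\hat{\mathbb{G}}_{22}$.

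Next I would carry out the Leibniz expansion. Write $\xi^\alpha\chi_1(\xi)\hat{\mathbb{G}}^{\pm}_{11}=P_\alpha(\xi)\,e^{-\frac{\mu}{2}|\xi|^2t}\,\psi(\xi,t)$, where $P_\alpha(\xi)=\xi^\alpha\chi_1(\xi)\cdot\big(\pm\tfrac12+\sqrt{-1}O(|\xi|^2)\big)$ is smooth and vanishes to order $|\alpha|$ at the origin, and $\psi(\xi,t)=e^{\pm\sqrt{-1}\sqrt2 t}e^{O(|\xi|^4)t+\sqrt{-1}O(|\xi|^2)t}$ collects the bounded oscillatory/perturbative factors. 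Applying $D_\xi^\beta$ and Leibniz, a term in which $\beta_1$ derivatives hit $P_\alpha$, $\beta_2$ hit the Gaussian $e^{-\frac{\mu}{2}|\xi|^2 t}$, and $\beta_3$ hit $\psi$ (with $\beta_1+\beta_2+\beta_3=\beta$) is bounded by
\begin{eqnarray*}
C\,|\xi|^{(|\alpha|-|\beta_1|)_+}\cdot\big(|\xi|t\big)^{|\beta_2|}\cdot\big(1+|\xi|t+|\xi|^3t\big)^{|\beta_3|}\cdot e^{-\frac{\mu}{2}|\xi|^2t},
\end{eqnarray*}
since each derivative of $P_\alpha$ lowers the vanishing order by at most one, each derivative of the Gaussian brings down a factor $\lesssim|\xi|t$, and each derivative of $\psi$ brings down a factor $\lesssim|\xi|t$. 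Summing over the finitely many splittings, using $|\xi|t\le (1+t|\xi|^2)(1+t)^{1/2}$-type interpolation and absorbing the half-powers of $t$ via the Gaussian into $t^{|\beta|/2}|\xi|^{|\alpha|}$ on the terms where $\beta_1$ is small, one obtains precisely
$$
|D_\xi^\beta(\xi^\alpha\chi_1(\xi)\hat{\mathbb{G}}^{\pm}_{11})|\le C\big(|\xi|^{(|\alpha|-|\beta|)_+}+|\xi|^{|\alpha|}t^{|\beta|/2}\big)(1+t|\xi|^2)^{|\beta|+1}e^{-b|\xi|^2t}
$$
with $b<\mu/2$ (the loss from $\mu/2$ to $b$ being needed to dominate polynomial-in-$t|\xi|^2$ growth of the extra factors and of $e^{O(|\xi|^4)t}$ on the compact support $|\xi|<\varepsilon_1$). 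The bound for $\hat{\mathbb{G}}^{\pm}_{12}$ is identical except that $P_\alpha$ now vanishes to order $|\alpha|+1$ (the extra $\xi^\tau$), which shifts every exponent by one and yields the stated $(|\alpha|-|\beta|+1)_+$ and $|\xi|^{|\alpha|+1}$.

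I expect the only real subtlety — rather than a genuine obstacle — to be bookkeeping the two competing bounds $|\xi|^{(|\alpha|-|\beta|)_+}$ and $|\xi|^{|\alpha|}t^{|\beta|/2}$: when $|\beta|\le|\alpha|$ the first is the natural estimate and no power of $t$ is needed, while when $|\beta|>|\alpha|$ one cannot lower the vanishing order below zero for free, so the surplus derivatives must fall on the exponential factors and each contributes a factor that, after using $a e^{-a}\le C$ with $a=b|\xi|^2t$, is controlled by $t^{1/2}|\xi|$ absorbed into $|\xi|^{|\alpha|}t^{|\beta|/2}(1+t|\xi|^2)$. Keeping the allocation of derivatives consistent so that exactly one unified bound covers both regimes is the step requiring care; everything else is the routine Leibniz/Gaussian estimation already standard in the cited works \cite{Wang,Wu1}. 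Finally, since $\chi_1$ is supported in $|\xi|\le 2\varepsilon_1$, all the $O(\cdot)$ remainders from Lemma \ref{l 41}(i) are uniformly valid on the support, so the constants $C$ and $b$ are independent of $\xi$ and $t$, completing the proof.
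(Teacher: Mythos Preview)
Your proposal is correct and follows exactly the approach the paper intends: the paper gives no detailed proof but simply states that, by differentiating $\chi_1(\xi)\hat{\mathbb{G}}_{11}^{\pm}$ and $\chi_1(\xi)\hat{\mathbb{G}}_{12}^{\pm}$ with respect to $\xi$ and using their smoothness near $|\xi|=0$, one can ``immediately obtain'' the lemma. Your Leibniz expansion, tracking of vanishing orders, and handling of the factors $|\xi|t$ coming from derivatives of the exponential are precisely the computation the paper omits, and your observation that the entries $\hat{\mathbb{G}}_{11}$, $\hat{\mathbb{G}}_{12}$ are singled out because they carry no negative powers of $|\xi|$ is the correct structural point.
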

Then, we will use the following key estimates for the Green's function like a Heat kernel.

\begin{lemma}\lbl{l 43}
If $\hat{f}(\xi,t)$ has compact support on
$\xi$ in $\mathbb{R}^n$, and satisfies for a constant $b>0$ that
$$
|D_\xi^\beta(\xi^\alpha\hat{f}(\xi,t)|\leq
C(|\xi|^{(|\alpha|+k-|\beta|)_+}
+|\xi|^{|\alpha|+k}t^{|\beta|/2})(1+(t|\xi|^2))^m \exp(-b|\xi|^2t),
$$
for any multi-indexes $\alpha, \beta$ with
$|\beta|\leq 2N$\ (the integer $N$ could be arbitrary large), then
$$
|D_x^\alpha f(x,t)|\leq C_N (1+t)^{-\frac{n+|\alpha|+k}{2}}\bigg(1+\frac{|x|^2}{1+t}\bigg)^{-N},
$$
where $k$ and $m$ are any fixed integers and $(a)_+=\max(0,a)$.
\end{lemma}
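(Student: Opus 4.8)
The plan is to estimate $D_x^\alpha f(x,t)=\frac{1}{(2\pi)^n}\int_{\mathbb{R}^n}(\sqrt{-1}\xi)^\alpha\hat f(\xi,t)e^{\sqrt{-1}x\cdot\xi}\,d\xi$ in two complementary ways — an unweighted bound and one weighted by $|x|^{2N}$ — and then glue them together according to whether $|x|^2\lesssim 1+t$ or not. First I would apply the hypothesis with $\beta=0$, which gives $|(\sqrt{-1}\xi)^\alpha\hat f(\xi,t)|\le C(|\xi|^{(|\alpha|+k)_+}+|\xi|^{|\alpha|+k})(1+t|\xi|^2)^m e^{-b|\xi|^2t}$ on the (bounded) support of $\hat f$. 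Splitting the $\xi$-integral into the range $0\le t\le1$, where it is plainly bounded by a constant, and $t\ge1$, where the substitution $\xi=\eta/\sqrt t$ turns the bound into $t^{-(n+|\alpha|+k)/2}\int_{\mathbb{R}^n}(|\eta|^{(|\alpha|+k)_+}+|\eta|^{|\alpha|+k})(1+|\eta|^2)^m e^{-b|\eta|^2}\,d\eta$ with a convergent $\eta$-integral, I obtain the unweighted estimate $|D_x^\alpha f(x,t)|\le C(1+t)^{-(n+|\alpha|+k)/2}$.

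Next I would produce the spatial decay by the standard device $|x|^{2N}e^{\sqrt{-1}x\cdot\xi}=(-\Delta_\xi)^N e^{\sqrt{-1}x\cdot\xi}$. Since the hypothesis with $\alpha$ fixed and $|\beta|\le 2N$ guarantees that $(\sqrt{-1}\xi)^\alpha\hat f(\cdot,t)$ is $C^{2N}$ with compact support, I can integrate by parts $2N$ times with no boundary terms to get $|x|^{2N}D_x^\alpha f(x,t)=\frac{1}{(2\pi)^n}\int_{\mathbb{R}^n}(-\Delta_\xi)^N\big[(\sqrt{-1}\xi)^\alpha\hat f(\xi,t)\big]e^{\sqrt{-1}x\cdot\xi}\,d\xi$. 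Writing $(-\Delta_\xi)^N$ as a finite combination of $D_\xi^\beta$ with $|\beta|=2N$ and invoking the hypothesis, the integrand is bounded by $C(|\xi|^{(|\alpha|+k-2N)_+}+|\xi|^{|\alpha|+k}t^{N})(1+t|\xi|^2)^m e^{-b|\xi|^2t}$. Splitting once more into $t\le1$ (bounded) and $t\ge1$ (rescale $\xi=\eta/\sqrt t$), the two resulting powers of $t$ are $t^{-n/2-(|\alpha|+k-2N)_+/2}$ and $t^{N-n/2-(|\alpha|+k)/2}$; an elementary case check on the sign of $|\alpha|+k-2N$ shows the second is always $\ge$ the first, hence dominates for $t\ge1$. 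This yields $|x|^{2N}|D_x^\alpha f(x,t)|\le C(1+t)^{N-(n+|\alpha|+k)/2}$.

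Finally I would combine the two: if $|x|^2\le1+t$ the unweighted bound already gives the claim, since $(1+|x|^2/(1+t))^{-N}\ge 2^{-N}$; if $|x|^2\ge1+t$, dividing the weighted bound by $|x|^{2N}$ and using $\tfrac{1+t}{|x|^2}\le \tfrac{2}{1+|x|^2/(1+t)}$ produces exactly $C_N(1+t)^{-(n+|\alpha|+k)/2}\big(1+|x|^2/(1+t)\big)^{-N}$, which is the asserted estimate.

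The only real work is the bookkeeping: checking that the $(\cdot)_+$ truncations never spoil the scaling — they are precisely what keeps the $\xi$-integrals convergent near the origin and what makes the weighted power of $t$ line up with the unweighted one — and confirming that the $2N$-fold integration by parts is legitimate at the claimed regularity $|\beta|\le2N$. I expect the trickiest bit of arithmetic to be verifying that $t^{N-n/2-(|\alpha|+k)/2}$ really dominates $t^{-n/2-(|\alpha|+k-2N)_+/2}$ for $t\ge1$ in the borderline case $|\alpha|+k<2N$: there the first exponent collapses to $-n/2$ while the second is strictly larger by $N-(|\alpha|+k)/2>0$, so the bound still closes but one must notice the slack rather than expecting equality of exponents.
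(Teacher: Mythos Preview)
Your proposal is correct and follows essentially the same route as the paper: multiply by $x^\beta$ (you use the specific choice $(-\Delta_\xi)^N$, the paper uses general $|\beta|=0$ or $|\beta|=2N$), integrate by parts in $\xi$, apply the hypothesis, rescale, and then choose $|\beta|=0$ for $|x|^2\le1+t$ and $|\beta|=2N$ otherwise. The only cosmetic difference is that you handle small $t$ by splitting $t\le1$ versus $t\ge1$, whereas the paper absorbs it by writing $e^{-b|\xi|^2t}=e^{-b|\xi|^2(1+t)}e^{b|\xi|^2}$ and using compact support to bound $e^{b|\xi|^2}$; the effect is the same.
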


\begin{proof} If $|\beta|<k+|\alpha|$, then we have by
direct calculation that
\bess
\begin{array}[b]{rl}
|x^\beta D_x^\alpha f(x,t)|=&\D C\bigg|\int e^{\sqrt{-1}x\cdot\xi}D^\beta\xi^\alpha\hat{f}(\xi,t)d\xi\bigg|\\
\leq &\D C\int|\xi|^{|\alpha|+k}(|\xi|^{-|\beta|}+t^{|\beta|/2})(1+(t|\xi|^2))^me^{-b|\xi|^2(1+t)}e^{b|\xi|^2}d\xi\\
\leq &\D C(1+t)^{-(|\alpha|+n+k-|\beta|)/2}.
\end{array}
\eess
If $|\beta|\geq k+|\alpha|$, one also can find that
\bess
\begin{array}[b]{rl}
|x^\beta D_x^\alpha f(x,t)|=&\D C|\int e^{\sqrt{-1}x\cdot\xi}D^\beta\xi^\alpha\hat{f}(\xi,t)d\xi|\\
\leq &\D C\int(|\xi|^{(|\alpha|+k-|\beta|)_+}+|\xi|^{|\alpha|+k}t^{|\beta|/2}))(1+(t|\xi|^2))^me^{-b|\xi|^2t}d\xi\\
= &\D C\int(|\xi|^{(|\alpha|+k-|\beta|)_+}+|\xi|^{|\alpha|+k}t^{|\beta|/2}))(1+(t|\xi|^2))^me^{-b|\xi|^2(1+t)}e^{b|\xi|^2}d\xi\\
\leq &\D C(1+t^{-(|\alpha|+k-|\beta|)/2})(1+t)^{-n/2}\\
=&\D C(1+t)^{|\beta|/2}(1+t)^{-(|\alpha|+n+k)/2}\bigg(\frac{t}{1+t}\bigg)^{(|\alpha|+k)/2}\\
\leq &\D C(1+t)^{|\beta|/2}(1+t)^{-(|\alpha|+n+k)/2}
\end{array}
\eess

Let $|\beta|=0$ when $|x|^2\leq 1+t$, and $|\beta|=2N$ when
$|x|^2>1+t$, we get
$$
|D_x^\alpha f(x,t)|\leq C(1+t)^{-(|\alpha|+n+k)/2}\min\bigg\{1,\bigg(\frac{1+t}{|x|^2}\bigg)^N\bigg\}.
$$
Since
$$
1+\frac{|x|^2}{1+t}\leq 2\left\{\begin{array}{l}1,\ \ \ \ \ \ \ |x|^2\leq1+t,\\
\frac{|x|^2}{1+t},\ \ \ |x|^2>1+t,
\end{array}
\right.
$$
we have
$$
\min\bigg\{1,\bigg(\frac{1+t}{|x|^2}\bigg)^N\bigg\}\leq \frac{2^N}{(1+\frac{|x|^2}{1+t})^N}=2^N\bigg(1+\frac{|x|^2}{1+t}\bigg)^{-N}.
$$
This completes the proof.
\end{proof}

Notice that when $\hat{f}(\xi,t)$ has not compact support in
$\xi$, the result in Lemma \ref{l 43} should be
\bes
|D_x^\alpha f(x,t)|\leq C_N t^{-\frac{n+|\alpha|+k}{2}}\bigg(1+\frac{|x|^2}{1+t}\bigg)^{-N}.\lbl{4.9}
\ees

From Lemma \ref{l 42} and \ref{l 43}, we can immediately give the following estimates for $\chi_1(D)\widehat{\mathbb{G}}_{11}$ and $\chi_1(D)\widehat{\mathbb{G}}_{12}$.

\begin{proposition}\lbl{l 44} When $|\xi|$ is sufficiently small, we have for any $|\alpha|\geq0$
and any integer $N>0$,
\bes
&&|D_x^\alpha (\chi_1(D)\mathbb{G}_{11}(x,t))|\leq C
(1+t)^{-(n+|\alpha|)/2}\bigg(1+\frac{|x|^2}{1+t}\bigg)^{-N}, \\
&&|D_x^\alpha (\chi_1(D)\mathbb{G}_{12}(x,t))|\leq C
(1+t)^{-(n+1+|\alpha|)/2}\bigg(1+\frac{|x|^2}{1+t}\bigg)^{-N}.
\ees
\end{proposition}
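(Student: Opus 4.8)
The statement is a routine consequence of combining the Fourier-side bounds of Lemma \ref{l 42} with the heat-kernel transfer Lemma \ref{l 43}, so the plan is essentially to package the two correctly. First I would split the long-wave symbol into its two spectral branches: from (\ref{4.5}) and the $\hat{\mathbb{G}}^{\pm}$ representation, $\chi_1(\xi)\hat{\mathbb{G}}_{11}(\xi,t)$ is a linear combination (up to signs) of $\chi_1(\xi)\hat{\mathbb{G}}_{11}^{+}(\xi,t)$ and $\chi_1(\xi)\hat{\mathbb{G}}_{11}^{-}(\xi,t)$, and likewise $\chi_1(\xi)\hat{\mathbb{G}}_{12}$ splits into $\chi_1(\xi)\hat{\mathbb{G}}_{12}^{\pm}$. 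Each of these four pieces is supported in $\{|\xi|\le 2\varepsilon_1\}$ because of the cut-off $\chi_1$, hence it is a legitimate input for the compact-support version of Lemma \ref{l 43} (not the weaker bound (\ref{4.9})). Thus it suffices to estimate each piece separately and add.

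For the $\mathbb{G}_{11}$ pieces I would apply Lemma \ref{l 42} with the derivative multi-index $\alpha$ and an arbitrary test multi-index $\beta$ with $|\beta|\le 2N$, which gives
\bess
|D_\xi^\beta(\xi^\alpha\chi_1(\xi)\hat{\mathbb{G}}_{11}^{\pm}(\xi,t))|\le C(|\xi|^{(|\alpha|-|\beta|)_+}+|\xi|^{|\alpha|}t^{|\beta|/2})(1+t|\xi|^2)^{|\beta|+1}e^{-b|\xi|^2t}.
\eess
This is exactly the hypothesis of Lemma \ref{l 43} with $k=0$ and fixed integer $m=2N+1$ (admissible since $|\beta|+1\le 2N+1$ throughout), so Lemma \ref{l 43} yields $|D_x^\alpha(\chi_1(D)\mathbb{G}_{11}^{\pm}(x,t))|\le C_N(1+t)^{-(n+|\alpha|)/2}(1+|x|^2/(1+t))^{-N}$; summing the two branches gives the first asserted bound. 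For the $\mathbb{G}_{12}$ pieces the computation is identical, except that Lemma \ref{l 42} now carries one extra power of $|\xi|$, i.e.\ it matches the hypothesis of Lemma \ref{l 43} with $k=1$, so Lemma \ref{l 43} returns the improved rate $(1+t)^{-(n+1+|\alpha|)/2}$, which is the second asserted bound.

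The one point deserving care — and the closest thing to an obstacle — is the factor $(1+t|\xi|^2)^{|\beta|+1}$ in Lemma \ref{l 42}: one must check it does not spoil the Gaussian decay needed by Lemma \ref{l 43}. This is handled by the elementary inequality $(1+t|\xi|^2)^m e^{-b|\xi|^2t}\le C_m e^{-b'|\xi|^2t}$ for any $0<b'<b$, which is precisely the $e^{-b|\xi|^2(1+t)}e^{b|\xi|^2}$ manipulation already carried out inside the proof of Lemma \ref{l 43}; so the hypotheses hold with the slightly smaller decay constant $b'$ and nothing is lost. Apart from this, the argument is bookkeeping: the genuine work was done upstream, in deriving the explicit form (\ref{4.5}), the spectral expansions of Lemma \ref{l 41}, and the symbol estimates of Lemma \ref{l 42}.
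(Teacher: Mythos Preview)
Your proposal is correct and follows exactly the route the paper indicates: the paper simply states that the estimates follow ``immediately'' from Lemma \ref{l 42} and Lemma \ref{l 43}, and you have spelled out precisely how (splitting into the $\pm$ spectral branches, reading off $k=0$ for $\mathbb{G}_{11}$ and $k=1$ for $\mathbb{G}_{12}$, and choosing a uniform $m=2N+1$). Your remark about the $(1+t|\xi|^2)^{|\beta|+1}$ factor is already anticipated in the hypothesis of Lemma \ref{l 43}, so no separate absorption argument is needed, but it does no harm.
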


Two remain terms $\chi_1(D)\hat{\mathbb{G}}_{21}(\xi,t)$ and
$\chi_1(D)\hat{\mathbb{G}}_{22}(\xi,t)$ containing Calderon-Zygmund operators $R_{i}$ and  $R_{ij}$
with the symbols $\frac{\xi_i}{|\xi|^2}$ and $\frac{\xi_i\xi_j}{|\xi|^2}$ respectively can be estimated by the following two lemmas.

\begin{lemma}\lbl{l 45} \cite{HZ}
When $|\xi|$ is sufficiently small in $\mathbb{R}^n$, it holds that
$C>0$ that
\bess
 |D_x^\alpha (R_{ij}\ast \chi_1(D)H(x,t))|\leq C(\alpha)(1+t)^{-\frac{n+|\alpha|}{2}}\bigg(1+\frac{|x|^2}{1+t}\bigg)^{-\frac{n+|\alpha|}{2}},
\eess
 where $H(x,t)$ is the heat kernel.
\end{lemma}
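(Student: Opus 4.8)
The plan is to use the Calderon--Zygmund structure of $R_{ij}$ together with the cancellation carried by $D_x^\alpha$, and to collapse the estimate to a fixed, time‑independent model estimate by parabolic scaling. Write $\xi_i\xi_j/|\xi|^2=\tfrac{1}{n}\delta_{ij}+m_{ij}(\xi)$, where $m_{ij}$ is homogeneous of degree $0$, smooth on $\mathbb{R}^n\setminus\{0\}$, and has mean zero over $S^{n-1}$; consequently $R_{ij}=\tfrac{1}{n}\delta_{ij}\,\mathrm{Id}+T_{ij}$, where $T_{ij}f=\mathrm{p.v.}\!\int |x-y|^{-n}\,\Omega_{ij}\!\left(\tfrac{x-y}{|x-y|}\right)f(y)\,dy$ with $\Omega_{ij}$ smooth on $S^{n-1}$ and $\int_{S^{n-1}}\Omega_{ij}=0$. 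Set $\phi(\cdot,t):=\chi_1(D)H(\cdot,t)$. Since $\widehat{\phi}=\chi_1(\xi)e^{-|\xi|^2t}$ has compact support and satisfies the hypotheses of Lemma \ref{l 43} with $k=0$ and $N$ arbitrary, we get $|D_y^\beta\phi(y,t)|\le C_{\beta,N}(1+t)^{-(n+|\beta|)/2}\big(1+|y|^2/(1+t)\big)^{-N}$ for all $\beta$. For the term $\tfrac1n\delta_{ij}\phi$ there is nothing to prove: $D_x^\alpha(\tfrac1n\delta_{ij}\phi)$ obeys directly the bound of Lemma \ref{l 43}, which is stronger than the assertion. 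So it remains to estimate $D_x^\alpha(T_{ij}\phi)=T_{ij}(D_x^\alpha\phi)$, using that $T_{ij}$ is a convolution and hence commutes with $D_x^\alpha$.

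Put $\psi:=D_x^\alpha\phi$. Two features of $\psi$ will be used: the size/regularity bounds $|D_y^\gamma\psi(y,t)|\le C_{\gamma,N}(1+t)^{-(n+|\alpha|+|\gamma|)/2}\big(1+|y|^2/(1+t)\big)^{-N}$, and the vanishing moments $\int y^\gamma\psi(y,t)\,dy=0$ for every $\gamma$ with $|\gamma|<|\alpha|$ (integrate by parts, using the rapid decay of $\phi$). Now rescale: set $\rho:=\sqrt{1+t}$ and $\Psi(z;t):=\rho^{\,n+|\alpha|}\psi(\rho z,t)$. Then $\Psi(\cdot;t)$ and all its derivatives are bounded, uniformly in $t$, by $C_N(1+|z|^2)^{-N}$, and $\int z^\gamma\Psi(z;t)\,dz=0$ for $|\gamma|<|\alpha|$, uniformly in $t$. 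Since the kernel of $T_{ij}$ is homogeneous of degree $-n$, a change of variables gives $(T_{ij}\psi)(x,t)=\rho^{-(n+|\alpha|)}(T_{ij}\Psi(\cdot;t))(x/\rho)$. Because $(1+|x|/\sqrt{1+t})^{-(n+|\alpha|)}\asymp(1+|x|^2/(1+t))^{-(n+|\alpha|)/2}$, the lemma reduces to the time‑independent claim: if $g$ is bounded with all derivatives by $C(1+|z|^2)^{-N}$ ($N$ large) and has vanishing moments up to order $|\alpha|-1$, then $|(T_{ij}g)(w)|\le C(1+|w|)^{-(n+|\alpha|)}$.

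This model estimate is the classical "singular integral of a smooth, rapidly decaying function with cancellation." For $|w|\le 2$ it is immediate, since $T_{ij}g$ is smooth and bounded. For $|w|>2$, I would split the integral over $\{|z|\le|w|/2\}$, $\{|z-w|\le|w|/2\}$ and the remainder. On $\{|z|\le|w|/2\}$ one subtracts from $z\mapsto|w-z|^{-n}\Omega_{ij}\!\left(\tfrac{w-z}{|w-z|}\right)$ its degree‑$(|\alpha|-1)$ Taylor polynomial in $z$ at the origin: the polynomial pairs to zero with $g$ by the moment conditions, while the Taylor remainder is $O(|z|^{|\alpha|}|w|^{-n-|\alpha|})$ (homogeneity $-n$ plus smoothness away from $0$), giving a contribution $\le C|w|^{-n-|\alpha|}\int|z|^{|\alpha|}|g(z)|\,dz\le C|w|^{-n-|\alpha|}$. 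On $\{|z-w|\le|w|/2\}$ one uses the principal value and $\int_{S^{n-1}}\Omega_{ij}=0$ to replace $g(z)$ by $g(z)-g(w)=O(|z-w|)$ times the (tiny, $\le C_N|w|^{-2N}$) sup of $\nabla g$ near $w$; the integrand $\sim|z-w|^{1-n}$ is integrable and the term is $\le C_N|w|^{1-2N}$. The remaining region, where $|z|>|w|/2$ and $|z-w|>|w|/2$, is handled by brute force from the rapid decay of $g$. Taking $N$ large yields $|(T_{ij}g)(w)|\le C(1+|w|)^{-(n+|\alpha|)}$, and undoing the scaling proves the lemma.

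I expect the main obstacle to be obtaining the \emph{sharp} spatial rate $-(n+|\alpha|)/2$ rather than a weaker one: this is precisely where the $|\alpha|$ vanishing moments of $D_x^\alpha(\chi_1(D)H)$ are essential, through the Taylor‑cancellation argument on $\{|z|\le|w|/2\}$, and where the near‑diagonal principal‑value piece forces the combined use of $\int_{S^{n-1}}\Omega_{ij}=0$ and the $C^1$ bound on the rescaled profile. (An alternative, entirely Fourier‑side route would split $\{|\xi|\le\delta\}$ from $\{\delta\le|\xi|\le 2\varepsilon_1\}$ and bound $x^\beta D_x^\alpha(\,\cdot\,)$ by integration‑by‑parts/contour arguments as in \cite{HZ}, avoiding the explicit kernel at the cost of comparable bookkeeping for the non‑smooth multiplier near $\xi=0$.)
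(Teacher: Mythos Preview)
The paper does not supply its own proof of this lemma: it is quoted from \cite{HZ} and used as a black box. So there is nothing in the paper to compare against beyond the citation. Your argument is correct and is essentially the standard Calder\'on--Zygmund route one finds in \cite{HZ}: after the parabolic rescaling and the split $R_{ij}=\tfrac1n\delta_{ij}\mathrm{Id}+T_{ij}$, the sharp spatial exponent $-(n+|\alpha|)/2$ comes out of the Taylor-cancellation argument on $\{|z|\le|w|/2\}$, and this is exactly where the $|\alpha|$ vanishing moments of $D_x^\alpha(\chi_1(D)H)$ (equivalently, the vanishing of $D_\xi^\gamma\big[(i\xi)^\alpha\chi_1(\xi)e^{-|\xi|^2t}\big]$ at $\xi=0$ for $|\gamma|<|\alpha|$) are indispensable. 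The near-diagonal and far-field pieces are handled as you indicate, using $\int_{S^{n-1}}\Omega_{ij}=0$ and the rapid decay of the rescaled profile; no gap there.

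One small remark: your moment claim $\int y^\gamma D_x^\alpha\phi\,dy=0$ for $|\gamma|<|\alpha|$ is cleanest justified on the Fourier side (Leibniz on $(i\xi)^\alpha\chi_1(\xi)e^{-|\xi|^2t}$ and evaluation at $\xi=0$), since the real-side integration-by-parts argument you hint at requires componentwise care. The alternative Fourier-splitting route you mention at the end is precisely the style of argument \cite{HZ} uses, so either path is acceptable here.
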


Next, for the term $R_i(x,t)$ in $\chi_1(D)\widehat{\mathbb{G}}_{21}$, by using the same method in Lemma \ref{l 44}, we have the following.

\begin{lemma}\lbl{l 46}
When $|\xi|$ is sufficiently small and $x\in\mathbb{R}^n$, we have for some
$C>0$ that
 \bess
 |D_x^\alpha (R_{i}\ast \chi_1(D)H(x,t))|\leq C(1+t)^{-\frac{n-1+|\alpha|}{2}}\bigg(1+\frac{|x|^2}{1+t}\bigg)^{-\frac{n-1+|\alpha|}{2}}.
\eess
 \end{lemma}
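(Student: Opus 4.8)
The plan is to prove Lemma \ref{l 46} by trading the singular factor $\xi_i/|\xi|^2$ for a time integral of honest (smooth, compactly supported) symbols, so that Lemma \ref{l 43} applies after all. Concretely, I would use the elementary identity $\frac{1}{|\xi|^2}e^{-b|\xi|^2 t}=\frac1b\int_t^{\infty}e^{-b|\xi|^2 s}\,ds$ for $\xi\neq0$. Since the symbol of $R_i\ast\chi_1(D)H$ is $\chi_1(\xi)\frac{\xi_i}{|\xi|^2}e^{-b|\xi|^2 t}$ and $\chi_1$ is supported in a fixed small ball, Fubini gives the representation $R_i\ast\chi_1(D)H(x,t)=\frac1b\int_t^{\infty}g(x,s)\,ds$, where $\hat g(\xi,s)=\chi_1(\xi)\,\xi_i\,e^{-b|\xi|^2 s}$; the $s$-integral converges absolutely in a neighbourhood of the origin because of the Gaussian decay.

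The second step is to estimate the building block $g(\cdot,s)$ uniformly for $s\ge1$. The symbol $\chi_1(\xi)\xi_i e^{-b|\xi|^2 s}$ is smooth and compactly supported, and its $\xi$-derivatives obey the hypothesis of Lemma \ref{l 43} with $k=1$ (the factor $\xi_i$ supplies the extra power of $|\xi|$, while the Gaussian contributes the standard $(1+s|\xi|^2)^m$ growth after differentiation). Hence, for every integer $N$, $|D_x^\alpha g(x,s)|\le C_N(1+s)^{-\frac{n+1+|\alpha|}{2}}\big(1+\frac{|x|^2}{1+s}\big)^{-N}$, and inserting this into the time integral yields $|D_x^\alpha(R_i\ast\chi_1(D)H)(x,t)|\le C_N\int_t^{\infty}(1+s)^{-\frac{n+1+|\alpha|}{2}}\big(1+\frac{|x|^2}{1+s}\big)^{-N}\,ds$, reducing everything to a one-dimensional integral.

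The evaluation of that integral is the last step, and it is split into the two regimes. When $|x|^2\le 1+t$ one discards the spatial factor and uses $\int_t^{\infty}(1+s)^{-\frac{n+1+|\alpha|}{2}}\,ds\le C(1+t)^{-\frac{n-1+|\alpha|}{2}}$ (the exponent exceeds $1$ since $n\ge2$), which in this regime is comparable to $(1+t)^{-\frac{n-1+|\alpha|}{2}}\big(1+\frac{|x|^2}{1+t}\big)^{-\frac{n-1+|\alpha|}{2}}$. When $|x|^2>1+t$ I would fix $N>\frac{n-1+|\alpha|}{2}$ and split the $s$-integral at $s\sim|x|^2$: on $[t,|x|^2]$ the bound $\big(1+\frac{|x|^2}{1+s}\big)^{-N}\le (1+s)^N|x|^{-2N}$ turns the integral into $|x|^{-2N}\int_t^{|x|^2}(1+s)^{N-\frac{n+1+|\alpha|}{2}}\,ds\le C|x|^{-(n-1+|\alpha|)}$, and on $[|x|^2,\infty)$ the spatial factor is bounded and the remaining temporal integral is again $\le C|x|^{-(n-1+|\alpha|)}$; since $|x|^{-(n-1+|\alpha|)}\approx (1+t)^{-\frac{n-1+|\alpha|}{2}}\big(1+\frac{|x|^2}{1+t}\big)^{-\frac{n-1+|\alpha|}{2}}$ here, the two cases combine to the stated estimate. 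The main subtlety I expect is exactly this: a direct imitation of Lemma \ref{l 44}, i.e. differentiating $\xi_i/|\xi|^2$ only up to the order at which $|\xi|^{-1-|\beta|}$ remains integrable, would cost a factor $\big(1+\frac{|x|^2}{1+t}\big)^{1/2}$ in the spatial decay, so it is crucial to use the time-integral representation together with the fact that each $g(\cdot,s)$ enjoys arbitrarily fast spatial decay, which is what upgrades the exponent to the sharp value $-\frac{n-1+|\alpha|}{2}$.
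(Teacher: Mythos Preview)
Your argument is correct. The paper itself does not spell out a proof here: it only says ``by using the same method in Lemma~\ref{l 44}'' and states the result. Your device of writing $\frac{\xi_i}{|\xi|^2}e^{-b|\xi|^2 t}$ as a time integral $\xi_i\int_t^\infty e^{-b|\xi|^2 s}\,ds$ (up to a harmless constant; the factor should be $b$ rather than $1/b$), so that at each fixed $s$ the symbol is smooth, compactly supported, and carries an extra factor $\xi_i$, is exactly what allows Lemma~\ref{l 43} to be invoked with $k=1$ and arbitrary $N$. This is the standard way to handle such operators and is precisely the mechanism behind the companion estimate for $R_{ij}$ in Lemma~\ref{l 45}, cited from Hoff--Zumbrun~\cite{HZ}; so your approach is the natural realization of what the paper leaves implicit.

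Your final remark is also accurate and worth keeping: differentiating the singular symbol $\xi_i/|\xi|^2$ directly in the style of the proof of Lemma~\ref{l 43} would cap the admissible number of $\xi$-derivatives at $|\beta|\le n-2+|\alpha|$ by integrability near $\xi=0$, and this loses exactly the half-power of $\big(1+\frac{|x|^2}{1+t}\big)$ that the time-integral representation recovers.
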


\noindent{\bf Remark 4.1.} This estimate is crucial to help us deduce the asymptotic profile $\big(1+\frac{|x|^2}{1+t}\big)^{-\frac{n}{2}}$ in the pointwise estimates, which is an improvement of those in \cite{Wang} for the Navier-Stokes-Poisson system in $\mathbb{R}^n$, where the asymptotic profile for the velocity (or the momentum) is $(1+t)^{-\frac{n-1}{2}}\big(1+\frac{|x|^2}{1+t}\big)^{-\frac{n-1}{2}}$.

Hence, we have the following pointwise estimates for $\chi_1(D)\mathbb{G}_{21}$ and $\chi_1(D)\mathbb{G}_{22}$.

 \begin{proposition}\lbl{l 47}
When $|\xi|$ is sufficiently small in $\mathbb{R}^n$, we have for any $|\alpha|\geq0$ that
\bess
&&|D_x^\alpha (\chi_1(D)\mathbb{G}_{21}(x,t))|\leq C
(1+t)^{-\frac{n-1+|\alpha|}{2}}\bigg(1+\frac{|x|^2}{1+t}\bigg)^{-\frac{n-1+|\alpha|}{2}}, \nm\\
&&|D_x^\alpha (\chi_1(D)\mathbb{G}_{22}(x,t))|\leq C
(1+t)^{-\frac{n+|\alpha|}{2}}\bigg(1+\frac{|x|^2}{1+t}\bigg)^{-\frac{n+|\alpha|}{2}}.
\lbl{2.19}
\eess
\end{proposition}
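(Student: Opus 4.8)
\textbf{Proof proposal for Proposition \ref{l 47}.}

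The plan is to decompose $\chi_1(\xi)\hat{\mathbb{G}}_{21}$ and $\chi_1(\xi)\hat{\mathbb{G}}_{22}$ exactly along the explicit formulas (\ref{4.6})--(\ref{4.8}), separating the genuinely non-local pieces carrying the singular symbols $\xi_i/|\xi|^2$ and $\xi_i\xi_j/|\xi|^2$ from the regular remainders that carry extra powers of $|\xi|$, and then to invoke Lemma \ref{l 46} (resp.\ Lemma \ref{l 45}) for the non-local parts and Lemmas \ref{l 42}--\ref{l 43} for the remainders. A preliminary observation makes this work: by Lemma \ref{l 41}(i) one may write $e^{\lambda_\pm t}=e^{\pm\sqrt{-1}\sqrt2\,t}\,e^{-\frac{\mu}{2}|\xi|^2t}\,r_\pm(\xi,t)$ on $\{|\xi|<\varepsilon_1\}$, where $r_\pm(\xi,t)=e^{O(|\xi|^4)t\pm\sqrt{-1}O(|\xi|^2)t}$ is bounded with $\xi$-derivatives controlled by $(1+t|\xi|^2)^m$-type weights. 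The scalar factor $e^{\pm\sqrt{-1}\sqrt2\,t}$ has modulus one and is independent of $\xi$, so it is irrelevant for pointwise bounds; this is exactly why the long wave of the NSP Green's function produces only a D-wave and no Huygens' wave (the oscillation lives in time, not in the frequency $c|\xi|$).

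For $\chi_1(D)\mathbb{G}_{21}$: from (\ref{4.6})--(\ref{4.7}), $\chi_1(\xi)(\hat{\mathbb{G}}_{21}^{+}+\hat{\mathbb{G}}_{21}^{-})=-\tfrac12\bigl(c^2+\tfrac{2}{|\xi|^2}\bigr)\xi\bigl(e^{\lambda_+t}+e^{\lambda_-t}\bigr)+O(|\xi|^3)\bigl(e^{\lambda_+t}+e^{\lambda_-t}\bigr)$. The piece $-\tfrac{c^2}{2}\xi\,e^{\lambda_\pm t}$ has symbol $\sim|\xi|\,e^{-b|\xi|^2t}\cdot(\text{bounded})$, so Lemma \ref{l 43} (with $k=1$) bounds its inverse transform by $C(1+t)^{-\frac{n+1+|\alpha|}{2}}\bigl(1+\frac{|x|^2}{1+t}\bigr)^{-N}$, and the $O(|\xi|^3)$ remainder likewise gives $C(1+t)^{-\frac{n+2+|\alpha|}{2}}\bigl(1+\frac{|x|^2}{1+t}\bigr)^{-N}$; both are dominated by the claimed bound. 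The leading singular piece is $-\tfrac{\xi_i}{|\xi|^2}\,e^{\lambda_\pm t}=R_i$ applied to $\chi_1(D)$ of a function whose symbol is $-e^{\lambda_\pm t}$, i.e.\ heat-kernel-like (it satisfies the hypothesis of Lemma \ref{l 43} with $k=0$), so Lemma \ref{l 46} gives
\[
|D_x^\alpha(\text{singular part of }\chi_1(D)\mathbb{G}_{21})|\le C(1+t)^{-\frac{n-1+|\alpha|}{2}}\Bigl(1+\tfrac{|x|^2}{1+t}\Bigr)^{-\frac{n-1+|\alpha|}{2}}.
\]
Summing the three contributions yields the first estimate.

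For $\chi_1(D)\mathbb{G}_{22}$: from (\ref{4.6})--(\ref{4.8}), $\chi_1(\xi)\hat{\mathbb{G}}_{22}$ splits as (a) $e^{-\mu_1|\xi|^2t}\bigl(I-\tfrac{\xi\xi^\tau}{|\xi|^2}\bigr)$, (b) $\tfrac{\xi\xi^\tau}{|\xi|^2}\bigl(\eta_+e^{\lambda_+t}-\eta_-e^{\lambda_-t}\bigr)$, and (c) an $O(|\xi|^3)$ remainder. In (a) the $I$-part is $\chi_1(D)$ of a pure heat kernel, bounded by Lemma \ref{l 43} by $C(1+t)^{-\frac{n+|\alpha|}{2}}\bigl(1+\frac{|x|^2}{1+t}\bigr)^{-N}$, while $\tfrac{\xi\xi^\tau}{|\xi|^2}e^{-\mu_1|\xi|^2t}$ is $R_{ij}\ast\chi_1(D)H$, so Lemma \ref{l 45} gives exactly $C(1+t)^{-\frac{n+|\alpha|}{2}}\bigl(1+\frac{|x|^2}{1+t}\bigr)^{-\frac{n+|\alpha|}{2}}$. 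In (b), since $\eta_\pm=\pm\tfrac12+\sqrt{-1}O(|\xi|^2)$ are smooth and bounded and $e^{\lambda_\pm t}$ factors as above, the term is again $R_{ij}$ applied to $\chi_1(D)$ of a heat-kernel-like function, so Lemma \ref{l 45} applies once more; the $O(|\xi|^3)$ remainder (c) is controlled by Lemma \ref{l 43} with $k=2$ and is strictly better. Collecting (a)--(c) gives the second estimate, and the proposition follows.

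The point that needs genuine care — though once dispatched it leaves no further difficulty — is verifying that all the non-singular correction factors produced by Lemma \ref{l 41}(i) (the purely temporal oscillation $e^{\pm\sqrt{-1}\sqrt2\,t}$, the $e^{\pm\sqrt{-1}O(|\xi|^2)t}$ twist, and the $e^{O(|\xi|^4)t}$ growth) preserve the symbol estimate required by Lemma \ref{l 43}. The key fact is that on ${\rm supp}\,\chi_1$ one has $|\xi|^3t\le\varepsilon_1|\xi|^2t$ and $|\xi|^4t\le\varepsilon_1^2|\xi|^2t$, so $e^{O(|\xi|^4)t}$ is absorbed into a Gaussian $e^{-b|\xi|^2t}$ with a slightly smaller $b$, while the oscillatory factors, being bounded with $\beta$-th $\xi$-derivatives of size $O((1+t|\xi|^2)^{|\beta|})$, do not disturb the conclusion. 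Apart from this, the only genuinely new ingredient relative to the Navier--Stokes long-wave analysis of Section 3 is the presence of the non-local operators $R_i$ and $R_{ij}$, which are handled precisely by Lemmas \ref{l 46} and \ref{l 45}.
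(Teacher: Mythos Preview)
Your proposal is correct and follows essentially the same approach as the paper: the paper's ``proof'' is just the sentence ``Hence, we have the following pointwise estimates\ldots'', relying on the decomposition (\ref{4.6})--(\ref{4.8}) together with Lemmas \ref{l 45} and \ref{l 46} for the singular $R_{ij}$ and $R_i$ pieces and Lemmas \ref{l 42}--\ref{l 43} for the smooth remainders. You have simply spelled out in detail what the paper leaves implicit, including the (correct) observation that the time-only oscillation $e^{\pm\sqrt{-1}\sqrt{2}\,t}$ and the $e^{O(|\xi|^4)t}$, $e^{\pm\sqrt{-1}O(|\xi|^2)t}$ corrections do not disturb the symbol bounds needed to invoke those lemmas.
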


The following proposition gives the pointwise estimates of the Green's function in the middle part, which is a standard result for the pointwise estimates as in \cite{Wang, Wu2}.

\begin{proposition}\lbl{l 48} For fixed $\varepsilon$ and $R$ defined in the cut-off functions, there exists a positive constant $c_0$ and $C$ such that
$$
|D_x^\alpha (\chi_2(D_x)\mathbb{G}(x,t))|\leq Ce^{-c_0t}\bigg(1+\frac{|x|^2}{1+t}\bigg)^{-N},\ {\rm for\ any\ integer}\ N>0.
$$
\end{proposition}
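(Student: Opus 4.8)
The plan is to combine the spectral gap of Lemma \ref{l 41}(ii) with the standard ``smooth, compactly supported symbol with exponentially small sup-norm'' argument in frequency space. On the support of $\chi_2$ we have $\varepsilon_1\le|\xi|\le K$, so the Calderon--Zygmund symbols $\xi/|\xi|^2$ and $\xi\xi^\tau/|\xi|^2$ appearing in $\hat{\mathbb{G}}(\xi,t)$ in (\ref{4.5}) are smooth there (the only singularity, at $\xi=0$, has been cut off). The remaining potential obstruction to smoothness of $\hat{\mathbb{G}}$ in $\xi$ is a coincidence $\lambda_+(\xi)=\lambda_-(\xi)$ (a double root of (\ref{4.2})), where $\eta_0=(\lambda_+-\lambda_-)^{-1}$ is singular; but the entries of $\hat{\mathbb{G}}$ enter only through the divided differences $(e^{\lambda_+t}-e^{\lambda_-t})/(\lambda_+-\lambda_-)$ and $(\lambda_+e^{\lambda_-t}-\lambda_-e^{\lambda_+t})/(\lambda_+-\lambda_-)$, i.e. through symmetric functions of $\lambda_\pm$, which extend analytically across such points — exactly the observation recorded in Lemma \ref{l 314} for the Navier--Stokes case. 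Hence $\chi_2(\xi)\hat{\mathbb{G}}(\xi,t)$ is $C^\infty$ and compactly supported in $\xi$ for each $t$.

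Next I would record the decay estimate on the symbol. By Lemma \ref{l 41}(ii), $\mathrm{Re}\,\lambda_\pm(\xi)\le -C_1$ on $\varepsilon_1\le|\xi|\le K$, so $|e^{\lambda_\pm(\xi)t}|\le e^{-C_1 t}$ there. Differentiating $e^{\lambda_\pm(\xi)t}$ in $\xi$ brings down factors $t\,\nabla_\xi\lambda_\pm(\xi)$, which are bounded on the annulus; together with the boundedness of all $\xi$-derivatives of the (smooth) matrix factors, this gives, for every multi-index $\beta$,
\[
\left|D_\xi^\beta\big(\chi_2(\xi)\hat{\mathbb{G}}(\xi,t)\big)\right|\le C_\beta(1+t)^{|\beta|}e^{-C_1 t},
\]
uniformly in $\xi$.

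Then I would proceed as usual: write $\chi_2(D_x)\mathbb{G}(x,t)=(2\pi)^{-n}\int \chi_2(\xi)\hat{\mathbb{G}}(\xi,t)e^{\sqrt{-1}x\cdot\xi}\,d\xi$, multiply by $x^\beta$, integrate by parts $|\beta|$ times in $\xi$ (no boundary terms, since the integrand has compact support in $\xi$), and apply $D_x^\alpha$ (which costs a factor $\xi^\alpha$, harmless on the annulus) to obtain
\[
\left|x^\beta D_x^\alpha\,\chi_2(D_x)\mathbb{G}(x,t)\right|\le C_{\alpha,\beta}(1+t)^{|\alpha|+|\beta|}e^{-C_1 t}.
\]
Choosing $|\beta|=0$ when $|x|^2\le 1+t$ and $|\beta|=2N$ when $|x|^2>1+t$, and absorbing the polynomial factor $(1+t)^{|\alpha|+2N}$ into the exponential at the cost of replacing $C_1$ by any $c_0<C_1$, yields
\[
\left|D_x^\alpha\,\chi_2(D_x)\mathbb{G}(x,t)\right|\le C e^{-c_0 t}\min\Big\{1,\big(\tfrac{1+t}{|x|^2}\big)^N\Big\}\le C e^{-c_0 t}\Big(1+\tfrac{|x|^2}{1+t}\Big)^{-N},
\]
where the last inequality uses $1+\frac{|x|^2}{1+t}\le 2\max\{1,\frac{|x|^2}{1+t}\}$, exactly as in the proof of Lemma \ref{l 43}. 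This is the claimed bound; in fact the same scheme even gives exponential spatial decay $Ce^{-(|x|+t)/C}$, but the stated form suffices for the nonlinear analysis in Section 5.

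The only genuinely non-routine point is the removability of the apparent singularities of $\hat{\mathbb{G}}$ at the frequencies where the discriminant of (\ref{4.2}) vanishes; this is handled by the same divided-difference/symmetric-function argument used for the Navier--Stokes Green's function in Lemma \ref{l 314} (and, for the pointwise version, as in \cite{Wang,Wu2}). Everything else is a mechanical application of the frequency-space integration-by-parts estimate, which is why the statement is labelled as standard.
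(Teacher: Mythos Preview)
Your argument is correct and is exactly the standard spectral-gap-plus-integration-by-parts proof the paper has in mind; the paper itself omits the proof entirely, merely citing it as ``a standard result for the pointwise estimates as in \cite{Wang,Wu2}.'' The key points you identify---smoothness of the symbol on the annulus (removability at $\lambda_+=\lambda_-$ via symmetric-function/divided-difference structure, exclusion of $\xi=0$) and the spectrum gap from Lemma~\ref{l 41}(ii)---are precisely what is needed.
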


Next, it is absolutely the same as in \cite{LS,LW} for the short wave of the Green's function for the Navier-Stokes system, one can immediately get the following result.

\begin{proposition}\lbl{l 49} For $|\xi|$ being sufficiently large, there exists distribution $\mathbb{G}_{S_2}$ such that
 for some $c_0>0$
$$
|D_x^\alpha(\chi_3(D)(\mathbb{G}(x,t)-\mathbb{G}_{S_2}(x,t)))|\leq Ce^{-c_0t}\bigg(1+\frac{|x|^2}{1+t}\bigg)^{-N}.
$$
\end{proposition}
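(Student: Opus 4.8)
The plan is to reproduce, almost verbatim, the short-wave analysis carried out for the Navier--Stokes system in Section~3 (compare Proposition~\ref{l 312} and Lemma~\ref{l 313}); the only inputs are the large-frequency expansion of the eigenvalues in Lemma~\ref{l 41}(iii) and the explicit formulas for $\hat{\mathbb{G}}^{\pm},\hat{\mathbb{G}}^{0}$ recorded after (\ref{4.5}). On $\{|\xi|>K\}$ I would write
$$
\chi_3(\xi)\hat{\mathbb{G}}(\xi,t)=\chi_3(\xi)\hat{\mathbb{G}}^{+}(\xi,t)+\chi_3(\xi)\hat{\mathbb{G}}^{-}(\xi,t)+\chi_3(\xi)\hat{\mathbb{G}}^{0}(\xi,t),
$$
and estimate the three pieces separately according to which branch $\lambda_{\pm}$ controls them: $\hat{\mathbb{G}}^{-}$ and $\hat{\mathbb{G}}^{0}$ are governed by the strongly dissipative branch $\lambda_{-}(\xi)=-\mu|\xi|^2+O(1)$ (resp.\ by $e^{-\mu_1|\xi|^2t}$), whereas $\hat{\mathbb{G}}^{+}$ is governed by the bounded branch $\lambda_{+}(\xi)=-c^2/\mu+O(|\xi|^{-2})$ and is the only genuinely singular contribution.

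For the dissipative pieces, Lemma~\ref{l 41}(iii) gives ${\rm Re}\,\lambda_{-}(\xi)\le -b|\xi|^2$ on $\{|\xi|>K\}$ (shrinking $b>0$ if necessary), while the scalar factors $\eta_0,\eta_{\pm},(c^2-|\xi|^{-2})\eta_0$ and the matrices $\xi\xi^{\tau}/|\xi|^2$, $I-\xi\xi^{\tau}/|\xi|^2$ occurring there are smooth and, together with all their $\xi$-derivatives, grow at most polynomially for $|\xi|>K$. Splitting $e^{\lambda_{-}t}=e^{\lambda_{-}t/2}e^{\lambda_{-}t/2}$ and absorbing one factor as $|e^{\lambda_{-}t/2}|\le e^{-bK^2t/2}=e^{-c_0t}$ (and similarly $e^{-\mu_1|\xi|^2t/2}\le e^{-c_0t}$), the symbol of $\chi_3\hat{\mathbb{G}}^{-}$ (and of $\chi_3\hat{\mathbb{G}}^{0}$) satisfies, for every multi-index $\beta$,
$$
\big|D_\xi^{\beta}\big(\xi^{\alpha}\chi_3(\xi)\hat{\mathbb{G}}^{-}(\xi,t)\big)\big|\le C_\beta\, e^{-c_0t}(1+|\xi|)^{m_\beta}e^{-b|\xi|^2t/2}.
$$
Integrating by parts $2N$ times in $\xi$ against $e^{\sqrt{-1}x\cdot\xi}$, exactly as in the proof of Lemma~\ref{l 43} (cf.\ (\ref{4.9})), then yields $|D_x^{\alpha}(\chi_3(D)\mathbb{G}^{-})|+|D_x^{\alpha}(\chi_3(D)\mathbb{G}^{0})|\le Ce^{-c_0t}(1+|x|^2/(1+t))^{-N}$ (in fact with the stronger heat-kernel factor $t^{-(n+|\alpha|)/2}e^{-|x|^2/(Ct)}$ replacing the last factor); these pieces need no subtraction.

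It remains to treat $\chi_3\hat{\mathbb{G}}^{+}$. Here $e^{\lambda_{+}t}$ contributes the time decay $e^{-(c^2/\mu)t}$ but no decay in $\xi$, so I would first expand $\eta_{\pm},\eta_0,(c^2-|\xi|^{-2})\eta_0$ for large $|\xi|$ (as in Lemma~\ref{l 311} for the Navier--Stokes case) and split $\chi_3(\xi)\hat{\mathbb{G}}^{+}(\xi,t)=\chi_3(\xi)e^{\lambda_{+}t}\big(\mathcal{L}_\infty(\xi)+O(|\xi|^{-2})\big)$, where $\mathcal{L}_\infty$ collects the finitely many non-decaying entries: the constant $1$ in the $(1,1)$-slot and the homogeneous degree $0$ (resp.\ degree $-1$) Riesz-type pieces hidden in $\eta_{+}\,\xi\xi^{\tau}/|\xi|^2$ and in $(c^2-|\xi|^{-2})\eta_0\,\xi$. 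Following \cite{HZ,LS} and Lemma~\ref{l 313}, I would then \emph{define} $\mathbb{G}_{S_2}$ as the inverse Fourier transform of $\chi_3(\xi)e^{-(c^2/\mu)t}\mathcal{L}_\infty(\xi)$ (up to a harmless lower-order reorganisation of $e^{\lambda_+t}$ versus $e^{-(c^2/\mu)t}$): this is precisely a Dirac mass $e^{-t/(2C)}\left(\begin{array}{cc}1&0\\0&0\end{array}\right)\delta(x)$ plus a kernel $f(x)$ with $|f(x)|\le e^{-bt}\tilde f(x)$, $\tilde f\in L^1$, $\tilde f(x)\sim|x|^{-2}$ near the origin and rapidly decreasing at infinity. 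Then $\chi_3(\xi)\big(\hat{\mathbb{G}}^{+}-\hat{\mathbb{G}}_{S_2}\big)$ equals $\chi_3(\xi)e^{\lambda_{+}t}$ times a symbol that is $O(|\xi|^{-2})$ and whose $\xi$-derivatives gain further powers of $|\xi|^{-1}$; since $|e^{\lambda_{+}t}|\le e^{-c_0t}$ and the $t$-derivatives of $e^{\lambda_{+}t}$ remain bounded, another $2N$-fold integration by parts in $\xi$ (now with an absolutely convergent $\xi$-integral on $\{|\xi|>K\}$) gives $|D_x^{\alpha}\big(\chi_3(D)(\mathbb{G}^{+}-\mathbb{G}_{S_2})\big)|\le Ce^{-c_0t}(1+|x|^2/(1+t))^{-N}$. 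Adding the three contributions proves the proposition.

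The only delicate point is the bookkeeping inside $\chi_3\hat{\mathbb{G}}^{+}$: one must verify that, after the homogeneous degree $0$ and degree $-1$ parts have been absorbed into $\mathcal{L}_\infty$ (equivalently into $\mathbb{G}_{S_2}$), the genuine remainder really is $O(|\xi|^{-2})$ uniformly on $\{|\xi|>K\}$ and stays so under differentiation, so that it is integrable there and its inverse transform is a bounded function rather than a further distribution; this is exactly where the precise expansions of Lemma~\ref{l 41}(iii) and the explicit form of $\hat{\mathbb{G}}^{+}$ are used. Everything else is a routine repetition of the Navier--Stokes short-wave estimates of Section~3; in particular, as there, the dissipative contributions actually enjoy the stronger heat-kernel-type bound $Ce^{-c_0t}t^{-(n+|\alpha|)/2}e^{-|x|^2/(Ct)}$, which is the form used when the Green's function is invoked for $t$ bounded away from the origin, and the weaker bound stated in the proposition suffices for the nonlinear argument in Section~5.
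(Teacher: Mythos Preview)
Your proposal is correct and follows exactly the route the paper intends: the paper itself gives no detailed proof here, stating only that the short-wave analysis ``is absolutely the same as in \cite{LS,LW} for the short wave of the Green's function for the Navier--Stokes system,'' i.e.\ a repetition of Proposition~\ref{l 312} and Lemma~\ref{l 313}. Your decomposition into $\hat{\mathbb{G}}^{+}+\hat{\mathbb{G}}^{-}+\hat{\mathbb{G}}^{0}$, the heat-kernel treatment of the $\lambda_-$ and $e^{-\mu_1|\xi|^2t}$ pieces via Lemma~\ref{l 43}/(\ref{4.9}), and the extraction of the non-decaying homogeneous part of $\hat{\mathbb{G}}^{+}$ as $\mathbb{G}_{S_2}$ (exactly as in Lemma~\ref{l 313}) is precisely what the paper has in mind; your remark that the dissipative pieces actually yield the sharper bound $e^{-c_0t}t^{-(3+|\alpha|)/2}e^{-|x|^2/(Ct)}$ is also consistent with the paper's later definition of $\mathbb{G}_{S_1}$ in Proposition~\ref{l 410}.
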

\ \ \ \ \ In summary, we have the following pointwise results for the Green's function $\mathbb{G}(x,t)$.
\begin{proposition}\lbl{l 410} For any $|\alpha|\geq0$ and $x\in\mathbb{R}^3$, we have
\bes
|D_x^\alpha(\mathbb{G}_{11}-\mathbb{G}_{S_2})|\leq C(1+t)^{-\frac{3+|\alpha|}{2}}\bigg(1+\frac{|x|^2}{1+t}\bigg)^{-N}+e^{-\frac{t}{2C}}t^{-\frac{3+|\alpha|}{2}}\bigg(1+\frac{|x|^2}{1+t}\bigg)^{-N},\\
|D_x^\alpha(\mathbb{G}_{12}-\mathbb{G}_{S_2})|\leq C(1+t)^{-\frac{4+|\alpha|}{2}}\bigg(1+\frac{|x|^2}{1+t}\bigg)^{-N}+e^{-\frac{t}{2C}}t^{-\frac{3+|\alpha|}{2}}\bigg(1+\frac{|x|^2}{1+t}\bigg)^{-N},\\
|D_x^\alpha(\mathbb{G}_{21}-\mathbb{G}_{S_2})|\leq C(1+t)^{-\frac{2+|\alpha|}{2}}\bigg(1+\frac{|x|^2}{1+t}\bigg)^{-\frac{2+|\alpha|}{2}}+e^{-\frac{t}{2C}}t^{-\frac{3+|\alpha|}{2}}\bigg(1+\frac{|x|^2}{1+t}\bigg)^{-N},\\
|D_x^\alpha(\mathbb{G}_{22}-\mathbb{G}_{S_2})|\leq C(1+t)^{-\frac{3+|\alpha|}{2}}\bigg(1+\frac{|x|^2}{1+t}\bigg)^{-\frac{3+|\alpha|}{2}}+e^{-\frac{t}{2C}}t^{-\frac{3+|\alpha|}{2}}\bigg(1+\frac{|x|^2}{1+t}\bigg)^{-N},
\ees
where $\mathbb{G}_{S_2}$ is defined in Proposition \ref{l 49} and the integer $N$ could be arbitrary large. Similar to Proposition \ref{l 312}, we also denote $\mathbb{G}_{S_1}=e^{-\frac{t}{2C}}t^{-\frac{3+|\alpha|}{2}}(1+\frac{|x|^2}{1+t})^{-N}$.
\end{proposition}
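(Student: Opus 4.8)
The plan is to assemble the four estimates by splitting $\mathbb{G}$ according to the frequency cut-offs $\chi_1+\chi_2+\chi_3=1$ of (\ref{3.1}) and invoking the block-by-block bounds already proved in this section. Writing
\[
\mathbb{G}-\mathbb{G}_{S_2}=\chi_1(D)\mathbb{G}+\chi_2(D)\mathbb{G}+\big(\chi_3(D)\mathbb{G}-\mathbb{G}_{S_2}\big),
\]
I would estimate each summand separately. For the long-wave part $\chi_1(D)\mathbb{G}$, I apply Proposition \ref{l 44} to the entries $\mathbb{G}_{11},\mathbb{G}_{12}$ and Proposition \ref{l 47} to $\mathbb{G}_{21},\mathbb{G}_{22}$; specializing $n=3$ there reproduces precisely the first terms of the four asserted inequalities. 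The qualitative dichotomy to keep in mind is that the first row $(\mathbb{G}_{11},\mathbb{G}_{12})$ carries the arbitrarily fast spatial decay $(1+|x|^2/(1+t))^{-N}$ coming from Lemmas \ref{l 42}--\ref{l 43}, while $\mathbb{G}_{21},\mathbb{G}_{22}$ only enjoy the algebraic Riesz tail $(1+|x|^2/(1+t))^{-\frac{n-1+|\alpha|}{2}}$, $(1+|x|^2/(1+t))^{-\frac{n+|\alpha|}{2}}$ because of the Calder\'on--Zygmund operators $R_i,R_{ij}$ with symbols $\xi_i/|\xi|^2$, $\xi_i\xi_j/|\xi|^2$ handled in Lemmas \ref{l 45}--\ref{l 46}.

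For the middle-frequency part I use Proposition \ref{l 48}, and for the remaining high-frequency part $\chi_3(D)\mathbb{G}-\mathbb{G}_{S_2}$ I use Proposition \ref{l 49}; both contribute only terms of the form $Ce^{-c_0t}(1+|x|^2/(1+t))^{-N}$, with $\mathbb{G}_{S_2}$ the Dirac-type singular distribution supplied by Proposition \ref{l 49} (the Navier--Stokes--Poisson analogue of Lemma \ref{l 313}). It then remains to check that such remainders are dominated by $\mathbb{G}_{S_1}=e^{-t/(2C)}t^{-(3+|\alpha|)/2}(1+|x|^2/(1+t))^{-N}$: since both carry the same factor $(1+|x|^2/(1+t))^{-N}$, one only compares the temporal parts. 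For $0<t\le1$ one has $t^{-(3+|\alpha|)/2}\ge1$ and $e^{-t/(2C)}$ bounded below, so the bound is immediate; for $t\ge1$ one fixes $C$ so large that $\tfrac1{2C}<c_0$, whence $e^{-c_0t}=e^{-(c_0-1/(2C))t}e^{-t/(2C)}\le C\,t^{-(3+|\alpha|)/2}e^{-t/(2C)}$ because an exponential beats any negative power of $t$. Summing the three pieces and relabelling constants yields the four inequalities with $\mathbb{G}_{S_1}$, $\mathbb{G}_{S_2}$ as named.

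Since only the bookkeeping of decay exponents is involved, there is no genuine obstacle; the one place deserving care is exactly the absorption of the $e^{-c_0t}$ remainders of the middle and short waves into $\mathbb{G}_{S_1}$ near $t=0$, where $\mathbb{G}_{S_1}$ is singular, which is why its definition retains the factor $t^{-(3+|\alpha|)/2}$ rather than $(1+t)^{-(3+|\alpha|)/2}$, mirroring $G_{S_1}$ in the Navier--Stokes case of Proposition \ref{l 312}. One should also note that, in contrast to \S3, no separate weighted energy estimate outside a Mach-number cone is needed here: the Navier--Stokes--Poisson Green's function contains no Huygens' wave, so the long-wave bounds of Propositions \ref{l 44} and \ref{l 47} are already valid for all $x\in\mathbb{R}^3$.
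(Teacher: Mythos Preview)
Your proposal is correct and follows exactly the paper's approach: the paper presents Proposition \ref{l 410} with the words ``In summary, we have the following pointwise results'' and gives no further proof, precisely because it is meant to be the direct superposition of Propositions \ref{l 44}, \ref{l 47} (long wave), \ref{l 48} (middle part), and \ref{l 49} (short wave minus $\mathbb{G}_{S_2}$) that you invoke. Your remark that the exponentially decaying middle/short-wave remainders can equally well be absorbed into the first term $C(1+t)^{-(3+|\alpha|)/2}(\cdot)$ for $t\ge1$---since $e^{-c_0t}\le C(1+t)^{-k}$ for any $k$---would slightly shorten the bookkeeping, but your route through $\mathbb{G}_{S_1}$ is also fine and makes the analogy with the Navier--Stokes $G_{S_1}$ of Proposition \ref{l 312} explicit.
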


\section{Pointwise estimates for nonlinear system}\ \

\quad This section devotes to the pointwise estimates of the
solution to the nonlinear system. The following lemma will be used to estimate the convolutions between the Green's function and the initial data.

\begin{lemma}\lbl{l 51}
There exists a constant $C>0$, such that
 \bess
\mathcal{I}_1&:=&\int_{\mathbb{R}^3} e^{-\frac{|x-y|^2}{C(1+t)}}(1+|y|^2)^{-r_1}dy\leq C\left(1+\frac{x^2}{1+t}\right)^{-r_1},\ {\rm for}\ r_1>\frac{3}{2};\\
\mathcal{I}_2&:=&\int_{\mathbb{R}^3}\left(1+\frac{|x-y|^2}{1+t}\right)^{-\frac{3}{2}}(1+|y|^2)^{-r_1}dy\leq C\left(1+\frac{x^2}{1+t}\right)^{-\frac{3}{2}},\ {\rm for}\ r_1>\frac{3}{2};\\
\mathcal{I}_3&:=&\int_{\mathbb{R}^3} e^{-\frac{(|x-y|-ct)^2}{C(1+t)}}(1+|y|^2)^{-r_1}dy\leq C\left(1+\frac{(|x|\!-\!ct)^2}{1+t}\right)^{-(\frac{3}{2}-\varepsilon)},\ {\rm for}\ r_1\geq\frac{21}{10}.
\eess
\end{lemma}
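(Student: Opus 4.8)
### Proof Proposal for Lemma 5.1

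The plan is to treat all three convolution integrals by the same splitting of the spatial domain into a region near $x$ and its complement, and then to exploit the polynomial decay $(1+|y|^2)^{-r_1}$ to control the far-field contribution. For $\mathcal{I}_1$, I would split $\mathbb{R}^3 = \{|y|\le |x|/2\} \cup \{|y| > |x|/2\}$. On the near region, $|x-y|\ge |x|/2$, so $e^{-|x-y|^2/(C(1+t))}\le e^{-|x|^2/(4C(1+t))}\le C(1+\frac{|x|^2}{1+t})^{-r_1}$ after absorbing the exponential into a polynomial bound, while $\int (1+|y|^2)^{-r_1}\,dy$ converges because $r_1>3/2$. On the far region, $(1+|y|^2)^{-r_1}\le C(1+|x|^2)^{-r_1}\le C(1+\frac{|x|^2}{1+t})^{-r_1}$ (using $1+t\ge 1$), and $\int e^{-|x-y|^2/(C(1+t))}\,dy \le C(1+t)^{3/2}$; but here one must be a little more careful, since a bare factor $(1+t)^{3/2}$ is not allowed on the right-hand side. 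The fix is to not pull $(1+|y|^2)^{-r_1}$ out crudely but to note that on $\{|y|>|x|/2\}$ one still has rapid decay in $y$, so after a change of variables $z = (x-y)/\sqrt{1+t}$ the Gaussian integral against the (rescaled) polynomial tail is bounded uniformly; this is the standard heat-kernel-versus-algebraic-tail estimate and gives exactly $(1+\frac{|x|^2}{1+t})^{-r_1}$.

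For $\mathcal{I}_2$, the argument is structurally the same but now the kernel $(1+\frac{|x-y|^2}{1+t})^{-3/2}$ is itself only algebraically decaying. I would again split at $|y| = |x|/2$. On $\{|y|\le |x|/2\}$ one has $|x-y|\ge|x|/2$ hence $(1+\frac{|x-y|^2}{1+t})^{-3/2}\le C(1+\frac{|x|^2}{1+t})^{-3/2}$, and the remaining $y$-integral converges by $r_1>3/2$. On $\{|y|>|x|/2\}$ one bounds $(1+|y|^2)^{-r_1}\le C(1+\frac{|x|^2}{1+t})^{-3/2}(1+|y|^2)^{-(r_1-3/2)}$ — legitimate since $(1+|y|^2)^{-3/2}\ge c(1+t)^{-3/2}\cdots$ needs a moment's thought; more cleanly, use $(1+\tfrac{|x-y|^2}{1+t})^{-3/2}\le 1$ and $(1+|y|^2)^{-r_1}\le C(1+|x|^2)^{-3/2}(1+|y|^2)^{-(r_1-3/2)}$ on that region, whose $y$-integral converges because $r_1-3/2>0$... this needs $r_1-3/2>3/2$, i.e. $r_1>3$, which is too strong. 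So for $\mathcal{I}_2$ the better route is the convolution inequality for algebraic weights directly: $(1+\tfrac{|\cdot|^2}{1+t})^{-3/2}$ has $L^1$ norm $\sim (1+t)^{3/2}$, and one uses the general fact that the convolution of $(1+\tfrac{|\cdot|^2}{1+t})^{-a}$ with $(1+|\cdot|^2)^{-b}$ is bounded by $C(1+\tfrac{|x|^2}{1+t})^{-\min(a,b,\,a+b-n/2)}$ when none of these exponents is critical; here $a = 3/2$, $b=r_1>3/2$, $n=3$, so $\min(3/2,\,r_1,\,r_1) = 3/2$, giving the claim. I would either cite such a lemma from the literature (e.g. \cite{LW,Wang}) or prove it inline by the three-region split $\{|y|\le|x|/2\}$, $\{|x-y|\le|x|/2\}$, $\{|y|,|x-y|>|x|/2\}$.

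For $\mathcal{I}_3$, the kernel $e^{-(|x-y|-ct)^2/(C(1+t))}$ concentrates on the sphere $|x-y|\approx ct$ rather than near $y=0$, which is why the exponent degrades from $3/2$ to $3/2-\varepsilon$ and why a stronger decay hypothesis $r_1\ge 21/10$ is needed. The plan is: if $|x|\le 2ct$ the target $(1+\frac{(|x|-ct)^2}{1+t})^{-(3/2-\varepsilon)}$ is comparable to a constant (since $(|x|-ct)^2\le C t^2$ is not small, but one can bound the whole integral by $C\int (1+|y|^2)^{-r_1}\,dy \le C$ and note the target is $\ge c(1+t)^{-(3-2\varepsilon)}$... no — rather, on $|x|\le 2ct$ one uses $\mathcal{I}_3\le C(1+t)^{-(3/2-\varepsilon)}\cdot(1+t)^{3/2-\varepsilon}\cdots$); the cleaner statement is that on $|x|\le 2ct$ we have $(1+\frac{(|x|-ct)^2}{1+t})^{-(3/2-\varepsilon)}\ge c(1+t)^{-(3/2-\varepsilon)}$ only when $|x|$ is far from $ct$, so one must split by whether $|y|$ is comparable to $|x|$ or not, and whether $|x-y|$ is comparable to $ct$. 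Concretely: on $\{|y|\le |x|/2\}$, $|x-y|\ge|x|/2$ so $||x-y|-ct|\ge ||x|/2 - ct|\ge \tfrac12||x|-ct| - \tfrac{c}{2}t\cdots$ — the geometry here is the delicate point. The standard maneuver (as in \cite{LW,LY3}) is: when $|x|\ge (1+\delta)ct$ or $|x|\le(1-\delta)ct$, one has $||x-y|-ct|\gtrsim |x|-y$-dependent-lower-bound and the exponential kills everything down to $(1+\frac{(|x|-ct)^2}{1+t})^{-M}$ for any $M$; the only genuinely hard zone is the transition layer $||x|-ct|\lesssim \sqrt{(1+t)}\cdot(\text{large})$, where one integrates the Gaussian tube of radius $\sqrt{1+t}$ and length $\sim t$ against $(1+|y|^2)^{-r_1}$, and a direct computation shows this contributes $C t\cdot(1+t)^{1/2}\cdot\sup_{|y|\approx ct}(1+|y|^2)^{-r_1}\sim C t^{3/2}(1+t)^{-2r_1}$, which is $\le C(1+t)^{-(3/2-\varepsilon)}$ precisely when $2r_1 - 3/2 \ge 3/2-\varepsilon$, i.e. $r_1\ge 3/2 - \varepsilon/2 + $ slack; tracking the $\varepsilon$'s carefully and accounting for the loss from widening the tube to capture the algebraic (not Gaussian) tail is what forces $r_1\ge 21/10$. \textbf{The main obstacle} is exactly this last estimate for $\mathcal{I}_3$: correctly organizing the regions in $(x,y,t)$-space near the light cone so that the exponential gain in $||x-y|-ct|$ trades against the algebraic decay in $|y|$ to yield the stated exponent $-(3/2-\varepsilon)$ with the hypothesis $r_1\ge 21/10$, rather than a worse exponent or a stronger hypothesis. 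I expect the first two integrals to be routine heat-kernel/algebraic-tail convolutions, and essentially all the real work — and the explanation for the numerology $\tfrac{21}{10}$ — to live in $\mathcal{I}_3$.
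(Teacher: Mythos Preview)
Your treatment of $\mathcal{I}_1$ and $\mathcal{I}_2$ is along the right lines; the paper itself skips these and proves only $\mathcal{I}_3$.

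For $\mathcal{I}_3$, however, your proposal has a genuine structural gap. First, you misidentify the hard zone: when $(|x|-ct)^2\le 4(1+t)$ the target is bounded below by a constant, and $\mathcal{I}_3\le\int(1+|y|^2)^{-r_1}\,dy\le C$ finishes it immediately---this is the \emph{easy} case, not the transition layer you describe as ``genuinely hard''. Second, in the remaining case $(|x|-ct)^2>4(1+t)$, your split at $|y|=|x|/2$ is the wrong threshold: as you yourself observe, $|x-y|\ge|x|/2$ gives control on $||x|/2-ct|$, which is not comparable to $||x|-ct|$. The correct split is at $|y|=\tfrac12||x|-ct|$. On $\{|y|<\tfrac12||x|-ct|\}$ the triangle inequality gives $||x-y|-ct|\ge\tfrac12||x|-ct|$, and the Gaussian yields the desired bound directly. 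On $\{|y|\ge\tfrac12||x|-ct|\}$ one peels off $(1+(|x|-ct)^2)^{-(3/2-\varepsilon)}$ from $(1+|y|^2)^{-r_1}$ and applies H\"older with exponent $p=\tfrac53$: the $L^p$ norm of $e^{-(|z|-ct)^2/(C(1+t))}$ is $\le C(1+t)^{5/(2p)}=C(1+t)^{3/2}$ (using $\int_{\mathbb{R}^3}e^{-(|z|-a)^2/b}\,dz\le C(b^{3/2}+b^{1/2}a^2)$), and the remaining factor $(1+|y|^2)^{-(r_1-3/2+\varepsilon)}$ lies in $L^{5/2}(\mathbb{R}^3)$ precisely when $\tfrac52(r_1-\tfrac32+\varepsilon)>\tfrac32$, i.e.\ $r_1>\tfrac{21}{10}-\varepsilon$. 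This H\"older step with $p=\tfrac53$ is the actual source of the threshold $r_1\ge\tfrac{21}{10}$; your tube-volume heuristic does not reproduce it.
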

\begin{proof}
We only prove $\mathcal{I}_3$. When $(|x|-ct)^2\leq 4(1+t)$,
$$
\mathcal{I}_3\leq C\leq C\left(1+\frac{(|x|-ct)^2}{1+t}\right)^{-r_1}.
$$
When $(|x|-ct)^2\geq 4(1+t)$, we break integration into two parts. If $|y|\geq\frac{||x|-ct|}{2}$,
\bess
\mathcal{I}_3&\leq & C(1+(|x|-ct)^2)^{-(\frac{3}{2}-\varepsilon)}\int_{\mathbb{R}^3} e^{-\frac{(|x-y|-ct)^2}{C(1+t)}}(1+|y|^2)^{-(r_1-\frac{3}{2}+\varepsilon)}dy\nm\\
&\leq& C(1+(|x|-ct)^2)^{-(\frac{3}{2}-\varepsilon)}(1+t)^{\frac{5}{2p}}\left(\int_{\mathbb{R}^3}(1+|y|^2)^{-\frac{p}{p-1}(r_1-\frac{3}{2}+\varepsilon)}dy\right)^{\frac{p-1}{p}}\nm\\
&\leq& C(1+(|x|-ct)^2)^{-(\frac{3}{2}-\varepsilon)}(1+t)^{(\frac{3}{2}-\varepsilon)}\left(\int_{\mathbb{R}^3}(1+|y|^2)^{-\frac{5}{2}(r_1-\frac{3}{2}+\varepsilon)}dy\right)^{\frac{2}{5}}\nm\\
&\leq& C\left(1+\frac{(|x|\!-\!ct)^2}{1+t}\right)^{-(\frac{3}{2}-\varepsilon)},\ {\rm since}\ r_1\geq\frac{21}{10}.
\eess
Here we have used Young's inequality with $p=\frac{5}{3}$ and the fact
\bes
\int_{\mathbb{R}^n}\left(1+\frac{(|y|-a)^2}{b}\right)^{-N}dy\leq C(b^{\frac{n}{2}}+b^{\frac{1}{2}}a^{n-1}),\ {\rm for}\ N>\frac{n}{2}.
\lbl{5.0}
\ees
If $|y|<\frac{||x|-ct|}{2}$, then $|x-y|-ct\geq\frac{||x|-ct|}{2}$. Thus, it holds that
\bess
\mathcal{I}_3\leq Ce^{-\frac{(|x|-ct)^2}{2C(1+t)}}\int_{\mathbb{R}^3} e^{-\frac{(|x-y|-ct)^2}{2C(1+t)}}(1+|y|^2)^{-r_1}dy
\leq C\left(1+\frac{(|x|\!-\!ct)^2}{1+t}\right)^{-\frac{3}{2}}.
\eess
This proves the third estimate.
\end{proof}

\newpage

We begin to study the pointwise estimates of the solution $(n_1,w_1,n_2,w_2)$ for (2.4). By Duhamel's principle,
the solution $(n_1,w_1)$ can be expressed as
\bes
 D_x^\alpha\left(\!\!
            \begin{array}{c}
              \!\!n_1 \\
              w_1 \\
            \end{array}
          \!\!\right)\!=\!D_x^\alpha \left(\!
                            \begin{array}{cc}
                              G_{11}\! & \!G_{12} \\
                              G_{21}\! & \!G_{22} \\
                            \end{array}
                          \!\right)\!\!\ast\!\!\left(\!\!
            \begin{array}{c}
              n_{1,0} \\
              w_{1,0} \\
            \end{array}
          \!\!\right)\!\!+\!\!\displaystyle\int_0^t\!D_x^\alpha\left(\!\!
                            \begin{array}{cc}
                              G_{11}\! & \!G_{12} \\
                              G_{21}\! & \!G_{22} \\
                            \end{array}
                          \!\!\right)(\cdot,t\!-\!s)\!\ast\!\left(\!\!
                                            \begin{array}{c}
                                              0 \\
                                              F_1(n_1,w_1,n_2,w_2)\! \\
                                            \end{array}
                                          \!\!\right)\!(\cdot,s)ds\ \ \ \ \ \
\lbl{5.1}
\ees
and $(n_2,w_2)$ can be expressed as
\bes
 \begin{array}[b]{ll}
 D_x^\alpha\left(\!\!
            \begin{array}{c}
              n_2 \\
              w_2 \\
            \end{array}
          \!\!\right)\!=\!D_x^\alpha \left(\!\!
                            \begin{array}{cc}
                              \mathbb{G}_{11}\! & \!\mathbb{G}_{12} \\
                              \mathbb{G}_{21}\! & \!\mathbb{G}_{22} \\
                            \end{array}
                          \!\!\right)\!\ast\!\left(\!\!
            \begin{array}{c}
              n_{2,0} \\
              w_{2,0} \\
            \end{array}
          \!\right)\!\!+\!\!\displaystyle\int_0^t\!\!D_x^\alpha\left(\!\!\!
                            \begin{array}{cc}
                              \mathbb{G}_{11} & \mathbb{G}_{12} \\
                              \mathbb{G}_{21} & \mathbb{G}_{22} \\
                            \end{array}
                          \!\!\right)(\cdot,t\!-\!s)\!\ast\!\left(\!\!
                                            \begin{array}{c}
                                              0 \\
                                              F_2(n_1,w_1,n_2,w_2) \\
                                            \end{array}
                                          \!\!\!\right)\!(\cdot,s)ds.\ \ \ \ \ \
\end{array}
\lbl{5.2}
\ees

Without loss of generality, we assume that for $|\alpha|\leq 2$
\bes
&&|D_x^\alpha(n_{1,0},w_{1,0})|\leq C\varepsilon_0\big(1+|x|^2\big)^{-r_1}, \ r_1\geq\frac{21}{10},\nm\\
&&|D_x^\alpha(n_{2,0},w_{2,0})|+|\nabla\phi_0|\leq C\varepsilon_0\big(1+|x|^2\big)^{-r_2}, \ r_2>\frac{3}{2}.
\lbl{5.3}
\ees
Here $\varepsilon_0$ is sufficiently small.

Now we first study the pointwise estimates for the solution $(\overline{n_1},\overline{w_1})$ to the linearized system of  (\ref{2.4})$_{1,2}$. From (\ref{3.5}), we have
\bes
D^\alpha \overline{n_1}=  D_x^\alpha G_{1,1}(t)\ast n_{1,0}+D_x^\alpha G_{1,2}\ast w_{1,0}
:=  R_1+R_2.
\lbl{5.5}
\ees

We rewrite $R_1$ as
\bes
R_1=D_x^\alpha(G_{11}-G_{S_1}-G_{S_2})\ast n_{1,0}+D_x^\alpha G_{S_1}\ast  n_{1,0}+D_x^\alpha G_{S_2}\ast  n_{1,0}:=R_1^1+R_1^2+R_1^3,\lbl{5.6}
\ees
where the Delta-like function $G_{S_2}$ is generated from the higher frequency part in Green's function defined in Lemma \ref{l 313}, and $G_{S_1}$ has exponential decay rate and is only singular as $t\rightarrow0$.

From Lemma \ref{l 51}, the assumption (\ref{5.3}) and Proposition \ref{l 317},  we have for $R_1^1$ that
\bes
R_1^1\leq C(1+t)^{-\frac{4+|\alpha|}{2}}\bigg(1+\frac{(|x|-ct)^2}{1+t}\bigg)^{-(\frac{3}{2}-\varepsilon)}.\lbl{5.7}
\ees

For $R_1^3$, when $|x|^2\leq 1+t$, from Lemma \ref{l 313}, we have
\bes
\left|\int G_{S_2}(x-y)n_{1,0}(y)dy\right|&\leq& e^{-\frac{c^2t}{\mu}}n_{1,0}(x)+\int_{|x-y|\leq R}\frac{Ce^{-bt}}{|x-y|^2}(1+|y|^2)^{-\frac{21}{10}}dy\nm\\
&&+\int_{|x-y|>R}\frac{C(N)e^{-bt}}{|x-y|^N}(1+|y|^2)^{-\frac{21}{10}}dy\nm\\ \ \ \ \ \
&\leq& Ce^{-C_1t}\leq Ce^{-C_1t}\bigg(1+\frac{|x|^2}{1+t}\bigg)^{-\frac{3}{2}}.
\ees
When $|x|^2>1+t$, we have
\bes
\left|\int G_{S_2}(x-y)n_{1,0}(y)dy\right|&\leq& e^{-\frac{c^2t}{\mu}}n_{1,0}(x)+\int_{|x-y|\leq \frac{|x|}{2}}\frac{Ce^{-bt}}{|x-y|^2}(1+|y|^2)^{-\frac{21}{10}}dy\nm\\
&&+\int_{|x-y|>\frac{|x|}{2}}\frac{C(N)e^{-bt}}{|x-y|^N}(1+|y|^2)^{-\frac{21}{10}}dy\nm\\
&\leq& Ce^{-\frac{c^2t}{\mu}}(1+|x|^2)^{-\frac{21}{10}}+Ce^{-bt}\bigg(1+\frac{|x|^2}{4}\bigg)^{-\frac{21}{10}}+C(N)e^{-bt}(1+|x|)^{-N}\nm\\ \ \ \
&\leq& Ce^{-C_1t}\bigg(1+\frac{|x|^2}{1+t}\bigg)^{-\frac{21}{10}}.
\ees

For $R_1^2$, we divide the time interval into $0\leq t\leq 1$ and $t>1$. When $t>1$, there is not singularity in $R_1^2$, so one can deal with this term as for $R_1^1$. When $0\leq t\leq 1$ and $|x|\leq 1+t$, by using $L^2$-energy estimate for the existence result and Sobolev inequality, we have
\bes
|R_1^2|=|G_{S_2}\ast  D_x^\alpha n_{1,0}|\leq C\leq Ce^{-|x|-t}\leq Ce^{-C_1t}\bigg(1+\frac{|x|^2}{1+t}\bigg)^{-\frac{3}{2}}.
\ees
When $t>1$ and $|x|>1+t$,
\bes
|R_1^2|&=&|G_{S_1}\ast  D_x^\alpha n_{1,0}|\leq e^{-\frac{t}{2C}}{t^{-3/2}\int_{\mathbb{R}^3} e^{-\frac{|x-y|^2}{2Ct}}}(1+|y|^2)^{-\frac{21}{10}}dy\nm\\
&=&e^{-\frac{t}{2C}}t^{-3/2}\left\{\int_{|y|\geq\frac{|x|}{2}} e^{-\frac{|x-y|^2}{2Ct}}(1+|y|^2)^{-\frac{21}{10}}dy\nm+\int_{|y|<\frac{|x|}{2}} e^{-\frac{|x-y|^2}{2Ct}}(1+|y|^2)^{-\frac{21}{10}}dy\right\}\nm\\
&\leq& Ce^{-\frac{t}{2C}}(1+|x|^2)^{-\frac{21}{10}}+Ce^{-\frac{t}{2C}}e^{-\frac{|x|^2}{2Ct}}\nm\\
&\leq& Ce^{-C_1t}\bigg(1+\frac{|x|^2}{1+t}\bigg)^{-\frac{3}{2}}.
\ees
Then, we have completed the estimate of $R_1$ as
\bes
|R_1|=|D_x^\alpha G_{1,1}(t)\ast n_{1,0}|\leq C(1+t)^{-\frac{4+|\alpha|}{2}}\Bigg\{\bigg(1+\frac{(|x|-ct)^2}{1+t}\bigg)^{-(\frac{3}{2}-\varepsilon)}
+\bigg(1+\frac{|x|^2}{1+t}\bigg)^{-\frac{3}{2}}\Bigg\},
\lbl{5.8}
\ees
synchronously, we have the same estimate for $R_2$.

As a result, we have
\bes
|D_x^\alpha \overline{n_1}|\leq C\varepsilon_0(1+t)^{-\frac{4+|\alpha|}{2}}\Bigg\{\bigg(1+\frac{(|x|-ct)^2}{1+t}\bigg)^{-(\frac{3}{2}-\varepsilon)}
+\bigg(1+\frac{|x|^2}{1+t}\bigg)^{-\frac{3}{2}}\Bigg\}.\lbl{5.11}
\ees
Similarly, we also have
\bes
|D_x^\alpha \overline{w_1}|\leq C\varepsilon_0\left\{(1+t)^{-\frac{4+|\alpha|}{2}}\bigg(1+\frac{(|x|-ct)^2}{1+t}\bigg)^{-(\frac{3}{2}-\varepsilon)}
+(1+t)^{-\frac{3+|\alpha|}{2}}\bigg(1+\frac{|x|^2}{1+t}\bigg)^{-\frac{3}{2}}\right\}.\lbl{5.12}
\ees

Let us emphasize here that the method above on dealing with the convolution between
the short wave component of the Green's function and the nonlinear terms is standard, see \cite{Wang,Wu1} and the references therein. Thus, in the following we mainly deal with the convolution between nonlinear terms and the leading part of the Green's function $G(x,t)-G_{S_1}-G_{S_2}$.

Next, we consider the $(\overline{n_2},\overline{w_2})$.  Firstly, we have
\bes
D^\alpha \overline{n_2}=  D_x^\alpha \mathbb{G}_{11}(t)\ast n_{2,0}+D_x^\alpha \mathbb{G}_{12}\ast w_{2,0}
:=  R_3+R_4,\\
D^\alpha \overline{w_2}=  D_x^\alpha \mathbb{G}_{21}(t)\ast n_{2,0}+D_x^\alpha \mathbb{G}_{22}\ast w_{2,0}
:=  R_5+R_6.
\lbl{5.14}
\ees

When $|\alpha|=0$, from the assumption
$$
|\nabla\phi_0|\leq C\varepsilon_0\bigg(1+|x|^2\bigg)^{-r}\ {\rm with}\ r>\frac{3}{2},
$$
and the fact $n_{2,0}={\rm div}\nabla\phi_0$, we find that for $|\alpha|\geq0$
\bes
|D_x^\alpha\mathbb{G}_{11}\ast n_{2,0}|&=&|D_x^\alpha(\mathbb{G}_{11}-\mathbb{G}_{S_1}-\mathbb{G}_{S_2})\ast n_{2,0}|+|(\mathbb{G}_{S_1}+\mathbb{G}_{S_2})\ast D_x^\alpha n_{2,0}|\nm\\
&\leq&|D_x^\beta(\mathbb{G}_{11}-\mathbb{G}_{S_1}-\mathbb{G}_{S_2})\ast \phi_0|+C\varepsilon_0e^{-c_0t}\bigg(1+\frac{|x|^2}{1+t}\bigg)^{-\frac{3}{2}}\nm\\
&\leq & C\varepsilon_0(1+t)^{-\frac{4+|\alpha|}{2}}\bigg(1+\frac{|x|^2}{1+t}\bigg)^{-\frac{3}{2}}
+C\varepsilon_0e^{-c_0t}\bigg(1+\frac{|x|^2}{1+t}\bigg)^{-\frac{3}{2}}\nm\\
&\leq & C\varepsilon_0(1+t)^{-\frac{4+|\alpha|}{2}}\bigg(1+\frac{|x|^2}{1+t}\bigg)^{-\frac{3}{2}}, \ {\rm where}\ |\beta|=|\alpha|+1.
\lbl{5.15}
\ees
Similarly, by using Proposition \ref{l 410}, we have
\bes
&&|R_4|=|D_x^\alpha\mathbb{G}_{22}\ast w_{2,0}|\leq C\varepsilon_0(1+t)^{-\frac{4+|\alpha|}{2}}\bigg(1+\frac{|x|^2}{1+t}\bigg)^{-\frac{3}{2}},\\
&&|R_5|+|R_6|\leq C\varepsilon_0(1+t)^{-\frac{3+|\alpha|}{2}}\bigg(1+\frac{|x|^2}{1+t}\bigg)^{-\frac{3}{2}}.
\lbl{5.17}
\ees

Up to now, we have obtained the following pointwise estimates for the linearized system.

\begin{proposition}\lbl{l 52}
For $0\leq |\alpha|\leq2$, we have
\bes
&|D_x^\alpha \overline{n_1}|\leq C\varepsilon_0(1+t)^{-\frac{4+|\alpha|}{2}}\Bigg\{\bigg(1+\frac{(|x|-ct)^2}{1+t}\bigg)^{-(\frac{3}{2}-\varepsilon)}
+\bigg(1+\frac{|x|^2}{1+t}\bigg)^{-\frac{3}{2}}\Bigg\},\nm\\
&|D_x^\alpha \overline{w_1}|\leq C\varepsilon_0(1+t)^{-\frac{3+|\alpha|}{2}}\left\{(1+t)^{-\frac{1}{2}}\bigg(1+\frac{(|x|-ct)^2}{1+t}\bigg)^{-(\frac{3}{2}-\varepsilon)}+\bigg(1+\frac{|x|^2}{1+t}\bigg)^{-\frac{3}{2}}\right\},\nm\\
&|D_x^\alpha \overline{n_2}|\leq C\varepsilon_0(1+t)^{-\frac{4+|\alpha|}{2}}\bigg(1+\frac{(|x|-ct)^2}{1+t}\bigg)^{-\frac{3}{2}},\nm\\
&|D_x^\alpha \overline{w_2}|\leq C\varepsilon_0(1+t)^{-\frac{3+|\alpha|}{2}}\bigg(1+\frac{|x|^2}{1+t}\bigg)^{-\frac{3}{2}}.
\ees
\end{proposition}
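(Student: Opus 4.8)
The plan is to read Proposition~\ref{l 52} off directly from the representation formulas (\ref{5.5}) and (\ref{5.14}), the pointwise Green's-function bounds of Proposition~\ref{l 317} and Proposition~\ref{l 410}, the convolution inequalities $\mathcal I_1,\mathcal I_2,\mathcal I_3$ of Lemma~\ref{l 51}, and the data decay (\ref{5.3}). Since the linearized problem for $(\overline{n_1},\overline{w_1})$ and that for $(\overline{n_2},\overline{w_2})$ decouple, I treat them in turn.

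For $(\overline{n_1},\overline{w_1})$ I would first split each entry of the Green's function as in (\ref{5.6}), writing $G_{ij}=(G_{ij}-G_{S_1}-G_{S_2})+G_{S_1}+G_{S_2}$. For the leading part $G_{ij}-G_{S_1}-G_{S_2}$ I convolve the bounds of Proposition~\ref{l 317} against $|D_x^\alpha(n_{1,0},w_{1,0})|\le C\varepsilon_0(1+|x|^2)^{-r_1}$: the H-wave factor $e^{-(|x|-ct)^2/(1+t)}$ is absorbed by $\mathcal I_3$ --- this is exactly where the hypothesis $r_1\ge\frac{21}{10}$ is used and where the loss $\varepsilon$ appears --- while the D-wave factor $(1+|x|^2/(1+t))^{-(3+|\alpha|)/2}$ is handled by $\mathcal I_1$ or $\mathcal I_2$, producing the time weight $(1+t)^{-(4+|\alpha|)/2}$ for $\overline{n_1}$ and the mixed weight for $\overline{w_1}$ in which the slower D-wave part originates from the Riesz component of $G_{22}$. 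For the Dirac-like piece $G_{S_2}$ I use Lemma~\ref{l 313} and split $\{|x|^2\le1+t\}$ from $\{|x|^2>1+t\}$, as in the two displays following (\ref{5.6}), absorbing its contribution into $Ce^{-C_1 t}(1+|x|^2/(1+t))^{-3/2}$. For $G_{S_1}$, which is singular only as $t\to0$, I split $0\le t\le1$ (where the singularity is dominated using the $H^6$ bound of Theorem~\ref{l a} and Sobolev embedding) from $t>1$ (where $G_{S_1}$ is harmless and is treated like the leading part). Collecting the pieces gives (\ref{5.11})--(\ref{5.12}).

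For $(\overline{n_2},\overline{w_2})$ the point is that $r_2>\frac32$ only feeds $\mathcal I_1$ and $\mathcal I_2$, so the extra half power of time decay demanded of $\overline{n_2}$ cannot come from the data alone. The device is the identity $n_{2,0}={\rm div}\,\nabla\phi_0$ combined with $|\nabla\phi_0|\le C\varepsilon_0(1+|x|^2)^{-r_2}$: in $\mathbb G_{11}\ast n_{2,0}$ one transfers the divergence onto the Green's function, so that the bound of Proposition~\ref{l 410} for $D_x^\beta(\mathbb G_{11}-\mathbb G_{S_1}-\mathbb G_{S_2})$ with $|\beta|=|\alpha|+1$ applies and upgrades the time weight from $(1+t)^{-(3+|\alpha|)/2}$ to $(1+t)^{-(4+|\alpha|)/2}$, exactly as in (\ref{5.15}). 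The term $\mathbb G_{12}\ast w_{2,0}$ already carries a derivative in its symbol, with Proposition~\ref{l 410} giving $(1+t)^{-(4+|\alpha|)/2}$ for $\mathbb G_{12}$, whereas $\mathbb G_{21}\ast n_{2,0}$ and $\mathbb G_{22}\ast w_{2,0}$ are estimated directly by Proposition~\ref{l 410} and $\mathcal I_1$, producing the weaker weight $(1+t)^{-(3+|\alpha|)/2}$ for $\overline{w_2}$. The singular components $\mathbb G_{S_1},\mathbb G_{S_2}$ are disposed of exactly as in the first step, yielding (\ref{5.15})--(\ref{5.17}) and hence the last two estimates.

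I expect the main obstacle to be the bookkeeping around the singular short-wave components rather than anything conceptual: one must verify that the derivative transfer in $\mathbb G_{11}\ast{\rm div}\,\nabla\phi_0$ can also be carried out on the Dirac-like and $L^1$-tail parts of $\mathbb G_{S_2}$, so that the gained decay is not lost there, and that the time split at $t=1$ for $G_{S_1}$ (and $\mathbb G_{S_1}$) interfaces correctly with the H-wave, the D-wave, and the polynomial data decay through (\ref{5.0}). The only genuinely sharp constraint entering the argument is $r_1\ge\frac{21}{10}$ forced by $\mathcal I_3$; all the other hypotheses are used with room to spare.
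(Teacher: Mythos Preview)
Your proposal is correct and follows essentially the same route as the paper: the decomposition $G_{ij}=(G_{ij}-G_{S_1}-G_{S_2})+G_{S_1}+G_{S_2}$, the use of $\mathcal I_3$ (with $r_1\ge\tfrac{21}{10}$) for the H-wave and $\mathcal I_1,\mathcal I_2$ for the D-wave, the $t\le1$/$t>1$ split for $G_{S_1}$, and the key trick $n_{2,0}={\rm div}\,\nabla\phi_0$ to gain a derivative on $\mathbb G_{11}$ all match the paper's argument in (\ref{5.5})--(\ref{5.17}). Regarding your stated concern, no derivative transfer onto $\mathbb G_{S_2}$ is actually needed: the paper simply places all derivatives on $n_{2,0}$ for the singular pieces and absorbs them via the exponential-in-$t$ decay, so that obstacle evaporates.
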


As mentioned in \S1, to deduce the pointwise estimates for the solution $(n_1,w_1)$ of the nonlinear problem, we have to first obtain the pointwise estimate of the electric field $\nabla\phi$. To this end, we shall consider the following new system on the variable $n_2$ and $v_2=u_1-u_2$, to avoid dealing with the term $\nabla\phi$ in the nonlinear term of this system. In fact, we can rewrite the system $(2.4)_{3,4}$ as
\bes
\left\{\begin{array}{lll}\medskip
\!\!\partial_tn_2+{\rm div}v_2=-{\rm div}(\frac{n_1v_2+n_2v_1}{2}):=F_3(n_1,v_1,n_2,v_2)\\
\!\!\partial_t v_2\!+\!h'(1)n_2\!-\!\mu_1\Delta v_2\!-\!\mu_2\!\nabla\!{\rm div}v_2\!+\!2\nabla\phi\!=\!\nabla \{h'(1)n_2\!-\!h(1\!+\!\frac{n_1+n_2}{2})\!+\!h(1\!+\!\frac{n_1-n_2}{2})\}\!+\!(v_1\!\cdot\!\nabla)v_2\\[1mm]
  \ \ \ \ \ \ \!+(v_2\!\cdot\!\nabla)v_1\!+\!\frac{\frac{n_1+n_2}{2}}{1\!+\!\frac{n_1+n_2}{2}}(\mu_1\Delta v_2\!+\!\mu_2\nabla{\rm div}v_2) \!+\!\left(\frac{\frac{n_1\!+\!n_2}{2}}{1\!+\!\frac{n_1\!+\!n_2}{2}}\!-\!\frac{\frac{n_1\!-\!n_2}{2}}{1\!+\!\frac{n_1\!-\!n_2}{2}}\right)
(\mu_1\Delta\frac{v_1+v_2}{2}\!+\!\mu_2\nabla{\rm div}\frac{v_1-v_2}{2})\ \ \ \ \ \\
\ \ \ \ \ \ \ \ \ \ \ \ \ \ \ \ \ \ \ \ \ \ \ \ \ \ \ \ \ \ \ \ \ \ \ \ \ \ \ \ \ \ \ \ \ \ \ \ :=F_4(n_1,v_1,n_2,v_2),
\end{array}\right.
\lbl{5.18}
\ees
where $h'(\rho_i)=\frac{P_i'(\rho_i)}{\rho_i}$ and $v_1=u_1+u_2$.\\
Furthermore, we find that
\bes
F_4(n_1,v_1,n_2,v_2)=\mathcal{O}(1)(D(n_1n_2)+v_1Dv_2+v_2Dv_1+n_1D^2v_2+n_2D^2v_2+n_2D^2v_1),
\ees
that is, there is a factor $n_2$ or $v_2$ in each term of $F_3(n_1,v_1,n_2,v_2)$ and $F_4(n_1,v_1,n_2,v_2)$. This fact is key for us to deduce the pointwise estimate of $n_2$ and $v_2$.

Due to Proposition \ref{l 52}, we first introduce the following ansatz for $|\alpha|\leq 2$:
\bes
M(T)&=&\sup\limits_{0\leq t\leq T}\left\{\|D_x^\alpha n_1(\cdot,t)\psi_1^{-1}(\cdot,t)\|_{L^\infty}+\|D_x^\alpha w_1(\cdot,t)\psi_2^{-1}(\cdot,t)\|_{L^\infty}+\|n_2(\cdot,t)\psi_3^{-1}(\cdot,t)\|_{L^\infty}\right.\nm\\
&&\ \ \ \ \ \ \ \ \ +\|w_2(\cdot,t)\psi_4^{-1}(\cdot,t)\|_{L^\infty}+\|D_x^\alpha n_2(\cdot,t)\psi_5^{-1}(\cdot,t)\|_{L^\infty}+\|D_x^\alpha w_2(\cdot,t)\psi_6^{-1}(\cdot,t)\|_{L^\infty}\nm\\
&&\ \ \ \ \ \ \ \ \ +(1+t)^3(\|D^3(n_1,w_1,n_2,w_2)\|_{L^\infty}+\|D^4(n_1,w_1,n_2,w_2)\|_{L^\infty})\},
\lbl{5.20}
\ees
where for any $0<\varepsilon\ll1$ that
\bess
&&\psi_1(x,t)=(1+t)^{-2}\left\{\left(1+\frac{x^2}{1+t}\right)^{-(\frac{3}{2}-\varepsilon)}
+\left(1+\frac{(|x|-ct)^2}{1+t}\right)^{-(\frac{3}{2}-\varepsilon)}\right\},\\
&&\psi_2(x,t)=(1+t)^{-\frac{3}{2}}\left\{\left(1+\frac{x^2}{1+t}\right)^{-(\frac{3}{2}-\varepsilon)}
+(1+t)^{-\frac{1}{2}}\left(1+\frac{(|x|-ct)^2}{1+t}\right)^{-(\frac{3}{2}-\varepsilon)}\right\},\\
&&\psi_3(x,t)=(1+t)^{-2}\bigg(1+\frac{x^2}{1+t}\bigg)^{-\frac{3}{2}},\ \ \psi_4(x,t)=(1+t)^{-\frac{3}{2}}\bigg(1+\frac{x^2}{1+t}\bigg)^{-\frac{3}{2}},\\
&&\psi_5(x,t)=(1+t)^{-2}\bigg(1+\frac{x^2}{1+t}\bigg)^{-(\frac{3}{2}-\varepsilon)},\ \ \psi_6(x,t)=(1+t)^{-\frac{3}{2}}\bigg(1+\frac{x^2}{1+t}\bigg)^{-(\frac{3}{2}-\varepsilon)}.
\eess
In what follows, we mainly prove that $M(T)\leq C$.\\
 In fact,  in \cite{WX} the authors have deduced the following decay rate of the solution $(\rho_1,u_1,\rho_2,u_2)$ when the initial data is in $H^l(\mathbb{R}^3)\cap L^1(\mathbb{R}^3)$ with $l\geq4$ that
\bes
\|D_x^\alpha(\rho_1-1,u_1,\rho_2-1,u_2)\|_{L^2(\mathbb{R}^3)}\leq C(1+t)^{-\frac{3}{4}-\frac{|\alpha|}{2}},\ |\alpha|\leq l.
\ees
After a similar procedure as in \cite{WX}, when the initial data is in $H^6(\mathbb{R}^3)\cap L^1(\mathbb{R}^3)$, one can immediately get the similar results for the higher order derivative of the solution. In addition, by using Sobolev inequality in $\mathbb{R}^3$
 \bess
 ||f||_{L^\infty}\le C||Df||_{L^2}^{\frac{1}{2}}||D^2f||_{L^2}^{\frac{1}{2}},
\eess
we know the ansatz on $D_x^\beta(n_1,w_1,n_2,w_2)$  with $3\leq |\beta|\leq 4$ is reasonable in the present paper. Thus, in the following we mainly focus on the pointwise estimates for the $D_x^\alpha(n_1,w_1,n_2,w_2)$ when $|\alpha|\leq 2$.

From the ansatz (\ref{5.20}) and  the relation $w_1=(\rho_1+1)u_1+(\rho_2+1)u_2$ and $w_2=(\rho_1+1)u_1-(\rho_2+1)u_2$, we have
\bes
|v_1|\leq C(|w_1|+|n_2|), \ |v_2|\leq C(|w_2|+|n_2|),\ |w_1|\leq C(|v_1|+|n_2|), \ |w_2|\leq C(|v_2|+|n_2|).\lbl{5.21}
\ees
In addition, for $1\leq |\alpha|\leq 2$ we get
\bes
&&|D_x^\alpha v_2(x,t)|\leq CM(T)(\psi_3(x,t)+\psi_4(x,t)),\nm\\
&&|D_x^\alpha v_1(x,t)|\leq CM(T)(\psi_2(x,t)+\psi_3(x,t)).\lbl{5.22}
\ees

Now, we begin to derive the pointwise estimates for $n_2$ and $v_2$. by Duhamel's principle, the nonlinear part of $|D_x^\alpha n_2|$ is as follows
\bes
&&\int_0^tD_x^\alpha \mathbb{G}_{11}(\cdot,t-s)\ast F_3(\cdot,s)ds+\int_0^tD_x^\alpha \mathbb{G}_{12}(\cdot,t-s)\ast F_4(\cdot,s)ds\nm\\
&=&\int_0^tD_x^\alpha (\mathbb{G}_{11}-\mathbb{G}_{S_1}-\mathbb{G}_{S_2})(\cdot,t-s))\ast F_3(\cdot,s)ds+\int_0^tD_x^\alpha (\mathbb{G}_{S_1}+\mathbb{G}_{S_2})(\cdot,t-s)\ast F_3(\cdot,s)ds\nm\\
&&\ \ +\int_0^tD_x^\alpha (\mathbb{G}_{12}-\mathbb{G}_{S_1}-\mathbb{G}_{S_2})(\cdot,t-s))\ast F_4(\cdot,s)ds+\int_0^tD_x^\alpha (\mathbb{G}_{S_1}+\mathbb{G}_{S_2})(\cdot,t-s)\ast F_4(\cdot,s)ds\nm\\
&:=&R_7+R_8+R_9+R_{10}.
\lbl{5.24}
\ees

We first consider the case $|\alpha|=0$ to obtain the estimate for $n_2$. We divide the time interval into two parts: short time $[0,\frac{t}{2}]$ and long time $(\frac{t}{2}, t]$. For $R_7$, since the nonlinear term $F_3$ has the conservative structure, and from (\ref{5.21}), we have for the short time
\bes
&&\left|\int_0^{\frac{t}{2}} (\mathbb{G}_{11}-\mathbb{G}_{S_1}-\mathbb{G}_{S_2})(\cdot,t-s)\ast F_3(\cdot,s)ds\right|\nm\\
&=&\left|\int_0^{\frac{t}{2}}D_x (\mathbb{G}_{11}-\mathbb{G}_{S_1}-\mathbb{G}_{S_2})(\cdot,t-s)\ast (n_1v_2+n_2v_1)(\cdot,s)ds\right|\nm\\
&\leq& \int_0^{\frac{t}{2}}|D_x (\mathbb{G}_{11}-\mathbb{G}_{S_1}-\mathbb{G}_{S_2})(\cdot,t-s)|\ast |n_1v_2+n_2v_1|(\cdot,s)ds\nm\\
&\leq& CM^2(T)\int_0^{\frac{t}{2}}\!\!\int_{\mathbb{R}^3}(1+t-s)^{-2}\bigg(1+\frac{|x-y|^2}{1+t-s}\bigg)^{-N}
(1+s)^{-\frac{7}{2}}\bigg(1+\frac{|y|^2}{1+s}\bigg)^{-\frac{3}{2}}\nm\\
\ \ \ \ \ \ \ &&\cdot\left\{\bigg(1+\frac{|y|^2}{1+s}\bigg)^{-(\frac{3}{2}-\varepsilon)}
+\bigg(1+\frac{(|y|-cs)^2}{1+s}\bigg)^{-(\frac{3}{2}-\varepsilon)}\right\}dyds
:=R_7^1+R_7^2.
\lbl{5.25}
\ees
When $|x|^2\leq 1+t$, by using (\ref{5.1}) and Young's inequality, we have
\bess
R_7^1+R_7^2&\leq & CM^2(T)(1+t)^{-2}\left\{\int_0^{\frac{t}{2}}(1+s)^{-\frac{7}{2}}(1+s)^{\frac{3}{2}}ds
+\int_0^{\frac{t}{2}}(1+s)^{-\frac{7}{2}}(1+s)^{\frac{3}{4}}(1+t-s)^{\frac{5}{4}}ds\right\}\nm\\
&\leq& CM^2(T)(1+t)^{-2}\left\{\!1\!+\!\!\int_0^{\frac{t}{2}}\!\!\!\int_{\mathbb{R}^3}\!\bigg(1\!+\!\frac{|x-y|^2}{1\!+\!t\!-\!s}\bigg)^{\!\!-N}
\!\!\!\!(1+s)^{-\frac{7}{2}}\bigg(1\!+\!\frac{|y|^2}{1+s}\bigg)^{\!\!-\frac{3}{2}}\!\!\bigg(1\!+\!\frac{(|y|\!-\!cs)^2}{1\!+\!s}\bigg)^{\!\!-\frac{1}{4}}\!\!dyds\!\right\}\nm\\ \ \
&\leq& CM^2(T)(1+t)^{-2}\left\{1+\int_0^{\frac{t}{2}}\!\!\int_{\mathbb{R}^3}(1+s)^{-\frac{7}{2}}\bigg(1+\frac{|y|^2}{1+s}\bigg)^{-\frac{3}{2}}
(1+s)^{\frac{1}{2}}\bigg(1+\frac{|y|^2}{1+s}\bigg)^{-\frac{1}{4}}dyds\right\}\nm\\
&\leq& CM^2(T)(1+t)^{-2}\left\{1+\int_0^{\frac{t}{2}}(1+s)^{-\frac{7}{2}}\!(1+s)^{\frac{1}{2}}(1+s)^{\frac{3}{2}}ds\right\}\nm\\
&\leq& CM^2(T)(1+t)^{-2}\leq C(1+t)^{-2}\left(1+\frac{|x|^2}{1+t}\right)^{-\frac{3}{2}},
\eess
where we have used the estimate (\ref{5.26(2)}).\\
When $|x|^2> 1+t$ and $|y|>\frac{|x|}{2}$, by using (\ref{5.1}), Young's inequality and Lemma \ref{A1}, we have
\bess
R_7^1&=&CM^2(T)\int_0^{\frac{t}{2}}\!\!\!\int_{\mathbb{R}^3}(1\!+\!t\!-\!s)^{-2}\bigg(1+\frac{|x-y|^2}{1\!+\!t\!-\!s}\bigg)^{-N}
(1+s)^{-\frac{7}{2}}\bigg(1+\frac{|y|^2}{1+s}\bigg)^{-(3-\varepsilon)}dyds\nm\\
&\leq& CM^2(T)(1+t)^{-2}\bigg(1+\frac{|x|^2}{1+t}\bigg)^{-\frac{3}{2}}(1+t)^{-\frac{3}{2}}\int_0^{\frac{t}{2}}(1+s)^{-2}(1+t-s)^{\frac{3}{4}}(1+s)^{\frac{3}{4}}ds\nm\\
&\leq& CM^2(T)(1+t)^{-\frac{11}{4}}\bigg(1+\frac{|x|^2}{1+t}\bigg)^{-\frac{3}{2}},
\eess
and
\bess
R_7^2&=&CM^2(T)\!\int_0^{\frac{t}{2}}\!\!\!\!\int_{\mathbb{R}^3}(1\!+\!t\!-\!s)^{-2}\bigg(1+\frac{|x-y|^2}{1\!+\!t\!-\!s}\bigg)^{-N}\!\!\!(1+s)^{-\frac{7}{2}}
\bigg(1+\frac{|y|^2}{1\!+\!s}\bigg)^{-\frac{3}{2}}\!\!
\bigg(1\!+\!\frac{(|y|\!-\!cs)^2}{1+s}\bigg)^{-(\frac{3}{2}-\varepsilon)}\!\!dyds\nm\\
&\leq & CM^2(T)(1+t)^{-2}\bigg(1+\frac{|x|^2}{1+t}\bigg)^{-\frac{3}{2}}(1+t)^{-\frac{3}{2}}
\int_0^{\frac{t}{2}}(1+s)^{-2}(1+t-s)^{\frac{3}{4}}(1+s)^{\frac{5}{4}}ds\nm\\
&\leq& CM^2(T)(1+t)^{-\frac{5}{2}}\bigg(1+\frac{|x|^2}{1+t}\bigg)^{-\frac{3}{2}}.
\eess
Similarly, when $|x|^2> 1+t$ and $|y|\leq\frac{|x|}{2}$, it also holds that
\bess
|R_7|\leq CM^2(T)(1+t)^{-2}\bigg(1+\frac{|x|^2}{1+t}\bigg)^{-\frac{3}{2}}.
\eess

Next, we consider the long time part of the nonlinear term. We have
\bes
&&\left|\int_{\frac{t}{2}}^t (\mathbb{G}_{11}-\mathbb{G}_{S_1}-\mathbb{G}_{S_2})(\cdot,t-s))\ast F_3(\cdot,s)ds\right|\nm\\
&=&\left|\int_{\frac{t}{2}}^tD_x (\mathbb{G}_{11}-\mathbb{G}_{S_1}-\mathbb{G}_{S_2})(\cdot,t-s)\ast (n_1v_2+n_2v_1)(\cdot,s)ds\right|\nm\\
&\leq& \int_{\frac{t}{2}}^t|D_x (\mathbb{G}_{11}-\mathbb{G}_{S_1}-\mathbb{G}_{S_2})(\cdot,t-s)|\ast (|n_1v_2|+|n_2v_1|)(\cdot,s)ds\nm\\
&\leq& CM^2(T)\int_{\frac{t}{2}}^t\!\int_{\mathbb{R}^3}(1+t-s)^{-2}\bigg(1+\frac{|x-y|^2}{1+t-s}\bigg)^{-N}(1+s)^{-\frac{7}{2}}\bigg(1+\frac{|y|^2}{1+s}\bigg)^{-\frac{3}{2}}\nm\\
\ \ \ \ \ \ \ &&\cdot\left\{\bigg(1+\frac{|y|^2}{1+s}\bigg)^{-(\frac{3}{2}-\varepsilon)}+\bigg(1+\frac{(|y|-cs)^2}{1+s}\bigg)^{-(\frac{3}{2}-\varepsilon)}\right\}dyds
:=R_7^3+R_7^4.
\lbl{5.26}
\ees
When $|x|^2\leq 1+t$, we have
\bess
R_7^3+R_7^4&\leq & CM^2(T)(1+t)^{-\frac{7}{2}}\int_{\frac{t}{2}}^t(1+t-s)^{-2}(1+t-s)^{\frac{3}{2}}ds\nm\\
&\leq& CM^2(T)(1+t)^{-3}\leq CM^2(T)(1+t)^{-3}\left(1+\frac{|x|^2}{1+t}\right)^{-\frac{3}{2}}.
\eess
When $|x|^2> 1+t$ and $|y|>\frac{|x|}{2}$,
\bess
R_7^3&=&CM^2(T)\int_{\frac{t}{2}}^t\!\!\int_{\mathbb{R}^3}(1+t-s)^{-2}\bigg(1+\frac{|x-y|^2}{1+t-s}\bigg)^{-N}(1+s)^{-\frac{7}{2}}
\bigg(1+\frac{|y|^2}{1+s}\bigg)^{-(3-\varepsilon)}dyds\nm\\
&\leq& CM^2(T)(1+t)^{-\frac{7}{2}}\left(1+\frac{|x|^2}{1+t}\right)^{-(3-\varepsilon)}\int_{\frac{t}{2}}^t(1+t-s)^{-2}(1+t-s)^{\frac{3}{2}}ds\nm\\
&\leq& CM^2(T)(1+t)^{-3}\bigg(1+\frac{|x|^2}{1+t}\bigg)^{-(3-2\varepsilon)}.
\eess
\bess
R_7^4&=&CM^2(T)\!\int_{\frac{t}{2}}^t\!\!\int_{\mathbb{R}^3}(1\!+\!t\!-\!s)^{-2}\bigg(1+\frac{|x-y|^2}{1\!+\!t\!-\!s}\bigg)^{-N}\!\!
(1+s)^{-\frac{7}{2}}\bigg(1+\frac{|y|^2}{1+s}\bigg)^{-\frac{3}{2}}\!\!
\bigg(1\!+\!\frac{(|y|\!-\!cs)^2}{1\!+\!s}\bigg)^{-(\frac{3}{2}-\varepsilon)}\!dyds\nm\\
&\leq & CM^2(T)(1+t)^{-\frac{7}{2}}\bigg(1+\frac{|x|^2}{1+t}\bigg)^{-\frac{3}{2}}\int_{\frac{t}{2}}^t(1+t-s)^{-2}(1+t-s)^{\frac{5}{2}}ds\nm\\
&\leq& CM^2(T)(1+t)^{-3}\bigg(1+\frac{|x|^2}{1+t}\bigg)^{-\frac{3}{2}}.
\eess
Similarly, one can get for $|x|^2> 1+t$ and $|y|\leq\frac{|x|}{2}$ that
\bess
R_7^3+R_7^4\leq CM^2(T)(1+t)^{-2}\bigg(1+\frac{|x|^2}{1+t}\bigg)^{-\frac{3}{2}}.
\eess

Notice that there exists an additional derivative in $\mathbb{G}_{12}$ though the nonlinear term $F_4$ has not a divergence form. Then, after a similar calculus as for $R_7$, and by using the ansatz (\ref{5.20}), one can immediately obtain
\bess
R_9\leq CM^2(T)(1+t)^{-2}\bigg(1+\frac{|x|^2}{1+t}\bigg)^{-\frac{3}{2}}.
\eess

Next we consider $R_8$ and $R_{10}$, In spite of singularity of $\mathbb{G}_{S_1}$ as $t\rightarrow0$, the convolution between $\mathbb{G}_{S_1}$ and the nonlinear terms has not this singularity when $|\alpha|\leq 1$. Thus, one can similarly deduce the estimate for this convolution as $R_7$ and $R_9$. Then, we consider the Dirac-like function $\mathbb{G}_{S_2}$. We need the following standard nonlinear estimates as in \cite{LW,LS}.

\begin{lemma}\lbl{l 52(0)} There exists a constant $C>0$ such that
\bes
&&\int_0^t\!\!\int_{\mathbb{R}^3} e^{-b(t-s)}\tilde{f}(x\!-\!y)(1+s)^{-\frac{3+\gamma}{2}}\bigg(1+\frac{|y|^2}{1+s}\bigg)^{-a}\!\!\!dyds\leq C(1+t)^{-\frac{3+\gamma}{2}}\bigg(1+\frac{|x|^2}{1+t}\bigg)^{-a}\!\!,\\
&&\int_0^t\!\!\int_{\mathbb{R}^3} e^{-b(t-s)}\tilde{f}(x\!-\!y)(1+s)^{-\frac{3+\gamma}{2}}\bigg(1+\frac{(|y|-cs)^2}{1+s}\bigg)^{-a}\!\!\!dyds\leq C(1+t)^{-\frac{3+\gamma}{2}}\bigg(1+\frac{(|x|-ct)^2}{1+t}\bigg)^{-a}\!\!,\ \ \
\ees
\end{lemma}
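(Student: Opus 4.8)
The plan is to run the classical argument for nonlinear convolution estimates in the pointwise framework (as in \cite{LW,LS}), the only new element being that the Gaussian/\,D--wave kernel used there is replaced by $\tilde f$, which by Lemma \ref{l 313} lies in $L^1(\mathbb R^3)$, is only mildly singular ($\tilde f(z)\le C|z|^{-2}$) near the origin, and decays like $|z|^{-N}$ for an $N$ we are free to take arbitrarily large. The factor $e^{-b(t-s)}$ is used throughout to localize the $s$--integration near $s=t$; in particular on $[0,t/2]$ it contributes $e^{-b(t-s)}\le e^{-bt/2}$, which beats any polynomial in $1+t$.

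For the first (diffusion) estimate I would first establish the purely spatial bound
\[
\int_{\mathbb R^3}\tilde f(x-y)\Bigl(1+\tfrac{|y|^2}{1+\tau}\Bigr)^{-a}\,dy\le C\Bigl(1+\tfrac{|x|^2}{1+\tau}\Bigr)^{-a}\qquad(\tau>0),
\]
by the dichotomy used for $\mathcal I_1,\mathcal I_2$ in Lemma \ref{l 51}: on $\{|y|>|x|/2\}$ one has $(1+\frac{|y|^2}{1+\tau})^{-a}\le 4^a(1+\frac{|x|^2}{1+\tau})^{-a}$, which comes out of the integral, and $\|\tilde f\|_{L^1}<\infty$ finishes it; on $\{|y|\le|x|/2\}$ one has $|x-y|\ge|x|/2$, so if $|x|\le 2R$ the estimate is trivial (the profile is $\simeq 1$ and $\tilde f(x-y)$ is integrable over the bounded $y$--region), while if $|x|>2R$ one uses $\tilde f(x-y)\le C_N(|x|/2)^{-N}$, bounds the remaining profile by $1$, integrates $\int_{|y|\le|x|/2}dy\le C|x|^3$, and closes the estimate by taking $N$ sufficiently large and distinguishing $|x|^2\le 1+\tau$ from $|x|^2>1+\tau$. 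Inserting this into the double integral gives an integrand $\le Ce^{-b(t-s)}(1+s)^{-(3+\gamma)/2}(1+\frac{|x|^2}{1+s})^{-a}$; splitting $s\in[0,t/2]$ (where $e^{-b(t-s)}\le e^{-bt/2}$) from $s\in[t/2,t]$ (where $1+s\simeq 1+t$ and $(1+\frac{|x|^2}{1+s})^{-a}\le(1+\frac{|x|^2}{1+t})^{-a}$) yields the claimed bound $C(1+t)^{-(3+\gamma)/2}(1+\frac{|x|^2}{1+t})^{-a}$.

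For the second (Huygens) estimate I would keep the two integrations coupled and mirror the proof of $\mathcal I_3$ in Lemma \ref{l 51}. If $(|x|-ct)^2\le 1+t$ the target is $\simeq 1$, and bounding $(1+\frac{(|y|-cs)^2}{1+s})^{-a}\le 1$ together with $\|\tilde f\|_{L^1}$ and $\int_0^t e^{-b(t-s)}(1+s)^{-(3+\gamma)/2}ds\le C(1+t)^{-(3+\gamma)/2}$ suffices. If $(|x|-ct)^2> 1+t$ I would split the $y$--integral at $|x-y|=\frac12||x|-ct|$. On $\{|x-y|\le\frac12||x|-ct|\}$ the triangle inequality gives $||y|-cs|\ge||x|-ct|-|x-y|-c(t-s)\ge\frac12||x|-ct|-c(t-s)$; when $c(t-s)\le\frac14||x|-ct|$ this is $\ge\frac14||x|-ct|$, so the profile at $(y,s)$ is $\le 16^a(1+\frac{(|x|-ct)^2}{1+t})^{-a}$ and $\|\tilde f\|_{L^1}$ closes it, while when $c(t-s)>\frac14||x|-ct|$ the factor $e^{-b(t-s)}\le e^{-b||x|-ct|/(4c)}$ is super-polynomially small in both $1+t$ and $||x|-ct|$, which again closes it. On $\{|x-y|>\frac12||x|-ct|\}$, provided $||x|-ct|>2R$ (the case $||x|-ct|\le 2R$ forces $t$ bounded and is trivial), one has $\tilde f(x-y)\le C_N|x-y|^{-N}$ with $N$ large; bounding the profile by $1$ and using $\int_{|x-y|>\frac12||x|-ct|}|x-y|^{-N}dy\le C||x|-ct|^{3-N}$ closes the estimate after taking $N>3+2a$.

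I expect the genuine difficulty to be this Huygens case: unlike the diffusion profile, $(1+\frac{(|y|-cs)^2}{1+s})^{-a}$ is neither monotone in $s$ nor pointwise comparable to the target weight $(1+\frac{(|x|-ct)^2}{1+t})^{-a}$, since the wave front $cs$ moves. What makes the argument work is that $e^{-b(t-s)}$ forces $t-s=O(1)$, so the front has advanced only by $O(1)$ and the remaining bounded shift in the argument of a polynomially decaying weight costs merely a constant; in the complementary regime $|x|\gg ct$ with $t$ bounded — where the target weight is tiny but so is $\tilde f(x-y)$ on the support of the wave — it is instead the spatial decay $|x-y|^{-N}$ of $\tilde f$, with $N$ taken large, that supplies the needed smallness.
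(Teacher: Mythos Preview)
The paper does not actually prove this lemma: it states the two inequalities, adds the clarifying sentence ``where $b>0$ and $\tilde f(x)$ is defined in Proposition \ref{l 49}, which is a Dirac-like function,'' and then immediately moves on (``This lemma implies that \ldots''). The result is treated as standard, in the spirit of the analogous nonlinear convolution estimates in \cite{LW,LS}. So there is no ``paper's proof'' to compare against.

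Your argument is correct and is precisely the kind of proof one would expect here. For the diffusion estimate, the key reduction to the purely spatial bound
\[
\int_{\mathbb R^3}\tilde f(x-y)\Bigl(1+\tfrac{|y|^2}{1+\tau}\Bigr)^{-a}dy\le C\Bigl(1+\tfrac{|x|^2}{1+\tau}\Bigr)^{-a}
\]
via the $|y|\gtrless|x|/2$ dichotomy is exactly right, and the subsequent $s$--splitting closes because $(1+\tfrac{|x|^2}{1+s})^{-a}\le(1+\tfrac{|x|^2}{1+t})^{-a}$ for all $s\le t$ (monotonicity in $s$), so the $[0,t/2]$ piece inherits the spatial decay as well, not just the exponential time decay; you might make that explicit. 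For the Huygens estimate your strategy is also sound: the point ``$e^{-b(t-s)}$ forces $t-s=O(1)$'' captures exactly why the moving front $cs$ differs from $ct$ by only $O(1)$, and your three-way split ($|x-y|$ small with $c(t-s)$ small; $|x-y|$ small with $c(t-s)$ large; $|x-y|$ large) covers all cases. In the sub-case $c(t-s)>\tfrac14||x|-ct|$, note that you are also using $(|x|-ct)^2>1+t$ to convert $e^{-b||x|-ct|/(4c)}$ into the required polynomial decay in $1+t$; this is implicit in your phrase ``super-polynomially small in both $1+t$ and $||x|-ct|$'' but worth spelling out. With these minor clarifications your proof is complete.
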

where $b>0$ and $\tilde{f}(x)$ is defined in Proposition \ref{l 49}, which is a Dirac-like function.

This lemma implies that the convolution between $\mathbb{G}_{S_2}$ and the nonlinear terms is almost the same as the convolution between $e^{-bt}\delta(x)$ and the nonlinear terms. Thus, in what follows, we regard $\mathbb{G}_{S_2}$ as $e^{-bt}\delta(x)$. Notice the difference between $\psi_3$, $\psi_4$ and $\psi_5$, $\psi_6$ in the ansatz (\ref{5.20}). We have to carefully estimate the convolutions between $\mathbb{G}_{S_2}$ and $F_3$, $F_4$ corresponding to the variables $n_2$, $w_2$. In fact, from the relation (\ref{5.22}) and the equations (\ref{5.18}), we can rewrite $F_3$ and $F_4$ as
\bes
|F_3|+F_4|&=&\mathcal{O}(1)\{Dn_1(w_2+n_2)+n_1D(w_2+n_2)+Dn_2w_1+n_2Dv_1+Dn_2n_1+w_1Dw_2\nm\\
&&\ \ \ \ \ \ +w_1Dn_2+n_1D^2w_2+n_1D^2n_2+n_1D^2w_2+n_1D^2n_2+\cdots\},
\lbl{5.26(0)}
\ees
where ``$\cdots$" denotes the  terms containing $n_2$ or $w_2$. Hence, for the term ``$\cdots$" above one can use the ansatz (\ref{5.20}) and Lemma \ref{l 52(0)} to get the pointwise profile with the factor $a=\frac{3}{2}$ in Lemma \ref{l 52(0)}. For the other terms above, from the ansatz (\ref{5.20}), it is not so obvious to obtain the same result. We just take the term $w_1Dw_2$ in (\ref{5.26(0)}) for example. In fact, we have
\bes
&&|w_1Dw_2|(x,t)\leq CM^2(T)\left\{(1+t)^{-\frac{3}{2}}\left(1+\frac{|x|^2}{1+t}\right)^{-(\frac{3}{2}-\varepsilon)}
\!\!+\!(1+t)^{-2}\left(1+\frac{(|x|-ct)^2}{1+t}\right)^{-(\frac{3}{2}-\varepsilon)}\right\}\nm\\
&&\ \ \ \ \ \ \ \ \ \ \ \ \ \ \ \ \ \ \ \ \ \ \ \ \ \cdot(1+t)^{-\frac{3}{2}}\bigg(1+\frac{|x|^2}{1+t}\bigg)^{-(\frac{3}{2}-\varepsilon)}\nm\\
&\leq& CM^2(T)(1+t)^{-3}\left(1+\frac{|x|^2}{1+t}\right)^{-(3-2\varepsilon)}
\!\!+CM^2(T)(1+t)^{-\frac{7}{2}}\left(1+\frac{|x|^2}{1+t}\right)^{-(\frac{3}{2}-\varepsilon)}\!\left(1+\frac{(|x|-ct)^2}{1+t}\right)^{-(\frac{3}{2}-\varepsilon)}\nm\\
&\leq& CM^2(T)(1+t)^{-3}\left(1+\frac{|x|^2}{1+t}\right)^{-(3-2\varepsilon)}
\!\!+CM^2(T)(1+t)^{-\frac{7}{2}}\left(1+\frac{|x|^2}{1+t}\right)^{-(\frac{3}{2}-\varepsilon)}(1+t)^{2\varepsilon}\!\left(1+\frac{|x|^2}{1+t}\right)^{-\varepsilon}\nm\\
&\leq& CM^2(T)(1+t)^{-3}\left(1+\frac{|x|^2}{1+t}\right)^{-(3-2\varepsilon)}\!+(1+t)^{-(\frac{7}{2}-2\varepsilon)}\left(1+\frac{|x|^2}{1+t}\right)^{-\frac{3}{2}}\nm\\
&\leq& CM^2(T)(1+t)^{-3}\left(1+\frac{|x|^2}{1+t}\right)^{-\frac{3}{2}}.
\lbl{5.26(1)}
\ees
Here we have used the following estimate for $a_1>0$ that
\bes
\left(1+\frac{(|x|-ct)^2}{1+t}\right)^{-a_1}\leq C(1+t)^{2a_1}\left(1+\frac{|x|^2}{1+t}\right)^{-a_1},\lbl{5.26(2)}
\ees
which can be proved by dividing the domain into two parts: $|x|<2ct$ and $|x|\geq 2ct$. For simplicity, we omit the details. Lastly, the other terms in
(\ref{5.26(0)}) could be treated similarly.

In a conclusion, we have the following estimate for $n_2$
\bes
|n_2(x,t)|\leq C(\varepsilon_0+M^2(T))(1+t)^{-2}\bigg(1+\frac{|x|^2}{1+t}\bigg)^{-\frac{3}{2}}.\lbl{5.27}
\ees
Synchronously, one can have the estimate of $v_2$ that
\bes
|v_2(x,t)|\leq C(\varepsilon_0+M^2(T))(1+t)^{-\frac{3}{2}}\bigg(1+\frac{|x|^2}{1+t}\bigg)^{-\frac{3}{2}},\lbl{5.27(0)}
\ees
which together with the relation (\ref{5.21}) yields
\bes
|w_2(x,t)|\leq C(\varepsilon_0+M^2(T))(1+t)^{-\frac{3}{2}}\bigg(1+\frac{|x|^2}{1+t}\bigg)^{-\frac{3}{2}}.\lbl{5.27(1)}
\ees
That is, we have completed the estimate for $(n_2,w_2)$ as
\bes
|n_2(x,t)|&\leq& C(\varepsilon_0+M^2(T))\psi_3,\nm\\
|w_2(x,t)|&\leq& C(\varepsilon_0+M^2(T))\psi_4.\lbl{5.27(2)}
\ees

Now, from the relation $\nabla\phi=\frac{\nabla}{-\Delta}n_2$, we can immediately give the following proposition on $\nabla\phi$, which is key for us to deduce the optimal pointwise estimate for $n_1$ and $w_1$.

\begin{proposition}\lbl{l 53} We have for a constant $\varepsilon$ satisfying $0<\varepsilon\ll1$ that
\bes
|\nabla\phi(x,t)|\leq C(\varepsilon_0+M^2(T))(1+t)^{-\frac{3-\varepsilon}{2}}\bigg(1+\frac{|x|^2}{1+t}\bigg)^{-\frac{2-\varepsilon}{2}}.\lbl{5.28}
\ees
\end{proposition}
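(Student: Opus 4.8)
The plan is to read $\nabla\phi$ off the Poisson equation as a Newtonian potential and then convolve the already-established pointwise bound (\ref{5.27}) on $n_2$ against the kernel $|x-y|^{-2}$. Since $\Delta\phi=n_2$ on $\mathbb{R}^3$ with $\phi\to0$ at infinity, we have $\nabla\phi=\big(\nabla\frac{-1}{4\pi|\cdot|}\big)\ast n_2$, hence, writing $A:=\varepsilon_0+M^2(T)$ and inserting (\ref{5.27}),
\[
|\nabla\phi(x,t)|\leq C\int_{\mathbb{R}^3}\frac{|n_2(y,t)|}{|x-y|^2}\,dy\leq CA\,\mathcal{J}(x,t),\qquad \mathcal{J}(x,t):=(1+t)^{-2}\int_{\mathbb{R}^3}\frac{(1+|y|^2/(1+t))^{-3/2}}{|x-y|^2}\,dy ;
\]
the bound on $n_2$ also makes all these convolutions absolutely convergent, so the representation is legitimate. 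It therefore suffices to prove $\mathcal{J}(x,t)\leq C(1+t)^{-\frac{3-\varepsilon}{2}}\big(1+\frac{|x|^2}{1+t}\big)^{-\frac{2-\varepsilon}{2}}$.

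To bound $\mathcal{J}$ I would split the $y$-integral according to the location of $y$ relative to $x$ and to the parabolic scale $\sqrt{1+t}$. On the near-source set $\{|x-y|\leq\sqrt{1+t}\}$ one uses $\int_{|z|\leq\sqrt{1+t}}|z|^{-2}\,dz=C\sqrt{1+t}$ together with $(1+|y|^2/(1+t))^{-3/2}\leq C(1+|x|^2/(1+t))^{-3/2}$ there, producing a contribution $\leq C(1+t)^{-3/2}(1+|x|^2/(1+t))^{-3/2}$, far stronger than claimed. On the far set $\{|y|\geq 2|x|\}$ one has $|x-y|\geq|y|/2$, so $\int_{|y|\geq\max(2|x|,\sqrt{1+t})}|y|^{-2}(1+|y|^2/(1+t))^{-3/2}\,dy\leq C(1+t)^{3/2}\max(|x|,\sqrt{1+t})^{-2}$, which is convergent and logarithm-free and again yields a contribution dominated by the right-hand side. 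The genuinely delicate set is the intermediate annulus $\{\sqrt{1+t}\leq|y|\leq|x|/2\}$ (nonempty only when $|x|>2\sqrt{1+t}$), where $|x-y|$ is comparable to $|x|$ and $(1+|y|^2/(1+t))^{-3/2}\leq C(1+t)^{3/2}|y|^{-3}$; its contribution to $\mathcal{J}$ is
\[
C(1+t)^{-2}\,\frac{(1+t)^{3/2}}{|x|^2}\int_{\sqrt{1+t}}^{|x|/2}\frac{dr}{r}=C\,(1+t)^{-1/2}\,\frac{1}{|x|^2}\,\log\frac{|x|}{2\sqrt{1+t}} .
\]
Setting $s=|x|^2/(1+t)>4$ and using $\log s\leq C_\varepsilon s^{\varepsilon/2}$ gives $\leq C_\varepsilon(1+t)^{-3/2}s^{-\frac{2-\varepsilon}{2}}=C_\varepsilon(1+t)^{-3/2}\big(\frac{|x|^2}{1+t}\big)^{-\frac{2-\varepsilon}{2}}$, which is dominated by the claimed profile. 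The finitely many remaining pieces (e.g.\ $\{|x-y|>\sqrt{1+t}\}$ intersected with the annulus $\{|x|/2\leq|y|\leq 2|x|\}$ or with $\{|y|<\sqrt{1+t}\}$) are handled by crude pointwise bounds on kernel and weight and contribute at most $C(1+t)^{-3/2}$ times the appropriate weight.

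The main obstacle is precisely the intermediate annulus. Both the Newtonian kernel $|z|^{-2}$ and the spatial tail $\sim|y|^{-3}$ of $n_2$ sit exactly at the threshold of $L^1(\mathbb{R}^3)$-integrability, so their convolution genuinely carries a logarithmic factor that cannot be removed; absorbing it into a small power $s^{\varepsilon/2}$ is the precise origin of the loss $\varepsilon$ in (\ref{5.28}). Everything else is bookkeeping: the worst temporal rate obtained from the various pieces is $(1+t)^{-3/2}\leq(1+t)^{-\frac{3-\varepsilon}{2}}$, the worst spatial weight is $(1+|x|^2/(1+t))^{-\frac{2-\varepsilon}{2}}$ after the logarithm absorption, and in the complementary regime $|x|\leq 2\sqrt{1+t}$ every piece is $\leq C(1+t)^{-3/2}$ while the spatial weight is $\approx 1$; the single $\varepsilon$ may be shrunk freely and is split between the temporal and spatial factors in (\ref{5.28}) purely for notational uniformity. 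Collecting the pieces yields (\ref{5.28}).
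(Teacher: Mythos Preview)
Your proof is correct and follows essentially the same approach as the paper: both start from the Newtonian representation $\nabla\phi=\frac{\nabla}{-\Delta}n_2$, insert the pointwise bound (\ref{5.27}) on $n_2$, and then split the convolution $\int |x-y|^{-2}(1+|y|^2/(1+t))^{-3/2}dy$ into regions according to the relative sizes of $|x|$, $|y|$, $|x-y|$ and the parabolic scale. The only cosmetic difference is how the borderline logarithm is absorbed: you make it explicit and kill it via $\log s\le C_\varepsilon s^{\varepsilon/2}$, whereas the paper instead splits the kernel as $|x-y|^{-2}=|x-y|^{-(2-\varepsilon)}|x-y|^{-\varepsilon}$, pulls out $|x-y|^{-(2-\varepsilon)}\le C|x|^{-(2-\varepsilon)}$ on the region $|y|<|x|/2$, and lets the remaining $|x-y|^{-\varepsilon}\sim|x|^{-\varepsilon}$ cancel the $\log(|x|/\sqrt{1+t})$ coming from $\int_{|y|<|x|/2}(1+|y|^2/(1+t))^{-3/2}dy$; in the complementary small-$|x|$ region the paper uses Young's inequality (which forces $q>1$ because $(1+|y|^2/(1+t))^{-3/2}\notin L^1$) and thereby also picks up an $\varepsilon$ in the time exponent. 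Your observation that only one $\varepsilon$ is genuinely needed and the second is cosmetic is accurate.
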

\begin{proof} The relation $\nabla\phi=\frac{\nabla}{-\Delta}n_2$ yields that
\bes
|\nabla\phi(x,t)|=(1+t)^{-2}\int_{\mathbb{R}^3}|x-y|^{-2}\bigg(1+\frac{|y|^2}{1+t}\bigg)^{-\frac{3}{2}}dy.
\ees
When $|x|^2\leq 1+t$, if $|x-y|\leq1$, we know
\bes
|\nabla\phi(x,t)|\leq C(1+t)^{-2}\leq C(1+t)^{-2}\bigg(1+\frac{|x|^2}{1+t}\bigg)^{-\frac{3}{2}},
\ees
and if $|x-y|>1$, by using Young's inequality we have
\bes
|\nabla\phi(x,t)|\leq C(1+t)^{-\frac{3-\varepsilon}{2}}\leq C(1+t)^{-\frac{3-\varepsilon}{2}}\bigg(1+\frac{|x|^2}{1+t}\bigg)^{-\frac{3}{2}}.
\ees
When $|x|^2> 1+t$, if $|y|<\frac{|x|}{2}$, we know $|x-y|\geq\frac{|x|}{2}$, which implies that
\bes
|\nabla\phi(x,t)|&\leq& C(1+t)^{-2}(1+\frac{|x|^2}{1+t})^{-\frac{2-\varepsilon}{2}}(1+t)^{-\frac{2-\varepsilon}{2}}\int_{\mathbb{R}^3}|x-y|^{-\varepsilon}\bigg(1+\frac{|y|^2}{1+t}\bigg)^{-\frac{3}{2}}dy\nm\\
&\leq &C(1+t)^{-\frac{3-\varepsilon}{2}}\bigg(1+\frac{|x|^2}{1+t}\bigg)^{-\frac{2-\varepsilon}{2}}.
\ees
When $|y|\geq\frac{|x|}{2}$, we also obtain
\bes
|\nabla\phi(x,t)|\leq C(1+t)^{-\frac{3-\varepsilon}{2}}\bigg(1+\frac{|x|^2}{1+t}\bigg)^{-\frac{2-\varepsilon}{2}}.
\ees
This proves Proposition \ref{l 53}.
\end{proof}\\

  Next, we shall go back to consider the conservation system on $n_1$ and $w_1$. We rewrite
\bes
&&\int_0^tD_x^\alpha (G_{12}(\cdot,t-s)\ast F_1(\cdot,s)ds\nm\\
&=&\int_0^tD_x^\alpha (G_{12}-G_{S_1}-G_{S_2})(\cdot,t-s))\ast F_1(\cdot,s)ds+\int_0^tD_x^\alpha (G_{S_1}+G_{S_2})(\cdot,t-s)\ast F_1(\cdot,s)ds\nm\\
&:=&R_{15}+R_{16}.\lbl{5.33}
\ees
For $R_{15}$, we mainly focus on the ``worst" nonlinear term $n_2\nabla\phi$ in $F_1$ of  (\ref{2.4}). In fact, we have
\bes
\bar{R}_{15}:=\int_0^tD_x^\alpha (G_{12}(\cdot,t-s)\ast (n_2\nabla\phi)(\cdot,s)ds
=-\int_0^tD_x^\beta G_{12}(\cdot,t-s)\ast(|\nabla\phi|^2)(\cdot,s)ds,\lbl{5.34}
\ees
where $|\beta|=|\alpha|+1$.\\
Since the Green's function of the Navier-Stokes system contains the diffusion wave, the Riesz wave and the Hyngens' wave, to estimate the interaction of these different waves, we should divide both the time $t$ and the space $x$ into several parts. In particular, as in \cite {LS} we define
\bess
&&D_1=\{x^2\leq 1+t\},\ \ D_2=\{(|x|-ct)^2\leq 1+t\},\ \ D_3=\{|x|\geq ct+\sqrt{1+t}\},\nm\\
&&D_4=\{\sqrt{1+t}\leq|x|\leq\frac{ct}{2}\},\ \ D_5=\{\frac{ct}{2}\leq|x|\leq ct-\sqrt{1+t}\}.
\lbl{5.34(0)}
\eess

In fact, we have the following estimate for the nonlinear term $n_2\nabla\phi$ in $F_1(n_1,w_1,n_2,w_2)$.

\begin{lemma}\lbl{l 54} We have
\bes
\mathcal{N}_1:&=&\Big|\int_0^t H(\cdot,t-s)\ast(n_2\nabla\phi)(\cdot,s)ds\Big|\nm\\
&\leq& CM^2(T)(1+t)^{-2}\left\{\bigg(1+\frac{(|x|-ct)^2}{1+t}\bigg)^{-(\frac{3}{2}-\varepsilon)}
+\bigg(1+\frac{|x|^2}{1+t}\bigg)^{-(\frac{3}{2}-\varepsilon)}\right\},
\ees
\end{lemma}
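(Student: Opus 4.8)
\noindent\emph{Sketch of proof.}\ The plan is to turn the source into a divergence, move one spatial derivative onto the Huygens kernel, and then run the region decomposition of \cite{LS}. By the Poisson equation $\Delta\phi=n_2$ one has
\[
n_2\nabla\phi=(\Delta\phi)\nabla\phi=\mathrm{div}\Big(\nabla\phi\otimes\nabla\phi-\tfrac12|\nabla\phi|^2 I_{3\times3}\Big),
\]
so integration by parts inside the convolution gives
\[
\mathcal{N}_1=\Big|\int_0^t D_xH(\cdot,t-s)\ast\Big(\nabla\phi\otimes\nabla\phi-\tfrac12|\nabla\phi|^2 I\Big)(\cdot,s)\,ds\Big|
\le C\int_0^t |D_xH|(\cdot,t-s)\ast|\nabla\phi|^2(\cdot,s)\,ds,
\]
the tensor term $\nabla\phi\otimes\nabla\phi$ being handled exactly like $|\nabla\phi|^2$. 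For $D_xH$ I will use Lemma~\ref{l 37}, i.e. $|D_xH(x,\tau)|\le C(1+\tau)^{-5/2}e^{-(|x|-c\tau)^2/(C(1+\tau))}+Ce^{-(|x|+\tau)/C}$, the exponentially localized piece giving only a rapidly decaying contribution (treated as in Lemma~\ref{l 52(0)}); for the source I will square Proposition~\ref{l 53},
\[
|\nabla\phi|^2(y,s)\le C(\varepsilon_0+M^2(T))^2(1+s)^{-(3-\varepsilon)}\Big(1+\tfrac{|y|^2}{1+s}\Big)^{-(2-\varepsilon)}.
\]
Note that it is precisely the extra half power gained by differentiating $H$ — which is why the divergence structure of $n_2\nabla\phi$ is indispensable here — combined with the fast source rate $(1+s)^{-(3-\varepsilon)}$ and the spatial decay $(2-\varepsilon)>\tfrac32$, that makes the sharp time rate $(1+t)^{-2}$ attainable.

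Next I will split $\int_0^t=\int_0^{t/2}+\int_{t/2}^t$ and, in each piece, decompose $x$-space into the five zones $D_1,\dots,D_5$ of \cite{LS} (near the origin, within $\sqrt{1+t}$ of the wave front, ahead of the front, and two intermediate zones inside the cone). On $[0,t/2]$ the kernel argument satisfies $1+t-s\sim 1+t$, so $|D_xH(x-y,t-s)|\le C(1+t)^{-5/2}e^{-(|x-y|-c(t-s))^2/(C(1+t))}$; the $y$-integral is then handled by the dichotomy used in the proof of $\mathcal{I}_3$ in Lemma~\ref{l 51} — on the part where $|y|$ is small compared with the distance from $x$ to the sphere $\{|x-y|=c(t-s)\}$ the Gaussian is exponentially small, while on the complementary part one pulls the diffusive profile out at its largest admissible value and is left with a pure Gaussian shell integral whose effective area is governed by that same distance rather than by the full sphere area $\sim(1+t)^2$, the appendix estimates (e.g. Lemma~\ref{A1}) disposing of the surviving one-dimensional time integrals. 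Converting the resulting $s$-scaled front factors into $t$-scaled ones (using that $1+\frac{\sigma}{1+s}$ decreases in $s$ and that $|x|-c(t-s)\ge|x|-ct$ when $|x|\ge ct$) yields front exponents at least $\tfrac32-\varepsilon$, and the remaining time integral converges because $3-\varepsilon>1$ — so, as the authors observe, the short-time integral is essentially localized near $s=O(1)$ and the growth of the rescaled spatial profile in $s$ is harmless. On $[t/2,t]$ the source is frozen at rate $(1+s)^{-(3-\varepsilon)}\sim(1+t)^{-(3-\varepsilon)}$ while $\int_{t/2}^t|D_xH|(\cdot,t-s)\,ds$ contributes only a bounded factor, and the two claimed profiles are read off region by region; wherever the two wave structures do not line up directly, one trades an H-wave factor for a D-wave factor (or conversely) by inequalities of the type (\ref{5.26(2)}), which costs little because the target rate $(1+t)^{-2}$ leaves room.

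The hard part will be exactly this last region-by-region bookkeeping: since Proposition~\ref{l 53} carries the loss $\varepsilon$ in \emph{both} its time and its space exponents, a careless estimate either overshoots $(1+t)^{-2}$ or produces a front profile weaker than $(\tfrac32-\varepsilon)$, and in each of $D_1,\dots,D_5$ one must decide whether to spend the available decay on keeping the Huygens front sharp (as in $D_2,D_3$) or on producing the diffusive profile $(1+\frac{|x|^2}{1+t})^{-(3/2-\varepsilon)}$ (as in $D_1,D_4,D_5$). The facts that make the accounting close are the divergence trick (which buys back exactly $(1+t)^{-1/2}$), the super-integrable source rate, the shell-area bound above, and the freedom in the Young-inequality exponent already used in the proof of Lemma~\ref{l 51}.
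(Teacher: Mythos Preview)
Your sketch is correct and follows the paper's strategy closely: the divergence identity $n_2\nabla\phi=\mathrm{div}(\nabla\phi\otimes\nabla\phi-\tfrac12|\nabla\phi|^2 I)$, squaring Proposition~\ref{l 53}, the split $[0,t/2]\cup[t/2,t]$, and the region decomposition $D_1,\dots,D_5$ are exactly what the paper does.

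There is one genuine difference worth noting. You propose to use the divergence trick and the bound $|\nabla\phi|^2\le C(1+s)^{-(3-\varepsilon)}(1+|y|^2/(1+s))^{-(2-\varepsilon)}$ on the \emph{entire} interval $[0,t]$. The paper does this only on $[0,t/2]$; on $[t/2,t]$ it drops the integration by parts, keeps the undifferentiated Huygens kernel at rate $(1+t-s)^{-2}$, and instead uses the direct product bound
\[
|n_2\nabla\phi|(y,s)\le CM^2(T)\,(1+s)^{-(\frac{7}{2}-\varepsilon)}\Big(1+\frac{|y|^2}{1+s}\Big)^{-(2-\varepsilon)},
\]
coming from the ansatz $|n_2|\le M(T)\psi_3$ together with Proposition~\ref{l 53}. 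The point is that $n_2$ decays half a power faster in time than $\nabla\phi$, so the product $n_2\nabla\phi$ carries the better rate $(1+s)^{-(7/2-\varepsilon)}$, whereas $|\nabla\phi|^2$ only gives $(1+s)^{-(3-\varepsilon)}$. This extra half power on the source side exactly compensates for not having the extra $(1+t-s)^{-1/2}$ that the derivative on $H$ would buy, and it makes the near-$s=t$ bookkeeping in $D_4$ and $D_5$ (Cases~3 and~4) a bit cleaner because one avoids the more singular prefactor $(1+t-s)^{-5/2}$. Your version also closes in every region (the accounting in $D_4$, $D_5$ Case~3 still works because $|x|\le C(1+t)$ and $ct-|x|\le C(1+t)$ absorb the leftover $(|x|/(1+t))^{1/2-2\varepsilon}$-type factors), but your sentence ``$\int_{t/2}^t|D_xH|\,ds$ contributes only a bounded factor'' is too coarse as stated --- read literally it would leave you with the $L^1$ norm of the source and a rate of only $(1+t)^{-(3/2-\varepsilon)}$; you really do need the full region-by-region argument on $[t/2,t]$ as well.
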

where $H(x,t)$ is the H-wave defined in Proposition \ref{l 317}.

\begin{proof} When $(x,t)\in D_1\cup D_2$, we divide the interval $[0,t]$ into two parts. In fact, by using Young's inequality and the ansatz (\ref{5.20}), we have
\bes
&&\Big|\int_0^{\frac{t}{2}}H(\cdot,t-s)\ast(n_2\nabla\phi)(\cdot,s)ds\Big|\nm\\
&=&\Big|-\int_0^{\frac{t}{2}}(1+t-s)^{-\frac{5}{2}}e^{-\frac{(|x-y|-c(t-s))^2}{C(t-s)}}\ast(|\nabla\phi|^2)(\cdot,s)ds\Big|\nm\\
&\leq&CM^2(T)\int_0^{\frac{t}{2}}\!\int_{\mathbb{R}^3}(1+t-s)^{-\frac{5}{2}}e^{-\frac{(|x-y|-c(t-s))^2}{C(t-s)}}
(1+s)^{-(3-\varepsilon)}\bigg(1+\frac{|y|^2}{1+s}\bigg)^{-(2-\varepsilon)}dyds\nm\\
&\leq& CM^2(T)(1+t)^{-\frac{5}{2}}\int_0^{\frac{t}{2}}(1+s)^{-(3-\varepsilon)}(1+s)^{\frac{3}{2}}ds\nm\\
&\leq& CM^2(T)(1+t)^{-\frac{5}{2}}\leq C\left\{\begin{array}{lll}\medskip
   (1+t)^{-\frac{5}{2}}\bigg(1+\frac{|x|^2}{1+t}\bigg)^{-\frac{3}{2}},\ (x,t)\in D_1;\\
   (1+t)^{-\frac{5}{2}}\bigg(1+\frac{(|x|-ct)^2}{1+t}\bigg)^{-\frac{3}{2}},\ (x,t)\in D_2,
\end{array}\right.
\ees
and
\bes
&&\Big|\int_{\frac{t}{2}}^tH(\cdot,t-s)\ast(n_2\nabla\phi)(\cdot,s)ds\Big|
=\Big|\int_{\frac{t}{2}}^t(1+t-s)^{-2}e^{-\frac{(|x-y|-c(t-s))^2}{C(t-s)}}\ast(n_2\nabla\phi)(\cdot,s)ds\Big|\nm\\
&\leq&CM^2(T)\int_{\frac{t}{2}}^t\int_{\mathbb{R}^3}(1+t-s)^{-2}e^{-\frac{(|x-y|-c(t-s))^2}{C(t-s)}}
(1+s)^{-(\frac{7}{2}-\varepsilon)}\bigg(1+\frac{|y|^2}{1+s}\bigg)^{-(2-\varepsilon)}dyds\nm\\
&\leq& CM^2(T)(1+t)^{-(\frac{7}{2}-\varepsilon)}\int_{\frac{t}{2}}^t(1+t-s)^{-2}(1+t-s)^{-\frac{5}{4}}(1+s)^{\frac{3}{4}}ds\nm\\
&\leq& CM^2(T)(1+t)^{-(\frac{7}{2}-\varepsilon)}\cdot(1+t)\leq C(1+t)^{-(\frac{5}{2}-\varepsilon)}\nm\\
&\leq& \left\{\begin{array}{lll}\medskip
   CM^2(T)(1+t)^{-\frac{5-\varepsilon}{2}}(1+\frac{|x|^2}{1+t})^{-\frac{3}{2}},\ (x,t)\in D_1;\\
   CM^2(T)(1+t)^{-\frac{5-\varepsilon}{2}}(1+\frac{(|x|-ct)^2}{1+t})^{-\frac{3}{2}},\ (x,t)\in D_2.
\end{array}\right.
\ees
When $(x,t)\in D_3$,  and $|y|\geq \frac{|x|-ct}{2}$, by using Lemma \ref{A1} and the ansatz (\ref{5.20}), we can get
\bes
&&\Big|\int_0^{\frac{t}{2}}H(\cdot,t-s)\ast(n_2\nabla\phi)(\cdot,s)ds\Big|\nm\\
&\leq&CM^2(T)\int_0^{\frac{t}{2}}\!\!\int_{\mathbb{R}^3}(1+t-s)^{-\frac{5}{2}}e^{-\frac{(|x-y|-c(t-s))^2}{C(t-s)}}
(1+s)^{-(3-\varepsilon)}\bigg(1+\frac{|y|^2}{1+s}\bigg)^{-(2-\varepsilon)}dyds\nm\\
&\leq& CM^2(T)(1+t)^{-\frac{5}{2}}\bigg(1+\frac{(|x|-ct)^2}{1+t}\bigg)^{-(\frac{3}{2}-\varepsilon)}(1+t)^{-(\frac{3}{2}-\varepsilon)}
\int_0^{\frac{t}{2}}(1+s)^{-\frac{3}{2}}(1+t-s)^{\frac{5}{2p}}(1+s)^{\frac{3}{2q}}ds\nm\\
&\leq & CM^2(T)(1+t)^{-\frac{5}{2}}\bigg(1+\frac{(|x|-ct)^2}{1+t}\bigg)^{-(\frac{3}{2}-\varepsilon)}(1+t)^{-(\frac{3}{2}-\varepsilon)}\int_0^{\frac{t}{2}}(1+s)^{-\frac{3}{2}}(1+t-s)^{\frac{15}{8}}(1+s)^{\frac{3}{8}}ds\nm\\
&\leq& CM^2(T)(1+t)^{-\frac{5}{2}}\bigg(1+\frac{(|x|-ct)^2}{1+t}\bigg)^{-(\frac{3}{2}-\varepsilon)}(1+t)^{-(\frac{3}{2}-\varepsilon)}(1+t)^{\frac{15}{8}}\nm\\
&\leq& CM^2(T)(1+t)^{-(\frac{17}{8}-\varepsilon)}\bigg(1+\frac{(|x|-ct)^2}{1+t}\bigg)^{-(\frac{3}{2}-\varepsilon)},\ {\rm where}\ p=\frac{4}{3},\ q=4,
\ees
and similarly
\bes
&&\Big|\int_{\frac{t}{2}}^t(1+t-s)^{-2}e^{-\frac{(|x-y|-c(t-s))^2}{C(t-s)}}\ast(n_2\nabla\phi)(\cdot,s)ds\Big|\nm\\
&\leq&CM^2(T)\int_{\frac{t}{2}}^t\!\int_{\mathbb{R}^3}(1+t-s)^{-2}e^{-\frac{(|x-y|-c(t-s))^2}{C(t-s)}}
(1+s)^{-(\frac{7}{2}-\varepsilon)}\bigg(1+\frac{|y|^2}{1+s}\bigg)^{-(2-\varepsilon)}dyds\nm\\
&\leq& CM^2(T)(1+t)^{-(\frac{7}{2}-\varepsilon)}\bigg(1+\frac{(|x|-ct)^2}{1+t}\bigg)^{-(\frac{3}{2}-\varepsilon)}
\!\!\int_{\frac{t}{2}}^t\!\!\int_{\mathbb{R}^3}(1+t-s)^{-2}e^{-\frac{(|x-y|-c(t-s))^2}{2C(t-s)}}\bigg(1+\frac{|y|^2}{1+s}\bigg)^{-\frac{1}{2}}dyds\nm\\
&\leq & CM^2(T)(1+t)^{-(\frac{7}{2}-\varepsilon)}\bigg(1+\frac{(|x|-ct)^2}{1+t}\bigg)^{-(\frac{3}{2}-\varepsilon)}
\int_0^{\frac{t}{2}}(1+t-s)^{-2}(1+t-s)^{\frac{15}{8}}(1+s)^{\frac{3}{8}}ds\nm\\
&\leq& CM^2(T)(1+t)^{-(\frac{7}{2}-\varepsilon)}\bigg(1+\frac{(|x|-ct)^2}{1+t}\bigg)^{-(\frac{3}{2}-\varepsilon)}(1+t)\nm\\
&\leq& CM^2(T)(1+t)^{-(\frac{5}{2}-\varepsilon)}\bigg(1+\frac{(|x|-ct)^2}{1+t}\bigg)^{-(\frac{3}{2}-\varepsilon)}.
\ees
When $(x,t)\in D_3$ and $|y|< \frac{|x|-ct}{2}$, by using the same method above, one can immediately obtain
\bes
\Big|\int_{\frac{t}{2}}^t(1+t-s)^{-2}e^{-\frac{(|x-y|-c(t-s))^2}{C(t-s)}}\ast(n_2\nabla\phi)(\cdot,s)ds\Big|\leq CM^2(T)(1+t)^{-(\frac{5}{2}-\varepsilon)}\bigg(1+\frac{(|x|-ct)^2}{1+t}\bigg)^{-\frac{3}{2}}.\ \ \
\ees
When $(x,t)\in D_4: \sqrt{1+t}\leq|x|\leq \frac{ct}{2}$, we have to divide the interval into several parts.\\
Case 1: $0\leq s\leq\frac{t}{2}-\frac{|x|}{2c}$. When $|y|\leq \frac{ct-|x|}{4}$, it holds that $c(t-s)-|x-y|\geq\frac{ct-|x|}{4}$. Then,
\bes
&&\int_0^{\frac{t}{2}-\frac{|x|}{2c}}\!\!\int_{|y|\leq \frac{ct-|x|}{4}}(1+t-s)^{-\frac{5}{2}}e^{-\frac{(|x-y|-c(t-s))^2}{C(t-s)}}
(1+s)^{-(3-\varepsilon)}\bigg(1+\frac{|y|^2}{1+s}\bigg)^{-(2-\varepsilon)}dyds\nm\\
&\leq& C(1+t)^{-\frac{5}{2}}\bigg(1+\frac{(|x|-ct)^2}{1+t}\bigg)^{-\frac{3}{2}}\int_0^{\frac{t}{2}-\frac{|x|}{2c}}(1+s)^{-(3-\varepsilon)}(1+s)^{\frac{3}{2}}ds\nm\\
&\leq& C(1+t)^{-\frac{5}{2}}\bigg(1+\frac{(|x|-ct)^2}{1+t}\bigg)^{-\frac{3}{2}}.
\ees
When $|y|> \frac{ct-|x|}{4}>\frac{ct}{8}>\sqrt{1+t}$, we have
\bes
&&\int_0^{\frac{t}{2}-\frac{|x|}{2c}}\!\!\!\int_{|y|\leq \frac{ct-|x|}{4}}(1+t-s)^{-\frac{5}{2}}e^{-\frac{(|x-y|-c(t-s))^2}{C(t-s)}}
(1+s)^{-(3-\varepsilon)}\bigg(1+\frac{|y|^2}{1+s}\bigg)^{-(2-\varepsilon)}dyds\nm\\
&\leq& C(1+t)^{-\frac{5}{2}}\bigg(1+\frac{(|x|-ct)^2}{1+t}\bigg)^{-\frac{3}{2}-\varepsilon}(1+t)^{-(\frac{3}{2}-\varepsilon)}\nm\\
&&\ \ \ \ \ \ \ \ \ \ \ \ \ \ \times\int_0^{\frac{t}{2}-\frac{|x|}{2c}}\!\!\!\int_{|y|\leq \frac{ct-|x|}{4}} e^{-\frac{(|x-y|-c(t-s))^2}{C(t-s)}}(1+s)^{-\frac{3}{2}}\bigg(1+\frac{|y|^2}{1+s}\bigg)^{-\frac{1}{2}}dyds\nm\\
&\leq& C(1+t)^{-(4-\varepsilon)}\bigg(1+\frac{(|x|-ct)^2}{1+t}\bigg)^{-(\frac{3}{2}-\varepsilon)}
\int_0^{\frac{t}{2}-\frac{|x|}{2c}}(1+s)^{-\frac{3}{2}}(1+t-s)^{\frac{15}{8}}(1+s)^{\frac{3}{8}}ds\nm\\
&\leq& C(1+t)^{-(\frac{17}{8}-\varepsilon)}\bigg(1+\frac{(|x|-ct)^2}{1+t}\bigg)^{-(\frac{3}{2}-\varepsilon)}.
\ees
Case 2: $\frac{t}{2}-\frac{|x|}{2c}\leq s\leq\frac{t}{2}$. Since $|x|\leq \frac{ct}{2}$, we know $s\geq \frac{t}{2}-\frac{|x|}{2c}>\frac{t}{4}$ and $\frac{t}{2}\leq t-s\leq t$.  Thus,
\bes
&&\int_{\frac{t}{2}-\frac{|x|}{2c}}^{\frac{t}{2}}\int_{\mathbb{R}^3}(1+t-s)^{-\frac{5}{2}}e^{-\frac{(|x-y|-c(t-s))^2}{C(t-s)}}
(1+s)^{-(3-\varepsilon)}\bigg(1+\frac{|y|^2}{1+s}\bigg)^{-(2-\varepsilon)}dyds\nm\\
&\leq& C(1+t)^{-\frac{5}{2}}\bigg(1+\frac{ct-|x|}{2c}\bigg)^{-(3-\varepsilon)}(1+t)^2\nm\\
&\leq& CM^2(T)(1+t)^{-(\frac{5}{2}+\varepsilon)}\bigg(1+\frac{ct-|x|}{4c}\bigg)^{-(3-2\varepsilon)}(1+t)^2\nm\\
&\leq& C(1+t)^{-(\frac{1}{2}+\varepsilon)}\bigg(1+\frac{(ct-|x|)^2}{1+t}\bigg)^{-\frac{3-2\varepsilon}{2}}(1+t)^{-(\frac{3}{2}-\varepsilon)}\nm\\
&\leq& C(1+t)^{-2}\bigg(1+\frac{(|x|-ct)^2}{1+t}\bigg)^{-(\frac{3}{2}-\varepsilon)},
\ees
where we have used the following estimate
\bess
\int_0^t\!\!\int_{\mathbb{R}^3}e^{-\frac{(|x-y|-c(t-s))^2}{C(t-s)}}\bigg(1+\frac{|y|^2}{1+s}\bigg)^{-a_2}dyds\leq C(1+t)^2,\ {\rm for}\ a_2>\frac{3}{2}.
\eess
Case 3: $\frac{t}{2}\leq s\leq t-\frac{|x|}{4c}$. Since $|x|\leq \frac{ct}{2}$, we have
\bes
&&\int_{\frac{t}{2}}^{t-\frac{|x|}{4c}}\int_{\mathbb{R}^3}(1+t-s)^{-2}e^{-\frac{(|x-y|-c(t-s))^2}{C(t-s)}}(1+s)^{-(\frac{7}{2}-\varepsilon)}
\bigg(1+\frac{|y|^2}{1+s}\bigg)^{-(2-\varepsilon)}dyds\nm\\
&\leq& C(1+t)^{-(\frac{7}{2}-\varepsilon)}\bigg(1+\frac{|x|}{4c}\bigg)^{-2}(1+t)^2\leq CM^2(T)(1+t)^{-(\frac{1}{2}+\varepsilon)}(1+t)^{-(1-2\varepsilon)}(1+|x|)^{-2}\nm\\
&\leq& C(1+t)^{-(\frac{1}{2}+\varepsilon)}(1+|x|)^{-(3-2\varepsilon)}
\leq C(1+t)^{-(\frac{1}{2}+\varepsilon)}\bigg(1+\frac{|x|^2}{1+t}\bigg)^{-(\frac{3}{2}-\varepsilon)}\!(1+t)^{-\frac{3-2\varepsilon}{2}}\nm\\
&\leq& C(1+t)^{-2}\bigg(1+\frac{|x|^2}{1+t}\bigg)^{-(\frac{3}{2}-\varepsilon)}.
\ees
Case 4: $t-\frac{|x|}{4c}\leq s\leq t$. When $|x-y|\geq\frac{|x|}{2}$, we have $|x-y|-c(t-s)\geq|x-y|-\frac{|x|}{4}\geq\frac{|x|}{4}$, then
\bes
&&\int_{t-\frac{|x|}{4c}}^t\int_{\mathbb{R}^3}(1+t-s)^{-2}e^{-\frac{(|x-y|-c(t-s))^2}{C(t-s)}}(1+s)^{-(\frac{7}{2}-\varepsilon)}\bigg(1+\frac{|y|^2}{1+s}\bigg)^{-(2-\varepsilon)}dyds\nm\\
&\leq& C(1+t)^{-(\frac{7}{2}-\varepsilon)}\bigg(1+\frac{|x|^2}{1+t}\bigg)^{-3}\int_{t-\frac{|x|}{4c}}^t\int_{\mathbb{R}^3}(1+t-s)^{-2}e^{-\frac{(|x-y|-c(t-s))^2}{2C(t-s)}}\bigg(1+\frac{|y|^2}{1+s}\bigg)^{-(2-\varepsilon)}dyds\nm\\
&\leq& C(1+t)^{-(\frac{7}{2}-\varepsilon)}\bigg(1+\frac{|x|^2}{1+t}\bigg)^{-3}\int_{t-\frac{|x|}{4c}}^t(1+t-s)^{-2}(1+t-s)^{\frac{5}{2}}ds\nm\\
&\leq& C(1+t)^{-(\frac{7}{2}-\varepsilon)}\bigg(1+\frac{|x|^2}{1+t}\bigg)^{-3}(1+t)^{\frac{1}{2}}\frac{|x|}{4c}\nm\\
&\leq& C(1+t)^{-(\frac{7}{2}-\varepsilon)}\bigg(1+\frac{|x|^2}{1+t}\bigg)^{-3}(1+t)^{\frac{1}{2}}\bigg(1+\frac{|x|^2}{1+t}\bigg)^{\frac{1}{2}}(1+t)^{\frac{1}{2}}\nm\\
&\leq& C(1+t)^{-(\frac{5}{2}-\varepsilon)}\bigg(1+\frac{|x|^2}{1+t}\bigg)^{-\frac{5}{2}}.
\ees
When $|x-y|<\frac{|x|}{2}$, we have $|y|\geq \frac{|x|}{2}$, which yields that
\bes
&&\int_{t-\frac{|x|}{4c}}^t\int_{\mathbb{R}^3}(1+t-s)^{-2}e^{-\frac{(|x-y|-c(t-s))^2}{C(t-s)}}(1+s)^{-(\frac{7}{2}-\varepsilon)}\bigg(1+\frac{|y|^2}{1+s}\bigg)^{-(2-\varepsilon)}dyds\nm\\
&\leq& C(1+t)^{-(\frac{7}{2}-\varepsilon)}\bigg(1+\frac{|x|^2}{1+t}\bigg)^{-(2-\varepsilon)}\int_{t-\frac{|x|}{4c}}^t(1+t-s)^{-2}(1+t-s)^{\frac{5}{2}}ds\nm\\
&\leq&C(1+t)^{-(\frac{7}{2}-\varepsilon)}\bigg(1+\frac{|x|^2}{1+t}\bigg)^{-(2-\varepsilon)}(1+t)^{\frac{1}{2}}\bigg(1+\frac{|x|^2}{1+t}\bigg)^{\frac{1}{2}}(1+t)^{\frac{1}{2}}\nm\\
&\leq& C(1+t)^{-(\frac{7}{2}-\varepsilon)}\bigg(1+\frac{|x|^2}{1+t}\bigg)^{-(2-\varepsilon)}(1+t)\bigg(1+\frac{|x|^2}{1+t}\bigg)^{\frac{1}{2}}\nm\\
&\leq& C(1+t)^{-(\frac{5}{2}-\varepsilon)}\bigg(1+\frac{|x|^2}{1+t}\bigg)^{-(\frac{3}{2}-\varepsilon)}.
\ees
Lastly,  when $(x,t)\in D_5: \frac{ct}{2}\leq |x|\leq ct-\sqrt{1+t}$, we also divide the domain into four parts. The Case 1: $0\leq s\leq \frac{t}{2}-\frac{|x|}{2c}$ and Case 2: $\frac{t}{2}-\frac{|x|}{2c}\leq s\leq\frac{t}{2}$ can be treated as those in $D_4$, we omit the details. For
the Case 3: $\frac{t}{2}\leq s\leq t-\frac{ct-|x|}{4c}$, we have
\bes
&&\int_{\frac{t}{2}}^{t-\frac{ct-|x|}{4c}}\!\!\int_{\mathbb{R}^3}(1+t-s)^{-2}e^{-\frac{(|x-y|-c(t-s))^2}{C(t-s)}}(1+s)^{-(\frac{7}{2}-\varepsilon)}
\bigg(1+\frac{|y|^2}{1+s}\bigg)^{-(2-\varepsilon)}dyds\nm\\
&\leq& C(1+ct-|x|)^{-2}(1+t)^{-(\frac{7}{2}-\varepsilon)}(1+t)^2\nm\\
&\leq& C(1+t)^{-(\frac{1}{2}+\varepsilon)}(1+t)^{-(1-2\varepsilon)}(1+ct-|x|)^{-2}\nm\\
&\leq& C(1+t)^{-(\frac{1}{2}+\varepsilon)}(1+ct-|x|)^{-(3-2\varepsilon)}\nm\\
&\leq& C(1+t)^{-(\frac{1}{2}+\varepsilon)}\bigg(1+\frac{(ct-|x|)^2}{1+t}\bigg)^{-\frac{3-2\varepsilon}{2}}(1+t)^{-\frac{3-2\varepsilon}{2}}\nm\\
&\leq& C(1+t)^{-2}\bigg(1+\frac{(|x|-ct)^2}{1+t}\bigg)^{-(\frac{3}{2}-\varepsilon)}.
\ees\\
For the Case 4: $t-\frac{ct-|x|}{4c}\leq s\leq t$, when $|x-y|\geq\frac{ct-|x|}{2}$, we have $|x-y|-c(t-s)\geq\frac{ct-|x|}{4}$. Then
\bes
&&\int_{t-\frac{ct-|x|}{4c}}^t\int_{\mathbb{R}^3}(1+t-s)^{-2}e^{-\frac{(|x-y|-c(t-s))^2}{C(t-s)}}(1+s)^{-(\frac{7}{2}-\varepsilon)}
\bigg(1+\frac{|y|^2}{1+s}\bigg)^{-(2-\varepsilon)}dyds\nm\\
&\leq& C(1+t)^{-(\frac{7}{2}-\varepsilon)}\bigg(1+\frac{(ct-|x|)^2}{1+t}\bigg)^{-3}\int_{t-\frac{ct-|x|}{4c}}^t(1+t-s)^{-2}(1+t-s)^{\frac{5}{2}}ds\nm\\
&\leq& C(1+t)^{-(\frac{7}{2}-\varepsilon)}\bigg(1+\frac{(ct-|x|)^2}{1+t}\bigg)^{-3}(1+t)^{\frac{1}{2}}\frac{ct-|x|}{4c}\nm\\
&\leq& C(1+t)^{-(\frac{7}{2}-\varepsilon)}\bigg(1+\frac{(ct-|x|)^2}{1+t}\bigg)^{-3}(1+t)^{\frac{1}{2}}\bigg(1+\frac{(ct-|x|)^2}{1+t}\bigg)^{\frac{1}{2}}
(1+t)^{\frac{1}{2}}\nm\\
&\leq& C(1+t)^{-(\frac{5}{2}-\varepsilon)}\bigg(1+\frac{(ct-|x|)^2}{1+t}\bigg)^{-\frac{5}{2}}.
\ees\\
When $|x-y|<\frac{ct-|x|}{2}$, we have $|y|\geq \frac{|x|-ct}{2}$, which yields that
\bes
&&\int_{t-\frac{ct-|x|}{4c}}^t\int_{\mathbb{R}^3}(1+t-s)^{-2}e^{-\frac{(|x-y|-c(t-s))^2}{C(t-s)}}(1+s)^{-(\frac{7}{2}-\varepsilon)}\bigg(1+\frac{|y|^2}{1+s}\bigg)^{-(2-\varepsilon)}dyds\nm\\
&\leq& C(1+t)^{-(\frac{7}{2}-\varepsilon)}\bigg(1+\frac{(ct-|x|)^2}{1+t}\bigg)^{-(2-\varepsilon)}\int_{t-\frac{ct-|x|}{4c}}^t(1+t-s)^{-2}(1+t-s)^{\frac{5}{2}}ds\nm\\
&\leq& C(1+t)^{-(\frac{7}{2}-\varepsilon)}\bigg(1+\frac{(ct-|x|)^2}{1+t}\bigg)^{-(2-\varepsilon)}(1+t)\bigg(1+\frac{(|x|-ct)^2}{1+t}\bigg)^{\frac{1}{2}}\nm\\
&\leq& C(1+t)^{-(\frac{5}{2}-\varepsilon)}\bigg(1+\frac{(ct-|x|)^2}{1+t}\bigg)^{-(\frac{3}{2}-\varepsilon)}.
\ees
Hence, we complete the proof of this lemma.
\end{proof}

Next, we consider the convolution between Riesz wave and the worst nonlinear term $n_2\nabla\phi$. We have the following lemma.

\begin{lemma}\lbl{l 55} We have
\bes
\mathcal{N}_2:&=&\Big|\int_\frac{t}{2}^t\!\int_{\mathbb{R}^3}(1+t-s)^{-\frac{3}{2}}\bigg(1+\frac{|x-y|^2}{1+t-s}\bigg)^{-\frac{3}{2}}(n_2\nabla\phi)(y,s)dyds\Big|,\nm\\
&&+\Big|\int_0^\frac{t}{2}\!\int_{\mathbb{R}^3}(1+t-s)^{-2}\bigg(1+\frac{|x-y|^2}{1+t-s}\bigg)^{-2}(|\nabla\phi|^2)(y,s)dyds\Big|\nm\\
&\leq& CM^2(T)(1+t)^{-2}\bigg(1+\frac{|x|^2}{1+t}\bigg)^{-(2-\varepsilon)}.
\ees
\end{lemma}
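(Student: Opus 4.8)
The two pieces of $\mathcal{N}_2$ reflect a genuine dichotomy in how the convolution with the Navier--Stokes Green's function $G_{12}$ is organized. For the long-time part $s\in[t/2,t]$ one must \emph{not} integrate by parts: moving a derivative onto $G_{12}$ turns its Riesz component into the $|\alpha|=1$ profile $(1+t-s)^{-2}(1+\tfrac{|x-y|^2}{1+t-s})^{-2}$, whose $(1+t-s)^{-2}$ singularity at $s=t$ is not integrable; keeping $n_2\nabla\phi$ in non-divergence form leaves the $|\alpha|=0$ Riesz profile $(1+t-s)^{-3/2}(1+\tfrac{|x-y|^2}{1+t-s})^{-3/2}$, for which $\int_{t/2}^t(1+t-s)^{-3/2}\,ds=O(1)$. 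For the short-time part $s\in[0,t/2]$ one has $t-s\sim t$, there is no singularity, and one freely integrates by parts, converting $n_2\nabla\phi=\operatorname{div}(\nabla\phi\otimes\nabla\phi-\tfrac12|\nabla\phi|^2I)$ into $|\nabla\phi|^2$ against the $|\alpha|=1$ Riesz profile $(1+t-s)^{-2}(1+\tfrac{|x-y|^2}{1+t-s})^{-2}$. The plan is then to insert the available pointwise bounds: the ansatz (\ref{5.20}) gives $|n_2(y,s)|\le CM(T)(1+s)^{-2}\big(1+\tfrac{|y|^2}{1+s}\big)^{-3/2}$ and Proposition \ref{l 53} gives $|\nabla\phi(y,s)|\le C(\varepsilon_0+M^2(T))(1+s)^{-\frac{3-\varepsilon}{2}}\big(1+\tfrac{|y|^2}{1+s}\big)^{-\frac{2-\varepsilon}{2}}$, whence $|n_2\nabla\phi|(y,s)\le CM^2(T)(1+s)^{-\frac{7-\varepsilon}{2}}\big(1+\tfrac{|y|^2}{1+s}\big)^{-\frac{5-\varepsilon}{2}}$ and $|\nabla\phi|^2(y,s)\le CM^2(T)(1+s)^{-(3-\varepsilon)}\big(1+\tfrac{|y|^2}{1+s}\big)^{-(2-\varepsilon)}$; in both cases the spatial exponent ($\tfrac{5-\varepsilon}{2}$ and $2-\varepsilon$) exceeds $\tfrac32=\tfrac n2$, so $\int_{\mathbb{R}^3}\big(1+\tfrac{|y|^2}{1+s}\big)^{-a}dy\le C(1+s)^{3/2}$.

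For the long-time integral I would use $1+s\sim1+t$ to extract $(1+s)^{-\frac{7-\varepsilon}{2}}$ and then estimate the inner $y$-convolution of the borderline profile $\big(1+\tfrac{|x-y|^2}{1+t-s}\big)^{-3/2}$ against $\big(1+\tfrac{|y|^2}{1+s}\big)^{-\frac{5-\varepsilon}{2}}$ by Young's inequality with exponents $p=3/2$, $p'=3$: the $L^{3/2}$-norm of the first factor is $\le C(1+t-s)$ and the $L^{3}$-norm of the second is $\le C(1+s)^{1/2}$, so the inner integral is $\le C(1+t-s)(1+s)^{1/2}$. Then $\int_{t/2}^t(1+t-s)^{-3/2}(1+t-s)(1+s)^{1/2}\,ds\le C(1+t)^{1/2}\int_{t/2}^t(1+t-s)^{-1/2}\,ds\le C(1+t)$, giving a long-time bound $\le CM^2(T)(1+t)^{-\frac{5-\varepsilon}{2}}$, which is far stronger than the asserted $(1+t)^{-2}$. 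The spatial profile $\big(1+\tfrac{|x|^2}{1+t}\big)^{-(2-\varepsilon)}$ is recovered by the usual region splitting: trivially on $\{|x|^2\le1+t\}$; on $\{|x|^2>1+t\}$ one writes $\mathbb{R}^3=\{|y|\ge|x|/2\}\cup\{|y|<|x|/2\}$, the first factor of $\big(1+\tfrac{|y|^2}{1+s}\big)^{-\frac{5-\varepsilon}{2}}$ already supplying $\big(1+\tfrac{|x|^2}{1+t}\big)^{-(2-\varepsilon)}$ (up to a harmless power of $\tfrac{1+t}{1+s}\sim1$) on the first set, while $|x-y|\ge|x|/2$ forces the Riesz kernel to supply it on the second.

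For the short-time integral I would symmetrically use $1+t-s\sim1+t$, carry out the same three-region splitting (now it is precisely the fact that the exponent $2-\varepsilon$ of $|\nabla\phi|^2$ exceeds $\tfrac32$ that allows the profile transfer at scale $1+t$), pull out $(1+t-s)^{-2}\big(1+\tfrac{|x|^2}{1+t}\big)^{-(2-\varepsilon)}\le C(1+t)^{-2}\big(1+\tfrac{|x|^2}{1+t}\big)^{-(2-\varepsilon)}$, and bound the leftover by $\int_0^{t/2}\!\!\int_{\mathbb{R}^3}(1+s)^{-(3-\varepsilon)}\big(1+\tfrac{|y|^2}{1+s}\big)^{-(2-\varepsilon)}dy\,ds\le C\int_0^{t/2}(1+s)^{-(3/2-\varepsilon)}\,ds\le C$ since $\varepsilon\ll1$. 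Combining the two regimes yields $\mathcal{N}_2\le CM^2(T)(1+t)^{-2}\big(1+\tfrac{|x|^2}{1+t}\big)^{-(2-\varepsilon)}$. The delicate point -- the only real obstacle -- is that the Riesz profile decays exactly like $|x|^{-3}\sim|x|^{-n}$ in $\mathbb{R}^3$, so its spatial mass is only logarithmically finite; the whole argument hinges on the nonlinear factors $n_2\nabla\phi$ and $|\nabla\phi|^2$ decaying \emph{strictly faster than critically} in $y$ (which is exactly the surplus bought by the extra $\nabla\phi$-decay in Proposition \ref{l 53}), and on routing that surplus through Young's inequality rather than a crude $L^\infty$ bound on the kernel, so that the $y$-integration costs only the volume factor $(1+s)^{3/2}$ and the output spatial exponent stays at $2-\varepsilon$.
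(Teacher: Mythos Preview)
There is a genuine gap in your long–time argument on the region $\{|x|^2>1+t\}\cap\{|y|<|x|/2\}$. On that set the only source of $|x|$–decay is the Riesz kernel itself, and with exponent $-\tfrac32$ you get at best
\[
(1+t-s)^{-3/2}\Big(1+\tfrac{|x-y|^2}{1+t-s}\Big)^{-3/2}\ \le\ C\,(1+t-s+|x|^2)^{-3/2},
\]
which, after the remaining $y$– and $s$–integrations, yields a bound of order $(1+t)^{-(2-\varepsilon/2)}\big(1+\tfrac{|x|^2}{1+t}\big)^{-3/2}$. This is \emph{not} controlled by the asserted target $(1+t)^{-2}\big(1+\tfrac{|x|^2}{1+t}\big)^{-(2-\varepsilon)}$: for $|x|^2\gg 1+t$ the deficit is a full factor $\big(1+\tfrac{|x|^2}{1+t}\big)^{1/2-\varepsilon}$. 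Your sentence ``$|x-y|\ge|x|/2$ forces the Riesz kernel to supply it on the second [set]'' is precisely where the argument breaks: a $-\tfrac32$ profile cannot supply a $-(2-\varepsilon)$ spatial profile.

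The fix is the step you explicitly rejected. Your objection to integrating by parts on $[t/2,t]$ --- that the $|\alpha|=1$ kernel $(1+t-s)^{-2}\big(1+\tfrac{|x-y|^2}{1+t-s}\big)^{-2}$ has a non-integrable $(1+t-s)^{-2}$ singularity --- is not correct: this kernel has spatial exponent $-2<-\tfrac32$, hence $\big\|(1+\tfrac{|\cdot|^2}{1+t-s})^{-2}\big\|_{L^1(\mathbb R^3)}\sim(1+t-s)^{3/2}$, and after the $y$–integration the effective singularity is only $(1+t-s)^{-1/2}$, perfectly integrable. This is exactly what the paper does: its proof treats the long-time piece with the $|\alpha|=1$ kernel and $|\nabla\phi|^2$ (the displayed integrals in Case~2 carry $(1+t-s)^{-2}(\cdots)^{-2}$ and $(1+s)^{-(3-\varepsilon)}(\cdots)^{-(2-\varepsilon)}$). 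The payoff on $|y|<|x|/2$ is then decisive: $(1+t-s)^{-2}\big(1+\tfrac{|x-y|^2}{1+t-s}\big)^{-2}\le C(1+t-s+|x|^2)^{-2}\le C(1+t+|x|^2)^{-2}=C(1+t)^{-2}\big(1+\tfrac{|x|^2}{1+t}\big)^{-2}$ on $|x|^2>1+t$, and the remaining $\int_{t/2}^t(1+s)^{-(3-\varepsilon)+3/2}\,ds$ is bounded. Your short-time treatment is fine and matches the paper.
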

\begin{proof} We divide the domain into several parts.\\
Case 1: $0\leq s\leq \frac{t}{2}$. When $|x|^2\leq 1+t$, we have
\bes
 &&\Big|\int_0^\frac{t}{2}\!\int_{\mathbb{R}^3}(1+t-s)^{-2}\bigg(1+\frac{|x-y|^2}{1+t-s}\bigg)^{-2}(1+s)^{-(3-\varepsilon)}\bigg(1+\frac{|y|^2}{1+s}\bigg)^{-(2-\varepsilon)}dyds\Big|\nm\\
&\leq& C(1+t)^{-2} \int_0^\frac{t}{2}(1+s)^{-(3-\varepsilon)}(1+s)^{\frac{3}{2}}ds\nm\\
&\leq& C(1+t)^{-2}\leq C(1+t)^{-2}\bigg(1+\frac{|x|^2}{1+t}\bigg)^{-2}.
\ees
When $|x|^2>1+t$ and $|y|\geq\frac{|x|}{2}$, we have
\bes
 &&\Big|\int_0^\frac{t}{2}\!\int_{\mathbb{R}^3}(1+t-s)^{-2}\bigg(1+\frac{|x-y|^2}{1+t-s}\bigg)^{-2}(1+s)^{-(3-\varepsilon)}\bigg(1+\frac{|y|^2}{1+s}\bigg)^{-(2-\varepsilon)}dyds\Big|\nm\\
&\leq& C(1+t)^{-2}\bigg(1+\frac{|x|^2}{1+t}\bigg)^{-(2-\varepsilon)}(1+t)^{-(2-\varepsilon)}\int_0^\frac{t}{2}(1+s)^{-1}(1+t-s)^{\frac{3}{2}}ds\nm\\
&\leq& C(1+t)^{-2}\bigg(1+\frac{|x|^2}{1+t}\bigg)^{-(2-\varepsilon)}.
\ees
When $|x|^2>1+t$ and $|y|<\frac{|x|}{2}$, we have $|x-y|\geq\frac{|x|}{2}$ and
\bes
&&\Big|\int_0^\frac{t}{2}\!\int_{\mathbb{R}^3}(1+t-s)^{-2}\bigg(1+\frac{|x-y|^2}{1+t-s}\bigg)^{-2}(1+s)^{-(3-\varepsilon)}\bigg(1+\frac{|y|^2}{1+s}\bigg)^{-(2-\varepsilon)}dyds\Big|\nm\\
&\leq& C(1+t)^{-2}\bigg(1+\frac{|x|^2}{1+t}\bigg)^{-2}\int_0^\frac{t}{2}(1+s)^{-(3-\varepsilon)}(1+s)^{\frac{3}{2}}ds\nm\\
&\leq& C(1+t)^{-2}\bigg(1+\frac{|x|^2}{1+t}\bigg)^{-2}.
\ees
Case 2: $\frac{t}{2}<s\leq t$. When $|x|^2\leq 1+t$, we have
\bes
&&\Big|\int_0^\frac{t}{2}\!\int_{\mathbb{R}^3}(1+t-s)^{-2}\bigg(1+\frac{|x-y|^2}{1+t-s}\bigg)^{-2}(1+s)^{-(3-\varepsilon)}\bigg(1+\frac{|y|^2}{1+s}\bigg)^{-(2-\varepsilon)}dyds\Big|\nm\\
&\leq& C(1+t)^{-(3-\varepsilon)}\int_{\frac{t}{2}}^t(1+t-s)^{-2}(1+t-s)^{\frac{3}{2}}ds\nm\\
&\leq& C(1+t)^{-(\frac{5}{2}-\varepsilon)}\leq C(1+t)^{-(\frac{5}{2}-\varepsilon)}\bigg(1+\frac{|x|^2}{1+t}\bigg)^{-2}.
\ees
When $|x|^2>1+t$ and $|y|\geq\frac{|x|}{2}$, we have
\bes
 &&\Big|\int_{\frac{t}{2}}^t\!\int_{\mathbb{R}^3}(1+t-s)^{-2}\bigg(1+\frac{|x-y|^2}{1+t-s}\bigg)^{-2}(1+s)^{-(3-\varepsilon)}\bigg(1+\frac{|y|^2}{1+s}\bigg)^{-(2-\varepsilon)}dyds\Big|\nm\\
&\leq& C(1+t)^{-(3-\varepsilon)}\bigg(1+\frac{|x|^2}{1+t}\bigg)^{-(2-\varepsilon)}\int_{\frac{t}{2}}^t(1+t-s)^{-2}(1+t-s)^{\frac{3}{2}}ds\nm\\
&\leq& C(1+t)^{-(\frac{5}{2}-\varepsilon)}\bigg(1+\frac{|x|^2}{1+t}\bigg)^{-(2-\varepsilon)}.
\ees
When $|x|^2>1+t$ and $|y|<\frac{|x|}{2}$, we have $|x-y|\geq\frac{|x|}{2}$ and
\bes
&&\Big|\int_{\frac{t}{2}}^t\!\int_{\mathbb{R}^3}(1+t-s)^{-2}\bigg(1+\frac{|x-y|^2}{1+t-s}\bigg)^{-2}(1+s)^{-(3-\varepsilon)}\bigg(1+\frac{|y|^2}{1+s}\bigg)^{-(2-\varepsilon)}dyds\Big|\nm\\
&\leq& C(1+t)^{-2}\bigg(1+\frac{|x|^2}{1+t}\bigg)^{-2}\int_{\frac{t}{2}}^t(1+s)^{-(3-\varepsilon)}(1+s)^{\frac{3}{2}}ds\nm\\
&\leq& C(1+t)^{-2}\bigg(1+\frac{|x|^2}{1+t}\bigg)^{-2}.
\ees
Thus, we complete the proof of this lemma.
\end{proof}

As in Lemma \ref{l 54} and \ref{l 55}, one can deal with the convolutions $\int_0^t(G_{12}-G_{S_1}-G_{S_2})(\cdot,t-s)\ast \widetilde{F_1}(\cdot,s)ds$ and $\int_0^t(G_{22}-G_{S_1}-G_{S_2})(\cdot,t-s)\ast \widetilde{F_1}(\cdot,s)ds$, where $\widetilde{F_1}$ is the other terms in $F_1$ except the worst nonlinear term $n_2\nabla\phi$ by using Lemma \ref{A4}-\ref{A6}.

On the other hand, we consider the convolution between the singular part in short wave $G_{S_1}$ and $F_1(n_1,w_1,n_2,w_2)(x,t)$. Since $G_{S_1}$ is singular when $t\rightarrow0$, this singularity no longer exists when taking the convolution. Then these terms can be estimated as those for the convolution between $G-G_{S_1}-G_{S_2}$ and the nonlinear terms above.

Next, for the convolution between the singular part $G_{S_2}$ and $F_1(n_1,w_1,n_2,w_2)(x,t)$, we have
\bes
&&\Big|\int_0^tG_{S_2}(\cdot,t-s)\ast F_1(n_1,w_1,n_2,w_2)(\cdot,s)ds\Big|\nm\\
&=&\Big|\int_0^t e^{-C_1(t-s)}(\delta(\cdot)+\tilde{f}(\cdot))\ast F_1(n_1,w_1,n_2,w_2)(\cdot,s)ds\Big|\nm\\
&\leq& CM^2(T)\int_0^te^{-C_1(t-s)}(\delta(\cdot)+\tilde{f}(\cdot))\ast\{(1+t)^{-3}(\psi_1+\psi_2+\psi_3+\psi_4)
+\psi_1\psi_4+\psi_2\psi_3\}(\cdot,s)ds\nm\\
&\leq& CM^2(T)\{(1+t)^{-3}(\psi_1+\psi_2+\psi_3+\psi_4)
+\psi_1\psi_4+\psi_2\psi_3+\cdots\}\nm\\
&\leq& CM^2(T)(1+t)^{-3}\left\{\bigg(1+\frac{|x|^2}{1+t}\bigg)^{-(\frac{3}{2}-\varepsilon)}+\bigg(1+\frac{(|x|-ct)^2}{1+t}\bigg)^{-(\frac{3}{2}-\varepsilon)}\right\},
\ees
where we have used the ansatz (\ref{5.20}), Lemma \ref{l 52(0)} and the fact
\bes
F_1&=&\mathcal{O}(1)\{D(w_1^2+w_2^2+n_1^2+n_2^2+n_2w_1^2+n_2w_1w_2+n_2w_2^2)+n_2\nabla\phi\nm\\
&&+D^2(n_1w_1+n_2w_2+n_1n_2w_1+n_1n_2w_2+w_1n_2^2+n_2^2w_2)\}.
\lbl{5.35}
\ees

Up to now, from Lemma \ref{l 54}, \ref{l 55}, \ref{A4}, \ref{A5}, \ref{A6}, and Proposition \ref{l 52}, we can immediately obtain
\bes
|n_1(x,t)|&\leq& C(\varepsilon_0+M^2(T))\psi_1,\nm\\
|w_1(x,t)|&\leq&  C(\varepsilon_0+M^2(T))\psi_2.
\lbl{5.36}
\ees

For the higher order derivative of $D_x^\alpha(n_1,w_1,n_2,w_2)$ with $|\alpha|=1,2$, one can easily obtain in the same way. In fact, for the short time part, one can put derivatives of any order on the Green's function to get the same estimate as $|\alpha|=0$. For the long time part, the unique difference from the case $|\alpha|=0$ is in the convolution between $G_{S_1}$ (or $\mathbb{G}_{S_1}$) due to the singularity as $t\rightarrow0$, which only allows us to put first order derivative on the Green's function. Thus, when $|\alpha|=2$, taking the nonlinear term $D^2(n_1w_1)$ in $F_1$ for example, the term $D^3(n_1w_1)=D^3n_1+D_2n_1Dw_1+Dn_1D^2w_1+n_1D^3w_1$ can easily be estimated from the ansatz (\ref{5.20}), Lemma \ref{A4}-\ref{A6}. The convolution between $D_x^\alpha (G(x,t)-G_{S_1}-G_{S_2})$ (or $D_x^\alpha(\mathbb{G}(x,t)-\mathbb{G}_{S_1}-\mathbb{G}_{S_2})$) and the nonlinear terms $F_1$ and $F_2$ can be treated similarly.

Lastly, for the convolution between $G_{S_2}$ (or $\mathbb{G}_{S_2}$) and the nonlinear terms $D_x^\alpha F_1$ and $D_x^\alpha F_2$ with $|\alpha|=1,2$, we have to face the third-order and the fourth-order derivatives of the solution $D^3(n_1,w_1,n_2,w_2)$ and $D^4(n_1,w_1,n_2,w_2)$, which is lack of the pointwise estimates for these terms due to the ansatz (\ref{5.20}). Hence, we should estimate as follows. In fact,
\bes
&&D^2F_1=\mathcal{O}(1)\{D^3w_1w_1\!+\!D^3w_2w_2\!+\!D^3n_1n_1\!+\!D^3n_2n_2\!+\!D^4n_1w_1\!+\!D^4w_1n_1\!+\!D^4n_2w_2\!+\!D^4w_2n_2\!+\!\cdots\},\nm\\
&&D^2F_2=\mathcal{O}(1)\{D^3w_1w_2\!+\!D^3w_2w_1\!+\!D^3n_1n_2\!+\!D^3n_2n_1\!+\!D^4n_1w_2\!+\!D^4w_2n_1\!+\!D^4n_2w_1\!+\!D^4w_1n_2\!+\!\cdots\},\ \ \ \ \ \ \ \ \
\lbl{5.36(00)}
\ees
where ``$\cdots$" denotes the rest terms. We take the term above $D^3w_1w_1$ for example and have
\bes
|D^3w_1w_1|&\leq& C(1+t)^{-3}\left\{(1+t)^{-\frac{3}{2}}\bigg(1+\frac{|x|^2}{1+t}\bigg)^{-(\frac{3}{2}-\varepsilon)}+(1+t)^{-2}\bigg(1+\frac{(|x|-ct)^2}{1+t}\bigg)^{-(\frac{3}{2}-\varepsilon)}\right\}\nm\\
&\leq& C(1+t)^{-\frac{9}{2}}\bigg(1+\frac{|x|^2}{1+t}\bigg)^{-(\frac{3}{2}-\varepsilon)}+C(1+t)^{-5}\bigg(1+\frac{|x|^2}{1+t}\bigg)^{-(\frac{3}{2}-\varepsilon)}(1+t)^{3-2\varepsilon}\nm\\
&\leq& C(1+t)^{-2}\bigg(1+\frac{|x|^2}{1+t}\bigg)^{-(\frac{3}{2}-\varepsilon)},
\lbl{5.36(0)}
\ees
where we have used the inequality (\ref{5.26(2)}) again. Then, from Lemma \ref{l 52(0)}, (\ref{5.36(00)}) and (\ref{5.36(0)}), we know when $1\leq|\alpha|\leq 2$ that
\bes
&&\int_0^t G_{S_2}(\cdot,t-s)\ast D_x^\alpha(F_1,F_2)(\cdot,s)ds+\int_0^t \mathbb{G}_{S_2}(\cdot,t-s)\ast D_x^\alpha(F_1,F_2)(\cdot,s)ds\nm\\
&\leq& C(1+t)^{-2}\bigg(1+\frac{|x|^2}{1+t}\bigg)^{-(\frac{3}{2}-\varepsilon)}.
\ees

As a result, we have for $1\leq|\alpha|\leq 2$ that
\bes
|D_x^\alpha n_1(x,t)|&\leq& C(\varepsilon_0+M^2(T))\psi_1,\nm\\
|D_x^\alpha w_1(x,t)|&\leq&  C(\varepsilon_0+M^2(T))\psi_2,\nm\\
|D_x^\alpha n_2(x,t)|&\leq& C(\varepsilon_0+M^2(T))\psi_5,\nm\\
|D_x^\alpha w_2(x,t)|&\leq& C(\varepsilon_0+M^2(T))\psi_6.
\lbl{5.37}
\ees

In summary, from (\ref{5.27(2)}), (\ref{5.36}) and (\ref{5.37}), we can conclude that
$$
M(T)\leq C(\varepsilon_0+M^2(T)),
$$
which together with the smallness of $\varepsilon_0$ and the continuity of $M(T)$ implies
$$
M(T)\leq C\varepsilon_0.
$$
This closes the ansatz (\ref{5.20}) and proves Theorem \ref{1 b}.

\section {Appendix}

\ \
\begin{lemma}\lbl{A1} \cite{Wang2}\ \ (1) When $\tau\in[0,t]$ and
$a^2\geq
 1+t$, for any positive constant $l$, we have
 \bess
 \left(1+\frac{a^2}{1+\tau}\right)^{-l}\leq
 3^l\left(\frac{1+\tau}{1+t}\right)^l\left(1+\frac{a^2}{1+t}\right)^{-l}.
 \eess

(2) When $a^2\leq 1+t$, we have
 \bess
 1\leq 2^l\left(1+\frac{a^2}{1+t}\right)^{-l}.
 \eess
 \end{lemma}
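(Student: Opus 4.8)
The plan is to derive both inequalities by direct elementary estimates on the positive quantities $1+\tau$, $1+t$ and $a^2$, using only that $s\mapsto s^{1/l}$ and $s\mapsto s^{-l}$ are monotone on $(0,\infty)$; no deep tool is involved, and the two parts are independent.

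For part (1): multiplying the asserted inequality through by the positive factor $\left(1+\frac{a^2}{1+\tau}\right)^l\left(1+\frac{a^2}{1+t}\right)^l$ and taking $l$-th roots, one sees it is equivalent to the quotient bound
\[
\frac{1+\dfrac{a^2}{1+t}}{\,1+\dfrac{a^2}{1+\tau}\,}\;\le\;3\,\frac{1+\tau}{1+t},
\]
and I would prove this form instead. First, the hypothesis $a^2\ge 1+t$ gives $\frac{a^2}{1+t}\ge 1$, so the numerator satisfies $1+\frac{a^2}{1+t}\le 2\,\frac{a^2}{1+t}$. Then for the denominator I simply drop the $1$, using $1+\frac{a^2}{1+\tau}\ge \frac{a^2}{1+\tau}$. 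Dividing one bound by the other,
\[
\frac{1+\dfrac{a^2}{1+t}}{\,1+\dfrac{a^2}{1+\tau}\,}\;\le\;\frac{2\,a^2/(1+t)}{a^2/(1+\tau)}\;=\;2\,\frac{1+\tau}{1+t}\;\le\;3\,\frac{1+\tau}{1+t},
\]
which is the desired quotient bound (in fact with constant $2$ rather than $3$).

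For part (2): the hypothesis $a^2\le 1+t$ gives $1+\frac{a^2}{1+t}\le 2$, hence $\left(1+\frac{a^2}{1+t}\right)^{-l}\ge 2^{-l}$; multiplying by $2^l$ yields $1\le 2^l\left(1+\frac{a^2}{1+t}\right)^{-l}$, as claimed. The only step deserving any attention is keeping track of which direction the inequality points when the $l$-th power is inverted, which is precisely why I reformulate part (1) as a quotient estimate before doing any arithmetic; past that there is no real obstacle, and the argument works verbatim for every $l>0$.
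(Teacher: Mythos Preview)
Your argument is correct. The paper does not give its own proof of this lemma; it is simply stated in the Appendix with a citation to \cite{Wang2}. Your elementary derivation (which in fact yields the sharper constant $2^l$ in part~(1)) is entirely adequate.
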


The following three lemmas are almost from \cite{LS,LW}. The unique difference in the present paper is the presence of the factor $\varepsilon$ in these lemmas. In fact, the factor $\varepsilon$ can be negligible in the proof. For simplicity, we omit the details. And  $t_0$ below is defined as in \cite{LS}
\bess
t_0=\max\bigg\{\frac{t}{2},t-\frac{\sqrt{1+t}}{4}\bigg\} \ {\rm in} \ D_1\cup D_2 \cup D_3;\ \ \
t_0=t-\min\bigg\{\frac{ct-|x|}{4c},\frac{|x|}{4c}\bigg\} \ {\rm in}\ D_4\cup D_5.
\eess

\begin{lemma}\lbl{A4}
For any $\gamma\geq0$, we have
\bess
\mathcal{N}_1^3&=&\int_0^{t_0}\int_{\mathbb{R}^3}(t-s)^{-\frac{4+\gamma}{2}}e^{-\frac{|x-y|^2}{C(t-s)}}(1+s)^{-3}\left(1+\frac{|y|^2}{1+s}\right)^{-(3-\varepsilon)}dyds\nm\\
&\leq & C(1+t)^{-\min\{\frac{4+\gamma}{2},\frac{9+\gamma}{4}\}}\left(1+\frac{x^2}{1+t}\right)^{-(\frac{3}{2}-\varepsilon)},\lbl{6.20}
\eess
\bess
\mathcal{N}_2^3&=&\int_0^{t_0}\int_{\mathbb{R}^3}(t-s)^{-\frac{4+\gamma}{2}}(t-s)^{-\frac{1}{2}}e^{-\frac{(|x-y|-c(t-s))^2}{C(t-s)}}(1+s)^{-4}\left(1+\frac{(|y|-cs)^2}{1+s}\right)^{-(3-2\varepsilon)}dyds\nm\\
&\leq & C(1+t)^{-\min\{\frac{4+\gamma}{2},3\}}\left(\left(1+\frac{x^2}{1+t}\right)^{-(\frac{3}{2}-\varepsilon)}+\left(1+\frac{(|x|-ct)^2}{1+t}\right)^{-(\frac{3}{2}-\varepsilon)}\right),
\lbl{6.2}
\eess
\bess
\mathcal{N}_3^3&=&\int_0^{t_0}\int_{\mathbb{R}^3}(t-s)^{-\frac{4+\gamma}{2}}(t-s)^{-\frac{1}{2}}e^{-\frac{(|x-y|-c(t-s))^2}{C(t-s)}}(1+s)^{-3}\left(1+\frac{|y|^2}{1+s}\right)^{-(3-2\varepsilon)}dyds\nm\\
&\leq & C(1+t)^{-\frac{4+\gamma}{2}}\left(\left(1+\frac{x^2}{1+t}\right)^{-(\frac{3}{2}-\varepsilon)}+\left(1+\frac{(|x|-ct)^2}{1+t}\right)^{-(\frac{3}{2}-\varepsilon)}\right),
\lbl{6.3}
\eess
\bess
\mathcal{N}_4^3&=&\int_0^{t_0}\int_{\mathbb{R}^3}(t-s)^{-\frac{4+\gamma}{2}}e^{-\frac{|x-y|^2}{C(t-s)}}(1+s)^{-4}\left(1+\frac{(|y|-cs)^2}{1+s}\right)^{-(3-2\varepsilon)}dyds\\[1mm]
&\leq&C(1+t)^{-\frac{4+\gamma}{2}}\left(\left(1+\frac{x^2}{1+t}\right)^{-(\frac{3}{2}-\varepsilon)}+\left(1+\frac{(|x|-ct)^2}{1+t}\right)^{-(\frac{3}{2}-\varepsilon)}\right).
\lbl{6.4}
\eess
\end{lemma}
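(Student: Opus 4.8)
The four estimates share the same structure, and the plan is to prove each one by adapting verbatim the argument of \cite{LS,LW}, the only novelty being that the exponent $3$ there is replaced by $3-\varepsilon$ or $3-2\varepsilon$, which is harmless because every integral that occurs converges with room to spare. In each case $\mathcal{N}^3_i$ is a convolution over $[0,t_0]$ of a parabolic kernel $e^{-|x-y|^2/C(t-s)}$ or a travelling-wave kernel $e^{-(|x-y|-c(t-s))^2/C(t-s)}$, carrying a factor $(t-s)^{-\frac{4+\gamma}{2}}$ (possibly times $(t-s)^{-1/2}$), against a source that carries $(1+s)^{-3}$ or $(1+s)^{-4}$ and either a diffusion weight $(1+\frac{|y|^2}{1+s})^{-a}$ or a Huygens weight $(1+\frac{(|y|-cs)^2}{1+s})^{-a}$ with $a\in\{3-\varepsilon,3-2\varepsilon\}$. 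Concretely I would: split the time integral as $\int_0^{t_0}=\int_0^{t/2}+\int_{t/2}^{t_0}$; split the space integral according to whether $(x,t)$ lies in the near-field $\{|x|^2\le 1+t\}$ or in one of the far-field regions $D_1,\dots,D_5$ of \cite{LS}, and in the far-field split the $y$-integral at $|y|=|x|/2$, so that on $\{|y|<|x|/2\}$ one has $|x-y|\ge|x|/2$ and the kernel is already exponentially small; and on every remaining piece use Lemma \ref{A1} to exchange the $(1+s)$-decay of the source for $(1+t)$-decay, use the shell bound $(\ref{5.0})$ together with Lemma \ref{l 51} to carry out the $y$-integration, and use the elementary inequality $(\ref{5.26(2)})$ to convert any leftover Huygens factor into a diffusion factor. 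This last step is precisely what produces the \emph{sum} of the two profiles $(1+\frac{x^2}{1+t})^{-(3/2-\varepsilon)}+(1+\frac{(|x|-ct)^2}{1+t})^{-(3/2-\varepsilon)}$ on the right-hand sides of the bounds for $\mathcal{N}^3_2$, $\mathcal{N}^3_3$ and $\mathcal{N}^3_4$.

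The appearance of the minimum of two exponents in $\mathcal{N}^3_1$ and $\mathcal{N}^3_2$ reflects which time-regime binds. On $[0,t/2]$ one has $t-s\ge t/2$, so $(t-s)^{-\frac{4+\gamma}{2}}\le C(1+t)^{-\frac{4+\gamma}{2}}$ can be pulled out; after integrating the kernel against the source weight by Lemma \ref{l 51} and checking that $\int_0^{t/2}(1+s)^{-3}(1+s)^{3/2}ds\le C$, this part contributes the rate $\frac{4+\gamma}{2}$. On $[t/2,t_0]$ I would instead pull out $(1+s)^{-3}\le C(1+t)^{-3}$; what remains, after the spatial integration, is of order $(1+t)^{3/2}\int_{t/2}^{t_0}(t-s)^{-\frac{4+\gamma}{2}}ds$, and since $t-t_0\ge \frac1C\sqrt{1+t}$ — which is the whole reason for cutting the integral at $t_0$ rather than at $t$, so that the kernel never blows up — this integral is controlled, giving the rate $\frac{9+\gamma}{4}$ (and the rate $3$ for $\mathcal{N}^3_2$, whose source decays like $(1+s)^{-4}$). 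For $\mathcal{N}^3_3$ and $\mathcal{N}^3_4$ the source already carries enough $(1+s)$-decay that only the first regime can bind, so only the rate $\frac{4+\gamma}{2}$ survives.

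For the genuinely travelling kernels in $\mathcal{N}^3_2$ and $\mathcal{N}^3_3$ the only extra ingredient is the interaction of two propagating profiles: since $c(t-s)+cs=ct$, the light cone $\{|x-y|\approx c(t-s)\}$ of the kernel and the cone $\{|y|\approx cs\}$ carried by the Huygens weight of the source are compatible only near $\{|x|\approx ct\}$, which is where the $H$-part of the output concentrates, while away from that cone the two shells are transversal and their overlap contributes the diffusive part localized near the origin. I would therefore run the far-field analysis region by region over $D_1,\dots,D_5$ exactly as in \cite{LS}, using in each the region-dependent choice of $t_0$ recalled just before the statement, and in the intermediate zones $D_4,D_5$ break the $s$-integral further at the times $\frac{t}{2}-\frac{|x|}{2c}$, $\frac{t}{2}$ and $t-\min\{\frac{ct-|x|}{4c},\frac{|x|}{4c}\}$ so that on each subinterval either the kernel or the source is already localized away from the relevant cone. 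The $\varepsilon$-terms only lower a few exponents by $O(\varepsilon)$ and never change which term dominates; as announced before Lemma \ref{A4}, they may simply be ignored in the bookkeeping.

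The delicate point is not any individual estimate but the simultaneous control of the \emph{optimal} time-rate and the \emph{correct} spatial profile in the intermediate far-field regions $D_4$ and $D_5$, where neither $|x|$ nor $|x|-ct$ is comparable to $\sqrt{1+t}$: there one must check that each use of $(\ref{5.26(2)})$ to trade a Huygens weight for a diffusion weight, together with the adaptive choice of $t_0$, costs no more than the permitted $O(\varepsilon)$ in the exponent of $(1+t)$. Once this is verified in each $D_j$, summing over the time- and space-decompositions yields the four asserted bounds.
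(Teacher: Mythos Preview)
Your proposal is correct and takes essentially the same approach as the paper: the paper does not give a proof of this lemma at all, but simply states that the three appendix lemmas are ``almost from \cite{LS,LW}'' with the only difference being the factor $\varepsilon$, which ``can be negligible in the proof,'' and then omits the details. Your sketch---splitting the time integral at $t/2$ and at the region-dependent $t_0$, decomposing space into $D_1,\dots,D_5$, splitting the $y$-integral at $|y|=|x|/2$, and using Lemma~\ref{A1}, the shell bound~(\ref{5.0}), and the trade-off inequality~(\ref{5.26(2)})---is precisely the \cite{LS,LW} argument the paper is invoking, and your remark that the $\varepsilon$-shift in the exponents is harmless is exactly the paper's own justification.
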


\begin{lemma}\lbl{A5}
For any $\gamma\geq0$, we have
\bess
&\mathcal{N}_5^3=&\int_{t_0}^t\int_{\mathbb{R}^3}(t-s)^{-2}e^{-\frac{|x-y|^2}{C(t-s)}}(1+s)^{-\frac{3+\gamma}{2}}\left(1+\frac{|y|^2}{1+s}\right)^{-(\frac{3}{2}-\varepsilon)}dyds\nm\\
&\leq & C(1+t)^{-\frac{2+\gamma}{2}}\left(1+\frac{x^2}{1+t}\right)^{-(\frac{3}{2}-\varepsilon)},
\lbl{6.5}
\eess
\bess
\mathcal{N}_6^3&=&\int_{t_0}^t\int_{\mathbb{R}^3}(t-s)^{-2}(t-s)^{-\frac{1}{2}}e^{-\frac{(|x-y|-c(t-s))^2}{C(t-s)}}(1+s)^{-\frac{4+\gamma}{2}}\left(1+\frac{(|y|-cs)^2}{1+s}\right)^{-(\frac{3}{2}-\varepsilon)}dyds\nm\\
&\leq & C(1+t)^{-\frac{2+\gamma}{2}}\left(\left(1+\frac{x^2}{1+t}\right)^{-(\frac{3}{2}-\varepsilon)}+\left(1+\frac{(|x|-ct)^2}{1+t}\right)^{-(\frac{3}{2}-\varepsilon)}\right),
\lbl{6.6}
\eess
\bess
\mathcal{N}_7^3&=&\int_{t_0}^t\int_{\mathbb{R}^3}(t-s)^{-2}(t-s)^{-\frac{1}{2}}e^{-\frac{(|x-y|-c(t-s))^2}{C(t-s)}}(1+s)^{-\frac{3+\gamma}{2}}
\left(1+\frac{|y|^2}{1+s}\right)^{-(\frac{3}{2}-\varepsilon)}dyds\nm\\
&\leq & C\left((1+t)^{-\frac{1+\gamma}{2}}\left(1+\frac{x^2}{1+t}\right)^{-(\frac{3}{2}-\varepsilon)}+(1+t)^{-\frac{2+\gamma}{2}}\left(1+\frac{(|x|-ct)^2}{1+t}\right)^{-(\frac{3}{2}-\varepsilon)}\right),
\lbl{6.7}
\eess
\bess
\mathcal{N}_8^3&=&\int_{t_0}^t\int_{\mathbb{R}^3}(t-s)^{-2}e^{-\frac{|x-y|^2}{C(t-s)}}(1+s)^{-\frac{4+\gamma}{2}}\left(1+\frac{(|y|-cs)^2}{1+s}\right)^{-(\frac{3}{2}-\varepsilon)}dyds\\[1mm]
&\leq&C(1+t)^{-\frac{2+\gamma}{2}}\left(\left(1+\frac{x^2}{1+t}\right)^{-(\frac{3}{2}-\varepsilon)}+\left(1+\frac{(|x|-ct)^2}{1+t}\right)^{-(\frac{3}{2}-\varepsilon)}\right).
\lbl{6.8}
\eess
\end{lemma}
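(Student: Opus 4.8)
\noindent The plan is to exploit that all four integrals are taken over the short window $[t_0,t]$, so that $t-s\le t-t_0$ is small relative to $1+t$: by the definition of $t_0$ one has $t-t_0\le\tfrac14\sqrt{1+t}$ on $D_1\cup D_2\cup D_3$ and $t-t_0\le\min\{\tfrac{ct-|x|}{4c},\tfrac{|x|}{4c}\}$ on $D_4\cup D_5$, hence in every region $s\ge t_0\ge\tfrac t2$ and $1+s$ is comparable to $1+t$. Accordingly I would first pull the slowly varying factor $(1+s)^{-(3+\gamma)/2}$ (resp. $(1+s)^{-(4+\gamma)/2}$) out of the $s$-integral, replacing it by $C(1+t)^{-(3+\gamma)/2}$ (resp. $C(1+t)^{-(4+\gamma)/2}$), and, using Lemma \ref{A1}, replace each $1+s$ in the spatial weight by $1+t$ up to a constant. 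It then remains, region by region among $D_1,\dots,D_5$, to bound the $y$-integral of the kernel against the resulting spatial profile and to integrate over $s\in[t_0,t]$.

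For the heat-type terms $\mathcal N_5^3$ and $\mathcal N_8^3$ the kernel $(t-s)^{-2}e^{-|x-y|^2/(C(t-s))}$ is a Gaussian of width $\sqrt{t-s}\ll\sqrt{1+t}$. Splitting the $y$-integral into $\{|x-y|\le|x|/2\}$, where the spatial profile is bounded by a constant times its value at $x$ and the Gaussian integrates to $C(t-s)^{3/2}$, and $\{|x-y|>|x|/2\}$, where one writes $e^{-|x-y|^2/(C(t-s))}\le e^{-|x|^2/(8C(t-s))}e^{-|x-y|^2/(2C(t-s))}$ and uses $|x|^2/(t-s)\ge c\sqrt{1+t}$ (resp. $\ge c|x|$ in $D_4\cup D_5$) to dominate the exponential by any negative power of $1+\tfrac{|x|^2}{1+t}$, one obtains $\int_{\mathbb R^3}(t-s)^{-2}e^{-|x-y|^2/(C(t-s))}g(y)\,dy\le C(t-s)^{-1/2}g_\sharp(x,t)$, with $g_\sharp$ the spatial profile on the right-hand side of the lemma. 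Since $\int_{t_0}^t(t-s)^{-1/2}\,ds=2(t-t_0)^{1/2}\le C(1+t)^{1/4}$ on $D_1\cup D_2\cup D_3$ and $\le C(1+t)^{1/2}$ on $D_4\cup D_5$, multiplying by the extracted time factor gives $(1+t)^{-(3+\gamma)/2+1/4}$ resp. $(1+t)^{-(3+\gamma)/2+1/2}$, both controlled by $(1+t)^{-(2+\gamma)/2}$. For $\mathcal N_8^3$ the profile $(1+\tfrac{(|y|-cs)^2}{1+s})^{-(3/2-\varepsilon)}$ is localized on the shell $\{|y|\approx cs\}$ and, because $cs\in[ct-c(t-t_0),ct]$, is up to constants the profile centered at $ct$; thus it contributes the Huygens term on $D_2$, while on $D_1,D_4,D_5$ one exchanges it for a diffusion profile by (\ref{5.26(2)}) at the cost of $(1+t)^{3-2\varepsilon}$, absorbed by the better time factor $(1+t)^{-(4+\gamma)/2}$, and on $D_3$ the Gaussian is super-polynomially small.

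For the Huygens-type terms $\mathcal N_6^3$ and $\mathcal N_7^3$ the kernel $(t-s)^{-5/2}e^{-(|x-y|-c(t-s))^2/(C(t-s))}$ is concentrated on the spherical shell $\{|x-y|\approx c(t-s)\}$. For $\mathcal N_6^3$, whose profile is itself shell-localized at radius $cs$, I would carry out the $y$-integration exactly as in the proofs of Lemmas \ref{l 37} and \ref{l 54}, combining the spherical-mean decay underlying Lemma \ref{l 37} with (\ref{5.0}); the two shells overlap tangentially precisely when $\big||x|-ct\big|$ is small, producing a Huygens profile about $ct$ on $D_2$ and an exponentially small factor on $D_1,D_3,D_4,D_5$ which, as before, is absorbed into a diffusion profile. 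The remaining $s$-integral $\int_{t_0}^t(t-s)^{-5/2}(t-s)^{2}\,ds$ (the $y$-integration already supplying the gain) converges near $s=t$ and is bounded by $C(t-t_0)^{1/2}$, giving the rate $(1+t)^{-(2+\gamma)/2}$. For $\mathcal N_7^3$ the profile $(1+\tfrac{|y|^2}{1+s})^{-(3/2-\varepsilon)}$ is concentrated near the origin while the kernel's mass lies on the shell of radius $c(t-s)$; convolving therefore sweeps essentially the whole shell, costing an extra factor $c(t-s)\le C(1+t)$ for the diffusion component near the origin ($D_1$), which is why only the strictly weaker rate $(1+t)^{-(1+\gamma)/2}$ survives there, whereas near the cone ($D_2$) the shells are tangent and the full rate $(1+t)^{-(2+\gamma)/2}$ is retained — exactly the asymmetric bound asserted.

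The genuinely delicate part is the case bookkeeping over $D_1,\dots,D_5$: in each region one must decide which of the two profiles on the right dominates and track the precise power of $1+t$ gained or lost when a Huygens profile is traded for a diffusion one via (\ref{5.26(2)}) (or conversely), since the margins are tight — the appearance of the larger exponent $\tfrac{1+\gamma}{2}$ in $\mathcal N_7^3$ is the warning sign. The parameter $\varepsilon$ only perturbs finitely many exponents by $O(\varepsilon)$ and never reaches a convergence threshold, because every $L^1$-type integral actually invoked carries a Gaussian or a shell cutoff supplying decay at infinity; so one may set $\varepsilon=0$ throughout and re-insert it at the end. As the computation is otherwise identical to those of Lemmas \ref{l 54} and \ref{l 55} and of \cite{LS,LW}, the statement is recorded and the details are referred there.
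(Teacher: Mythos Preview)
Your proposal aligns with the paper's treatment: the paper does not prove Lemma~\ref{A5} at all but simply states that it is ``almost from \cite{LS,LW}'' with the sole difference being the factor $\varepsilon$, which ``can be negligible in the proof,'' and then omits the details. Your sketch follows precisely this template --- region-by-region analysis over $D_1,\dots,D_5$, pulling out the slowly varying $(1+s)$ factors since $s\ge t_0\ge t/2$, and noting that $\varepsilon$ is harmless --- and ends by referring to the same sources; so in substance you and the paper agree, with your write-up actually supplying more detail than the paper does.

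One caution on your handling of $\mathcal N_8^3$: the trade via (\ref{5.26(2)}) costs $(1+t)^{3-2\varepsilon}$, and for small $\gamma$ the factor $(1+t)^{-(4+\gamma)/2}$ does \emph{not} absorb this (e.g.\ $\gamma=0$ gives $(1+t)^{1-2\varepsilon}$, far worse than the target $(1+t)^{-1}$). In fact on $D_1$ no trade is needed --- bounding the shell profile crudely by $1$ and integrating the Gaussian already yields $(1+t)^{-(4+\gamma)/2+1/4}$, which suffices --- and on $D_4\cup D_5$ one instead uses that $t-t_0$ is small relative to $|x|$ (resp.\ $ct-|x|$) so the Gaussian is localized near $x$, away from the shell $|y|\approx cs$. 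This is a bookkeeping slip in your sketch rather than a structural error, and the correct argument is the one carried out in \cite{LS,LW}.
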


\begin{lemma}\lbl{A6} For any $\gamma\geq0$, we have
\bess
\mathcal{N}_9^3&=&\int_0^{t_0}\int_{\mathbb{R}^3}(t-s)^{-\frac{4+\gamma}{2}}\chi_{|x-y|\le c(t-s)}\left(1+\frac{|x-y|^2}{t-s}\right)^{-2} (1+s)^{-3}\left(1+\frac{|y|^2}{1+s}\right)^{-(3-2\varepsilon)}dyds\nm\\
&\leq & C(1+t)^{-\frac{4+\gamma}{2}}\left(1+\frac{x^2}{1+t}\right)^{-(\frac{3}{2}-\varepsilon)},
\lbl{6.9}
\eess
\bess
\mathcal{N}_{10}^3&=&\int_{t_0}^t\int_{\mathbb{R}^3}(t-s)^{-2}\chi_{|x-y|\le c(t-s)}\left(1+\frac{|x-y|^2}{t-s}\right)^{-2} (1+s)^{-\frac{3+\gamma}{2}}\left(1+\frac{|y|^2}{1+s}\right)^{-(\frac{3}{2}-\varepsilon)}dyds\nm\\
&\leq & C(1+t)^{-\frac{2+\gamma}{2}}\left(1+\frac{x^2}{1+t}\right)^{-(\frac{3}{2}-\varepsilon)},\lbl{6.29}
\eess
\bess
\mathcal{N}_{11}^3&=&\int_0^{t_0}\int_{\mathbb{R}^3}(t-s)^{-\frac{4+\gamma}{2}}\chi_{|x-y|\le c(t-s)}\left(1+\frac{|x-y|^2}{t-s}\right)^{-2} (1+s)^{-4}\left(1+\frac{(|y|-cs)^2}{1+s}\right)^{-(3-2\varepsilon)}dyds\nm\\
&\leq & C(1+t)^{\frac{4+\gamma}{2}}\left(\left(1+\frac{x^2}{1+t}\right)^{-(\frac{3}{2}-\varepsilon)}+\left(1+\frac{(|x|-ct)^2}{1+t}\right)^{-(\frac{3}{2}-\varepsilon)}\right),
\lbl{6.10}
\eess
\bess
\mathcal{N}_{12}^3&=&\int_{t_0}^t\int_{\mathbb{R}^3}(t-s)^{-2}\chi_{|x-y|\le c(t-s)}\left(1+\frac{|x-y|^2}{t-s}\right)^{-2} (1+s)^{-\frac{4+\gamma}{2}}\left(1+\frac{(|y|-cs)^2}{1+s}\right)^{-(\frac{3}{2}-\varepsilon)}dyds\nm\\
&\leq & C(1+t)^{-\frac{2+\gamma}{2}}\left(\left(1+\frac{x^2}{1+t}\right)^{-(\frac{3}{2}-\varepsilon)}+\left(1+\frac{(|x|-ct)^2}{1+t}\right)^{-(\frac{3}{2}-\varepsilon)}\right).\lbl{6.31}
\eess
\end{lemma}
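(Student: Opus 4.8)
The four integrals $\mathcal{N}_9^3,\mathcal{N}_{10}^3,\mathcal{N}_{11}^3,\mathcal{N}_{12}^3$ are estimated by the same region-by-region convolution technique used by Liu--Wang \cite{LW} and Liu--Shi \cite{LS}; as remarked before Lemma \ref{A4}, the parameter $\varepsilon$ is harmless, only slightly weakening the spatial weights and never touching an integrability threshold, so it will be carried along without further comment. The plan is, for each term, to split $x$-space into the regions $D_1,\dots,D_5$ introduced before Lemma \ref{l 54} and to split the $s$-integral at $t_0$, so that $\mathcal{N}_9^3,\mathcal{N}_{11}^3$ are the short-time parts ($0\le s\le t_0$) and $\mathcal{N}_{10}^3,\mathcal{N}_{12}^3$ the long-time parts ($t_0\le s\le t$). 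Two features of the kernel $\chi_{|x-y|\le c(t-s)}(1+\frac{|x-y|^2}{t-s})^{-2}$ play the role of the Gaussian properties used in Lemmas \ref{A4}--\ref{A5}: it is dominated by $(1+\frac{|x-y|^2}{t-s})^{-2}$, whose $y$-integral over $\mathbb{R}^3$ is at most $C(1+t-s)^{3/2}$ and whose algebraic tail is still fast enough to absorb the weight-exchange of Lemma \ref{A1}; and it is supported in the cone $|x-y|\le c(t-s)$, which for small $t-s$ localizes it even more sharply than a heat kernel, and it is precisely this that yields the clean rate $(1+t)^{-(4+\gamma)/2}$ in $\mathcal{N}_9^3$. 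Note that the definition of $t_0$ keeps $t-s$ bounded below by a positive multiple of $\sqrt{1+t}$ on the short-time parts and keeps $1+s$ comparable to $1+t$ on the long-time parts, which is what allows the prefactors $(t-s)^{-(4+\gamma)/2}$ and $(1+s)^{-(3+\gamma)/2}$ to be extracted at the claimed rate.

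For $\mathcal{N}_9^3$ and $\mathcal{N}_{10}^3$ the source carries only the diffusion weight $\big(1+\frac{|y|^2}{1+s}\big)^{-(3-2\varepsilon)}$, respectively $\big(1+\frac{|y|^2}{1+s}\big)^{-(3/2-\varepsilon)}$, and the target is a pure diffusion profile, so the argument is line-for-line that for $\mathcal{N}_1^3$ in Lemma \ref{A4} and $\mathcal{N}_5^3$ in Lemma \ref{A5}, with the cone-truncated Riesz kernel replacing the heat kernel. Concretely, in $D_1$ one uses $\big(1+\frac{x^2}{1+t}\big)^{-(3/2-\varepsilon)}\sim1$ and estimates the leftover $s$-integral directly; in $D_3\cup D_4\cup D_5$ one splits the $y$-integral at $|y|=\frac{|x|}{2}$, extracting $\big(1+\frac{|y|^2}{1+s}\big)^{-(3/2-\varepsilon)}\le C\big(\frac{1+s}{1+t}\big)^{3/2-\varepsilon}\big(1+\frac{x^2}{1+t}\big)^{-(3/2-\varepsilon)}$ via Lemma \ref{A1} when $|y|\ge\frac{|x|}{2}$, and using $|x-y|\ge\frac{|x|}{2}$ together with the same lemma applied to the Riesz kernel when $|y|<\frac{|x|}{2}$. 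The remaining elementary $s$-integrals, such as $\int_{t_0}^t(t-s)^{-2}(1+t-s)^{3/2}\,ds\le C(1+t)^{1/2}$, then close the bounds.

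The substantial terms are $\mathcal{N}_{11}^3$ and $\mathcal{N}_{12}^3$, where the source is a Huygens weight $\big(1+\frac{(|y|-cs)^2}{1+s}\big)^{-a}$ (with $a=3-2\varepsilon$, resp.\ $a=3/2-\varepsilon$) concentrated on the sphere $|y|=cs$, while the target is a sum of a diffusion and a Huygens profile; these encode the interaction of the Riesz wave with the Huygens wave. The plan here is first to use the elementary integral inequality (\ref{5.0}) to reduce the $y$-integral of the Huygens weight against the cone-truncated Riesz kernel to an explicit function of $(x,s,t)$, and then, inside each $D_i$, to split into the sub-cases $|y|\ge\frac{\bullet}{2}$ and $|y|<\frac{\bullet}{2}$, with $\bullet=|x|$ in $D_1,D_4$ and $\bullet=\big||x|-ct\big|$ in $D_2,D_3,D_5$, exactly as in the proof of Lemma \ref{l 54}, deciding in each sub-case whether the Riesz kernel or the source weight delivers the required spatial decay and converting the residual time factor into a spatial weight via Lemma \ref{A1}. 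The hard part will be the bookkeeping in the intermediate regions $D_4$ $(\sqrt{1+t}\le|x|\le\frac{ct}{2})$ and $D_5$ $(\frac{ct}{2}\le|x|\le ct-\sqrt{1+t})$: there the $s$-integral must be broken further into sub-intervals matched to $t_0$, such as $[0,\tfrac{t}{2}-\tfrac{|x|}{2c}]$, $[\tfrac{t}{2}-\tfrac{|x|}{2c},\tfrac{t}{2}]$, $[\tfrac{t}{2},t-\tfrac{|x|}{4c}]$, $[t-\tfrac{|x|}{4c},t]$ and the analogues with $ct-|x|$ replacing $|x|$, so that on each sub-interval one of the quantities $c(t-s)-|x-y|$ or $|y|-cs$ is bounded below by a fixed multiple of the governing distance; and, as emphasized in \S1, one must exchange the diffusion and Huygens weights through (\ref{5.26(2)}) only when the time-decay budget permits, so as not to leave a spurious factor $(1+t)^{\varepsilon}$ in the temporal rate. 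Assembling this case analysis and summing over $D_1,\dots,D_5$ and over all time sub-intervals yields the four stated inequalities.
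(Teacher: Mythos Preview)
Your proposal is correct and follows exactly the approach the paper intends: the paper does not give a proof of Lemma~\ref{A6} at all, stating only that it is ``almost from \cite{LS,LW}'' with the factor $\varepsilon$ being ``negligible in the proof,'' and omitting the details. Your region-by-region sketch using $D_1,\dots,D_5$, the $t_0$-splitting, Lemma~\ref{A1}, and the weight exchange (\ref{5.26(2)}) is precisely the \cite{LS,LW} argument the paper is citing, with the $\varepsilon$ carried along as you note.
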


\section*{Acknowledgments}

The research of Z.G. Wu  was sponsored by Natural Science Foundation of Shanghai (No. 16ZR1402100) and the Fundamental Research Funds for the Central Universities (No. 2232015D3-33).
The research of W.K. Wang was supported by Natural Science Foundation of China
(No. 11231006).

\end{document}